\pgfplotsset{compat=1.18}
\providecommand\@dotsep{5}
\renewcommand{\k}{\mathbf{k}}
\newcommand{\bfS}{\mathbf S}
\newcommand{\calC}{\mathcal C}
\newcommand{\aonehat}{\widehat{A}_1}
\newcommand{\from}{\colon}
\newcommand{\heart}{\heartsuit}
\newcommand{\elealpha}{e^{\le}_{\alpha}}
\newcommand{\egalpha}{e^{>}_{\alpha}}
\DeclareMathOperator{\id}{id}
\DeclareMathOperator{\Z}{\mathbf Z}
\DeclareMathOperator{\R}{\mathbf R}
\DeclareMathOperator{\Q}{\mathbf Q}
\DeclareMathOperator{\C}{\mathbf C}
\DeclareMathOperator{\Aut}{Aut}
\DeclareMathOperator{\Cone}{Cone}
\DeclareMathOperator{\dbl}{dbl}
\DeclareMathOperator{\End}{End}
\DeclareMathOperator{\Hom}{Hom}
\DeclareMathOperator{\HN}{HN}
\DeclareMathOperator{\gradedHom}{Hom^{\bullet}}
\DeclareMathOperator{\homBar}{\overline{hom}}
\DeclareMathOperator{\Path}{Path}
\DeclareMathOperator{\std}{std}
\DeclareMathOperator{\Stab}{Stab}
\DeclareMathOperator{\Tr}{Tr}
\DeclareMathOperator{\tr}{\mathsf{tr}}
\DeclareMathOperator{\PSL}{PSL}
\DeclareMathOperator{\loopmap}{loop}
\DeclareMathOperator{\coker}{coker}
\DeclareMathOperator{\ob}{ob}
\DeclareMathOperator{\dgmod}{dgmod-}
\DeclareMathOperator{\sgn}{s}
\declaretheorem[parent=section]{theorem}
\declaretheorem[sibling=theorem]{lemma}
\declaretheorem[sibling=theorem]{corollary}
\declaretheorem[sibling=theorem]{proposition}
\declaretheorem[sibling=theorem, style=remark]{remark}
\declaretheorem[sibling=theorem, style=remark]{warning}
\declaretheorem[sibling=theorem, style=definition]{definition}
\declaretheorem[sibling=theorem, style=definition]{example}
\newcommand{\minus}{\setminus}
\title[Thurston compactification of the space of stability conditions]{A Thurston compactification of the space of stability conditions}
\author{Asilata Bapat \and Anand Deopurkar \and Anthony M. Licata}
\begin{document}

\begin{abstract}
  We propose compactifications of the moduli space of Bridgeland stability conditions of a triangulated category.
  Our construction arises from a viewing a stability condition as a metric on the underlying category and is inspired by the Thurston compactification of the Teichm\"uller space of hyperbolic metrics on a surface.
  The key ingredient in the construction are maps from the stability manifold to an infinite projective space.
  We prove that, under suitable hypotheses, these maps are injective and their image has a compact closure.
  We identify a family of points in the boundary that are categorical analogous to the intersection functionals in Teichm\"uller theory.

  We study in detail the geometry of the resulting compactification for the 2-Calabi--Yau categories of quivers, and fully work out the cases of the \(A_2\) and \(\aonehat\) quivers.
  To do so, we carefully examine the dynamics of Harder--Narasimhan multiplicities under auto-equivalences of the category.
  We introduce a finite automaton to study this dynamics and employ it in our analysis of the \(A_{2}\) and \(\aonehat\) categories.
\end{abstract}

\maketitle

\section{Introduction}\label{sec:intro}
The space of Bridgeland stability conditions has emerged as an important invariant of triangulated categories, evidenced by its prominent role in homological algebra, algebraic geometry, mirror symmetry, representation theory, and mathematical physics.
This paper takes the point of view of studying stability conditions through the metrics they define on the underlying triangulated category~\cite{ike:21}.
This perspective infuses the study of stability conditions with ideas from metric geometry, opening up new possibilities inspired by the study of moduli spaces of metrics elsewhere in mathematics.

In recent years, a significant body of work has established connections between metric geometry and homological algebra, particularly by relating Teichm\"uller theory and Bridgeland stability~\cite{hai.kat.kon:17,dim.hai.kat.ea:14,bri.smi:15,bri.qiu.sut:20,tho:06,smi:15, fan.fil.hai.ea:21,ike:21}.
A particularly promising  prospect of this connection is a theory of dynamics in triangulated categories parallel to the theory of dynamics in surface geometry.
To further develop this subject, we must consider not only spaces of Bridgeland stability conditions but also their modular compactifications.

In this paper, we propose a family of such compactifications, inspired by Thurston's compactification of Teichm\"uller space.
The first part of the paper establishes general results about this construction by studying a stability condition via its associated family of metrics.
The highlights of this part of the paper are~\Cref{cor:mass-determines-stables,thm:product-of-masses-injective}.
\Cref{cor:mass-determines-stables} shows that the stable objects of a stability condition (though not necessarily the central charge) are determined by any one of its associated metrics.
\Cref{thm:product-of-masses-injective} shows that in fact the entire stability condition (including the central charge) is determined by any pair of its associated metrics.

The second part of the paper focuses on a family of important categories, namely the 2-Calabi--Yau categories associated to quivers.
In this case, a stability condition is completely determined by any one of its associated metrics.
In~\Cref{sec:a2,sec:aonehat} respectively, we work out our proposed compactification explicitly for two small but highly non-trivial cases, namely the \(A_2\) and \(\widehat A_1\) quivers.
To do so, we introduce a number of general ideas, including the use of automata to describe the evolution of Harder--Narasimhan filtrations (\Cref{sec:intro-automata} and~\Cref{sec:automata}) and the notion of rectifiable filtrations (\Cref{sec:rectifiable-filtrations}).

\subsection{The construction}\label{sec:weight}
Thurston's compactification of Teichm\"uller space proceeds by embedding it in an infinite projective space, and then taking its closure.
To do this, one first fixes a suitable collection of curves $\mathbf{S}$.  The embedding sends a point of Teichm\"uller space, represented by a metric $\mu$, to the real-valued function on $\mathbf{S}$ defined by the length with respect to $\mu$.

In our categorical analogue of Thurston's construction, Bridgeland stability conditions provide the metrics.  The analogue of the set of curves tends to be flexible---the best choice may vary for different categories. 
In the examples studied in the second part of the paper, we take \(\bfS\) to be the set of spherical objects.  Indeed, as described by Khovanov and Seidel~\cite{kho.sei:02}, in a 2-Calabi--Yau category of type A there is a precise correspondence between spherical objects and curves on the punctured disc.
In greater generality, a reasonable choice for ``curves'' may be the objects that are stable for some stability condition.
The categorical analogue of the length of a curve with respect to a metric is the mass of such an object with respect to a stability condition, whose definition we now recall.

Let $\mathcal C$ be a triangulated category.
Given a stability condition $\tau$ on $\calC$ with central charge $Z$, the \emph{mass} of an object $x$ is defined as
\begin{equation}\label{eqn:mass}
  m_{\tau}(x) = \sum_i |Z(a_i)|,
\end{equation}
where the $a_i$ are the semistable Harder--Narasimhan factors of $x$.
The notion of mass admits a $q$-analogue for any positive real number $q > 0$. 
The \emph{$q$-mass} of an object $x$ is defined as
\[ m_{q,\tau}(x) = \sum_i q^{\phi(a_i)}|Z(a_i)|,\]
were $\phi(a_i)$ is the phase of the semistable object $a_i$.

The \(q\)-mass satisfies the triangle inequality~\cite{ike:21}.
That is, in a distinguished triangle
\( x \to y \to z \xrightarrow{+1}\),
we have the inequality
\( m_{q,\tau}(y) \leq m_{q,\tau}(x) + m_{q,\tau}(z)\).
It easily follows that the function that sends a morphism \(f\) to the non-negative real number \(m_{q,\tau}(\operatorname{Cone}(f))\) defines a translation invariant metric on the category in the sense of~\cite{nee:20}.
In our paper, all the metrics on categories are of this kind.
Thus, we sometimes refer to the function \(m_{q,\tau}\) as a metric.

Let $\mathbf{S}$ be a subset of the set of objects of $\mathcal{C}$.
Denote by $\mathbf{R}^{\mathbf{S}}$ the space of functions from $\mathbf{S}$ to $\mathbf{R}$, endowed with the product topology.
Let $\mathbf{P}^{\mathbf{S}}$ denote the quotient $(\mathbf{R}^{\mathbf{S}} \setminus \{0\})/\mathbf{R}^\times$.
Let \(\Stab(\mathcal{C})\) be the space of stability conditions on \(\mathcal{C}\), and recall that it admits a standard action of \(\mathbf{C}\).
For each \(q\), the association $\tau \mapsto m_{q,\tau}$ defines a continuous map
\[ m_q \from \Stab(\mathcal C)/ \mathbf{C} \to \mathbf{P}^{\mathbf{S}}.\]
Our proposed ``Thurston compactification'' of $\Stab(\mathcal C)/\mathbf{C}$ is the closure of the image.

\subsection{The results}
Recall the salient features of Thurston's compactification of Teichm\"uller space.
\begin{enumerate}
\item The length function up to scaling determines the metric up to isotopy.
  That is, the map taking a metric to its length function is injective.
  In fact, it is a homeomorphism onto its image.
\item The closure of the image is a compact real manifold with boundary. 
\item The boundary contains a distinguished collection of functionals, namely the intersection functionals.
  This collection is in fact dense in the boundary.
  Finally, all points of the boundary have a modular interpretation as projective measured foliations. 
\end{enumerate}

We now address each of these points in our categorical context.
In the remarks below, we have chosen \(\bfS\) to be the set of objects \(x\) such that there is a stability condition in which \(x\) is semi-stable.
We remind the reader that the best choice for \(\mathbf{S}\) may vary in applications.

\subsubsection{Injectivity and homeomorphic embedding}\label{sec:homeomorphic-embedding}
The family of functions $m_{q,\tau}$ determines the stability condition $\tau\in \Stab(\mathcal C)/ \mathbf{C}$.
In fact, we prove in~\Cref{thm:product-of-masses-injective} that for any two distinct positive real numbers $q_1,q_2$, the product map $m_{q_1}\times m_{q_2}$ is injective.
We conjecture that a product of mass maps gives a homeomorphic embedding of $\Stab(\mathcal C)/ \mathbf{C}$.

We prove in~\Cref{cor:mass-determines-stables} that for any single \(q\), the function \(m_{q,\tau}\) determines the stable objects of \(\tau\).
On the other hand, we note that a single \(m_{q,\tau}\) may fail to determine the central charge of \(\tau\).
As a result, we cannot expect in general that a single \(m_q\) gives an embedding of the stability manifold (a phenomenon also observed in~\cite{kik.kos.ouc:22}).

Nevertheless, for 2-CY categories of quivers, we prove in~\Cref{prop:injectivity-general} that for any \(q\), the map $m_q$ is injective.
Moreover, in types $A_2$ and $\widehat{A_1}$, we prove in~\Cref{prop:homeo1,prop:homeo2} that \(q = 1\) already gives a homeomorphic embedding.
(The case of \(q \neq 1\) is handled in the follow-up work~\cite{bap.bec.lic:23} using similar techniques).
Our key technical tool is \emph{Harder--Narasimhan automata}, which we briefly discuss in~\Cref{sec:intro-automata} and develop in~\Cref{sec:automata}.

\subsubsection{Properties of the closure}\label{sec:compact-closure}
We prove that the closure of the image of $m_q$ is compact under mild hypotheses.
Precisely, as long as the set $\mathbf S$ contains a classical generator of the category, the closure of the image of $m_q$ is compact (\Cref{prop:pre-compactness}).
Categories that have a classical generator include derived categories of coherent sheaves on a quasi-projective variety, and the category of perfect complexes over a dg algebra.

For applications to dynamics, it is particularly important to understand when the compactification is homeomorphic to a closed Euclidean ball, since in that case every autoequivalence will act with a fixed point on the compactification.
For the $A_2$ 2-Calabi--Yau category, we prove that the closure of $m_1$ is homeomorphic to a closed Euclidean ball (\Cref{prop:r3-triangle}).

We expect our statement about $A_2$ to generalise to other important cases, such as 2-CY quiver categories and derived categories of K3 surfaces.

\subsubsection{Functionals on the boundary}\label{sec:modular-intepretation}

Assume that \(\mathcal{C}\) is \(\k\)-linear and of finite type.
Let \(a \in \mathcal{C}\) be a spherical object.
Associated to \(a\), we have a function on \(\bfS\) defined by
\[
  x \mapsto \dim \gradedHom(a,x) = \sum_{i} \dim \Hom^{i}(a,x).
\]
In~\Cref{def:hombar}, we define a close analogue of this function which also incorporates the \(q\), denoted by \(\homBar_{q}(a)\).
This is a categorical analogue of the intersection functionals of curves.
We prove in~\Cref{cor:limit-mass} that, in direct analogy with geometry, the function \(\homBar_q(a)\) represents a boundary point in our compactification.

Whether or not the \(\homBar\) functionals are dense in the boundary is a subtle question, whose answer depends on \(q\).
In the case of the 2-CY category of type $A_2$ and $q=1$, the answer is ``yes''---they are essentially the rational numbers $\mathbf{Q} \cup \{\infty\}$ on the boundary circle $\mathbf{R} \cup \{\infty\}$.
For the same category and \(q \neq 1\), however, they are not dense in the boundary, but a fractal subset, which may be identified with the left $q$-rational numbers of~\cite{bap.bec.lic:23}.

Giving a moduli interpretation to the boundary points, as a kind of categorical analogue of measured foliations, remains an excellent outstanding problem for future work.

\subsection{Departures from Teichm\"uller theory}\label{sec:comparison}
The categorical picture, although motivated by Thurston's geometric picture, also differs from it interesting ways.

While a stability condition defines a family of metrics on the underlying triangulated category, those metrics are not ``hyperbolic'' in a meaningful sense.
They behave more like flat metrics, which is consonant with~\cite{bri.smi:15}, where the authors establish a precise relationship between quadratic differentials and stability condition in certain 3-CY settings.

In the 2-CY category of type \(A_{n}\), the stability manifold and the Teichm\"uller space of a disk with \((n+1)\) punctures are both universal covers of the complement of the hyperplane arrangement of type $A_{n}$.
For $n > 2$, however, we believe that no homeomorphism between them extends to a homeomorphism between the Thurston compactification and any of our compactifications.
In Thurston's construction, the Teichm\"uller space maps homeomorphically onto the interior of the Thurston compactification.
In our case, on the other hand, we see that for $n > 2$, the interior of our compactification contains additional points which are not themselves stability conditions.
(Some of them can be identified with degenerate stability conditions constructed by Bolognese~\cite{bol:20}.)
It seems likely that a closer Teichm\"uller-theoretic analogue of our construction is a compactification of the moduli space of flat metrics on a surface, constructed, for example, by Duchin, Leininger, and Rafi~\cite{duc.lei.raf:10}.
It would be interesting to relate our construction and theirs.

Our theory has a natural $q$-analogue, so we obtain a family of compactifications of the space of stability conditions, depending on a positive real parameter $q$.
This should not be confused with existing literature on quantizations of Teichm\"uller space (e.g.~\cite{foc.gon:08}), which pertain to quantizing algebras of functions.

\begin{figure}
  \centering
  \begin{tikzpicture}
    \draw[thick, dotted, fill=black!10!white] (0,0) circle (2cm);
    \draw (0,0) node {$\Stab(\mathcal C)/\mathbf{C}$};
    \draw[fill=black] (-30:2cm) circle (0.05cm);
    \draw (-30:2.3cm) node {\tiny $P_1$};
    \draw[fill=black] (90:2cm) circle (0.05cm);
    \draw (90:2.3cm) node {\tiny $P_2 \to P_1$};
    \draw[fill=black] (210:2cm) circle (0.05cm);
    \draw (210:2.3cm) node {\tiny $P_2$};
    \draw[fill=black] (270:2cm) circle (0.05cm);
    \draw (270:2.3cm) node {\tiny $P_1 \to P_2$};
    \draw[fill=black] (30:2cm) circle (0.05cm);
    \draw (20:3.3cm) node {\tiny $P_2 \to P_1 \to P_1[1]$};
    \draw[fill=black] (150:2cm) circle (0.05cm);
    \draw (160:3.3cm) node {\tiny $P_2 \to P_2 \to P_1[1]$};
  \end{tikzpicture}
  \caption{The compactification of $\Stab(\mathcal C)/\mathbf{C}$. The spherical objects appear as a dense subset of the boundary.}\label{fig:main}
\end{figure}

\subsection{Harder--Narasimhan automata}\label{sec:intro-automata}
After establishing the general results, we focus on the 2-CY categories \(\mathcal{C}_{\Gamma}\) associated to finite connected quivers \(\Gamma\).
The Artin--Tits braid group \(B_{\Gamma}\) acts (conjecturally faithfully) on the 2-CY category \(\mathcal{C}_{\Gamma}\).
To understand the compactification of the stability manifold, we heavily use this action.
Along the way, we develop techniques that may be broadly applicable to the study of stability conditions in the presence of a group action.

The key question we address is how the HN filtration of an object transforms under an auto-equivalence.
To make this precise, fix a stability condition and let $\Sigma$ be the set of indecomposable semi-stable objects up to shift.
For an object $x$, let $HN(x) \in \mathbf{Z}^\Sigma$ be its HN multiplicity vector---the function that assigns to \(s \in \Sigma\) the number of times it appears (up to shift) in the HN filtration of \(x\).
Suppose a group $G$ acts on our category by auto-equivalences.
This action may not induce a compatible linear action on $\mathbf{Z}^\Sigma$.
Nevertheless, in many situations---including the $A_2$ and $\aonehat$ cases studied in the latter part of the paper---it induces a ``piecewise linear'' action.
Roughly speaking, this means the following.
We can partition the objects of the category into subsets, called \emph{states}.
For each ordered pair of states, there are certain allowable elements of $G$ that send all objects of the first state into objects of the second state and induce linear transformations on their HN multiplicities.
We formalise this structure as a decorated finite automaton called an \emph{HN automaton} (see~\Cref{sec:automata}).

An HN automaton allows us to express the evolution of HN multiplicities as a linear map.
This can lead to a direct method to compute the categorical entropy of an autoequivalence (see~\cite{hen:22}).

We construct HN automata in type $A_2$ and $\aonehat$.  These automata recognize all braids, meaning that every braid can be represented by a sequence of allowable arrows in the decorated graph.  We use these automata as a tool to prove all of the basic properties of our compactifications.
As an additional application, we recover a theorem of Rouquier--Zimmermann~\cite[Proposition 4.8]{rou.zim:03} in the type $A_2$ case and prove a new analogue of their theorem in the type $\aonehat$ case.
We anticipate automata to play an important role in the study of group actions on triangulated categories.

\subsection{Organisation}\label{sec:org}\
In~\Cref{sec:background}, we recall some background on Bridgeland stability conditions, spherical objects and spherical twists.
In~\Cref{sec:mass}, we prove foundational results about the mass functions associated to stability conditions, including the fact that the mass function determines the stable objects (\Cref{cor:mass-determines-stables}).
In~\Cref{sec:proj-embedding}, we use the mass to define a map from the stability manifold to a projective space, and establish its basic properties.
These include the compactness of the closure of the image (\Cref{prop:pre-compactness}), injectivity for a pair of masses (\Cref{thm:product-of-masses-injective}), and the presence of \(\homBar\) functionals in the closure (\Cref{cor:limit-mass}).
In~\Cref{sec:automata}, we introduce the notion of Harder--Narasimhan automata.

So far, all the discussion applies generally.
From~\Cref{sec:cgamma} onwards, we focus on specific categories, namely the 2-Calabi--Yau categories of quivers.
\Cref{sec:cgamma} begins with the definition and basic properties of the categories, and proceeds to prove the injectivity of a single mass map (\Cref{prop:injectivity-general}).
\Cref{sec:a2} contains the \(A_2\) case in detail.
Using a suitable HN automaton, we prove that the mass map is a homeomorphic embedding (\Cref{prop:homeo1}; the boundary is a circle (\Cref{prop:pmf} and~\Cref{prop:b-closure}); and construct an explicit homeomorphism from the closure to a 2-disk (\Cref{prop:r3-triangle}).
Along the way, we reprove a result of Rouquier--Zimmermann~\cite[Proposition 4.8]{rou.zim:03}.
\Cref{sec:aonehat} contains analogous results for the \(\widehat{A}_1\) case, including a new analogue of Rouquier--Zimmermann's result.

The paper has two appendices.
They contain technical results, which may have broader use.
\Cref{sec:rectifiable-filtrations} addresses the question of when a filtration in a triangulated category can be re-arranged to obtain the Harder--Narasimhan filtration.
\Cref{sec:an} studies homological properties of self-extensions of a spherical object.

\subsection*{Acknowledgements}
We thank Louis Becker and Edmund Heng for carefully reading multiple drafts of this paper, finding errors, and suggesting improvements.
We thank Tom Bridgeland for enlightening conversations and pointing out connections to other work.
We benefited from interactions with Fabian Haiden, Curtis McMullen, Amnon Neeman, Alexander Polishchuk, and Catharina Stroppel.
We are extremely grateful to the anonymous referee for suggesting a number of detailed improvements to the exposition; the paper has been significantly updated as a result.
A.M.L. is supported by the Australian Research Council Award FT180100069 and DP180103150.
A.D. is supported by the Australian Research Council Award DE180101360.

\section{Background}\label{sec:background}
\subsection{Stability conditions}\label{subsec:stability-conditions}
We assume familiarity with the theory of Bridgeland stability conditions (see~\cite{bri:07}), but we recall the basic definitions.
Let $\mathcal{C}$ be a triangulated category.
A stability condition $\tau$ on $\mathcal{C}$ is specified by two compatible structures $(\mathcal P,Z)$:
\begin{enumerate}
\item the \emph{slicing} $\mathcal{P}$ consists of additive subcategories $\mathcal{P}(\phi)$ for each real number $\phi$;
\item the \emph{central charge} $Z$ is a homomorphism of additive groups from the Grothendieck group $K(\mathcal{C})$ to the complex numbers $\mathbf{C}$.
\end{enumerate}
The slicing and the central charge satisfy the following conditions:
\begin{enumerate}
\item $\mathcal{P}(\phi + 1) = \mathcal{P}(\phi)[1]$;
\item if $a \in \mathcal{P}(\phi)$ and $b \in \mathcal{P}(\psi)$ with $\phi > \psi$, then $\Hom(a,b) = 0$;
\item\label{HNfiltration} for every nonzero object $x \in \mathcal{C}$, there exists a sequence of morphisms
  \[0 = x_0 \to x_1 \to \dots \to x_n = x\]
  such that if we define \(a_i\) so that each triangle in 
  \[
    \begin{tikzcd}[column sep=tiny]
      0 = x_0 \ar{rr}  & & x_1\ar{rr} & & x_2 &  \cdots  & x_{n-1}\ar{rr} & & x_n = x,  \\
      & a_1\ar[dashed]{ul}{+1}\ar[<-]{ur} & & a_2\ar[dashed]{ul}{+1}\ar[<-]{ur}& & \cdots & &  a_n\ar[dashed]{ul}{+1}\ar[<-]{ur}
    \end{tikzcd}
  \]
  is exact, then $a_i \in \mathcal{P}(\phi_i)$ and $\phi_1 > \cdots > \phi_n$;
\item\label{mass} for every non-zero $a \in \mathcal{P}(\phi)$, there is some positive real number $r$ such that
  \[Z([a]) = r \cdot e^{i \pi \phi}.\]
\end{enumerate}

The additive category $\mathcal{P}(\phi)$ turns out to be abelian.
Its objects are called \emph{semistable of phase $\phi$}.

  We call a sequence of maps \(x_0 \to \cdots \to x_n = x\) a \emph{filtration} of \(x\).
  For each \(i\), let \(a_i\) be such that \(x_{i-1} \to x_{i} \to a_{i} \xrightarrow{+1}\) is exact.
  We then say that the objects \(a_i\) are \emph{factors} of the filtration.

  For a morphism \(f \colon x_{i-1} \to x_i\), we use \(\Cone(f)\) to denote any object that fits into a distinguished triangle \(x_{i-1} \xrightarrow{f} x_{i} \to \Cone(f) \xrightarrow{+1}\).
  Such an object is unique up to a possibly non-unique isomorphism.
  We only use this notation in contexts where the ambiguity of the isomorphism is irrelevant.

The specific filtration described in~\eqref{HNfiltration} is called the \emph{Harder--Narasimhan (HN) filtration} of $x$.
Its factors are called \emph{HN factors} and their phases are called \emph{HN phases}.
The HN filtration is unique up to isomorphisms (see, e.g.~\cite[\S 3]{bri:07}).

Fix a stability condition \(\tau = (\mathcal{P}, Z)\) on \(\mathcal{C}\).
We recall  from~\cite[\S 3]{bri:07} the induced truncation structure on \(\mathcal{C}\).
This structure depends on the slicing, but brevity, the notation hides this dependence.
Let \(I \subset \mathbf{R}\) be an interval.
Denote by \(\mathcal{C}_I \subset \mathcal{C}\) the full subcategory consisting of objects whose HN phases lie in \(I\).
The notation \(\mathcal C_{\geq \alpha}\) means \(\mathcal C_{[\alpha, \infty)}\), and likewise for \(\mathcal C_{< \alpha}\) and \(\mathcal C_{\geq \alpha}\) and \(\mathcal C_{> \alpha}\).
The pair of categories \(\mathcal C_{> \alpha}\) and \(\mathcal C_{\leq \alpha+1}\) define a \(t\)-structure on \(\mathcal C\) with the heart \(\mathcal C_{(\alpha,\alpha+1]}\).
As a result, we have a truncation functor \[\tr_{> \alpha} \colon \mathcal C \to \mathcal C_{> \alpha},\] right adjoint to the inclusion \(\mathcal C_{\leq \alpha} \to \mathcal C\)~\cite[1.3]{bel.ber.del:82}.
We abbreviate \(\tr_{> \alpha}x\) to \(x_{> \alpha}\).
We have similar functors \(\tr_{< \alpha}\) and \(\tr_{\geq \alpha}\), and \(\tr_{\leq \alpha}\), and the corresponding notation \(x_{< \alpha}\) and \(x_{\geq \beta}\) and \(x_{\leq \alpha}\).
For an interval \(I = [\alpha,\beta]\), we define \(\tr_I\) as
\[ \tr_I = \tr_{\geq \alpha} \circ \tr_{\leq \beta} = \tr_{\leq \beta} \circ \tr_{\geq \alpha},\]
and set \(x_I = \tr_Ix\).
We have analogous functors and notation for other kinds of intervals, namely \((\alpha,\beta)\), \((\alpha,\beta]\), and \([\alpha,\beta)\).

Let $\Stab(\mathcal{C})$ be the set of all stability conditions satisfying an additional condition called local finiteness~\cite[Definition 5.7]{bri:07}.
The main result of~\cite{bri:07} states that each connected component of \(\Sigma \subset \Stab(\mathcal{C})\) is a complex manifold, and the forgetful map
\[ \Stab(\mathcal{C}) \ni (\mathcal{P}, Z) \mapsto Z \in \Hom(K(\mathcal{C}), \mathbf{C})\]
is a local homeomorphism to a linear subspace \(V(\Sigma)\).

The complex numbers \(\mathbf{C}\) act on $\Stab(\mathcal C)$ as follows.
Say $\tau = (\mathcal P,Z)$ and $\omega = a + i\pi b$, then
\[(\omega \cdot \mathcal P)(\phi) = \mathcal P(\phi - b) \text{ and } \omega \cdot Z = e^\omega Z.\]
The action of \(\mathbf{C}\) on \(\Stab(\mathcal{C})\) lifts the action of \(\mathbf{C}^{*}\) on \(\Hom(K(\mathcal{C}), \mathbf{C})\) by scaling.
It follows that the induced forgetful map
\[\Stab(\mathcal{C})/\mathbf{C} \to \Hom(K(\mathcal{C}), \mathbf{C})/\mathbf{C}^{*}\]
maps each connected component \(\Sigma/\mathbf{C}\) locally homeomorphically to \(V(\Sigma)/ \mathbf{C}^{*}\).

Observe that the purely imaginary numbers \(i\mathbf{R} \subset \mathbf{C}\) simply shift the slicing and rotate the central charge.
We call such transformations \emph{translations}.

\subsection{Spherical objects and twists}\label{subsec:sphericals}
Let \(\k\) be a field and \(\mathcal{C}\) a \(\k\)-linear triangulated category.
Assume that \(\mathcal{C}\) is of finite type, that is, for all objects \(x,y\) of \(\mathcal{C}\), the vector space
\[\gradedHom(x,y) = \bigoplus_{n} \Hom(x,y[n])\]
is a finite-dimensional \(\k\)-vector space.

We recall the notion of a spherical object introduced in~\cite[Definition 2.9]{sei.tho:01} (see also~\cite[Chapter 8]{huy:06}).
An object \(x\) of \(\mathcal{C}\) is said to have a Serre dual if the functor \(\gradedHom(x,-)\) is representable.  If \(x\) has a Serre dual, the representing object \(Sx\) is unique up to a unique isomorphism, and there are functorial isomorphisms
\[\gradedHom(x,y) \cong \gradedHom(y,Sx)^{\vee}.\]
A \emph{\(d\)-Calabi--Yau} object is an object \(x\) such that its shift \(x[d]\) is its Serre dual.
A \emph{\(d\)-spherical object} is a \(d\)-Calabi--Yau object \(x\) whose endomorphism algebra is isomorphic to the cohomology algebra of the \(d\)-sphere:
\[\gradedHom(x,x) \cong H^{\bullet}(S^d, \k).\]
Note that this algebra is simply \(k[t]/t^{2}\), where  \(t\) has degree \(d\).
We denote by \(\loopmap_x\) an unspecified homogeneous generator of degree \(d\) of \(\gradedHom(x,x)\).

If \(x\) is a spherical object and \(y\) is any object, then there is a perfect pairing
\begin{equation}\label{eqn:dcyiso}
  \Hom^i(x,y) \times \Hom^{d-i}(y,x) \to \Hom^d(x,x) \cong \k,
\end{equation}
induced by composition (\cite[Proposition 2.2]{kel:08}).

If every object in the category \(\mathcal{C}\) is \(d\)-Calabi--Yau (for the same \(d\)), then the category \(\mathcal{C}\) is said to be 
a \(d\)-Calabi--Yau category.  Thus, if \(\mathcal{C}\) is a \(d\)-Calabi--Yau category, then for any \(x, y \in \mathcal{C}\), there exist functorial isomorphisms
\[\Hom^i(x,y) \cong \Hom^{d-i}(y,x)^{\vee}.\]
Moreover, it follows that if \(x\) is an indecomposable object in a \(d\)-Calabi--Yau category such that the \(\k\)-vector space \(\gradedHom(x,x)\) is two-dimensional, then \(x\) is automatically a \(d\)-spherical object of \(\mathcal{C}\).

There is also a notion of a \emph{strongly} \(d\)-Calabi--Yau category, in which the pairing corresponding to the functorial isomorphism~\eqref{eqn:dcyiso} is required to be anti-symmetric (see~\cite{kel:08}).
The categories we study in the later part of the paper are strongly \(d\)-Calabi--Yau.

Assume that \(\mathcal{C}\) admits a dg-enhancement.
This is true, for instance, if \(\mathcal{C}\) is algebraic in the sense of~\cite{kel:06} or enhanced in the sense of~\cite{bon.kap:91}, and will hold in all the examples we consider.
Fix a dg-enhancement on \(\mathcal{C}\), which guarantees that \(\mathcal{C}\) has functorial cones.
Then any spherical object $x$ gives rise to an autoequivalence $\sigma_x \from \mathcal C \to \mathcal C$ called the \emph{spherical twist} in $x$ (see~\cite[\S 2.2]{sei.tho:01}).
For \(y \in \mathcal{C}\), the object \(\sigma_x(y)\) is defined to be the cone of the evaluation map
\[ x \otimes \gradedHom(x,y) \xrightarrow{\operatorname{ev}} y.\]
The twist \(\sigma_x\) and its inverse \(\sigma_x^{-1}\) give rise to distinguished triangles
\[
\gradedHom(x,y) \otimes x \xrightarrow{\operatorname{ev}} y \to \sigma_x(y) \xrightarrow{+1} \text{ and }
\sigma_{x}^{-1}y \to y \xrightarrow{\operatorname{coev}} \gradedHom(y,x)^{*} \otimes x \xrightarrow{+1}.
\]
The evaluation map \(\operatorname{ev}\) is self-explanatory.
The co-evaluation map \(\operatorname{coev}\) is the adjoint to the evaluation map
\[ \gradedHom(y,x) \otimes y \to x.\]

\section{The mass associated to a stability condition}\label{sec:mass}
Let $\mathcal{C}$ be a triangulated category and let $\tau$ be a stability condition on $\mathcal{C}$.
Fix a positive real number $q$; for us the most important case is $q=1$.

Let \(x\) be an object of $\mathcal{C}$.
Let
  \[
    \begin{tikzcd}[column sep=tiny]
      0 = x_0 \ar{rr}  & & x_1\ar{rr} & & x_2 &  \cdots  & x_{n-1}\ar{rr} & & x_n = x,  \\
      & a_1\ar[dashed]{ul}{+1}\ar[<-]{ur} & & a_2\ar[dashed]{ul}{+1}\ar[<-]{ur}& & \cdots & &  a_n\ar[dashed]{ul}{+1}\ar[<-]{ur}
    \end{tikzcd}
  \]
  be the HN filtration of $x$, where $a_i$ is semi-stable of phase $\phi_{i}$.
  We define the \emph{\((q,\tau)\)-mass} of \(x\) by the formula
  \[ m_{q,\tau}(x) = \sum_i q^{\phi_i}|Z(a_i)|.\] 
  We take the mass of the zero object to be zero.

  A key property of the mass is that it satisfies a triangle inequality.
      \begin{proposition}[Triangle inequality]\label{prop:triangle}
     Let $x \to y \to z \xrightarrow{+1}$ be an exact triangle.
     Then
     \[ m_{q,\tau}(y) \leq m_{q,\tau}(x) + m_{q,\tau}(z).\]
   \end{proposition}
   \begin{proof}
 See~\cite[Theorem 3.8]{ike:21}.
   \end{proof}
   
   We devote~\Cref{sec:rectifiable-filtrations} to understanding more about when equality holds in~\Cref{prop:triangle}.
   The following is a particularly simple instance.
   \begin{proposition}\label{prop:highlow}
     Let $x \to y \to z \xrightarrow{+1}$ be an exact triangle.
     Assume that the lowest HN phase of \(x\) is greater than or equal to the highest HN phase of \(z\).
     Then
     \[ m_{q,\tau}(y) = m_{q,\tau}(x) + m_{q,\tau}(z).\]
   \end{proposition}
   \begin{proof}
     Using the exact triangle, we can splice together filtrations of \(x\) and \(z\) to get a filtration of \(y\), as follows.
     Suppose we are given \(0 = x_0 \to \cdots \to x_n = x\) with factors \(a_1, \dots, a_n\) and \(0 = z_0 \cdots \to z_m = z\) with factors \(b_1, \dots, b_m\).
     Set \(y_{m+n} = y\), and recall that \(z_m = z\).
     For each \(i = m, \ldots, 1\), define \(y_{i+n-1}\) by descending induction so that it fits into a map of distinguished triangles as follows:
     \[
       \begin{tikzcd}
         y_{n+i-1}\ar{d} \ar{r}& z_{i-1}\ar{d} \ar{r}& x[1] \ar{r}{+1}\ar{d}{=} & {}\\
         y_{n+i} \ar{r}& z_i \ar{r}& x[1]\ar{r}{+1} & {}.
       \end{tikzcd}
     \]
     By the octahedral axiom, we see that for each \(i = m, \ldots, 1\), there is a distinguished triangle
     \[y_{n+i-1} \to y_{n+i} \to b_i\xrightarrow{+1}.\]
     Note that \(y_n = x\).
     For each \(i = n, \ldots, 1\), set \(y_{i-1} = x_{i-1}\).
     Then for \(i = n, \ldots, 1\), there is a distinguished triangle
     \[y_{i-1} \to y_i \to a_i\xrightarrow{+1}.\]
     Therefore we have constructed a filtration
     \begin{equation}\label{eq:yfilt}
       0 = y_0 \to y_1 \to \cdots \to y_{m+n} = y
     \end{equation}
     with factors \(a_1,\ldots, a_n,b_1, \ldots, b_m\).
     
     Suppose the filtrations \((x_{i})\) and \((y_{j})\) are the HN filtrations of \(x\) and \(y\).
     By hypothesis, we have \(\phi(a_{n}) \geq \phi(b_{1})\).
     If \(\phi(a_{n}) > \phi(b_{1})\), then~\eqref{eq:yfilt}  is the HN filtration of \(y\), and hence
     \begin{equation}\label{eq:massadd}
       m_{q,\tau}(y) = \sum m_{q,\tau}(a_i) + \sum m_{q,\tau}(b_j) = m_{q,\tau}(x) + m_{q,\tau}(z).
     \end{equation}
     If \(\phi(a_{n}) = \phi(b_{1})\), then shorten the filtration~\eqref{eq:yfilt} by dropping the \(y_{n}\) term so that in the middle it looks like
     \[ \cdots \to y_{n-1} \to y_{n+1} \to \cdots.\]
     Let \(c\) complete the triangle \(y_{n-1} \to y_{n+1} \to c \xrightarrow{+1}\).
     By the octahedral axiom, we have an exact triangle \(a_{n} \to c \to b_{1} \xrightarrow{+1}\).
     Since \(a_{n}\) and \(b_{1}\) are semistable of the same phase, so is \(c\).
     As a result, the shortened~\eqref{eq:yfilt} is the HN filtration of \(y\).
     Furthermore, we have
     \[ m_{q,\tau}(c) = m_{q,\tau}(a_{n}) + m_{q,\tau}(b_{1}),\]
     and hence~\eqref{eq:massadd} holds.
   \end{proof}

   The following key proposition shows that there are no unexpected mass-preserving decompositions of semi-stable objects.
   \begin{proposition}\label{prop:ss-geodesic}
     Let $y$ be a $\tau$-semi-stable object of phase $\phi$.
     Let
     \[ x \to y \to z \xrightarrow{+1}\]
     be an exact triangle such that 
     \[ m_{q,\tau}(y) = m_{q,\tau}(x) + m_{q,\tau}(z).\]
     Then \(x\) and \(z\) are also semi-stable of the same phase $\phi$.
   \end{proposition}
   The proof follows the same ideas as the proof of the triangle inequality in~\cite{ike:21}.
   It uses the following \(q\)-analogue of the triangle inequality for vectors, which is~\cite[Lemma 3.6]{ike:21}.
   \begin{lemma}\label{lem:q-triangle}
     Let \(a, b, c\) be complex numbers of the form
     \[a = r_{a}e^{i\pi\phi_{a}},\quad b = r_{b} e^{i\pi\phi_{b}},\quad c = r_{c}e^{i\pi\phi_{c}},\]
     where \(r_{a},r_{b},r_{c}\) are positive real numbers and \(\phi_{a}, \phi_{b}, \phi_{c}\) are real numbers lying in an interval of length 1.
     Suppose \(b = a+c\).
     Then
     \[ r_{a}q^{\phi_{a}} + r_{c}q^{\phi_{c}} \geq r_{b}q^{\phi_{b}}\]
     with equality if and only if \(\phi_{a} = \phi_{b} = \phi_{c}\).
   \end{lemma}
   \begin{lemma}\label{cor:q-heart}
     Let \(I\) be a half-open interval of length \(1\), and let \(x\) be an object in the \(I\)-heart associated to \(\tau\).
     Write \(Z(x) = r e^{i\pi\phi}\) for \(\phi \in I\).
     Then
     \[m_{q,\tau}(x) \geq r q^{\phi},\]
     with equality if and only if \(x\) is semi-stable of phase \(\phi\).
   \end{lemma}
   \begin{proof}
     Apply~\Cref{lem:q-triangle} inductively to the central charges of the HN factors of \(x\).
   \end{proof}
   
   \begin{proof}[Proof of~\Cref{prop:ss-geodesic}]
     Abbreviate \(m_{q,\tau}\) by \(m\) and assume that both \(x\) and \(z\) are non-zero.
     The statement is unaffected if we translate the slicing of \(\tau\).
     Translate so that \(y\) has phase \(0\).

     We first show that \(z\) lies in the \([0,1)\)-heart associated to \(\tau\).
     Taking cohomology with respect to the corresponding \(t\)-structure gives
     \[ 0 \to H^{-1}(z) \to H^{0}(x) \to H^{0}(y) = y \to H^{0}(z) \to H^{1}(x) \to 0.\]
     Let \(s = \coker (H^{-1}(z) \to H^{0}(x))\) and \(t = \ker(H^{0}(z) \to H^{1}(x))\).
     Then we have the short exact sequence
     \[ 0 \to s \to y \to t \to 0.\]
     By the triangle inequality (\Cref{prop:triangle}), we have
     \begin{equation}\label{eq:iy}
       m(y) \leq m(s) + m(t).
     \end{equation}
     The triangle inequality applied to the defining triangles of \(s\) and \(t\) gives
     \begin{equation}\label{eq:is}
       m(s) \leq m(H^{0}(x)) + q m(H^{-1}(z))
     \end{equation}
     and
     \begin{equation}\label{eq:it}
       m(t) \leq m(H^{0}(z)) + q^{-1}m(H^{1}(x)).
     \end{equation}
     By adding~\eqref{eq:is} and~\eqref{eq:it}, we get
     \[ m(s) + m(t) \leq  m(H^{0}(x)) +q^{-1}m(H^{1}(x)) +  qm(H^{-1}(z))+m(H^{0}(z)). \]
     By~\eqref{eq:iy}, the left hand side dominates \(m(y)\).
     For the right hand side, we use
     \begin{equation}\label{eq:iz}
       qm(H^{-1}(z))+m(H^{0}(z)) \leq \sum_{i} q^{-i}m(H^{i}(z)) = m(z),
     \end{equation}
     and similarly for \(x\).
     Thus, the right hand side is dominated by \(m(x) + m(z)\), which by the hypothesis is also \(m(y)\).
     Hence, equalities must hold in~\eqref{eq:iy},~\eqref{eq:is},~\eqref{eq:it}, and~\eqref{eq:iz}.
     From~\eqref{eq:iz}, we conclude that \(H^{i}(z) = 0\) for all \(i \not \in \{-1,0\}\).
     In the exact sequence
     \[ 0 \to s \to y \to t \to 0,\]
     the object \(y\) is semi-stable of the lowest possible phase in the heart, namely 0.
     It follows that \(s\) is also semi-stable of phase 0.

     Now consider the exact sequence
     \[ 0 \to H^{-1}(z) \to H^{0}(x) \to s \to 0.\]
     Since \(s\) is semi-stable of the lowest phase in the heart,~\Cref{prop:highlow} implies that
     \[m(H^{0}(x)) = m(s) + m(H^{-1}(z)). \]
     On the other hand, we know that~\eqref{eq:is} is an equality, so
     \[ m(s) = m(H^{0}(x)) + q m(H^{-1}(z)).\]
     The last two equations imply that \(H^{-1}(z) = 0\).
     That is, \(z\) lies in the \([0,1)\)-heart.

     A parallel argument shows that \(x\) lies in the \((-1,0]\)-heart.
     
     Write
     \[ Z(x) = r_{x}e^{i\pi\phi_{x}}, \quad Z(y) = r_{y}, \quad Z(z) = r_{z}e^{i\pi\phi_{z}},\]
   where \(r_{x},r_{y}, r_{z}\) are positive real numbers, \(\phi_{x} \in (-1,0]\) and \(\phi_{z} \in [0,1)\).
   \Cref{cor:q-heart} says
   \begin{equation}\label{eq:i1}
     m(x) \geq r_{x}q^{\phi_{x}} \text{ and } m(z) \geq r_{z}q^{\phi_{z}}
   \end{equation}
   with equalities if and only if \(x\) and \(z\) are semi-stable.
   Since \(Z(y) = Z(x) + Z(z)\) lies on the non-negative real axis, we must have
   \[ \phi_{z} \leq \phi_{x} + 1.\]
   Therefore,~\Cref{lem:q-triangle} gives
   \begin{equation}\label{eq:i2}
     r_{x}q^{\phi_{x}} + r_{z}q^{\phi_{z}} \geq r_{y}
   \end{equation}
   with equality if and only if \(\phi_{x} = \phi_{z} = 0\).
   Inequalities~\eqref{eq:i1} and~\eqref{eq:i2}, together with the hypothesis
   \(r_{y} = m(y) = m(x)+m(z)\)
   imply that equalities must hold in~\eqref{eq:i1} and~\eqref{eq:i2}
   Thus, \(x\) and \(z\) are semi-stable of phase 0.     
 \end{proof}

 \begin{corollary}\label{cor:ss-geodesic}
   Let \(y\) be a semi-stable object of phase \(\phi\).
   Let
   \[ 0= y_0 \to y_{1} \to \cdots \to y_{n} = y\]
   be a filtration of \(y\) with factors \(a_{i} = \Cone(y_{i-1} \to y_{i})\).
   Suppose that
   \[ m_{q,\tau}(y) = \sum_{i} m_{q,\tau}(a_{i}).\]
   Then for all \(i\), the object \(a_{i}\) is semi-stable of phase \(\phi\).
 \end{corollary}
 \begin{proof}
   Follows from~\Cref{prop:ss-geodesic} and induction on \(n\).
 \end{proof}
 
 \begin{corollary}\label{cor:stable-object-indivisible}
   Let $y$ be a $\tau$-stable object of phase $\phi$.
   Let
   \[ 0 = y_0 \to y_{1} \to \cdots \to y_{n} = y\]
   be a filtration of \(y\) with factors \(a_{i} = \Cone(y_{i-1} \to y_{i})\).
   Suppose that
   \[ m_{q,\tau}(y) = \sum_{i} m_{q,\tau}(a_{i}).\]
   Then for all \(i\) except possibly one, we have \(a_i = 0\).
 \end{corollary}
 \begin{proof}
   By~\Cref{cor:ss-geodesic}, we conclude that all \(a_{i}\) are $\tau$ semi-stable of the same phase $\phi$.
   Since \(y\) is stable, it is a simple object of abelian category of semi-stable objects of phase \(\phi\).
   Thus, its Jordan--Holder filtration is of length 1.
 \end{proof}
 \begin{remark}\label{rem:stable-HN-factors}
   The Harder--Narasimhan factors of an object \(x\) are semi-stable, but not necessarily stable.
   But recall that the abelian category of semi-stable objects of a given phase is of finite length~\cite[Definition 5.7]{bri:07}.
   As a result every semi-stable object has a finite Jorden--H\"older filtration whose factors are stable of the same phase.
   By splicining together these Jorden--H\"older filtrations, we obtain a filtration of \(x\) with stable factors.
   Note that the mass of \(x\) is the sum of the masses of the (stable) factors in this filtration.
 \end{remark}
 \begin{theorem}\label{cor:mass-determines-stables}
   Let $\tau$ and $\tau'$ be two stability conditions such that for each object \(x\) of \(\mathcal{C}\), we have $m_{q,\tau}(x) = m_{q,\tau'}(x)$.
   Then an object of \(\mathcal{C}\) is $\tau$-stable if and only if it is $\tau'$-stable.
 \end{theorem}
 \begin{proof}
   Let $x$ be a $\tau$-stable object.
   As described in~\Cref{rem:stable-HN-factors}, consider a filtration of \(x\) with $\tau'$-stable factors $y_1,\ldots,y_k$ such that
  \[{m}_{q,\tau'}(x) = \sum_{i=1}^k{m}_{q,\tau'}(y_i).\]
  Since ${m}_{q,\tau}(x) = {m}_{q,\tau'}(x)$ and \({m}_{q,\tau}(y_i) = {m}_{q,\tau'}(y_i)\) for every \(i\), we have:
  \[{m}_{q,\tau}(x) = \sum_{i=1}^k{m}_{q,\tau}(y_i).\]
  Since \(x\) is \(\tau\)-stable, we conclude by~\Cref{cor:stable-object-indivisible} that all but one of the objects \(y_i\) are zero.
  It follows that \(x\) is \(\tau'\)-stable.
\end{proof}

It is useful to record a generalisation~\Cref{cor:mass-determines-stables} that allows us to restrict the objects \(x\).
Let \(\bfS\) be a set of objects of \(\mathcal{C}\) satisfying the following conditions.
\begin{enumerate}
\item\label{iso} If \(x \in \bfS\) and \(x \cong y\), then \(y \in \bfS\).
\item\label{hn} If \(x \in \bfS\) and \(\tau\) is a stability condition, then the \(\tau\)-stable HN factors of \(x\) also lie in \(\bfS\).
\end{enumerate}
Recall that the \(\tau\)-stable HN factors of \(x\) are the subquotients in Jorden--H\"older filtrations of the \(\tau\)-semistable factors of \(x\).
Examples of such \(\bfS\) include:
\begin{enumerate}
\item[(a)] the union over \(\tau\) of \(\tau\)-stable objects,
\item[(b)] if \(\mathcal{C}\) is 2-Calabi--Yau, the set of spherical objects.
\end{enumerate}
In the second example, the condition~\eqref{hn} holds thanks to the Mukai lemma~\cite[Lemma 2.2]{huy:12}.
\begin{corollary}\label{cor:mass-determines-spherical-stables}
  Let \(\bf S\) be a set of objects of \(\mathcal{C}\) satisfying the conditions~\eqref{iso} and~\eqref{hn}.
  Let \(\tau\) and \(\tau'\) be two stability conditions such that for all \(x \in \bfS\), we have $m_{q,\tau}(x) = m_{q,\tau'}(x)$.
  Then an object \(x \in \bfS\) is $\tau$-stable if and only if it is $\tau'$-stable.
\end{corollary}
\begin{proof}
  The proof of~\Cref{cor:mass-determines-stables} goes through verbatim.
\end{proof}

\begin{proposition}\label{prop:generalinj}
   Let $\tau$ and $\tau'$ be stability conditions.
   Suppose that $q_{1}$ and $q_{2}$ are two distinct positive real numbers such that for each object \(x\) we have
   \begin{align*}
     m_{q_1,\tau}(x) &= m_{q_1,\tau'}(x)\text{ and}\\
     m_{q_2,\tau}(x) &= m_{q_2,\tau'}(x).
   \end{align*}
   Then $\tau = \tau'$.
 \end{proposition}
 \begin{proof}
   Let \(x\) be any stable object of \(\tau\).
   By~\Cref{cor:mass-determines-stables}, \(x\) is also a stable object of \(\tau'\).
   Let \(\phi\) be the \(\tau\)-phase of \(x\) and \(\phi'\) be the \(\tau'\)-phase of \(x\).
   We have
   \begin{align*}
     q_1^{\phi} |Z_{\tau}(x)| &= q_1^{\phi'}|Z_{\tau'}(x)| \text{ and }\\
     q_2^{\phi} |Z_{\tau}(x)| &= q_2^{\phi'}|Z_{\tau'}(x)|.
   \end{align*}
   Noting that both \(|Z_{\tau}(x)|\) and \(|Z_{\tau'}(x)|\) are nonzero, we have
   \((q_1/q_2)^{\phi} = (q_1/q_2)^{\phi'}\), which gives \(\phi = \phi'\) and hence \(|Z_{\tau}(x)| = |Z_{\tau'}(x)|\).
   We conclude that the phases and central charges of \(x\) in both \(\tau\) and \(\tau'\) are equal.
   Since this holds for all stable objects \(x\), we must have \(\tau = \tau'\).
 \end{proof}

\section{The projective embedding and its boundary}\label{sec:proj-embedding}
Let \(\mathcal{C}\) be a triangulated category and \(\Stab(\mathcal{C})\) the space of stability conditions on \(\mathcal{C}\).
Let \(\bfS\) be a non-empty set of non-zero objects of \(\mathcal{C}\).
Let \(\mathbf{R}^{\bfS}\) be the space of functions from \(\bfS \to \mathbf{R}\) with the product topology, and set
\[ \mathbf{P}^{\bfS} = \left(\mathbf{R}^{\bfS} \setminus \{0\}\right) / \text{scaling}.\]
Fix a positive real number \(q\).

Every stability condition \(\tau\) gives a non-zero function
\[ m_{q, \tau} \colon \bfS \to \mathbf{R}\]
that sends \(x \in \bfS\) to its \(\tau\)-mass \(m_{q,\tau}(x)\) defined in~\Cref{sec:mass}.
Changing \(\tau\) to a translate by the action of \(\C\) only rescales the function above.
As a result, we have a well-defined map
\begin{equation}\label{def:mass-map}
  m_q \colon \Stab(\mathcal{C})/\C \to \mathbf{P}^{\bfS},
\end{equation}
which we call the \emph{\(q\)-mass map}.
If \(q = 1\), we drop it from the notation.
The choice of \(\bfS\) is flexible.
In our applications, it will consist of all spherical objects.

The aim of this section is to prove some general properties of the mass map.
\subsection{Pre-compactness}
Recall that an object \(p \in \mathcal{C}\) is called a (classical) generator if every object \(y \in \mathcal{C}\) is a direct summand of some object \(x \in \mathcal{C}\) that admits a filtration
\[ 0 \to x_{0} \to \cdots \to x_{m} = x,\]
whose factors are shifts of \(p\).
\begin{proposition}[Pre-compactness]\label{prop:pre-compactness}
  Suppose the set \(\bfS\) contains objects \(p_{1}, \dots, p_{n}\) such that \(p = \oplus p_{i}\) is a generator of \(\mathcal{C}\).
  Then the closure of the image of \(m_{q}\) in \(\mathbf{P}^{\bfS}\) is compact.
\end{proposition}
\begin{proof}
  We first show that for every \(y \in \mathcal{C}\), there is some positive real number \(N(y)\) such that for every stability condition \(\tau\), we have
  \begin{equation}
    \label{eq:global-mass-bound}
    m_{q,\tau}(y) \leq N(y) m_{q,\tau}(p).
  \end{equation}

  To see this, take \(y \in \mathcal{C}\).
  Consider an object \(x \in \mathcal{C}\) such that \(y\) is a direct summand of \(x\), and 
 \(x\) has a filtration
  \[ 0 = x_0 \to x_1 \to \cdots \to x_{m} = x\]
  whose factors are  \(p[a_0], \dots, p[a_m]\).
  By the triangle inequality (\Cref{prop:triangle}), we have for every stability condition \(\tau\) that
  \[m_{q,\tau}(x) \leq (q^{a_0} + \cdots + q^{a_m}) m_{q,\tau}(p).\]
  Since \(y\) is a summand of \(x\), we also have \(m_{q,\tau}(y) \leq m_{q,\tau}(x)\).
  Taking, \(N(y)  = (q^{a_0} + \cdots + q^{a_m})\), we have the desired inequality~\eqref{eq:global-mass-bound}.

  Note in particular by~\eqref{eq:global-mass-bound} that \(m_{q,\tau}(p) > 0\).
  Let $\widetilde m_q \from \Stab(\mathcal C)/\mathbf{C} \to \R^{\bfS}$ be the lift of $m_q$ characterised by
  \[ \widetilde{m}_{q,\tau}(p) = \sum_{i}\widetilde{m}_{q,\tau}(p_i) = 1\]
  for every \(\tau \in \Stab(\mathcal{C})/\mathbf{C}\).
  Let $\widetilde B$ be the closure in $\R^{\bfS}$ of the image of $\widetilde m_{q}$.
  By~\eqref{eq:global-mass-bound}, we see that the image of \(\widetilde m_{q}\) lies in the compact set
  \[ \prod_{y \in \bfS}[0, N(y)].\]
  Thus, \(\widetilde B\) is a closed subset of a compact set, and hence compact.
  Note that by continuity, we have for all \(\mu \in \widetilde B\) that \(\sum \mu(p_{i}) = 1\), and therefore \(\widetilde B \subset \R^{\bfS} - 0\).

  Let \(\pi \colon (\R^{\bfS} - 0) \to {\mathbf P}^{\bfS}\) be the projection.
  Since \(\widetilde B\) is compact, so is \(\pi(\widetilde B)\).
  By construction, \(m_{q}(\Stab(\mathcal{C}) / \mathbf{C})\) is contained in the compact set \(\pi(\widetilde B) \subset {\mathbf P}^{\bfS}\).
  Therefore, its closure is a closed subset of a compact set, and hence compact.
\end{proof}

\begin{corollary}
  Under the same hypotheses as~\Cref{prop:pre-compactness}, fix positive real numbers \(q_{1}, \dots, q_{n}\).
  Consider the map
  \[ m_{q_1} \times \cdots \times m_{q_{n}} \colon \Stab(\mathcal{C})/ \mathbf{C} \to \mathbf{P}^{\bfS} \times \cdots \times \mathbf{P}^{\bfS}.\]
  Then the closure of its image is compact.
\end{corollary}
\begin{proof}
  Let \(m = m_{q_1} \times \cdots \times m_{q_{n}}\).
  Then the image of \(m\) is contained in the product of the images of \(m_{q_{i}}\).
  Hence, the closure of the image of \(m\) is contained in the product of their closures, which is compact by~\Cref{prop:pre-compactness}.
The result follows.
\end{proof}

\subsection{Injectivity}
The following proposition says that if \(\bfS\) is sufficiently large, then two mass maps together determine the stability condition up to translation.
\begin{theorem}\label{thm:product-of-masses-injective}
  Assume that \(\bfS\) contains all the stable objects in every stability condition on \(\mathcal{C}\).
  Then for \(n \geq 2\), the map
  \[ m = m_{q_1} \times \cdots \times m_{q_{n}} \colon \Stab(\mathcal{C})/ \mathbf{C} \to \mathbf{P}^{\bfS} \times \cdots \times \mathbf{P}^{\bfS}\]
  is injective.
\end{theorem}
\begin{proof}
It is sufficient to show that \(m\) is injective for \(n = 2\); it then follows for any \(n \geq 2\).

Let $\tau, \tau'\in \Stab(\mathcal{C})$, and suppose $m(\tau) = m(\tau')$.
By using the action of $\C$ on $\Stab(\mathcal{C})$, we may replace $\tau'$ by a stability condition $\tau''$ so that for all \(i = 1,2\), we have
\[ m_{q_i}(\tau) = m_{q_i}(\tau'') \in \R^{\bfS}.\]

Let $x \in \bfS$ be stable for $\tau$.
By the argument in the proof of~\Cref{cor:mass-determines-stables}, $x$ is also stable for $\tau''$.
Now, by the argument in the proof of~\Cref{prop:generalinj}, it follows that \(\tau = \tau''\).
Thus $\tau$ and $\tau'$ agree up to the action of \(\C\).
\end{proof}
  \begin{remark}
  Suppose \(\mathcal{C}\) is 2-Calabi--Yau and the spherical objects in \(\mathcal{C}\) span its Grothendieck group.
  Then~\Cref{thm:product-of-masses-injective} holds even if we replace \(\bfS\) by its subset consisting only of \emph{spherical} objects.
  See~\Cref{cor:mass-determines-spherical-stables}.
 \end{remark}
 \begin{remark}\label{rem:injectivity-fails}
  In general, the mass map for a single \(q\), namely
  \[m_q \colon \Stab(\mathcal{C})/\mathbf{C} \to \mathbf{P}^{\bfS},\]
  need not be injective.
  See~\cite[\S~6.1]{kik.kos.ouc:22} for an example, or consider the following example.
  For simplicity, take  \(q = 1\).
  Consider the quiver \(1 \to 2\) of type \(A_2\), and let \(\mathcal{C}\) be the bounded derived category of finite-dimensional representations of this quiver.
  Let \(S_1\) and \(S_2\) be the irreducible representations \(k \to 0\) and \(0 \to k\) respectively.
  Let \(E\) be the representation \(k \xrightarrow{1} k\), which fits into the exact sequence
  \[0 \to S_2 \to E \to S_1 \to 0.\]
  Recall that any object of \(\mathcal{C}\) is a direct sum of triangulated shifts of copies of \(S_1\), \(S_2\), and \(E\).
    Consider a stability condition in which \(S_1\) and \(S_2\) are stable objects of the standard heart, such that
    \(\phi(S_1) < \phi(S_2)\).
    Due to this inequality, \(E\) is not semistable, and so
    \[m(E) = m(S_1) + m(S_2).\]
    Consider two stability conditions \(\tau\) and \(\tau'\) in which \(S_1\) and \(S_2\) are stable objects of the \([0,1)\) heart, satisfying the following property:
    \[Z_{\tau}(S_1) = Z_{\tau'}(S_1) = 1, \text{ and}\]
    both \(Z_{\tau}(S_2)\) and \(Z_{\tau'}(S_2)\) are complex numbers in the upper half plane such that 
    \[Z_{\tau'}(S_2) = e^{ic}Z_{\tau}(S_2)\]
    for some small \(c > 0\).
    That is, \(Z_{\tau'}(S_2)\) is a slight rotation of \(Z_{\tau}(S_1)\).
    Then it is clear that \(\tau\) and \(\tau'\) are not equal points of \(\Stab(\mathcal{C})/\mathbf{C}\), whereas \(m_{\tau} = m_{\tau'}\).
\end{remark}

\begin{remark}
  For specific categories \(\mathcal{C}\), we can prove stronger injectivity results.
  For example, for the 2-CY categories \(\mathcal{C}_{\Gamma}\) associated to a connected graph \(\Gamma\), we prove in~\Cref{prop:injectivity-general} that the mass map for a single \(q\) is injective.
\end{remark}

\begin{remark}
  We would like the map \(m\) from~\Cref{thm:product-of-masses-injective} to be not only injective, but also a homeomorphism onto its image.
  Since \(m\) is continuous and injective, what remains to prove is that it maps open subsets of the domain to open subsets of the image.  
  At present we do not know how to prove this in general.  However, in examples we find that every stability condition $\tau$ has an open neighborhood $U$ whose closure is compact and such that $m(U)$ is cut out by finitely many (strict) inequalities involving the masses of finitely many objects in $S$.
  This implies that $m$ is an open map, and hence a homeomorphism onto the image. While we haven't formulated a general statement regarding the origin of such inequalities here, see~\Cref{prop:conditional-local-homeo} for a version written explicitly for 2-CY categories of quivers.  In particular, we use~\Cref{prop:conditional-local-homeo} to prove that a single mass map $m=m_q$ is a homeomophism onto its image in types \(A_{2}\) and \(\widehat{A}_{1}\) in~\Cref{prop:homeo1} and~\Cref{prop:homeo2} respectively.
\end{remark}

\subsection{Points in the boundary of the mass map}\label{sec:boundary}
In this section, we obtain a family of elements in \(\mathbf{P}^{\bfS}\) in the closure of the image of the stability manifold.
These are analogous to the intersection functionals in Teichm\"uller theory, and arise as limits under successive applications of a spherical twist.

We assume that the category \(\mathcal{C}\) is \(\k\)-linear of finite type with a fixed dg-enhancement.

\begin{definition}\label{def:hombar}
  Let \(a\) be a \(d\)-spherical object in \(\mathcal{C}\) and let \(q > 0\) be a fixed real number.
  Let \(x\) be any object of \(\mathcal{C}\) such that
  \[\Hom^i(x,x) = 0\text{ if }i < 0.\]
  \begin{enumerate}
  \item If \(x\) is indecomposable, set
    \[\homBar_q(a,x) =
      \begin{cases}
        \sum_i q^{-i}\dim(\Hom^i(a,x))&x \neq a[j] \text{ for any } j \in \mathbf{Z},\\
        q^j \cdot |q^{-d}-q^{-1}|&x \cong a[j].
      \end{cases}
    \]
  \item If \(x = \oplus_i y_i\) for indecomposable objects \(y_i\), set
    \[\homBar_q(a,x) = \sum_i \homBar_{q}(a,y_i).\]
  \end{enumerate}
\end{definition}
We should regard \(\homBar_{q}(x,y)\) as a categorical version of the minimal intersection number between two curves.
Then, in analogy with~\cite[\S 3.3]{fat.lau.poe:12}), we can regard the function \(\homBar_{q}(a,-) \colon \bfS \to \R\) as the intersection functional.

The following theorem shows that the quantity \(\homBar_q(a,x)\) governs the growth of the \(\tau\)-mass of an object under repeated applications of \(\sigma_a\).
\begin{theorem}\label{prop:limit-mass}
  Let \(\tau\) be a stability condition on \(\mathcal{C}\).
  Let \(a\) be a \(\tau\)-semi-stable d-spherical object, and let \(x\) be an object that has no endomorphisms of negative degree.
  Let \(q > 0\) and set $e=d-1$.
  \begin{enumerate}
  \item If \(0 < q \leq 1\), then
    \[
      \lim_{n \to \infty} \frac{m_{q,\tau}(\sigma_a^nx)}{q + q^{1-e} + \cdots + q^{1-(n-1)e}} = m_{q,\tau}(a) \cdot \homBar_q(a,x).
    \]
  \item If \(q \geq 1\), then
    \[
      \lim_{n \to \infty} \frac{m_{q,\tau}(\sigma_a^{-n}x)}{q^{1+e} + q^{1+2e} + \cdots + q^{1+ne}} = m_{q,\tau}(a) \cdot \homBar_q(a,x).
    \]
  \end{enumerate}
  In particular, if \(q = 1\), then
  \begin{equation}\label{eq:twist-limit}
    \lim_{n \to \pm\infty} \frac{m_\tau(\sigma_a^nx)}{n} = m_{\tau}(a) \cdot \homBar(a,x).
  \end{equation}
\end{theorem}

The rest of~\Cref{sec:boundary} is devoted to the proof of~\Cref{prop:limit-mass}.

Recall that for a \(d\)-spherical object $a$, we have a perfect pairing
\[ \Hom^i(x,y) \otimes \Hom^{d-i}(y,x) \to \k,\]
which is obtained by the composition followed by a map
\[ \Tr \colon \Hom^{d}(x,x) \to \k.\]
The graded vector space $\gradedHom(a,a)$ is two-dimensional, generated by the identity map $\id_a$ of degree zero, and a map $\loopmap_a$ of degree \(d\), which we choose so that \(\Tr (\loopmap_{a}) = 1\).

Let \(x\) be any object.
Then we have the distinguished triangle
\[ \gradedHom(a,x) \otimes a \xrightarrow{\alpha} x \to \sigma_a(x) \xrightarrow{+1},\]
where \(\alpha\) is the evaluation map.
Likewise, we have the distinguished triangle
\[ \sigma_{a}^{-1}x \to x \xrightarrow{\beta} \gradedHom(x,a)^{*} \otimes a \xrightarrow{+1},\]
where \(\beta\) is the co-evaluation map.
The two triangles together give the filtration
\begin{equation}\label{eq:twotwists}
  \sigma_{a}^{-1}x \to x \to \sigma_{a}x
\end{equation}
with factors \(\gradedHom(x,a)^{*} \otimes a\) and \(\gradedHom(a,x) \otimes a[1]\).
Using that \(a\) is \(d\)-spherical, we identify
\begin{equation}\label{eq:d-duality}
  \gradedHom(x,a)^{*} = \gradedHom(a,x)[d].
\end{equation}
\begin{lemma}\label{lem:connecting-map}
  In the setup above, suppose \(x\) does not contain any shift of \(a\) as a direct summand.
  Consider the filtration
  \[ \sigma_{a}^{-1}x \to x \to \sigma_{a}x.\]
  Then, with the identification as in~\eqref{eq:d-duality}, the connecting map
  \begin{equation}\label{eq:orig}
    \gradedHom(a, x) \otimes a [1] \to \gradedHom(a, x) \otimes a[d+1]
  \end{equation}
  is a shift of $\id \otimes \loopmap_a$.
\end{lemma}
\begin{proof}
  Shifted by \(-1\), the connecting map is the composite 
  \begin{equation}
    \gradedHom(a,x) \otimes a \xrightarrow{\alpha} x \xrightarrow{\beta} \gradedHom(x,a)^{*} \otimes a \xrightarrow{\gamma} \gradedHom(a,x)[d] \otimes a,
  \end{equation}
  where \(\alpha\) is evaluation, \(\beta\) is co-evaluation, and \(\gamma\) is the duality isomorphism.
  By the definition of evaluation and co-evaluation, \(\beta \circ \alpha\) is adjoint to the composite
  \[ \gradedHom(a,x) \otimes \gradedHom(x,a) \otimes a \to  \gradedHom(a,a) \otimes a \to a,\]
  where the first map is composition and the second is evaluation.
  Since \(x\) does not have any shift of \(a\) as a direct summand, no composition \(a[j] \to x \to a[j]\) can be the identity.
  Therefore, the image of the composition map
  \begin{equation}\label{eq:composition}
    \gradedHom(a,x) \otimes \gradedHom(x,a) \to \gradedHom(a,a)
  \end{equation}
  is the one-dimensional subspace spanned by \(\loopmap_{a}\).
  Interpreting \(\Tr \colon \Hom^{i}(a,a) \to \k\) as 0 on the summands exept \(\Hom^d(a,a)\), we can then write~\eqref{eq:composition} as
  \[ f \otimes g \mapsto \Tr(g \circ f) \cdot \loopmap_{a},\]
 The pairing that induces the duality isomorphism \[\gamma \colon \gradedHom(x,a)^{*} \cong \gradedHom(a,x)[d]\] is also given by
  \[ (f,g) \mapsto \Tr (g \circ f).\]
  It follows that the map \(\gamma \circ \beta \circ \alpha\) is \(\id \otimes \loopmap_{a}\).
\end{proof}

We recall the objects \(a_n\) studied in~\Cref{sec:an}.
The object \(a_n\) is characterised uniquely by the existence of a filtration with factors \(a[0], a[-(d-1)], \dots, a[-n(d-1)]\) such that the connecting map
\[ a[-i(d-1)] \to a[-(i-1)(d-1)]\]
is a shift of \(\loopmap_a\) (\Cref{prop:an-objects}).
By~\Cref{lem:twomaps}, we have non-zero maps
\begin{equation}\label{eq:twomaps}
  \begin{split}
    i_n &\colon a \to a_n, \quad l_n \colon a[-n(d-1)-d] \to a_n\\
    t_n &\colon a_n \to a[d], \quad q_n \colon a_n \to a[-n(d-1)]
  \end{split}
\end{equation}
forming distinguished triangles
\begin{equation}
  \begin{split}
    &a_{n-1}[-d] \xrightarrow{t_{n-1}[-d]} a \xrightarrow{i_n} a_n \xrightarrow{+1} \text{ and }\\
    &a_n\xrightarrow{q_n} a[-n(d-1)] \xrightarrow{l_{n-1}[1]} a_{n-1}[1] \xrightarrow{+1}.
  \end{split}
\end{equation}
We have
\begin{equation}
  \label{eq:composites}
  t_{n} \circ i_{n} = \loopmap_{a} \text{ and } q_{n} \circ l_{n} = \loopmap_{a}[-n(d-1)-d].
\end{equation}
The other composites are zero.
Up to scaling and shifts, the maps in~\eqref{eq:twomaps} are the only non-zero maps between \(a\) and \(a_{n}\).
  
\begin{lemma}\label{lem:tails}
  Let \(x \in \calC\) be any object that does not contain a shift of \(a\) as a direct summand and which does not have endomorphisms of negative degree.
  For every \(n \geq 0\), the natural map \(x \to \sigma_a^{n+1}x\) completes to a distinguished triangle
  \[ \gradedHom(a,x) \otimes a_n \to x \to \sigma_a^{n+1}x.\]
\end{lemma}
\begin{proof}
  Recall that we have the triangle
  \begin{equation}\label{eq:one-twist}
    \gradedHom(a,x) \otimes a \to x \to \sigma_a x \xrightarrow{+1}.
  \end{equation}
  By repeatedly applying \(\sigma_a\) to the triangle above, we get the triangles
  \begin{equation}
    \gradedHom(a,x) \otimes a[-i(d-1)] \to \sigma_a^i x \to \sigma_a^{i+1} x \xrightarrow{+1},
  \end{equation}
  which assemble into the diagram
  \begin{equation}\label{eq:long-twist-diagram}
    \begin{tikzcd}[column sep=0cm]
      x \ar{rr}&& \sigma_a x \ar{rr}\ar{dl}&& \cdots \ar{rr} && \sigma_a^nx \ar{rr}&& \sigma_a^{n+1} x \ar{dl} \\
      & \gradedHom(a,x) \otimes a[1] \ar[dashed]{ul} && \cdots && \cdots && \gradedHom(a,x) \otimes a[1-n(d-1)] \ar[dashed]{ul}.
    \end{tikzcd}
  \end{equation}
  By~\Cref{lem:connecting-map}, the connecting map between any two successive factors in~\eqref{eq:long-twist-diagram} is a shift of \(\id \otimes \loopmap_a\).
  Suppose \(y_n\) completes \(x \to \sigma_a^{n+1}x\) to a distinguished triangle
  \[ x \to \sigma_a^{n+1}x \to y_n \xrightarrow{+1}.\]
    We prove the following two statements by induction on \(n\).
  \begin{enumerate}
  \item We have an isomorphism \(y_n \cong \gradedHom(a,x) \otimes a_n[1]\).
  \item Via the above isomorphism, the map
    \begin{equation}\label{eqn:lastconmap}
      y_n \to \gradedHom(a,x) \otimes a[1-n(d-1)]
    \end{equation}
    induced by the diagram~\eqref{eq:long-twist-diagram} is a shift of \(\id \otimes q_{n+1}\).
  \end{enumerate}
  The base case \(n = 0\) follows from the distinguished triangle~\eqref{eq:one-twist}.
  Assume the result for \(n-1\).
  Then~\eqref{eq:long-twist-diagram} collapses to
  \[
    \begin{tikzcd}[column sep=0cm]
      x \ar{rr}&& \sigma_a^{n} x \ar{rr}\ar{dl}&& \sigma_a^{n+1} x \ar{dl} \\
      & \gradedHom(a,x) \otimes a_{n-1}[1]\ar[dashed]{ul} && \gradedHom(a,x) \otimes a[1-n(d-1)] \ar[dashed]{ul}.
    \end{tikzcd}
  \]
  We now prove that the connecting map
  \begin{equation}\label{eqn:bigcon}
    \gradedHom(a,x) \otimes a[1-n(d-1)] \to \gradedHom(a,x) \otimes a_{n-1}[2]
  \end{equation}
  is \(\id \otimes l_{n-1}[2]\).

  Chose a homogeneous basis of \(\gradedHom(a,x)\) so that the map~\eqref{eqn:bigcon} may be written as a matrix of maps from shifts of \(a\) to shifts of \(a_{n-1}\).
 We know that, up to scaling and shifts, the only two maps from \(a\) to \(a_{n-1}\) are
  \begin{enumerate}
  \item \(i_{n-1} \colon a \to a_{n-1}\), and
  \item \(l_{n-1} \colon a \to a_{n-1}[n(d-1)+1]\).
  \end{enumerate}
  The composite
  \[ \gradedHom(a,x) \otimes a[1-n(d-1)] \xrightarrow{\eqref{eqn:bigcon}} \gradedHom(a,x) \otimes a_{n-1}[2] \xrightarrow{\eqref{eqn:lastconmap}} \gradedHom(a,x) \otimes a[2-(n-1)(d-1)].\]
  is the connecting map in the last two steps of~\eqref{eq:long-twist-diagram}.
  By~\Cref{lem:connecting-map}, we know that it is a shift of \(\id \otimes \loopmap_a\).
  From~\eqref{eq:composites}, we have \(q_{n-1} \circ i_{n-1} = 0\) and \(q_{n-1} \circ l_{n-1} = \loopmap_a\).
  Therefore, we conclude that the diagonal entries of~\eqref{eqn:bigcon} must be \(l_{n-1}[2]\) and the off-diagonal entries must be multiples of shifts of \(i_{n-1}\), possibly zero.
  In fact, we need to prove that the off-diagonal entries are zero.
  To do so, we need another fact about \(a_{n-1}\) from the appendix: that the map \(i_{n-1}\) does not factor through \(\sigma_a^{n}x\) (\Cref{lem:i-does-not-factor}).
  We conclude that a non-zero multiple of a shift of \(i_{n-1}\) cannot be a summand of~\eqref{eqn:bigcon}, and so the off-diagonal entries are indeed zero.
  
  Having proved that~\eqref{eqn:bigcon} is a shift of \(\id \otimes l_{n-1}\), it follows from~\Cref{lem:twomaps} that \(y_{n+1}\) is isomorphic to \(\gradedHom(a,x) \otimes a_n\) and the map \(y_{n+1} \to \gradedHom(a,x) \otimes a[1-n(d-1)]\) is isomorphic to a shift of \(\id \otimes q_{n+1}\).
\end{proof}

We now employ the machinery of \emph{rectifiable filtrations}, as discussed in~\Cref{sec:rectifiable-filtrations}, specifically~\Cref{def:rectifiable-filtration}.
\begin{lemma}\label{lem:eventual-tail}
  Fix a stability condition \(\tau\).
  Let \(a\) be a \(\tau\)-semistable \(d\)-spherical object of \(\mathcal{C}\).
  Let $x \in \calC$ be any object that does not have endomorphisms of negative degree and does not have any shift of $a$ as a direct summand.
  There exists $N$ (depending on $x$ and $a$) such that for every $n \geq N$, the filtration
  \[ 0 \to \sigma_a^{n}x \to \sigma_a^{n+1}x  \]
  is rectifiable (with respect to \(\tau\)).
\end{lemma}
\begin{proof}
  The factors of the filtration are \(\sigma_a^nx\) and \(\gradedHom(a,x) \otimes a[1-n(d-1)]\).
  We need to show that the connecting map
  \[ \gradedHom(a, x) \otimes a[1-n(d-1)] \to \sigma_a^n x[1]\]
  is rectifiable.
  Let us show, equivalently, that its shift
  \[\chi \colon \gradedHom(a, x) \otimes a[-n(d-1)] \to \sigma_a^n x \]
  is rectifiable.

  Let $\alpha$ be a real number.
  We need to show that the map
  \begin{equation}\label{eq:chi}
    \left(\gradedHom(a,x) \otimes a[-n(d-1)]\right)_{\geq \alpha} \to (\sigma_a^nx)_{< \alpha+1}
  \end{equation}
  induced by $\chi$ is zero.
  Without loss of generality, assume that \(x_{\leq 0} = 0\), which can be achieved by replacing \(x\) by a sufficiently positive shift.

  First assume that \(\alpha \geq -1\).
  Then, if \(n\) is large enough, we have
  \[ \left(\gradedHom(a,x) \otimes a[-n(d-1)]\right)_{\geq \alpha} = 0.\]
  So the map~\eqref{eq:chi} is also zero.
  
  Now assume \(\alpha < -1\).
  By~\Cref{lem:tails}, we have the exact triangle
  \begin{equation}\label{eq:xnx}
    \gradedHom(a,x) \otimes a_{n-1} \to x \to \sigma_a^n x \xrightarrow{+1}.
  \end{equation}
  It is easy to see that, \(\alpha < -1\) and \(x_{\leq 0} = 0\) imply that the \(+1\) map in~\eqref{eq:xnx} induces an isomorphism
  \[ \left(\sigma_{a}^nx\right)_{< \alpha + 1} \to \left(\gradedHom(a,x) \otimes a_{n-1}[1] \right)_{<\alpha+1}.\]
  So the map in~\eqref{eq:chi} is equal to the map
  \begin{equation}\label{eq:chi2}
\left(\gradedHom(a,x) \otimes a[-n(d-1)]\right)_{\geq \alpha} \to \left(\gradedHom(a,x) \otimes a_{n-1}[1]\right)_{< \alpha+1}.
\end{equation}
The map above is induced by the connecting map in the filtration
\[ x \to \sigma_{a}^{n}x \to \sigma_{a}^{n+1}x.\]
We have seen in the proof of~\Cref{lem:tails} that this map a shift of \(\id \otimes l_{n-1}\) (see~\eqref{eqn:bigcon}).
Note that \(l_{n-1}\) is rectifiable by~\Cref{ex:easyrect}~\eqref{i:highlow} and so is \(\id \otimes l_{n-1}\) by~\Cref{ex:easyrect}~\eqref{i:dsum}.
Therefore, the map~\eqref{eq:chi2} is zero.
\end{proof}

\begin{proof}[Proof of~\Cref{prop:limit-mass}]
  We prove the result for \(0 < q \leq 1\); the case of \(q \geq 1\) is analogous.
We abbreviate \(m_{q,\tau}\) by \(m\) and \(\homBar_{q}\) by \(\homBar\).
  
  First, let \(x \cong a[j]\).
  Then
  \[ \sigma_{a}^{n} x = a[j-ne],\]
  and hence
  \[ m(\sigma_{a}^{n}x) = q^{j-en} m(a).\]
  Therefore, 
  \begin{align*}
    \lim_{n \to \infty} \frac{m(\sigma_a^nx)}{q + q^{1-e} + \cdots + q^{1-(n-1)e}} &=   \lim_{n \to \infty} \frac{q^{j-en}}{q + q^{1-e} + \cdots + q^{1-(n-1)e}}\\
    &= q^{j}\left(q^{-d}-q^{-1}\right).
  \end{align*}

  Next, suppose \(x\) is indecomposable and not isomorphic to a shift of \(a\).
  Let $N$ be as guaranteed by~\Cref{lem:eventual-tail}.
  Then, for every $i \geq N$, we have
  \begin{align*}
    m(\sigma_a^{i+1}x) &= m(\sigma_a^ix) + m\left(\gradedHom(a,x) \otimes a[1-ie]\right) \\
    &= m(\sigma_a^ix) + q^{1-ie}\cdot m(a).
  \end{align*}
  By taking the sum as \(i\) ranges from \(N\) to \(n-1\), we get
  \[
m(\sigma_{a}^{n}x) = \left(q^{1-(n-1)e} + \cdots + q^{1-Ne}\right) \cdot m(a) + m(\sigma_{a}^{N}x).
    \]
    Dividing by \(q^{1-(n-1)e} + \cdot + q^{1}\) and letting \(n \to \infty\) yields
    \[
\lim_{n \to \infty}\frac{m(\sigma_{a}^{n}x)}{q+q^{1-e}+ \cdots + q^{1-(n-1)e}} = m(a) \cdot \homBar_{q}(a,x),
\]
as desired.

    Finally, if \(x\) is decomposable, then the desired equality follows by adding up the equalities for each indecomposable summand.
\end{proof}

Let \(\bfS\) be a set of objects of \(\mathcal{C}\) such that no object of \(\bfS\) has endomorphisms of negative degree.
Recall from~\eqref{def:mass-map} the mass map
\[ m_q \colon \Stab(\mathcal{C})/\mathbf{C} \to \mathbf{P}^{\bfS}.\]
We also have a map $h_q \from \bfS \to \mathbf{P}^\bfS$
defined as
\[ h_q \colon x \mapsto [\homBar_q(x,-)]\]
For \(q = 1\), we write \(m\) and \(h\) to mean \(m_q\) and \(h_q\) respectively.
\begin{corollary}\label{cor:limit-mass}
  Let \(a\) be a spherical object that is semi-stable in a stability condition \(\tau\).
  Then, in $\mathbf{P}^\bfS$ we have the following.
  \begin{enumerate}
  \item If \(0 < q \leq 1\), then
    \[\lim_{n\to\infty} m_q(\sigma_{a}^n x) = h_q(a).\]
      \item If \(q \geq 1\), then
        \[\lim_{n\to\infty} m_q(\sigma_{a}^{-n} x) = h_{q}(a).\]
      \end{enumerate}
      In particular, if \(q = 1\), then
      \[ \lim_{n \to \pm\infty} m(\sigma_{a}^{n}x) = h(a).\]
      In all cases, $h_{q}(a)$ is in the closure of the image of the mass map.
    \end{corollary}
    \begin{proof}
      Follows immediately from~\Cref{prop:limit-mass}.
    \end{proof}
    
\begin{remark}[Degenerate stability conditions]
\Cref{prop:limit-mass} describes one way to approach the boundary of the stability manifold in $\mathbf{P}^\bfS$, namely by applying powers of a spherical twist.
There are more direct ways to approach the boundary.
Start with a stability condition $\tau$ with heart $\heart$ and central charge $Z \from K(\calC) \to \mathbf C$.
Recall that $Z$ must map non-zero objects of $\heart$ to non-zero complex numbers.
Take a deformation $Z_t$ of $Z$ such that $Z_0$ sends some objects of $\heart$ to $0$.
Then the limit of the stability condition described by $(\heart, Z_t)$ is not a stability condition, but it is in the closure.
Such limits correspond to the degenerate stability conditions in the sense of~\cite{bol:20}.

In the analogy with the Teichm\"uller space, applying a spherical twist corresponds to applying a Dehn twist.
On the other hand, degenerating $Z$ corresponds to contracting a (collection of) curve(s).
In geometry, both operations yield the same limit (see, e.g.~\cite{fat.lau.poe:12}), but the limits can be distinct in the categorical setting (see, e.g., the results of~\cite{bap.bec.lic:23}).
\end{remark}

\section{Harder--Narasimhan (HN) automata}\label{sec:automata}
The aim of this section is to introduce some general machinery for a piecewise generalisation of a \(G\)-set (resp.\ a piecewise-linear generalisation of a \(G\)-representation).
In~\Cref{sec:a2-automaton,subsec:aonehat-automaton}, we apply this to the action of the auto-equivalence group on the Harder--Narasimhan filtrations of an object.
The reader may choose to defer reading the remainder of this section until required for the applications.

Let \(\Theta\) be a quiver with vertex set \(\Theta_0\) and edge set \(\Theta_1\).
For an edge \(\alpha \in \Theta_1\), let \(s(\alpha)\) and \(t(\alpha)\) denote its source and target, respectively.
A \emph{path} in \(\Theta\) is a finite sequence of edges \((\alpha_1, \ldots,\alpha_n)\), such that for each \(i\), we have
\[s(\alpha_{i+1}) = t(\alpha_i).\]
A \(\Theta\)-representation \({X}\) with values in a fixed category consists of a collection of objects of the category \(\{X_v \mid v \in \Theta_0\}\) indexed by vertices, and a collection of morphisms \(\{\phi(X)_{\alpha} \colon X_{s(\alpha)} \to X_{t(\alpha)} \mid \alpha \in \Theta_1\}\) indexed by edges.
A morphism of \(\Theta\)-representations from \({X}\) to \({Y}\) consists of morphisms
\(\{f_v \colon X_v \to Y_v \mid v \in \Theta_0\}\) such that all squares of the form
\[
  \begin{tikzcd}
    X_{s(\alpha)} \arrow{r}{f_{s(\alpha)}} \arrow{d}{\phi(X)_{\alpha}} & Y_{s(\alpha)} \arrow{d}{\phi(Y)_{\alpha}}\\
    X_{t(\alpha)} \arrow{r}{f_{t(\alpha)}} & Y_{t(\alpha)}
  \end{tikzcd}
\]
commute.
A \(\Theta\)-set \({S}\) is a representation of \(\Theta\) in the category of sets.
Similarly, if \(A\) is any ring, then a \(\Theta\)-representation \({M}\) of \(A\)-modules is a representation of \(\Theta\) in the category of \(A\)-modules.
Clearly, any \(\Theta\)-representation of \(A\)-modules is also a \(\Theta\)-set.
A \(\Theta\)-subset \({T} \subset {S}\) is a morphism of \(\Theta\)-sets in which for each \(v \in \Theta_0\), the corresponding map \(T_v \to S_v\) is an inclusion.

Fix a group \(G\).
We say that a function \(\ell \colon \Theta_1 \to G\) is a \emph{\(G\)-labelling} of \(\Theta\).
We can extend this function to paths in \(\Theta\) in a natural way, as follows.
If \((\alpha_1, \ldots, \alpha_n)\) is a path, then
\[\ell((\alpha_1, \ldots, \alpha_n)) = \ell(\alpha_n) \cdots \ell(\alpha_1).\]

Consider a quiver \(\Theta\) with a \(G\)-labelling \(\ell \colon \Theta_1 \to G\).
Let \(S\) be a set with a \(G\)-action.
These data naturally give rise to a \(\Theta\)-set \({S}\), in which the object associated to each vertex \(v \in \Theta_0\) is simply \(S_v = S\), and the map associated to any edge \(\alpha\) is the action of \(\ell(\alpha)\) on \(S\).
Similarly, a \(G\)-representation \(M\) naturally gives rise to a \(\Theta\)-representation.
In particular, if \(\mathcal{C}\) is a (triangulated) category with a \(G\)-action, then \({\ob \mathcal{C}}\) is a \(\Theta\)-set.

We now take up the question of evolutions of HN filtrations under group actions.
Let \(\mathcal{C}\) be a triangulated category and \(\tau\) a stability condition on \(\mathcal{C}\).
Let \(\Sigma\) be the set of all indecomposable \(\tau\)-semistable objects in the \([0,1)\)-heart.
\begin{definition}\label{def:hn-multiplicity-vector}
  Let \(x\) be any object of \(\mathcal{C}\).
  The \emph{\(\tau\)-Harder--Narasimhan multiplicity vector} \(\HN_{\tau}(x)\) is the element of \(h \in \Z^{\Sigma}\) defined as follows:
  \[ h(s) = \sum_{n \in \Z} \text{Number of occurrences of \(s[n]\) among the \(\tau\)-HN factors of \(x\)}.\]
\end{definition}
\begin{remark}
  The definition has a natural \(q\)-analog \(h = \HN_{q,\tau}(x) \in \Z[q^{\pm}]^{\Sigma}\) defined by
  \[ h(s) = \sum_{n \in \Z} (\text{Number of occurrences of \(s[n]\) in the \(\tau\)-HN filtration of \(x\)}) \cdot q^{n}.\]
\end{remark}

We now come to the key definition that will capture the piecewise linear nature of the evolution of HN multiplicities.
Recall the notation:
\begin{description}
\item[\(\mathcal C\)] a triangulated category,
\item[\(\tau\)] a stability condition on \(\mathcal C\),
\item[\(\Sigma\)] the set of indecomposable \(\tau\) semi-stable objects in the \([0,1)\)-heart,
  \item[\(G\)] a group acting on \(\mathcal{C}\).
\end{description}

\begin{definition}\label{def:hn-automaton}
  A \(\tau\)-HN automaton for \(\mathcal{C}\) as above consists of the following data.
  \begin{enumerate}
  \item A quiver \(\Theta\) together with a \(G\)-labelling \(\ell \colon \Theta_1 \to G\).
  \item A \(\Theta\)-subset \({S} \subset {\ob \mathcal{C}}\).
  \item\label{def:autkey} A \(\Theta\)-representation \( M\) of \(\Z\)-modules that assigns the module \(\Z^{\Sigma}\) to every vertex such that the HN-multiplicity vector
    \[  S \xrightarrow{\HN_{\tau}}  M\]
    defines a map of \(\Theta\)-sets.    
  \end{enumerate}
\end{definition}
Let us make condition~\eqref{def:autkey} explicit.
Let \(\alpha \colon v \to w\) be an edge of \(\Theta\).
Then condition~\eqref{def:autkey} means that the following diagram commutes:
\[
  \begin{tikzcd}
    S_v \ar{r}{HN_\tau}\ar{d}{\ell(\alpha)}&  \Z^{\Sigma}\ar{d}{M(\alpha)} \\
    S_w \ar{r}{HN_{\tau}}& \Z^{\Sigma}.
  \end{tikzcd}
  \]
  Thus, for the objects in \(S_v\), the matrix \(M(\alpha)\) captures the evolution of HN multiplicities under the application of \(\ell(\alpha)\).

  We obtain the mass of an object by applying a linear function to the HN-multiplicity vector.
  Hence, an HN automaton also allows us to understand the evolution of the mass.

  \begin{remark}\label{rem:automaton-recognising}
    We establish some terminology for HN automata.
    Consider a \(\tau\)-HN automaton as explained in~\Cref{def:hn-automaton}.
    Following standard conventions in the theory of finite automata, we call the vertices of \(\Theta\) the \emph{states} of the automaton.
    Given a state \(v\), we say that \(S_v\) is the set of objects \emph{supported at the state \(v\)}.
    Note that an object may be supported at multiple states.
    We say that an object is \emph{recognised by $\Theta$} if it is supported at some state.

    We say that an expression $g = g_n\cdots g_1$ in \(G\) is \emph{recognised by $\Theta$} if there is a path \((\alpha_1, \ldots, \alpha_n)\) in \(\Theta\) such that
    \(g_i = \ell(\alpha_i)\), and hence
    \[g = \ell((\alpha_1,\ldots,\alpha_n)).\]
    Finally, let \(x\) be an object of \(\mathcal{C}\) supported at a state \(v\). Suppose that
    \(y = g_n \cdots g_1 \cdot x\) for some group elements \(g_n, \ldots, g_1\), and suppose also that
    \(g_n \cdots g_1\) is a recognised expression via a path starting at the state \(v\).
    In this case, we say that the expression \(y = g_n \cdots g_1 \cdot x\) is \emph{recognised by \(\Theta\)}.
  \end{remark}

\section{The category \texorpdfstring{$\calC_{\Gamma}$}{CGamma}}\label{sec:cgamma}
In this section we recall the definition and properties of 2-Calabi--Yau categories associated to quivers.
Fix $\Gamma$, a connected undirected graph without loops or multiple edges.
\begin{definition}
The \emph{Artin--Tits braid group associated to $\Gamma$}, denoted by \(B_{\gamma}\) is the group generated by $\sigma_i$, where $i$ ranges over the set of vertices of $\Gamma$, modulo the following relations:
\begin{align*}
  \sigma_i \sigma_j \sigma_i &= \sigma_j \sigma_i \sigma_j \text{ if there is an edge between $i$ and $j$,} \\
  \sigma_i \sigma_j &= \sigma_j \sigma_i \text{ otherwise.}
\end{align*}
\end{definition}
\begin{definition}
  The \emph{Coxeter group associated to \(\Gamma\)}, denoted by \(W_{\gamma}\), is the quotient of \(B_{\Gamma}\) by the following additional relations.
  For each vertex \(i\) of \(\Gamma\), set
  \[\sigma_i^2 = 1.\]
  Denote the image of \(\sigma_i\) in \(W_{\Gamma}\) by \(s_i\).
\end{definition}
Let \(V_{\Gamma}\) be the \(\mathbf{Z}\)-module with basis vectors \(v_i\) indexed by the vertices of \(\Gamma\).
Define a bilinear form on \(V_{\Gamma}\) by the following formula:
\[
  \langle v_i, v_j \rangle =
  \begin{cases}2&\text{ if } i = j,\\
    -1&\text{ if }i\text{ and }j\text{ are connected by an edge,}\\
    0&\text{otherwise}.
  \end{cases}
\]

\begin{definition}
  The \emph{standard representation} of \(W_{\Gamma}\) is the action on \(V_{\Gamma}\) defined by the formula
  \[s_i(v_j) = v_j - \langle v_i, v_j \rangle v_i.\]
\end{definition}
We describe a category $\calC_\Gamma$ with a (weak) action of $B_\Gamma$ that categorifies the standard representation of \(W_{\Gamma}\).
The construction is via the zig-zag algebra, which was introduced in~\cite{hue.kho:01}.
A construction of \(\mathcal{C}_{\Gamma}\) for type $A_n$ graphs is in~\cite{sei.tho:01,tho:06}.

We first recall the zig-zag algebra.
A version of this construction is also in~\cite[\S 2.3]{bap.deo.lic:22}, but in that version, we have not been careful with signs; see~\Cref{rem:zig-zag-signs,rem:strong-2-CY,rem:strong-2-CY-end}.

Let $\Gamma^{\dbl}$ denote the doubled quiver of \(\Gamma\).
This is the directed graph obtaind by replacing each edge of \(\Gamma\) by a pair of oppositely oriented edges between its endpoints.
For each original edge \(e\) in \(\Gamma\) between \(i\) and \(j\), we let \(e_{ij}\) and \(e_{ji}\) be the corresponding arrows in \(\Gamma^{\dbl}\) from \(i\) to \(j\) and \(j\) to \(i\) respectively.
Let $\Path(\Gamma^{\dbl})$ denote the path algebra of $\Gamma^{\dbl}$.  As a vector space over $\k$, the algebra $\Path(\Gamma^{\dbl})$ is spanned by all oriented paths in $\Gamma^{\dbl}$, and the multiplication is given by concatenation.
The length function on paths that declares edges to have length one induces a non-negative grading on $\Path(\Gamma^{\dbl})$.

We define the zigzag algebra of $\Gamma$, as follows.
\begin{definition}\label{def:zig-zag-algebra}
  Suppose $\Gamma$ has at least two vertices.
  Fix a sign \(\sgn_{ij} \in \{+1, -1\}\) for each arrow \(e_{ij}\) in \(\Gamma^{\dbl}\) such that \(\sgn_{ij} = - \sgn_{ji}\).
  Let $A(\Gamma) = A_{\sgn}(\Gamma)$ be the quotient of the path algebra \(\Path(\Gamma^{\dbl})\) by the two-sided ideal generated by the following elements:
  \begin{enumerate}
  \item all length three paths,
  \item length two paths whose source and target differ,
  \item for vertices \(i,j,k\) of \(\Gamma\) such that there are edges \(e_{ij}\) and \(e_{ik}\), the element
    \[\sgn_{ij} e_{ij}e_{ji} - \sgn_{ik} e_{ik}e_{ki}.\]
  \end{enumerate}
  If $\Gamma$ has one vertex, (type $A_1$), set $A(\Gamma) = \mathbf{C}[x]/x^2$.
  We call $A(\Gamma)$ the \emph{zigzag algebra} of $\Gamma$.
\end{definition}
Note that the third relations in the definition above imply that all degree-two paths starting and ending at the same vertex are equal up to sign.
We call one such non-zero path a ``loop'' at a vertex.
When \(\Gamma\) has at least two vertices, the relations of \(A(\Gamma)\) are all homogeneous, and so the natural grading on \(\Path(\Gamma^{\dbl})\) descends to a grading on $A(\Gamma)$.
When \(\Gamma\) has a single vertex, grade \(A(\Gamma) = \mathbf{C}[x]/x^2\) by setting $\deg(x)=2$.

The choice of signs does not matter.
\begin{proposition}\label{prop:zig-zag-orientations}
Consider another choice of signs \(s'_{ij} \in \{+1, -1\}\) such that \(s'_{ij} = -s'_{ji}\).
  Then the map
  \[ e_{ij}   \mapsto \sgn_{ij}\sgn'_{ij} e_{ij}\]
  induces an isomorphism of graded algebras \(A_{\sgn}(\Gamma) \cong A_{\sgn'}(\Gamma)\).
\end{proposition}
\begin{proof}
  We leave the proof to the reader.
\end{proof}

\begin{remark}\label{rem:zig-zag-signs}
  In~\cite{bap.deo.lic:22}, we gave a definition of the zig-zag algebra that ignored the choice of signs in~\Cref{def:zig-zag-algebra}, in effect taking \(\sgn_{ij} = \sgn_{ji} = 1\).
  This unsigned version also appears in~\cite{hue.kho:01} and~\cite{tho:06} for type \(A\).  
  Let \(B(\Gamma)\) denote this unsigned version of the zig-zag algebra, namely the algebra with the same generators and relations as \(A(\Gamma)\) from~\Cref{def:zig-zag-algebra}, but in which we set \(s_{ij} = 1\).

  We prefer to work with the algebra \(A(\Gamma)\) in this paper, as it gives rise to a strongly 2-Calabi--Yau category \(\mathcal{C}_{\Gamma}\); see~\Cref{rem:strong-2-CY,rem:strong-2-CY-end}.
  However, if the unoriented graph \(\Gamma\) is bipartite, the two definitions are equivalent, as stated in~\Cref{prop:zigzag-signed-unsigned}.
  In particular, the definitions are equivalent for finite type \(ADE\).
\end{remark}
The following is easy to check; we omit the proof.
\begin{proposition}\label{prop:zigzag-signed-unsigned}
  Let \(\Gamma = \{\Gamma_1, \Gamma_2\}\) be an unoriented bipartite graph without self-loops or multiple edges, in which each edge connects a vertex of \(\Gamma_1\) with a vertex of \(\Gamma_2\).
  Let \(A(\Gamma)\) be the zig-zag algebra as in~\Cref{def:zig-zag-algebra} and \(B(\Gamma)\) the unsigned zig-zag algebra.
  Consider the map from \(A(\Gamma)\) to \(B(\Gamma)\) in which
  \[e_{ij} \mapsto
    \begin{cases}
    \sgn_{ij}e_{ij} &\text{ if \(i \in \Gamma_1\) and \(j \in \Gamma_2\)}, \\
     e_{ij} & \text{ if \(i \in \Gamma_2\) and \(j \in \Gamma_1\)}.
                   \end{cases}
                 \]
  This map is an algebra isomorphism.
\end{proposition}
The minimal idempotents of $A(\Gamma)$ are the length zero paths $(i)$ for $i \in \Gamma$.
The modules $P_i = A(\Gamma) (i)$ are indecomposable projective left $A(\Gamma)$ modules.
Any finitely-generated graded projective left $A(\Gamma)$ module is isomorphic to a direct sum of grading shifts of the modules $P_i$.

The category of complexes of graded projective left \(A(\Gamma)\) modules admits two ``shift'' functors, namely the homological shift and the grading shift.
We work in a simpler setting where the two shifts are identified.
We construct the desired category as follows.

Regard \(A(\Gamma)\) as a differential graded algebra (dga) where all differentials are zero.
Let \(K(\dgmod A)\) be the category of finite-dimensional differential graded modules (dgms) over the dga \(A(\Gamma)\).
Morphisms in \(K(\dgmod A)\) are homotopy classes of chain maps that are compatible with the action of \(A(\Gamma)\).
Then \(K(\dgmod A)\) is a triangulated category (see, e.g.~\cite[\S 22.8]{the:22}).
\begin{definition}
  Set \(\mathcal{C}_{\Gamma}\) to be the smallest full and strict triangulated subcategory generated by the objects \(P_i\) as \(i\) ranges over the vertices of \(\Gamma\).
\end{definition}

\begin{remark}
The categories considered in~\cite{sei.tho:01} and~\cite{tho:06} are described in a slightly different way.
However,~\cite[Proposition 2.1]{bap.deo.lic:22} shows that they coincide with our category $\calC_{\Gamma}$.
The category considered in~\cite{hue.kho:01} is the category of graded projective \(A(\Gamma)\)-modules; see~\cite[\S~2.3.3]{bap.deo.lic:22} for a discussion on how it is related to \(\mathcal{C}_{\Gamma}\).
\end{remark}

We state the salient properties of $\calC_\Gamma$ from~\cite[\S~2.3]{bap.deo.lic:22}.
\begin{enumerate}
\item $\calC_\Gamma$ is a $\k$-linear triangulated category.
\item $\calC_\Gamma$ is (strongly) 2-Calabi--Yau.
  That is, for every pair of objects $x,y \in \mathcal C$ and \(i \in \mathbf{Z}\), we have a natural non-degenerate graded symmetric pairing
  \[ \Hom^i(x,y) \otimes \gradedHom^{2-i}(y,x) \to \k.\]
  That is, the pairing is symmetric for even \(i\) and skew-symmetric for odd \(i\).
\item $\mathcal C_{\Gamma}$ is classically generated by the objects $P_i$.
  These objects satisfy
  \begin{align*}
    \gradedHom(P_i, P_i[n]) &=
    \begin{cases}
      \k & \text{ if $n=0 $ or $n=2$,}\\
      0 & \text{ otherwise};
    \end{cases}\\
    \gradedHom(P_i, P_j[n]) &=
    \begin{cases}
      \k & \text{ if $n=1$ and $i$ and $j$ are neighbours,}\\
      0 & \text{ otherwise, for $i\neq j$}.
    \end{cases}
  \end{align*}
\item The extension closure of the objects \(P_i\) is an abelian category, which is the heart of a bounded t-structure on \(\mathcal{C}_{\Gamma}\).
  We call this t-structure the \emph{standard t-structure} on \(\mathcal{C}_{\Gamma}\), and refer to its heart as the \emph{standard heart} \(\heart_{\std}\).
\end{enumerate}
The Grothendieck group $K_\Gamma$ of $\mathcal{C}_\Gamma$ is a \(\mathbf{Z}\)-module with a pairing given by
\[ \langle  [x], [y] \rangle = \bigoplus_{i} (-1)^{i} \dim \Hom^{i}(x,y).\]
It is easy to check that we have an isomorphism \(V_{\Gamma} \to K_{\Gamma}\) compatible with the pairing, defined by \(v_i \mapsto [P_i]\).
\begin{remark}\label{rem:strong-2-CY}
  The pairing 
  \[\Hom(x,y) \otimes \Hom^{2-i}(y,x) \to \k\]
  is defined exactly as in~\cite[\S~2.3]{bap.deo.lic:22}.
  The choice of signs in~\Cref{def:zig-zag-algebra} gives the graded symmetry, and hence a strongly 2-Calabi--Yau category (see~\cite{kel:08}).
\end{remark}

\begin{remark}\label{rem:strong-2-CY-end}
  The zig-zag algebra \(A(\Gamma)\) arises naturally as follows.
  Let \(\{P_i \mid i \in \Gamma\}\) be a collection of spherical objects in a strongly 2-CY triangulated category.
  Assume that for \(i \neq j\), we have
  \[\Hom^1(P_i, P_j) \cong \begin{cases}
                         \k & \text{ if \(ij\) is an edge of \(\Gamma\),}\\
                         0 & \text{ otherwise}.
                       \end{cases}
                     \]
                     Set \(P = \bigoplus_{i \in \Gamma} P_i\).
 Then the endomorphism algebra of \(P\) is isomorphic to the zig-zag algebra \(A(\Gamma)\) as defined in~\Cref{def:zig-zag-algebra}.
 We describe an explicit isomorphism
 \[ \psi \colon A(\Gamma) \to \End(P).\]
 Choose a sign \(s_{ij} \in \{\pm 1\}\) for each edge \(ij\) of \(\Gamma^{\dbl}\) such that \(s_{ij} = -s_{ji}\).
 For edge \(ij\) with \(s_{ij} = 1\), let \(\psi(e_{ij}) \in \Hom^1(P_i,P_j)\) be any non-zero element and \(\psi(e_{ji})\) be the unique element such that 
 \[ \langle \psi(e_{ij}), \psi(e_{ji}) \rangle = 1.\]
 Then it is easy to check that \(\psi \colon A(\Gamma) \to \End(P)\) an isomorphism of graded algebras.  
\end{remark}

Recall that the indecomposable projective modules $P_i$ in \(\mathcal{C}_{\Gamma}\) are spherical.
Furthermore, the spherical twists in the $P_i$ satisfy the defining relations of the Artin--Tits braid group $B_\Gamma$ associated to $\Gamma$ (see, e.g.,~\cite{hue.kho:01}).
That is,
\begin{align*}
  \sigma_{P_i}\sigma_{P_j}\sigma_{P_i} &\cong\sigma_{P_j}\sigma_{P_i}\sigma_{P_j} \text{ when $i$ and $j$ are connected by an edge of $\Gamma$},\\ 
  \sigma_{P_i}\sigma_{P_j} &\cong \sigma_{P_j}\sigma_{P_i}\text{ when $i$ and $j$ are not connected by an edge of $\Gamma$}.
\end{align*}
Via the homomorphism $B_\Gamma \to \Aut(\mathcal C_\Gamma)$ defined by  $\sigma_i \mapsto \sigma_{P_i}$, we have a (weak) action of $B_\Gamma$ on $\mathcal C_\Gamma$.
This action is known to be faithful in some cases---for example when $\Gamma$ is an ADE Dynkin diagram~\cite{bra.tho:11,kho.sei:02}---and is conjecturally faithful for all $\Gamma$.
The induced action on $K_\Gamma$ is the standard (geometric) representation of the Coxeter group associated to $\Gamma$.

\subsection{Injectivity of the mass map}\label{sec:massinj}
We now fix the graph $\Gamma$ and let $\calC = \calC_\Gamma$ be the associated strongly 2-CY category.
We say that a stability condition \(\tau\) is \emph{standard} if its \([0,1)\) heart is the standard heart \(\heart_{\std}\).
It is easy to see that the set of standard stability conditions is connected.
Let \(\Stab^{\circ} \mathcal{C}_{\Gamma} \subset \Stab \mathcal{C}_{\Gamma}\) be the connected component containing this set.

Let \(\mathbf{S}\) to be the set of spherical objects \(x\) of \(\mathcal{C}_{\Gamma}\) such that \(x\) is stable with respect to some stability condition in \(\Stab^{\circ} \mathcal{C}_{\Gamma}\).
The goal of~\Cref{sec:massinj} is to show that the map $m_q \colon \Stab^{\circ}(\calC)/\mathbf{C} \to \mathbf{P}^{\bfS}$ is injective.
We need two lemmas.
\begin{lemma}\label{lem:phase-comparison-unequal}
  Let \(\tau\) be a standard stability condition on \(\mathcal{C}\).
  Let \(\tau'\) be any stability condition such that for all spherical objects \(x \in \bfS\), we have
  \[ m_{q,\tau}(x) = m_{q,\tau'}(x).\]
  Suppose that \(\phi_{\tau}(P_1) < \phi_{\tau}(P_2)\) and \(\phi_{\tau'}(P_1) = \phi_{\tau}(P_1)\).
  Then \(\phi_{\tau'}(P_2) = \phi_{\tau}(P_2)\).
\end{lemma}
\begin{proof}
  We introduce some notation.
  Given an edge \(ij\) in \(\Gamma\), let \(P_{ij}\) denote the cone of a non-zero morphism \(P_i[-1] \to P_j\).
  Note that \(P_{ij}\) is spherical and fits in an exact sequence
  \[ 0 \to P_i \to P_{ij} \to P_j \to 0 \]
  in the standard heart.

  Since we have \(\phi_{\tau}(P_1) < \phi_{\tau}(P_2)\), the argument of the central charge of \(P_{21}\) lies strictly between these two;  that is
  \[\phi_{\tau}(P_1) < \arg(Z_{\tau}(P_{21})) < \phi_{\tau}(P_2).\]
  Since the only proper sub-object of \(P_{21}\) in the standard heart is \(P_1\), we observe that \(P_{21}\) is \(\tau\)-stable.
  The previous inequality gives us
  \begin{equation}\label{eq:phase-ineq1}
    \phi_\tau(P_1) < \phi_\tau(P_{21}) < \phi_\tau(P_1) < \phi_\tau(P_1) + 1.
  \end{equation}
  Recall from~\Cref{cor:mass-determines-spherical-stables} that \(\tau\) and \(\tau'\) have the same stable spherical objects.
  In particular, $P_{21}$ is also $\tau'$-stable.
  Since we know that
  \[\Hom(P_1, P_{21}) \neq 0, \quad \Hom(P_{21}, P_2) \neq 0, \quad \text{and } \Hom(P_2, P_1[1])\neq 0,\]
  the analogous inequalities hold for $\phi_{\tau'}$:
  \begin{equation}\label{eq:phase-ineq2}
    \phi_\tau'(P_i) < \phi_\tau'(P_{ji}) < \phi_\tau'(P_j) < \phi_\tau'(P_i)+ 1.
  \end{equation}
  We also know that
  \[m_{q,\tau'}(P_1) = m_{q,\tau}(P_1), \quad m_{q,\tau'}(P_{21}) = m_{q,\tau}(P_{21}), \quad m_{q,\tau'}(P_2) = m_{q,\tau}(P_2).\]
  By~\cite[Lemma~5.2]{bap.bec.lic:23}, it follows that the pair of complex numbers \((Z_{\tau}(P_1), Z_{\tau}(P_2))\) is equal to the pair \((Z_{\tau'}(P_{1}), Z_{\tau'}(P_2))\) possibly after a rotation and a reflection.
  The inequalities~\eqref{eq:phase-ineq1} and~\eqref{eq:phase-ineq2} imply that no reflection is necessary, and the equality \(\phi_{\tau}(P_{1}) = \phi_{\tau'}(P_1)\) implies that no rotation is necessary.
  We conclude that \(Z_{\tau}(P_2) = Z_{\tau'}(P_{2})\); call this number \(z\).
  Since both \(\phi = \phi_{\tau}(P_2)\) and \(\phi = \phi_{\tau'}(P_2)\) have the property that \(e^{i \pi \phi}\) lies on the same real ray as \(z\), and both lie in the interval \([\phi_{\tau}(P_1), \phi_{\tau}(P_1)+1)\), we conclude that they are equal.
\end{proof}
\begin{lemma}\label{lem:phase-comparison-equal}
  Let \(\tau\) be a standard stability condition on \(\mathcal C\).
  Let \(\tau'\) be any stability condition such that for all \(x \in \bfS\), we have
  \[ m_{q,\tau}(x) = m_{q,\tau'}(x).\]
  Moreover, suppose \(\phi_\tau(P_1) = \phi_\tau(P_2)\).
  Then \(\phi_{\tau'}(P_1) = \phi_{\tau'}(P_2)\).
  In particular, if \(\phi_{\tau'}(P_1) = \phi_{\tau}(P_1)\), then we also have \(\phi_{\tau'}(P_2) = \phi_{\tau}(P_2)\).
\end{lemma}
\begin{proof}
  Recall from~\Cref{cor:mass-determines-spherical-stables} that \(\tau\) and \(\tau'\) have the same stable spherical objects.
  In particular \(P_1\) and \(P_2\) are also \(\tau'\)-stable.

  Let \(x = P_{12}\).
  We claim that the \(\tau'\)-stable factors in the \(\tau'\)-HN filtration of \(x\) are precisely \(P_1\) and \(P_2\).
  To see this, consider a filtration
  \begin{equation}\label{eqn:tauprimex}
    0 = x_0 \to x_1 \to \cdots \to x_n = x
  \end{equation}
  whose factors \(a_{i}\) are \(\tau'\)-stable and appear in the order of non-increasing phase.
  By the Mukai lemma~\cite[Lemma 2.2]{huy:12} the \(a_{i}\) are spherical and by~\Cref{cor:mass-determines-spherical-stables} also \(\tau\)-stable.
  We have
  \[m_{q,\tau'}(x) = \sum_im_{q,\tau'}(a_i),\]
  and hence
  \[m_{q,\tau}(x) = \sum_im_{q,\tau}(a_i).\]
    But \(x\) is \(\tau\)-semistable.
    Therefore by~\Cref{prop:ss-geodesic}, we conclude that the \(\tau\)-stable objects \(a_i\) all have \(\tau\)-phase \(\phi = \phi_{\tau}(x)\).
    Since the \(a_i\) are \(\tau\)-stable, they are simple objects of the category \(\mathcal C_{\tau,\phi}\).
    Thus,~\eqref{eqn:tauprimex} is a Jordan--H\"older filtration of \(x\) in \(\mathcal C_{\tau,\phi}\).
    But we know that \(x\) has a unique Jordan--H\"older filtration in \(\mathcal C_{\tau,\phi}\), namely
    \begin{equation}\label{eq:xjh}
      \begin{tikzcd}
        0 \ar{rr}&& P_2 \ar{rr}\ar{dl}&& x. \ar{dl} \\\
        &P_2\ar[dashed]{ul}&& P_1\ar[dashed]{ul}
      \end{tikzcd}
    \end{equation}
    Therefore, the filtration~\eqref{eqn:tauprimex} must coincide with~\eqref{eq:xjh}.
    That is, \(P_1\) and \(P_2\) are the \(\tau'\)-stable HN factors of \(x\), with
    \[\phi_{\tau'}(P_2) \geq \phi_{\tau'}(P_1).\]

    By the same argument, the \(\tau'\)-stable factors of \(y = P_{21}\) are also \(P_1\) and \(P_2\), with
    \[\phi_{\tau'}(P_1) \geq \phi_{\tau'}(P_2).\]
    It follows that \(\phi_{\tau'}(P_1) = \phi_{\tau'}(P_2)\).
\end{proof}

\begin{proposition}[Injectivity]\label{prop:injectivity-general}
  Let $\calC$ be the 2-CY category associated to a finite connected quiver \(\Gamma\).
  Let \(q > 0\) be a real number.
  Then the mass map $m_{q} \from \Stab^{\circ}(\mathcal C)/\mathbf{C} \to \mathbf{P}^\bfS$ is injective.
\end{proposition}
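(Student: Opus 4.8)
The plan is to reconstruct any stability condition in $\Stab^\circ(\calC)$, up to the $\bbC$-action, from the restriction of its mass functional to $\bfS$; injectivity on $\Stab^\circ(\calC)/\bbC$ then follows at once. The reconstruction runs in two stages: first recover the central charge up to rotation, then recover the slicing. \emph{Normalisation.} Suppose $\tau_1,\tau_2\in\Stab^\circ(\calC)$ satisfy $m(\tau_1)=m(\tau_2)$ in $\bbP^{\bfS}$. The real subgroup $\bbR\subset\bbC$ scales all masses by a common positive constant, and the mass of a nonzero object is strictly positive, so after acting on $\tau_2$ by a real scalar we may assume $m_{\tau_1}(x)=m_{\tau_2}(x)$ for every $x\in\bfS$. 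Since $m_\tau$ is invariant under the rotation part of the $\bbC$-action, it now suffices to show that $\tau_1$ and $\tau_2$ differ by a rotation. We use repeatedly that the $B_\Gamma$-action permutes $\bfS$ (by the hypotheses on $\bfS$) and commutes with the $\bbC$-action, so the hypothesis and the conclusion are unchanged if $(\tau_1,\tau_2)$ is replaced by $(\beta\tau_1,\beta\tau_2)$ for $\beta\in B_\Gamma$; together with the fact that every stability condition in $\Stab^\circ(\calC)$ lies in the $B_\Gamma\cdot\bbC$-orbit of a standard one, this lets us assume $\tau_1$ is standard, say $\mathcal{P}([\alpha,\alpha+1))=\heart_{\std}$.

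\emph{Recovering the central charge.} When $\tau_1$ is standard, the simple modules $S_i$ are $\tau_1$-stable, so $m_{\tau_1}(S_i)=|Z_1([S_i])|$; and the $S_i$ lie in the $B_\Gamma$-orbit of the $P_j$, hence in $\bfS$ (the one-vertex case is trivial). Thus $m_{\tau_1}|_{\bfS}$ determines $|Z_1|$ on the basis $\{[S_i]\}$ of $K_\Gamma$. For each edge $i\sim j$ of $\Gamma$ the two non-split extensions of $S_i$ and $S_j$ are spherical objects of class $[S_i]+[S_j]$ lying in $\bfS$; the one that is destabilised (if any) has mass $|Z_1([S_i])|+|Z_1([S_j])|$ and the other has mass $|Z_1([S_i])+Z_1([S_j])|$, so comparing their masses determines the relative phase $\phi_1(S_i)-\phi_1(S_j)$: its magnitude by the law of cosines, its sign by which extension is destabilised. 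Since $\Gamma$ is connected, this pins down all the $Z_1([S_i])$ up to a single common rotation, i.e.\ $Z_1$ up to rotation. The spurious complex-conjugate solution is excluded because a nontrivial conjugation makes some non-split extension of two $\tau_1$-stable spherical objects of distinct phase semistable, strictly lowering its mass, and so cannot preserve $m_{\tau_1}|_{\bfS}$ — unless $Z_1$ already takes values on a single line through the origin, in which case the conjugate is itself a rotation of $Z_1$. Because the computation uses only the numbers $m_{\tau_1}|_{\bfS}=m_{\tau_2}|_{\bfS}$, once one checks that $\tau_2$ is likewise standard (standardness being detectable from the mass functional) the same computation shows that $Z_2$ agrees with $Z_1$ up to rotation.

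\emph{Recovering the slicing, and conclusion.} With $Z:=Z_1=Z_2$ now known up to a common rotation, an object $x$ is $\tau_k$-semistable if and only if $m_{\tau_k}(x)=|Z([x])|$; hence $m|_{\bfS}$ together with $Z$ determines the set $\Sigma_k\cap\bfS$ of semistable objects lying in $\bfS$, and $\Sigma_1\cap\bfS=\Sigma_2\cap\bfS$. A stability condition is determined by its central charge together with its set of semistable objects, via $\mathcal{P}(\phi)=\{x\ \text{semistable}: Z(x)\in\bbR_{>0}e^{i\pi\phi}\}$, and for the categories at hand the recognisable (spherical) semistable objects suffice to reconstruct the whole slicing: every semistable object is an iterated extension of $\tau$-stable objects, and the phases of the recognisable stable objects are dense among those that occur. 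Therefore $\tau_1$ and $\tau_2$ have the same central charge and the same slicing up to rotation, and hence coincide in $\Stab^\circ(\calC)/\bbC$.

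\emph{Main obstacle.} The crux is the central-charge stage. Three points require genuine work in the generality of an arbitrary finite connected quiver: that $\Stab^\circ(\calC)$ really is the $B_\Gamma\cdot\bbC$-orbit of the standard stability conditions (so that one may reduce to $\tau_1$ standard); that standardness can be read off the mass functional on $\bfS$ (so that the recovery applies symmetrically to $\tau_2$); and the careful bookkeeping of the sign/complex-conjugation ambiguity when reading phases off masses. The normalisation step and the slicing step are comparatively routine once these are settled, and for the quivers worked out explicitly later ($A_2$ and $\aonehat$) the three points can be verified by hand.
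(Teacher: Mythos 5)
Your overall strategy --- reduce to a standard $\tau_1$ via the braid orbit, recover $|Z|$ on the simples, read off relative phases from the masses of the two extensions attached to each edge, then recover the slicing --- is the same skeleton as the paper's argument. But there is a genuine gap at the point where you transfer the reconstruction from $\tau_1$ to $\tau_2$. Your edge-by-edge phase recovery needs to know, for $\tau_2$, that $S_i$ and $S_j$ are $\tau_2$-stable and that the extension with the strictly smaller mass is genuinely $\tau_2$-(semi)stable of class $[S_i]+[S_j]$ (rather than having completely different HN factors). You defer this to the unproved claim that ``standardness is detectable from the mass functional,'' and you explicitly list it as an unresolved obstacle. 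The paper closes exactly this hole with a separate argument you are missing: after normalising so that $\widetilde m_{\tau}(x)=|Z_\tau(x)|$ characterises semistable objects, it shows that $m_\tau=m_{\tau'}$ forces $\tau$ and $\tau'$ to have the \emph{same stable objects}. The proof is short but essential: if $x$ is $\tau$-stable but not $\tau'$-stable, its $\tau'$-stable refinement $y_1,\dots,y_k$ gives $|Z_\tau(x)|=\sum\widetilde m_\tau(y_i)$, so by the triangle inequality all $Z_\tau(y_i)$ lie on one ray, contradicting the simplicity of $x$ in the abelian category $\mathcal P_\tau(\phi)$. Only after this does the inductive recovery of $Z_{\tau'}(P_i)$ and $\phi_{\tau'}(P_i)$ go through; without it your computation applies to $\tau_1$ only and the ``symmetric'' application to $\tau_2$ is circular.

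A second, smaller gap is the slicing step. Your appeal to ``the phases of the recognisable stable objects are dense among those that occur'' is unjustified and not needed. Once $Z_{\tau'}=Z_\tau$ on $K(\calC)$ (because the $[P_i]$ span), the phase of each stable $x$ is determined up to an integer shift, and the paper pins down that integer by exhibiting nonzero maps $P_i[n]\to x$ and $x\to P_j[n]$ inside $\heart_{\std}[n]$, which force $\phi_{\tau'}(x)\in[n,n+1)$. Your complex-conjugation discussion is also somewhat muddled but not fatal: the sign of $\phi_i-\phi_j$ is already fixed by knowing \emph{which} of the two (distinct) extensions in $\bfS$ carries the degenerate mass, so no separate conjugation argument is required --- provided the ``same stable objects'' step above has been established.
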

\begin{proof}
  Let $\tau$ be a stability condition in \(\Stab^{\circ}(\mathcal{C})\).
  By~\cite[Proposition 4.13]{ike:14}, \(\tau\) is in the braid group orbit of a standard stability condition up to the action of \(\C\).
  (Although~\cite{ike:14} treats the case of preprojective algebras, the same proof works in our setting.)    
  By applying a braid and an element of \(\C\), suppose that $\tau$ is standard.
 Since the objects \(P_i\) are simple objects of the standard heart, they are \(\tau\)-stable.
  
 Let \(\tau'\) be another stability condition such that \(m_q(\tau) = m_q(\tau') \in \mathbf{P}^{\bfS}\).
 By scaling the central charge of \(\tau'\), we may assume that \(m_{q,\tau} = m_{q,\tau'}\) on \(\bfS\). 
  By~\Cref{cor:mass-determines-spherical-stables}, $\tau$ and $\tau'$ have the same spherical stable objects.

Label the vertices of \(\Gamma\) by \(\{1, \dots, n\}\) so that for each $i > 1$ there is some $j < i$ such that the sub-quiver $\{i,j\}$ is of type $A_2$.
  By translating the slicing of \(\tau'\) if necessary, assume that \(\phi_{\tau'}(P_1) = \phi_{\tau}(P_1)\).
  Rescale the central charge again to ensure that \(m_{q,\tau} = m_{q,\tau'}\) continues to hold on \(\bfS\).
  Then \(Z_{\tau}(P_1) = Z_{\tau'}(P_1)\).

  We now prove by induction on \(i\) that \(\phi_{\tau}(P_i) = \phi_{\tau'}(P_i)\) and \(Z_{\tau}(P_i) = Z_{\tau'}(P_i)\).
  The base case \(i = 1\) holds by construction.
  For the induction step, let $i > 1$.
  Consider $j < i$ such that the sub-quiver $\{i,j\}$ is of type $A_2$.
  By the induction hypothesis, we have $\phi_{\tau'}(P_j) = \phi_\tau(P_j)$ and $Z_{\tau'}(P_j) = Z_\tau(P_j)$.
  Depending on whether or not the quantities \(\phi_{\tau}(P_i)\) and \(\phi_{\tau}(P_j)\) are equal, we are now in the setting of one of~\Cref{lem:phase-comparison-unequal,lem:phase-comparison-equal}.
In both cases, we conclude that \(\phi_{\tau'}(P_i) = \phi_{\tau}(P_i)\).
  Since we already have \(m_{q,\tau}(P_i) = m_{q,\tau'}(P_i)\), we conclude that \(Z_{\tau}(P_i) = Z_{\tau'}(P_{i})\).

  Since \(\phi_{\tau}(P_i) = \phi_{\tau'}(P_i)\) and \(0 \leq \phi_{\tau}(P_i) < 1\), the same holds for \(\phi_{\tau'}(P_i)\).
  In particular, \(P_i\) lies in the \([0,1)\) heart of \(\tau'\).
  As a result, \(\heart_{\std}\) is contained in the \([0,1)\)-heart of \(\tau'\).
  Since both are hearts of bounded \(t\)-structures, they must be equal.
  Furthermore, since the \(P_i\) span the Grothendieck group and \(Z_{\tau}(P_i) = Z_{\tau'}(P_i)\), we have \(Z_{\tau} = Z_{\tau'}\).
  So \(\tau'\) is a stability condition with the same heart and central charge as \(\tau\).
  We conclude that \(\tau' = \tau\).
\end{proof}

\subsection{Homeomorphism onto the image}
Having proved injectivity, we take up the question of \(m_q\) being a homeomorphism onto its image.

As before, let $\calC$ be the 2-CY category associated to a finite connected quiver.
We begin by explicitly identifying convenient open neighbourhoods of points in $\Stab(\calC)/\mathbf{C}$ whose closure is compact.

Let \(W \subset \Stab(\mathcal{C})\) be the set of standard stability conditions.
It is easy to see that \(W \subset \Stab(\mathcal{C})\) is locally closed.
Let \(H \subset \mathbf{C}\) be the semi-closed upper half plane
\[ H = \{z \in \mathbf{C} \mid z = r e^{i\pi\phi} \text{ for } r \in \mathbf{R}_{> 0} \text{ and } \phi \in [0,1)\}.\]
We have a homeomorphism
\begin{equation}\label{eqn:stdhomeo}
  H^{n} \to W
\end{equation}
that sends \((z_1,\dots, z_{n})\) to the stability condition whose \([0,1)\)-heart is the standard heart and whose central charge is defined by \([P_i] \mapsto z_i\).

Fix a small \(\epsilon > 0\).
Let \(W_{\epsilon} \subset W\) be the open subset of stability conditions \(\tau\) that satisfy
\[ \epsilon < m_{q,\tau}(P_i) < \epsilon^{-1}.\]
Likewise, let \(H_{\epsilon} \subset H\) be the open subset containing \(z = re^{i\pi\phi}\) such that
\[ \epsilon < rq^{\phi} < \epsilon^{-1}.\]
Then~\eqref{eqn:stdhomeo} restricts to a homeomorphism
\[ H_{\epsilon}^{n} \to W_{\epsilon}.\]
Let \(\overline H_{\epsilon} \subset \mathbf{C}\) be the closure of \(H_{\epsilon}\).
Then \(\overline H_{\epsilon}\) is closed and bounded, and hence compact.
Let \(\overline W_{\epsilon} \subset \Stab(\mathcal{C})\) be the closure of \(W_{\epsilon}\).
\begin{proposition}\label{prop:wcompact}
  The map
  \(H_{\epsilon}^n \to W_{\epsilon}\)
  extends to a homeomorphism
  \(\overline H_{\epsilon}^n \to \overline W_{\epsilon}\).
  As a result, \(\overline W_{\epsilon}\) is compact.
\end{proposition}
\begin{proof}
  We first extend the map \(H_{\epsilon}^n \to \Stab(\mathcal{C})\) to \(\overline H_{\epsilon}^n\).
  The only points of \(\overline H_{\epsilon}\) not in \(H_{\epsilon}\) are the complex numbers of argument 1.
  Given \((z_1, \dots, z_{n}) \in \overline H_{\epsilon}^n\), let \(S \subset \{1,\dots, n\}\) be the indices such that the argument of \(z_i\) is \(1\).
  Let \(\mathcal{A} \subset \mathcal{C}\) be the extension closure of \(P_{i}\) for \(i \not \in S\) and \(P_{i}[-1]\) for \(i \in S\).
  This is a tilt of the standard heart, and hence also the heart of a bounded \(t\)-structure.
  We send \((z_1,\dots,z_{n})\) to the stability condition whose \([0,1)\)-heart is \(\mathcal{A}\) and whose central charge is defined by \([P_{i}] \mapsto z_{i}\).
  It is easy to check that the resulting map is continuous, and defines the required homeomorphism.
\end{proof}

Let \(U \subset \Stab(\mathcal{C})/\mathbf{C}\) be the image of \(W\) under the quotient map \(\Stab(\mathcal{C}) \to \Stab(\mathcal{C})/\mathbf{C}\).
This is the set of stability conditions (up to translation) in which the objects \(P_{i}\) are stable and their phases lie in an open interval of length \(1\).
It is easy to check that \(U \subset \Stab(\mathcal{C})/\mathbf{C}\) is an open subset.
Let \(U_{\epsilon} \subset U\) be the image of \(W_{\epsilon}\).\
\begin{proposition}\label{prop:ucompact}
  The open set \(U_{\epsilon} \subset \Stab(\mathcal{C})/\mathbf{C}\) has a compact closure.
\end{proposition}
\begin{proof}
  The closure \(\overline U_{\epsilon}\) is the image of \(\overline W_{\epsilon}\), which is compact by~\Cref{prop:wcompact}.
\end{proof}

The following proposition will be used when we show that the mass map is a homeomorphism onto its image in type $A_2$ and $\aonehat$.
First let us establish some notation.
\begin{definition}\label{def:collapsed-triangle}
  Fix some \(q > 0\).
  Let \(\tau\) be a stability condition.
  We say that \(\tau\) \emph{collapses} a distinguished triangle \(x \to y \to z \xrightarrow{+1}\) if
    \[m_{q,\tau}(y) = m_{q,\tau}(x) + m_{q,\tau}(z).\]
\end{definition}
If \(\tau\) does not collapse \(x \to y \to z \xrightarrow{+1}\), then we have a strict triangle inequality
    \[m_{q,\tau}(y) < m_{q,\tau}(x) + m_{q,\tau}(z).\]

\begin{proposition}\label{prop:conditional-local-homeo}
  Let \(\tau\) be a standard stability condition.
  Suppose there exists a finite set \(T\) of distinguished triangles of objects in \(\bfS\) with the following properties:
  \begin{enumerate}
  \item no triangle in \(T\) is collapsed by \(\tau\);
  \item if \(\tau'\) is any stability condition such that no triangle in \(T\) is collapsed by \(\tau'\), then \(\tau'\) is standard up to the action of \(\mathbf{C}\).
  \end{enumerate}
  Then
  \[m_q \colon \Stab(\mathcal{C})/\mathbf{C} \to \mathbf{P}^{\bfS}\]
  is a local homeomorphism onto its image at \(\tau\).
\end{proposition}
\begin{proof}
  We abbreviate \(m_{q}\) by \(m\).

  Since \(\tau\) is standard, it represents a point of \(U \subset \Stab(\mathcal{C})/\mathbf{C}\), which we also denote by \(\tau\).
  For a small enough \(\epsilon\), we have \(\tau \in U_{\epsilon}\).
  The set \(\overline U_{\epsilon}\) is compact by~\Cref{prop:ucompact} and \(m\) is injective by~\Cref{prop:injectivity-general}.
  Therefore the map
  \begin{equation}\label{eq:homeocompact}
    m \colon \overline U_{\epsilon} \to m(\overline U_{\epsilon})
  \end{equation}
  is a continuous bijection between two compact sets and hence a homeomorphism.

  Let \(V\) be the subset of the image of \(m\) consisting of points \([f]\) that satisfy the inequalities
  \[ f(y) < f(x) + f(z)\]
  for each non-collapsed triangle \(x \to y \to z \xrightarrow{+1}\) asserted by the hypotheses.
  Then \(V\) is an open subset of the image of \(m\) containing \(m(\tau)\).
  Furthermore, by the hypotheses, we have
  \[ m^{-1}(V) \subset U.\]
  Let \(V_{\epsilon} \subset V\) be the open subset defined by the conditions
  \begin{enumerate}
  \item \(f(P_1) \neq 0\), and
  \item for all \(i\), we have
      \[ \epsilon < f(P_{i})/f(P_{1}) < \epsilon^{-1}. \]
  \end{enumerate}
  Then it is easy to check that \(m^{-1} (V_{\epsilon}) \subset U_{\epsilon}\).
  If \(\epsilon\) is small enough, then \(m(\tau) \in V_{\epsilon}\) and hence \(\tau \in m^{-1}(V_{\epsilon})\).
  The map
  \[ m \colon m^{-1}(V_{\epsilon}) \to V_{\epsilon}\]
  is a restriction of the homeomorphism~\eqref{eq:homeocompact}, and hence a homeomorphism.
  Thus, \(m\) maps an open neighbourhood of \(\tau\) homeomorphically to an open neighbourhood of \(m(\tau)\), and hence is a local homeomorphism at \(\tau\).
\end{proof}

\begin{remark}[Existence of triangles]
  We prove the existence of the triangles \(T\) required in~\Cref{prop:conditional-local-homeo} in the \(A_2\) and \(\aonehat\) cases, but we them more generally.
\end{remark}

\begin{remark}[Functionals in the closure]
  It follows from~\Cref{cor:limit-mass} that the \(\homBar\) functionals are in the closure of the mass map.
  Whether these functionals are dense in the boundary depends on \(q\).
  In the next two sections we answer this question for \(q = 1\) in the \(A_2\) and \(\aonehat\) cases.
  \cite{bap.bec.lic:23} answers this question for other values of \(q\) in the \(A_2\) case.
\end{remark}

\section{The \texorpdfstring{$A_2$}{A2} case}\label{sec:a2}
The aim of this section is to understand the Thurston compactification at \(q = 1\) of the stability space for the 2-Calabi--Yau category $\calC_\Gamma$ where $\Gamma$ is the $A_2$ quiver.
\subsection{Standard stability conditions}
Let $\mathcal C$ be the 2-Calabi--Yau category associated to the $A_2$ quiver, as defined in~\Cref{sec:background}.
It is a graded, $\k$-linear triangulated category classically generated by two spherical objects $P_1$ and $P_2$.
The standard heart \(\heart_{\std}\) is the extension closure of \(P_1\) and \(P_2\).
It has two simple objects, $P_1$ and $P_2$, and two additional indecomposable objects, denoted by $P_{12}$ and $P_{21}$.
The object $P_{ij}$ is the unique extension of $P_i$ by $P_j$.

Recall that \(\tau \in \Stab(\mathcal{C})\) is standard if its \([0,1)\) heart is \(\heart_{\std}\) and a point in \(\Stab(\mathcal{C})/\mathbf{C}\) is standard if it is the image of a standard \(\tau\).

We divide the set of standard stability conditions in two subsets as follows.
Recall that for a standard $\tau$, both $P_1$ and $P_2$ must be \(\tau\)-stable.
We say that \(\tau\) is of \emph{type I} if \(\phi(P_1) \leq \phi(P_2)\) and of \emph{type II} if \(\phi(P_2) \leq \phi(P_1)\).
If \(\phi(P_1) = \phi(P_2)\), then \(\tau\) is of both types, and we say that it is \emph{on-the-wall}; otherwise, it is \emph{off-the-wall}.
See~\Cref{fig:stdstab} for a sketch of the central charges of the stability conditions of the two types.

The two types are distinguished by which of \(P_{12}\) or \(P_{21}\) is semi-stable.
In type I, \(P_{21}\) is semi-stable; in type II, \(P_{12}\) is semi-stable; on the wall, both are semi-stable.

\begin{figure}[ht]
  \centering
  \begin{tikzpicture}[thick]
    \tikzstyle{every node}=[font=\footnotesize]
    \begin{scope}[xshift=-3cm]
    \draw (0:0) edge [->] (60:2) (60:2) node [right] {$P_1$};
    \draw (0:0) edge [->] (120:2) (120:2) node [left] {$P_2$};
    \draw (0:0) edge [->] (90:2.5) (90:2.5) node [above] {$P_{21}$};
    \node (0:0) {$\bullet$};
    \draw[decorate, decoration={brace, amplitude=10}] (-2,-0.25) -- node[sloped, yshift=0.75cm] {standard} (-2,3);
    \draw[decorate, decoration={brace, amplitude=10}] (3,-1.25) -- node[sloped, yshift=-0.75cm] {type I} (-2,-1.25);
  \end{scope}
  \begin{scope}
    \draw (0:0) edge [->] (90:1) (90:1) node [right] {$P_1$};
    \draw (0:0) edge [->] (90:1.5) (90:1.5) node [left] {$P_2$};
    \draw (0:0) edge [->] (90:2.5) (90:2.5) node [above] {$P_{21}$} (90:3.2) node {$P_{12}$};
    \node (0:0) {$\bullet$};
  \end{scope}
  \begin{scope}[xshift=3cm]
    \draw (0:0) edge [->] (60:2) (60:2) node [right] {$P_2$};
    \draw (0:0) edge [->] (120:2) (120:2) node [left] {$P_1$};
    \draw (0:0) edge [->] (90:2.5) (90:2.5) node [above] {$P_{12}$};
    \node (0:0) {$\bullet$};
    \draw[decorate, decoration={brace, amplitude=10}] (2.5,-1.25) -- node[sloped, yshift=-0.75cm] {type II} (-3.25,-1.25);
  \end{scope}
\end{tikzpicture}
  \caption{Central charges in a standard stability condition of types I and II.  The intersection of the two types are stability conditions on-the-wall.}\label{fig:stdstab}
\end{figure}

The subset of \(\Stab(\mathcal{C})/\mathbf{C}\) represented by standard stability conditions of type I is locally closed.
Let \(\Lambda\) be the closure of this set.
Note that \(\Lambda\) includes some non-standard stability conditions, namely those with \(\phi(P_2) = \phi(P_1) + 1\).
The set $\Lambda$ tiles $\Stab(\mathcal C)/\mathbf{C}$ under the action of $B_3$.
That is, $\Lambda$ satisfies the following properties (see, e.g.,~\cite[Proposition 4.2]{bri.qiu.sut:20}).
\begin{enumerate}
\item Each point of $\Stab(\mathcal C)/\mathbf{C}$ lies in the $B_3$-orbit of a point of $\Lambda$.
\item The stabiliser of $\Lambda$ is the subgroup generated by $\gamma = \sigma_2\sigma_1$ in $B_3$.
\item For any $g \in B_3$ not in the stabiliser, the interiors of $\Lambda$ and $g \Lambda$ have empty intersection.
\end{enumerate}
See~\Cref{fig:exchange-graph} for a picture of the tiling of $\Stab(\mathcal C)/\mathbf{C}$ by the orbit of $\Lambda$.
The interior of $\Lambda$ is the set of standard off-the-wall stability conditions of type I.

\subsection{The spherical objects}
Let \(\bfS\) be the set of spherical objects of \(\mathcal{C}\), up to isomorphism and shift.
It turns out that elements of \(\bfS\) can be naturally thought of as rational points on the circle \(\mathbf{P}^1(\mathbf{R})\).
We explain how.

Recall that the Artin--Tits braid group of the $A_2$ quiver is 
\[ B_3 = \langle  \sigma_1, \sigma_2 \mid  \sigma_1\sigma_2\sigma_1 = \sigma_2\sigma_1\sigma_2\rangle.\]
It acts on \(\calC\) via the homomorphism 
\[ \sigma_i \mapsto \sigma_{P_i}.\]
It turns out to act transitively on the set of all the spherical objects of $\mathcal C$, and hence on $\bfS$~\cite{bap.deo.lic:22}.
The element \((\sigma_2\sigma_1)^3\) generates the center of \(B_3\) and acts on the category by \(x \mapsto x[-2]\).
As a result, the action of \(B_3\) on \(\bfS\) factors through \(B_3/Z(B_3)\).
On the other hand, we have an isomorphism $B_3 / Z(B_3) \to \PSL_2(\Z)$ given by
\begin{align*}
\sigma_1 \mapsto
    \begin{pmatrix}
      1 & 1 \\
      0 & 1
    \end{pmatrix}, \quad
  \sigma_2 \mapsto
             \begin{pmatrix}
      1 & 0 \\
      -1 & 1
    \end{pmatrix}.
\end{align*}
The action of \(B_3\) on \(\mathcal{C}\) is faithful~\cite{kho.sei:02,rou.zim:03}.
It is easy to check that the stabiliser of \(P_1 \in \bfS\) is generated by \(\sigma_1\).
As a result, we have a $\PSL_2(\Z)$-equivariant bijection
\begin{equation}\label{eq:num-to-obj}
  i \from \bfS \to \mathbf{P}^1(\Z)
\end{equation}
defined uniquely by the choice
\[ P_1 \mapsto [1:0].\]

Let us describe the spherical object corresponding to a point \([a:c] \in \mathbf{P}^1(\mathbf{Z})\).
Assume \(c \neq 0\).
Write the rational number $a/c$ as a continued fraction with an odd number of terms:
\[
  \frac{a}{c} = n_0 + \cfrac{1}{n_2 + \cfrac{1}{\ddots + \cfrac{1}{n_{2k}}}}.
\]
Here, each $n_i$ is an integer, with $n_i > 0$ for $i = 1, \ldots, 2k$.
Then $[a:c]$ corresponds to the object of $\bfS$ given by
\begin{equation}\label{eq:cfwriting}
  \sigma_1^{n_0}\sigma_2^{-n_1}\cdots\sigma_1^{n_{2k}} (P_2).
\end{equation}
For example, we get
\[
  P_1 \mapsto [1:0], \quad P_2 \mapsto [0:1], \quad P_{21} \mapsto [1:-1], \text{ and } P_{12} \mapsto [1:1].
\]

\subsection{The automaton}\label{sec:a2-automaton}
Fix an off-the-wall standard stability condition $\tau$ of type I.
Let \(\Sigma = \{P_1, P_2, P_{21}\}\), the set of indecomposable \(\tau\)-stable objects of the heart.
Set \(X = P_{21}\).
Define the braid $\gamma \in B_3$ by
\[ \gamma = \sigma_2\sigma_1 = \sigma_X \sigma_2 = \sigma_1\sigma_X.\]
We now describe an HN automaton $\Theta$ that computes \(\tau\)-HN multiplicities of all spherical objects (see~\Cref{fig:automaton}).
\begin{figure}[ht]
  \centering
  \begin{tikzpicture}[node distance=2cm and 4cm, auto, ->, >=stealth', shorten >=1pt, semithick, font=\footnotesize]
    \tikzstyle{every state}=[fill=black!10!white, draw=black!20!white, rectangle, font=\normalsize]
    \draw (30:4) node[state] (A) {$[P_1, P_2]$};
    \draw (150:4) node[state] (B) {$[X, P_1]$};
    \draw (270:4) node[state] (C) {$[P_2, X]$};
    \path (A)  edge [loop above] node {$\sigma_1, \begin{pmatrix} 1 & 1 \\ 0 & 1\end{pmatrix}$} (A)
    edge node [anchor=center, fill=white, sloped] {$\sigma_X , \begin{pmatrix}1 & 0 \\ 1 & 1\end{pmatrix}$} (B)
    edge [bend left=20] node {$\gamma^{-1}$} (C)
    edge [bend left=20] node {$\gamma$} (B)
    (B) edge [loop above] node  {$\sigma_X , \begin{pmatrix} 1 & 1 \\ 0 & 1\end{pmatrix}$} (B)
    edge node [anchor=center, fill=white, sloped] {$\sigma_2 , \begin{pmatrix}1 & 0\\ 1 & 1 \end{pmatrix}$} (C)
    edge [bend left=20] node {$\gamma^{-1}$} (A)
    edge [bend left=20] node {$\gamma$} (C)
    (C) edge [loop below] node {$\sigma_2 , \begin{pmatrix}1 & 1 \\ 0 & 1 \end{pmatrix}$} (C)
    edge node [anchor=center, fill=white, sloped] {$\sigma_1 , \begin{pmatrix}1 & 0 \\ 1 & 1\end{pmatrix}$} (A)
    edge [bend left=20] node {$\gamma$} (A)
    edge [bend left=20] node {$\gamma^{-1}$} (B)
    ;
  \end{tikzpicture}
  \caption{An automaton describing the dynamics of Harder--Narasimhan filtrations in a stability condition with stable objects $P_1$, $P_2$, and $X = P_{21}$.}\label{fig:automaton}
\end{figure}

Formally (following~\Cref{def:hn-automaton}), the automaton $\Theta$ is defined by the $B_3$-labeled graph with three vertices and three edges from each vertex, as shown in~\Cref{fig:automaton}.
Note that the states are called $[X, P_1]$, $[P_1, P_2]$, and $[P_2, X]$.
The \(\Theta\)-set \(S\) is defined by setting \(S_{[a,b]}\) to be the set of spherical objects whose \(\tau\)-stable HN factors are shifts of \(a\) and \(b\).
For example, the object \(P_{112} = \sigma_1\sigma_1(P_2)\) has stable HN factors \(P_1[-1]\), \(P_1\), and \(P_2\), and hence is supported at the state \([P_1,P_2]\).
The objects \(P_{1}\), \(P_2\), and \(X\) are supported at two states.
Recall that the representation $M$ associates the rank 3 module \(\mathbf{Z}^{\Sigma}\) to each state.
However, by construction, at the state \([a,b]\), the image of \(\HN_{\tau}\colon S \to \mathbf{Z}^{\Sigma}\) lands in the rank 2 sub-module \(\mathbf{Z}^{\{a,b}\).
We replace \(\mathbf{Z}^{\Sigma}\) by this sub-module.
Then the maps \(M(e)\) are given by \(2 \times 2\) matrices in the standard basis, as shown in~\Cref{fig:automaton}.
The arrows labelled $\gamma^{\pm 1}$ correspond to the identity matrix, which we omit in the figure.

To check that this setup defines an HN automaton, we need to check that an arrow takes objects supported at its source to objects supported at its target.
We also need to check that the HN multiplicities transform according to the matrix associated to the arrow, as in~\Cref{def:hn-automaton}.
We carry out both these checks in the next proposition.

\begin{proposition}\label{prop:automaton}
  Let \(v = [a,b]\) and $w = [c,d]$ be two states of \(\Theta\).
  Let $e \from v \to w$ be an arrow labelled by the element \(g \in B_3\).
  Let $x \in \mathcal C$ be any object whose stable \(\tau\)-HN factors are shifts of $a$ and $b$.
  The following hold.
  \begin{enumerate}
  \item The stable \(\tau\)-HN factors of $g \cdot x$ are shifts of $c$ and $d$.
  \item The following diagram commutes.
    \begin{center}
      \begin{tikzcd}
        x \arrow[mapsto]{r}{HN_{\tau}} \arrow[mapsto]{d}{g} & HN_{\tau}(x) \in \mathbf{Z}^{\{a,b\}} \arrow[mapsto, shift right=10]{d}{M(e)}\\
        gx \arrow[mapsto]{r}{HN_{\tau}} & HN_{\tau}(gx) \in \mathbf{Z}^{\{c,d\}}
      \end{tikzcd}
    \end{center}
    In other words,
    \[ HN_\tau(gx) = M(e) \cdot HN_\tau(x).\]
  \end{enumerate}
\end{proposition}
\begin{proof}
  Let
  \[0 \to x_0 \to x_1 \to \dots \to x_n = x\]
  be a filtration of $x$ with whose factors $z_i = \Cone(x_{i-1} \to x_i)$ are stable and appear in non-increasing order of phase.
  Then, by hypothesis each \(z_i\) is a shift of \(a\) or \(b\).
  By applying $g$, we get a filtration
  \begin{equation}\label{eqn:newfiltration}
    0 \to gx_0 \to gx_1 \to \dots \to gx_n = gx
  \end{equation}
  with factors $gz_i$.

  We check that the filtration~\eqref{eqn:newfiltration} is rectifiable in the sense of~\Cref{def:rectifiable-filtration}.
  We use~\Cref{prop:rectifiable-filtration-check}.

  We outline the case $v = [P_1,P_2]$, leaving the others to the reader.
  Denote by \(\lceil x \rceil\) (resp.\ \(\lfloor x \rfloor\)) the HN factor of \(x\) of highest (resp.\ lowest) phase.
  To apply~\Cref{prop:rectifiable-filtration-check} to the filtration~\eqref{eqn:newfiltration}, we must check the following:
  for $i < j$, either
  \begin{enumerate}
  \item $\Hom(gz_j, gz_i[1]) = \Hom(gz_i, gz_j[1]) = 0$, or 
  \item $\lfloor  gz_i \rfloor \geq \lceil gz_j \rceil$.
  \end{enumerate}
  If \(z_i \cong z_j\), then (1) holds.
  Let us assume otherwise, so that \(\phi(z_i) > \phi(z_j)\).
  Since we are in a 2-CY category, \(\Hom(gz_j, gz_i[1])\) vanishes if and only if \(\Hom(gz_i, gz_j[1])\) does.
  We enumerate the pairs $(z_i,z_j)$ such that
  \begin{enumerate}
  \item \(z_i\) and \(z_j\) are shifts of \(P_1\) or \(P_2\),
  \item \(\phi(z_i) > \phi(z_{j})\), and
  \item \(\Hom(z_j, z_i[1]) \neq 0\).
  \end{enumerate}
  Up to a simultaneous shift, the only such pairs are $(P_1[1],P_1)$, $(P_2[1], P_2)$, and $(P_2, P_1)$.
  The outgoing edges \(e\) from \(v\) are labeled \(\sigma_1\),\(\sigma_2\), \(\gamma\), or \(\gamma^{-1}\).
  It is a simple check that for each of these edges and each of the enumerated pair, condition (2) holds.
  
  Since the filtration~\eqref{eqn:newfiltration} is rectifiable, it follows from~\Cref{prop:rect-filt-hn} that
  \[ \HN_{\tau}(gx) = \sum \HN_{\tau}(gz_{i}).\]
  Therefore, it suffices to prove the proposition for \(x = z_i\), and hence, for \(x = a\) and \(x = b\).
  This is another straightforward calculation.
\end{proof}

The automaton recognises every spherical object in the sense of~\Cref{rem:automaton-recognising}.
\begin{proposition}\label{prop:cyclicwriting}
The following hold.
  \begin{enumerate}
  \item Every \(\beta \in B_3\) has an expression of the form
    \[ \beta = \gamma^n \sigma_{a_1}^{m_1} \sigma_{a_2}^{m_2}\cdots \sigma_{a_k}^{m_k},\]
    where $n$ is an integer, $k$ is a non-negative integer, the $m_i$ are positive integers, and the sequence
    $(a_1,a_2 \dots, a_k)$ is a contiguous subsequence of the sequence \[(\ldots, X,1,2, X, 1, 2, \ldots).\]
  \item Every spherical object \(s \in \mathcal{C}\) has an expression
    \[ s = \beta_{1} \cdots \beta_n P_1\]
    that is recognised by \(\Theta\).
  \end{enumerate}
\end{proposition}
\begin{proof}
  For (1), we repeatedly use the commutation relations
  \[
    \gamma \sigma_2 \gamma^{-1} = \sigma_1, \quad     \gamma \sigma_X \gamma^{-1} = \sigma_2, \quad \gamma \sigma_1 \gamma^{-1} = \sigma_X.
  \]
  Begin by writing $\beta$ as any product of the generators $\sigma_1$ and $\sigma_2$, along with their inverses.
  Eliminate the inverses of the generators by rewriting as follows:
  \[
    \sigma_1^{-1} = \sigma_X\gamma^{-1}, \quad \sigma_2^{-1} = \sigma_1\gamma^{-1}.
  \]
  Next, use the commutation relations to rewrite
  \begin{equation}\label{eq:gammacomm}
    \gamma^i\sigma_X = \sigma_2\gamma^i,\quad \gamma^i\sigma_1 = \sigma_X\gamma^i, \quad \gamma^i\sigma_2 = \sigma_1\gamma^i,
  \end{equation}
  and thus move all powers of to the left.
  The rest of $\beta$ is now a product of elements from $\{\sigma_1,\sigma_2,\sigma_X\}$.  Replace any occurrences of $\sigma_2\sigma_1$, $\sigma_1\sigma_X$, or $\sigma_X\sigma_2$ by $\gamma$ and again move $\gamma$ to the left as before.
  It is easy to see that eventually, $\beta$ reaches the desired form.

  For (2), begin by writing $s = \beta P_1$ for some \(\beta \in B_3\).
  Write
  \[ \beta = \gamma^n \sigma_{a_1}^{m_1} \sigma_{a_2}^{m_2}\cdots \sigma_{a_k}^{m_k}\]
  as in (1).
  Note that \(P_1\) is supported at two states, namely \([P_1,P_2]\) and \([X,P_1]\).
  Each of the letters \(\{\sigma_1,\sigma_2,\sigma_X\}\) is applicable at least at one of the two states above, so we can apply \(\sigma_{a_k}\).
  The condition on our cyclic writing guarantees that we can apply each subsequent letter.  
  Therefore, \(\Theta\) recognises the expression
  \[ s = \gamma^n \sigma_{a_1}^{m_1} \sigma_{a_2}^{m_2}\cdots \sigma_{a_k}^{m_k} P_1.\]
\end{proof}
As a consequence of~\Cref{prop:cyclicwriting}, every spherical object is supported on at least one of the three states.
In particular, its HN factors involve (up to shift) only 2 out of the 3 objects \(P_1\), \(P_2\), and \(X\).

\begin{proposition}\label{prop:unsink}
  Let \(s\) be a spherical object supported at the state \([P_1,P_2]\).
  Assume that \(s\) is not a shift of \(P_1\) or \(P_2\).
  Then
  \begin{enumerate}
  \item we can write \(s = \sigma_1^{r} t\) for some \(r > 0\) and an object \(t\) supported at \([P_2,X]\);
  \item we can write \(s = \sigma_2^{-r}t\) for some \(r > 0\) and an object \(t\) supported at \([X,P_{1}]\).
  \end{enumerate}
  Analogous statements hold for the other two states.
\end{proposition}
\begin{proof}
  Write a recognised expression
  \(s = \gamma^n \sigma_{a_1}^{m_1} \sigma_{a_2}^{m_2}\cdots \sigma_{a_k}^{m_k} P_1\) as in the proof of~\Cref{prop:cyclicwriting}.
  Since \(s\) is not a shift of \(P_1,P_2\), or \(X\), we have \(k \geq 1\).
  Using the commutation relations~\eqref{eq:gammacomm}, move \(\gamma\) to the right to obtain another expression \(s = \sigma_{b_1}^{m_1} \sigma_{b_2}^{m_2}\cdots \sigma_{b_k}^{m_k} \gamma^nP_1 \), which is also recognised by \(\Theta\).
  Observe the state at which the recognising path ends is dictated by \(b_{1}\).
  For \(b_1 = 1\), the end state is \([P_1,P_2]\); for \(b_1 = 2\), it is \([P_2,X]\); and for \(b_1 = X\), it is \([X,P_1]\).
  By assumption, we must have \(b_1 = 1\).

  Taking \(r = m_1\) and \(t = \sigma_{b_2}^{m_2}\cdots \sigma_{b_k}^{m_k} \gamma^nP_1\) yields the first assertion.
  For the second assertion, consider
  \[\sigma_2s = \sigma_2\sigma_{b_1}^{m_1} \cdots \sigma_{b_k}^{m_k}\gamma^n P_1.\]
  Write \(\sigma_2\sigma_{b_1} = \sigma_2\sigma_1\) as \(\gamma\) and move \(\gamma\) to the right using the commutation relations~\eqref{eq:gammacomm}, to obtain a recognised expression for \(\sigma_2s\).
  Its leftmost letter is \(\gamma\), \(\sigma_{X}\), or \(\sigma_1\).
  In the first two cases, we stop; in the last case we consider \(\sigma_2^2s\).
  After finitely many steps, we get \(t = \sigma_2^rs\) supported at \([X,P_1]\) with \(r > 0\).
  \end{proof}

\begin{remark}\label{rem:new-degegenerate-tau-automaton}
  Let $\tau$ be an on-the-wall standard stability condition.
  \Cref{fig:degenerate-tau-automaton} describes a \(\tau\)-HN automaton that recognises all spherical objects.
  Since the automaton in~\Cref{fig:automaton} suffices for mass computations, we omit the details.
\end{remark}

\begin{remark}
  For every off-the-wall stability condition, we have an HN automaton obtained by applying an appropriate braid to the automaton in~\eqref{fig:automaton}.
  Likewise, for a on-the-wall stability condition, we have an HN automaton obtained by applying a braid to the automaton in~\eqref{fig:degenerate-tau-automaton}.
  Thus, we have an automaton at every point of the stability space.
  Observe that these automata exhibit a wall-and-chamber behaviour: they remain constant in a chamber, get more complicated along a wall, and flip to a different automaton as we cross the wall.  It would be interesting to understand the wall-crossing formulas for recognized expressions of a fixed braid in different chambers.
\end{remark}

\begin{figure}
  \centering
  \begin{tikzpicture}[node distance=2cm and 4cm, auto, ->, >=stealth', shorten >=1pt, semithick, font=\footnotesize]
    \tikzstyle{every state}=[fill=black!10!white, draw=black!20!white, rectangle, font=\normalsize]
    \tikzstyle{label}=[fill=white, anchor=center]
    \draw (45:4) node[state] (A) {$[X', P_2]$};
    \draw (135:4) node[state] (B) {$[P_1, X']$};
    \draw (225:4) node[state] (C) {$[X, P_1]$};
    \draw (315:4) node[state] (D) {$[P_2, X]$};
    \path
    (A) edge[bend left=10] node[label]{$\sigma_1$} (B)
    (A) edge[bend right=10] node[label]{$\gamma'$} (B)
    (B) edge[loop left] node {$\sigma_1$} ()
    (B) edge[bend left=10] node[label] {$\sigma_2$} (C)
    (B) edge[bend right=10] node[label] {$\gamma$} (C)
    (C) edge[bend left=10] node[label] {$\sigma_2$} (D)
    (C) edge[bend right=10] node[label] {$\gamma$} (D)
    (D) edge[bend left=10] node[label] {$\sigma_1$} (A)
    (D) edge[bend right=10] node[label] {$\gamma'$} (A)
    (D) edge[loop right] node {$\sigma_2$} ()
    (A) edge[bend left=10] node[pos=0.2,label,sloped] {$\gamma$, $w_0^\pm$} (C)
    (C) edge[bend left=10] node[pos=0.2,label,sloped] {$\gamma'$, $w_0^{\pm}$} (A)
    (B) edge[bend left=10] node[pos=0.2,label,sloped] {$\gamma'$, $w_0^\pm$} (D)
    (D) edge[bend left=10] node[pos=0.2,label,sloped] {$\gamma$, $w_0^\pm$} (B)
    ;
  \end{tikzpicture}
  \caption{An automaton describing the dynamics of HN filtrations in an on-the-wall stability condition with semistable objects $P_1$, $P_2$, $X = P_{21}$, and $X' = P_{12}$.  Here \(\gamma = \sigma_2 \sigma_1\); \(\gamma' = \sigma_1\sigma_2\); and \(w_0 = \sigma_1\sigma_2\sigma_1 = \sigma_2\sigma_1\sigma_2\).}
  \label{fig:degenerate-tau-automaton}
\end{figure}
\subsection{Consequences of the automaton}\label{sec:a2-automaton-consequences}
In this section, we use the automaton from~\Cref{sec:a2-automaton} to prove results about $m \colon \Stab(\mathcal C)/\mathbf{C} \to \mathbf{P}^{\bfS}$ and \(h \colon \bfS \to \mathbf{P}^{\bfS}\).
We prove that \(m\) is a homeomorphism onto its image (\Cref{prop:homeo1}) and describe the closure of \(h\) (\Cref{prop:pmf}).
In subsequent sections, we establish that the closure of \(h\) is indeed the boundary of the image of \(m\).

Recall that \(X = P_{21}\).
Recall from~\Cref{def:collapsed-triangle} that \(x \to y \to z \xrightarrow{+1}\) is a collapsed triangle if \(m(y) = m(x)  + m(z)\).
\begin{proposition}\label{prop:non-collapsing-triangles-non-deg}
  Consider the following collection of distinguished triangles:
  \[T = \{P_1 \to X \to P_2 \xrightarrow{+1},\quad X \to P_2 \to P_1[1] \xrightarrow{+1},\quad P_2[-1] \to P_1 \to X \xrightarrow{+1}\}.\]
  If \(\tau\) is an off-the-wall standard stability condition of type I, then no triangle in \(T\) is collapsed by \(\tau\).
  Furthermore, if \(\tau'\) is any stability condition such that no triangle in \(T\) is collapsed by \(\tau'\), then \(\tau'\) is standard of type I, up to the action of \(\mathbf{C}\).
\end{proposition}
 
\begin{proof}
  It is immediate from~\Cref{prop:ss-geodesic} (or a direct check) that no triangle in \(T\) is collapsed by any off-the-wall standard stability condition of type \(I\).
  
  Let \(\tau'\) be any stability condition.
  Recall that \(\tau'\) is in the braid group orbit of a standard stability condition \(\tau\) of type I.
  Write \(\tau' = \beta\tau''\) for some braid \(\beta\) and some standard stability condition \(\tau''\) of type I.

  If \(\beta\) is a power of \(\gamma\), then \(\tau'\) is already standard of type I.
  Otherwise, we exhibit a triangle of \(T\) that is collapsed by \(\tau'\).
  Equivalently, we exhibit a triangle of \(\beta^{-1}T\) that is collapsed by \(\tau''\).

Suppose that \(\tau''\) is off-the-wall.
  Consider a cyclic writing of \(\beta^{-1}\) as described in~\Cref{prop:cyclicwriting}.
  Since $\beta$ is not a power of $\gamma$, this writing has rightmost letter either \(\sigma_1\), \(\sigma_2\), or \(\sigma_X\).
  Suppose the rightmost letter of \(\beta^{-1}\) is \(\sigma_1\).
  Consider the first triangle in \(T\), namely
  \(X \to P_2 \to P_1[1] \xrightarrow{+1}\).
  Observe from the automaton in~\Cref{fig:automaton} that the objects \(P_1\), \(P_2\), and \(X\) are all sent to the state \([P_1, P_2]\) after applying \(\sigma_1\).
  Furthermore, we have the equation
  \[HN_{\tau''}(\sigma_1(X)) = HN_{\tau''}(\sigma_1(P_1)) + HN_{\tau''}(\sigma_1(P_2)).\]
  The equation above persists by linearity as we apply each successive letter of the cyclic writing of \(\beta^{-1}\), resulting in the equation
  \[HN_{\tau''}(\beta^{-1}(X)) = HN_{\tau''}(\beta^{-1}(P_1)) + HN_{\tau''}(\beta^{-1}(P_2)),\]
  and hence
  \[m_{\tau''}(\beta^{-1}(X)) = m_{\tau''}(\beta^{-1}(P_1)) + m_{\tau''}(\beta^{-1}(P_2)).\]
  Hence the triangle \(\beta^{-1}X \to \beta^{-1}P_2 \to \beta^{-1}P_1[1] \xrightarrow{+1}\) is collapsed by \(\tau''\).
  Similarly, we can check that if the cyclic writing of \(\beta^{-1}\) ends in \(\sigma_2\) (respectively \(\sigma_X\)), then the second (respectively third) triangle of \(T\) is collapsed by \(\tau'\).

  If \(\tau''\) is on-the-wall, then it is a limit of an on-the-wall stability condition.
  By continuity of the masses, we still conclude that \(\tau' = \beta \tau''\) collapses one of the triangles in \(T\).
\end{proof}
Recall that \(X' = P_{21}\).
\begin{proposition}\label{prop:non-collapsing-triangles-deg}
  Consider the following collection of distinguished triangles:
  \[T = \{X \to P_2 \to P_1[1] \xrightarrow{+1},\quad P_2[-1] \to P_1 \to X \xrightarrow{+1}, \quad X' \to P_1 \to P_2[1] \xrightarrow{+1}\}.\]
  If \(\tau\) is an on-the-wall standard stability condition, then no triangle in \(T\) is collapsed by \(\tau\).
  Furthermore, if \(\tau'\) is any stability condition such that no triangle in \(T\) is collapsed by \(\tau'\), then \(\tau'\) is standard of type I, up to the action of \(\mathbf{C}\).
\end{proposition}
\begin{proof}
  Analogous to the proof of~\Cref{prop:non-collapsing-triangles-non-deg}.
\end{proof}

\begin{proposition}\label{prop:homeo1}
  The map $m \from \Stab(\mathcal C)/\mathbf{C} \to \mathbf{P}^\bfS$ is a homeomorphism onto its image.
\end{proposition}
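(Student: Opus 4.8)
The plan is to combine the global injectivity of $m$ with the local--homeomorphism criterion of \autoref{prop:local-homeo} and the degenerating automaton of \autoref{prop:A2-automaton-degenerating}, spreading the local statement over all of $\Stab(\mathcal C)/\bbC$ by means of the $B_3$-action, and then upgrading it to the global statement by a soft topological argument. First I would record that $m$ is injective on all of $\Stab(\mathcal C)/\bbC$: in the $A_2$ case this space is connected --- it is the union of the $B_3$-translates of the connected set $\Lambda$, and adjacent translates meet along walls --- so it equals $\Stab^\circ(\mathcal C)/\bbC$ and \autoref{prop:injectivity-general} applies. Let $B_3$ act on $\bbP^\bfS$ by permuting coordinates (legitimate because $\bfS$ is $B_3$-stable); then $m$ is $B_3$-equivariant, via the identity $m_{\beta\tau}(x) = m_\tau(\beta^{-1}x)$, so $\img(m)$ is $B_3$-invariant and $m$ is an equivariant continuous bijection $\Stab(\mathcal C)/\bbC \to \img(m)$.

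Next I would show that $m$ is a local homeomorphism onto its image at every point. By \cite[Proposition 4.2]{bri.qiu.sut:20}, every point of $\Stab(\mathcal C)/\bbC$ is of the form $\beta\tau$ for some $\beta \in B_3$ and some standard $\tau$ of type A, possibly degenerate. For such a $\tau$, \autoref{prop:A2-automaton-degenerating} supplies a degenerating $\tau$-mass automaton --- the automaton $\Theta$ of \autoref{fig:automaton}, with the mass functions of \autoref{rem:degenerate-tau-automaton} in the degenerate case --- so \autoref{prop:local-homeo} yields open sets $U \ni \tau$ and $V \ni m(\tau)$ with $m|_U \colon U \to V \cap \img(m)$ a homeomorphism. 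Using global injectivity one checks that $m^{-1}(V) \subseteq U$, so $m^{-1}(V)$ is an \emph{open} neighbourhood of $\tau$ carried homeomorphically onto $V \cap \img(m)$; this also disposes of the degenerate $\tau$, for which the naive "same stable objects" locus need not be a neighbourhood. Applying the homeomorphism $\beta$ on both sides, and using $\beta\cdot\img(m)=\img(m)$ together with the commuting square from the first step, transports this to a local homeomorphism onto the image at $\beta\tau$. Hence $m$ is a local homeomorphism onto its image everywhere.

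Finally, I would invoke the standard fact that a continuous injection which is everywhere a local homeomorphism onto its image is an open map onto its image: for $W \subseteq \Stab(\mathcal C)/\bbC$ open and $p = m(q) \in m(W)$, a local--homeomorphism neighbourhood $U' \ni q$ makes $m(U'\cap W)$ an open subset of $\img(m)$ containing $p$ and contained in $m(W)$, so $m(W)$ is open in $\img(m)$. Being a continuous open bijection onto $\img(m)$, the map $m$ is a homeomorphism onto its image, which is the assertion.

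The substantive content --- linearising HN multiplicities under braids, and the mass-degeneracy that lets one reconstruct a stability condition from finitely many masses --- is already encapsulated in \autoref{prop:local-homeo} and \autoref{prop:A2-automaton-degenerating}, so the argument above is essentially assembly and there is no serious obstacle. The points demanding the most attention are the bookkeeping around degenerate standard stability conditions --- where one relies on $\Theta$ still computing $\tau$-\emph{masses} and remaining mass-degenerating, even though it no longer computes $\tau$-HN-multiplicities, so that \autoref{prop:local-homeo} applies verbatim --- and the verification that the local--homeomorphism property is $B_3$-invariant, which is precisely where the equivariance $m_{\beta\tau}(x)=m_\tau(\beta^{-1}x)$ and the invariance of $\img(m)$ are used.
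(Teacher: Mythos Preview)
Your proposal is correct and follows essentially the same route as the paper's own proof: invoke \autoref{prop:injectivity-general} for injectivity, use \autoref{prop:A2-automaton-degenerating} to feed \autoref{prop:local-homeo} at every standard type A point, and spread this by $B_3$-equivariance. The paper's argument is terser---it simply asserts that injectivity together with the local-homeomorphism property at every point gives a global homeomorphism onto the image---whereas you spell out the openness argument and the bookkeeping around degenerate $\tau$ (in particular the passage from the not-necessarily-open ``same stable objects'' locus to the genuinely open $m^{-1}(V)$), but there is no substantive difference in strategy.
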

\begin{proof}
  \Cref{prop:injectivity-general} shows that \(m\) is injective.
  \Cref{prop:conditional-local-homeo} together with~\Cref{prop:non-collapsing-triangles-non-deg} and~\Cref{prop:non-collapsing-triangles-deg} shows that \(m\) is a local homeomorphism at a standard stability condition of type I.
  Since every \(\tau \in \Stab(\mathcal{C})/\mathbf{C}\) is in the braid group orbit of a standard stability condition of type I, it follows that \(m\) is a local homeomorphism at every \(\tau\).
  As a result, $m$ is a homeomorphism onto its image.
\end{proof}

Recall that we have a surjective map $B_3 \to \PSL_2(\mathbf{Z})$.
We use the notation \(/\pm\) to denote the quotient that identifies every element and its negative.
Then we have the standard action of \(\PSL_2(\mathbf{Z})\) on \(\mathbf{Z}^2 / \pm \).
We thus get an action of \(B_3\) on \(\mathbf{Z}^2 / \pm \).
As a result, \(\mathbf{Z}^2/ \pm \) becomes a \(\Theta\)-set.
Recall that we also have the \(\Theta\)-representation \(M\) shown in~\Cref{fig:automaton}, and hence the \(\Theta\)-set \(M/\pm \).

For each of the three states \(v\) of \(\Theta\), we define a linear map \(\phi_v \colon M_v \to \mathbf{Z}^2\) as follows:
  \begin{align*}
    \phi_{[P_1,P_2]} &: (1,0) \mapsto (1,0) \text{ and } (0,1) \mapsto (0,1),\\
    \phi_{[P_2, X]} &: (1,0) \mapsto (0, 1) \text{ and } (0,1) \mapsto (1,-1), \\
    \phi_{[X, P_1]} &: (1,0) \mapsto (1, -1) \text{ and } (0,1) \mapsto (1, 0).
  \end{align*}
\begin{proposition}\label{prop:M-is-standard}
  The maps \(\phi_v\) give an isomorphism of \(\Theta\)-sets \(M/\pm \to \mathbf{Z}^2/ \pm\).
\end{proposition}
\begin{proof}
  All three maps \(\phi_v\) are clearly bijective.
  It remains to check that they are \(\Theta\)-equivariant.
  That is, up to sign, for every edge \(e \colon v \to w\), we have
  \[ e \circ \phi_v = \phi_w \circ M(e).\]
  We omit this straightforward verification.
\end{proof}

Recall that we have a $B_3$-equivariant map $i \from \bfS \to \mathbf{P}^1(\Z)$ defined in~\eqref{eq:num-to-obj}.
The following proposition identifies \(i(s)\) explicitly in terms of the HN multiplicities.
\begin{proposition}\label{prop:iishn}
  For a spherical object \(s\) supported at the state \(v\), we have the equality
  \[i(s) = \phi_v(\HN_{\tau}(s)) \in \mathbf{P}^1(\mathbf{Z}).\]
  Moreover \(\phi_v(\HN_{\tau}(s)) \in \mathbf{Z}^2/\pm\) is the unique representative of \(i(s)\) whose coordinates are relatively prime.
\end{proposition}
\begin{proof}
  The equation evidently holds for \(s = P_1\).
  If it holds for \(s\) supported at \(v\), and \(e \colon v \to w\) is an edge of \(\Theta\) with label \(\beta \in B_{3}\), then using the \(\Theta\)-equivariance of \(i\), \(\phi_v\), and \(\HN_{\tau}\), we see that it also holds for \(\beta s\).
  By~\Cref{prop:cyclicwriting}, it holds for all spherical objects.

  By~\Cref{prop:cyclicwriting}, the vector \(\HN_{\tau}(s)\) is obtained by applying a sequence of matrices in~\Cref{fig:automaton} to the vector \((1,0)\) or \((0,1)\).
  Since all these matrices have determinant \(1\), we see that the coordinates of \(\HN_{\tau}(s)\) are relatively prime.
  Since the linear maps defining \(\phi\) are also of determinant \(\pm 1\), the same is true for \(\phi_v(\HN_{\tau}(s))\).
\end{proof}

By~\Cref{prop:cyclicwriting}, the set \(\bfS\) of spherical objects of \(\mathcal{C}\) is the union of the three sets supported at the three vertices of \(\Theta\).
Under the map \(i \colon \bfS \to \mathbf{P}^1(\mathbf{Z})\), this division corresponds to a geometric division of the circle $\mathbf{P}^1(\R)$, which we now describe.
The three points  $i(P_1) = [1:0]$ and $i(P_2)=[0:1]$, and $i(X) = [-1:1]$ divide $\mathbf{P}^1(\R)$ into three closed arcs (see~\Cref{fig:circle}).
We denote these arcs by $[P_1,P_2]$, $[P_2, X]$, and $[X, P_1]$.
\begin{figure}
  \begin{center}
  \begin{tikzpicture}[thick]
    \draw (0,0) circle (2);
    \draw [fill]
    (180:2) circle (0.05) node [left] {\tiny $X = [-1:1]$}
    (60:2) circle (0.05) node [above right] {\tiny $P_1 = [1:0]$}
    (300:2) circle (0.05) node [below right] {\tiny $P_2 = [0:1]$};
  \end{tikzpicture}
  \caption{The points $P_1$, $P_2$, and $X = P_{21}$ divide $\mathbf{P}^1(\mathbf{R})$ into three arcs.
    The Harder--Narasimhan factors of an object only include the two endpoints of the arc on which the object lies.}\label{fig:circle}
\end{center}
\end{figure}
\begin{proposition}\label{prop:sphericalHN}
  The map \(i\) sends the objects of $\bfS$ supported at the state $[P_1, P_2]$ to the arc $[P_1, P_2] \subset \mathbf{P}^{1}(\mathbf{R})$, and likewise for the other two states.
\end{proposition}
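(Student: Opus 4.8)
The plan is to reduce the statement to the identification $i(s) = \phi([s])$ for $s \in \bfS$ that was established just above the proposition: here $i \from \bfS \to \bbP^1(\Z)$ is the bijection of \eqref{eq:num-to-obj} and $\phi \from \mathbb P M \to \mathbb P^1(\Z)$ is the $\Theta$-equivariant isomorphism of \autoref{prop:M-is-standard}. Under this identification, an object $s$ supported at a state $v = [A,B]$ is sent to $\phi_v([s])$, so the proposition becomes a purely combinatorial claim about which subset of $\bbP^1(\R)$ is swept out when $\phi_v$ is applied to the vectors $[s]$ for $s \in S(v)$.

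First I would record that $[s]$, as an element of $M(v) = \mathbb Z^{\{A,B\}}$, equals the HN multiplicity vector $HN_\tau(s)$ — this is exactly the defining property of the HN automaton, justified by \autoref{prop:automaton} together with the description of $HN_v$ as the inclusion $\mathbb Z^{\{A,B\}} \hookrightarrow \mathbb Z^\Sigma$ — and that it therefore lies in the cone of non-negative integer combinations of the two basis vectors $A$ and $B$, minus the origin (HN multiplicities are non-negative integers, and $s$ is a non-zero object). So what remains is to show that, at each of the three states, $\phi_v$ maps this non-negative cone onto the rational points of the arc $[A,B]$.

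Next I would check this state by state, using the explicit formulas for $\phi_v$ from the proof of \autoref{prop:M-is-standard}. For $v = [P_1,P_2]$, $\phi_v$ is the identity in the standard bases, so the non-negative cone projectivises to $\{[a:c] : ac \geq 0\}$, which is the arc from $P_1 = [1:0]$ to $P_2 = [0:1]$ avoiding $X = [-1:1]$. For $v = [P_2,X]$ the cone spanned by $\phi_v(P_2) = (0,-1)$ and $\phi_v(X) = (1,-1)$ projectivises to the arc from $P_2$ to $X$ avoiding $P_1$, and for $v = [X,P_1]$ the cone spanned by $\phi_v(X) = (-1,1)$ and $\phi_v(P_1) = (-1,0)$ projectivises to the arc from $X$ to $P_1$ avoiding $P_2$. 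In each case this is a one-line verification: the two extreme rays of the cone land on the two named endpoints, and the third distinguished point is visibly not a non-negative combination of the two basis images.

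The only point needing care — and the only real obstacle — is the \emph{orientation}: "the arc $[A,B]$" is by definition the one of the two closed arcs between $A$ and $B$ that does not contain the third distinguished point, so I must be sure the image of the non-negative cone is this arc rather than its complement. This is precisely what the last sentence of each case settles: e.g. for $[P_1,P_2]$ one checks that $[-1:1]$ is not in the first quadrant. I would also note, for consistency, that an object supported at two of the three states necessarily maps to their common endpoint (one of $P_1, P_2, X$), so the three cases fit together without conflict.
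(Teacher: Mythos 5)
Your proposal is correct and follows exactly the paper's argument: identify the map $\bfS \to \bbP^1(\R)$ with $s \mapsto \phi_v([s])$, observe that $[s]$ is the HN multiplicity vector and hence lies in the non-negative cone of $M(v)$, and check that $\phi_v$ sends that cone to the corresponding arc. You simply spell out the state-by-state verification and the orientation check that the paper leaves as "similar", which is a fine (and slightly more careful) rendering of the same proof.
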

\begin{proof}
  By~\Cref{prop:iishn}, the map $i \colon \bfS \to \mathbf P^1(\mathbf{R})$ is equal to $s \mapsto \phi(HN_{\tau}(s))$.
  For $s$ supported at $[P_1,P_2]$, the vector $HN_{\tau}(s)$ lies in the non-negative cone in $\mathbf{Z}^{\{P_1,P_2\}}$, which is mapped by $\phi_{[P_1,P_2]}$ to the arc $[P_1,P_2]$.
  The argument for the other two states is similar.
\end{proof}

We now use our automaton to give a new and simpler proof of a theorem of Rouquier and Zimmermann~\cite[Proposition 4.8]{rou.zim:03}.
This will be the content of~\Cref{prop:ac}.
Rouquier and Zimmermann work in the homotopy category of complexes of projective modules over the zig-zag algebra, which is formally a different category from the 2-CY category we study in this section.
As explained, e.g., in~\cite[\S 2.3.3]{bap.deo.lic:22}, the effect of passing from their category to ours is to collapse one grading.
Since~\cite[Proposition 4.8]{rou.zim:03} involves forgetting the grading,~\Cref{prop:ac} implies~\cite[Proposition 4.8]{rou.zim:03}.

To explain the argument in more detail, let us first introduce some basic notation.
\begin{definition}\label{def:JH-multiplicity}
  Let \(x\) be an object of \(\mathcal{C}\) belonging to the standard heart.
  The \emph{Jordan--H\"older (JH) multiplicity} of \(P_1\) (resp. \(P_2\)) in \(x\) is the multiplicity of \(P_1\) (resp. \(P_2\)) in any Jordan--H\"older filtration of \(x\).
  More generally, the JH multiplicity of \(P_i\) in an arbitrary \(x\) is the sum of the JH multiplicities of \(P_i\) in each cohomology of \(x\), where cohomology is taken with respect to the \(t\)-structure whose heart is the standard heart.
\end{definition}
\begin{proposition}\label{prop:ac}
  Let $s$ be a spherical object with $i(s) = [a:c] \in \mathbf{P}^1(\mathbf{Z})$, where $a, c$ are relatively prime integers.
  Then the JH multiplicities of \(P_1\) and \(P_2\) in \(s\) are \(|a|\) and \(|c|\) respectively.
\end{proposition}
\begin{proof}
  Since the \(\tau\)-HN filtration refines the cohomology filtration with respect to the standard heart, the JH multiplicities of \(P_1\) and \(P_2\) are linear functions of the \(\tau\)-HN multiplicities.
  Up to sign, these linear functions are precisely the maps \(\phi\) in~\Cref{prop:M-is-standard}.
  The result now follows by~\Cref{prop:iishn}.
\end{proof}

The next proposition relates the JH multiplicities and the \(\homBar\) functionals.
It is an important tool for understanding the closure of the image of the mass map.
\begin{proposition}\label{prop:homs}
  Let $x$ be a spherical object with $i(x) = [a:c] \in \mathbf{P}^1(\mathbf{Z})$, where $a, c$ are relatively prime integers.  
  Then $\homBar(P_2,x) = |a|$ and $\homBar(P_1,x) = |c|$.
\end{proposition}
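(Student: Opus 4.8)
The plan is to derive this from \autoref{prop:ac} together with the limit formula of \autoref{prop:limit-mass}, using the identification $\bfS \cong \bbP^1(\bbZ)$ of \eqref{eq:num-to-obj}. First I would use the $2$-Calabi--Yau property to rewrite $\homBar(x, P_j) = \homBar(P_j, x)$; since $P_1$ and $P_2$ are $\tau$-stable, \autoref{prop:limit-mass} then applies with $a = P_j$ and gives $\homBar(P_j, x) = \tfrac{1}{m(P_j)}\lim_{n\to\infty} m_\tau(\sigma_{P_j}^n x)/n$. So it suffices to compute the leading-order growth in $n$ of $m_\tau(\sigma_{P_j}^n x)$.

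Take $j = 1$; the case $j=2$ is identical. Under \eqref{eq:num-to-obj} the twist $\sigma_{P_1}$ acts as $\begin{pmatrix}1 & 1\\ 0 & 1\end{pmatrix}$, so $\sigma_{P_1}^n x$ corresponds to $[a+nc : c]$, again a coprime pair. By \autoref{prop:cyclicwriting}(3) every HN factor of $\sigma_{P_1}^n x$ is a shift of $P_1$, $P_2$, or $X$; writing $N_1, N_2, N_X$ (depending on $n$) for the numbers of each, we have $m_\tau(\sigma_{P_1}^n x) = N_1 m(P_1) + N_2 m(P_2) + N_X m(X)$. On the other hand \autoref{prop:ac} says that the minimal complex of $\sigma_{P_1}^n x$ has exactly $|a+nc|$ copies of $P_1$ and $|c|$ copies of $P_2$; since these multiplicities are additive along the HN filtration (as in the proof of \autoref{prop:ac}), and the minimal complexes of $P_1$, $P_2$, and $X = (P_2 \to P_1)$ contain $(1,0)$, $(0,1)$, and $(1,1)$ copies of $(P_1, P_2)$ respectively, this forces $N_1 + N_X = |a+nc|$ and $N_2 + N_X = |c|$. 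Hence $N_2$ and $N_X$ are bounded (by $|c|$) independently of $n$, while $N_1 = |a+nc| - N_X = n|c| + O(1)$, so $m_\tau(\sigma_{P_1}^n x) = n|c|\,m(P_1) + O(1)$ and the limit above is $|c|$. This also yields the correct value when $x$ is a shift of $P_1$ (then $c = 0$ and $\homBar(P_1,x) = 0$). For $j = 2$ one uses that $\sigma_{P_2}$ acts as $\begin{pmatrix}1 & 0\\ -1 & 1\end{pmatrix}$ and sends $[a:c]$ to $[a : c-na]$, whose minimal complex has the bounded number $|a|$ of $P_1$'s and $|c-na| = n|a| + O(1)$ copies of $P_2$, giving $\homBar(P_2,x) = |a|$.

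I do not anticipate a genuine obstacle: the only delicate point is the bookkeeping relating the HN-factor counts $N_1, N_2, N_X$ to the minimal-complex multiplicities, and checking that the $O(1)$ error terms are bounded uniformly in $n$ — both of which are immediate once one knows $N_2, N_X \leq |c|$. (A more self-contained but messier alternative would bypass \autoref{prop:limit-mass} entirely and compute $\Hom^*(P_1, x) = H^*(e_1 x^\bullet)$ directly from the minimal complex $x^\bullet$: its total dimension is $2|a| + |c|$ by \autoref{prop:ac}, and one shows the induced differential has rank exactly $|a|$ using minimality together with the multiplication rules of the zigzag algebra — in particular that $e_1 A e_1 \cong \k[\varepsilon]/\varepsilon^2$ is free of rank one over itself while $e_1 A e_2$ is the trivial module, and that composing the two edges through vertex $1$ gives the loop. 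I would prefer the first route.)
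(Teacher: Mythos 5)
Your argument is correct, but it takes a genuinely different route from the paper's. The paper computes $\Hom(P_1,x)$ directly: it applies $\Hom(P_1,-)$ to the HN filtration of $x$, checks that the three possible boundary maps ($P_2\to P_2[2]$, $X \to X[2]$, $P_2 \to X[2]$) are killed by this functor so that $\Hom(P_1,x)$ splits as a direct sum over the HN factors, and then reduces to the factors $P_2$ and $X$ by linearity; this requires a separate case analysis according to which state of the automaton supports $x$. You instead bypass any hom computation by combining the general asymptotic formula of \autoref{prop:limit-mass} with the Rouquier--Zimmermann count of \autoref{prop:ac}: the relation $N_1+N_X = |a+nc|$, $N_2+N_X=|c|$ between HN multiplicities and minimal-complex multiplicities is exactly the additivity established in the proof of \autoref{prop:ac}, so your bound $N_2,N_X\le |c|$ and the resulting linear growth $m_\tau(\sigma_{P_1}^n x)=n|c|\,m(P_1)+O(1)$ are sound, and there is no circularity since \autoref{prop:limit-mass} is proved in \autoref{sec:proj-embedding} independently of anything in \autoref{sec:a2}. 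The trade-off: the paper's method is elementary and yields the slightly stronger splitting statement $\Hom(P_1,x)=\bigoplus\Hom(P_1,z_i)$, while yours avoids both the boundary-map verification and the case analysis on supporting states, at the cost of invoking the heavier machinery (geodesic filtrations, \autoref{lem:eventual-tail}) behind \autoref{prop:limit-mass}. Both proofs silently pass from $\homBar(x,P_j)$ to $\homBar(P_j,x)$ via the $2$-Calabi--Yau symmetry, and both handle the degenerate case $x\cong P_j[i]$ separately, so no gap there.
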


\begin{proof}
  We prove the assertion for $\homBar(x,P_1)$.
  The other case follows by applying $\gamma$.
  It is easy to check the result for \(x = P_1,P_2,\) and \(X\) by hand.

  Fix a standard off-the-wall stability condition.
  By~\Cref{prop:ac}, \(|c|\) gives the JH-multiplicity of \(P_2\), which is a linear function of the HN-multiplicities.
  The key idea is to prove that \(\homBar(-,P_1)\) is also a linear function of the HN-multiplicities.
  Then the cases \(x = P_{1}, P_2,\) and \(X\) imply the result for all \(x\).

  We now prove the linearity.
  Begin with an object \(x\) supported at $[P_2, X]$.
  Let
  \[ 0 = x_0 \to \cdots \to x_n = x.\]
  be the HN filtration with factors \(z_{i} = \Cone(x_{i-1} \to x_i)\).
  We must prove that
  \[ \homBar(P_1,x) = \sum \homBar(P_1, z_i).\]
  Note that since neither \(x\) nor \(z_i\) is a shift of \(P_1\), we have \(\homBar = \dim\Hom\).

  Apply $\Hom(P_1,-)$ to the HN filtration of \(x\) to get the following filtration in the bounded derived category of graded vector spaces:
  \begin{equation}\label{eqn:hom-filtration}
    0 = \Hom(P_1, x_0) \to \cdots \to \Hom(P_1, x_n) = \Hom(P_1, x).
  \end{equation}
  The factors in~\eqref{eqn:hom-filtration} are \(\Hom(P_1, z_i)\).
  We argue that the connecting map
  \[ \Hom(P_1, z_i) \to \Hom(P_1, x_i[1])\]
  vanishes.
  To see this, consider the map \(z_i \to x_{i-1}[1]\).
  It suffices to prove that for every HN factor \(y\) of \(x_{i-1}[1]\) and every map \(z_i \to y\), the induced map
  \[ \Hom(P_1,z_i) \to \Hom(P_1, y)\]
  vanishes.
  But the HN factors of \(x_{i-1}[1]\) are simply \(z_j[1]\) for \(j < i\).
  Using that both \(z_i\) and \(z_j\) are shifts of \(P_2\) or \(X\) and that \(\phi(z_j) \geq \phi(z_i)\), we see that (up to isomorphisms and shifts) the only non-zero possibilities for \(z_i \to z_{j}[1]\) are 
  \[ P_2 \to P_2[2], \quad X \to X[2], \text{ and } P_2 \to X[2]. \]
  All three are killed by the functor $\Hom(P_1,-)$.

  Since the connecting maps in~\eqref{eqn:hom-filtration} vanish, we get
  \[ \Hom(P_1,x) \cong \bigoplus \Hom(P_1, z_i),\]
  and by taking dimensions
  \[ \hom(P_1,x) = \sum \hom(P_1,z_i).\]
 
  We now treat the case of $x$ supported at the other two states.
  Instead of directly proving linearity, we argue using the group action.
  By~\Cref{prop:ac}, the JH multiplicities of \(P_2\) in $x$ and $\sigma_1^rx$ are equal.
  We also have
  \[\homBar(x, P_1) = \homBar(\sigma_1^rx, P_1).\]
  Hence, the proposition for $x$ implies the proposition for $\sigma_1^rx$.
  By~\Cref{prop:unsink}, any \(x\) supported at \([X,P_1]\) or \([P_{1},P_2]\) can be written as \(x = \sigma_1^ry\) for some \(r \in \mathbf{Z}\) and \(y\) supported at \([P_2,X]\).
Hence the proposition holds for \(x\).
\end{proof}

Consider the action of \(B_3\) on \(\mathbf{R}^2/\pm\) via the homomorphism \(B_3 \to \PSL_2\mathbf{Z}\).
Let \(\pi \colon \mathbf{R}^2 \to \mathbf{R}\) be the second projection.
\begin{proposition}\label{cor:homs}
  Let \(\beta \in B_3\).
  Set \(s = \beta P_1\) and let \(x\) be any spherical object with \(i(x) = [a:c]\), where \(a,c \in \mathbf{Z}\) are relatively prime.
  Then
  \[ \homBar(s,x) = |\pi(\beta^{-1}(a,c))|.\]
  In particular, we have
  \[
   \homBar(P_1,x) = |c|, \quad \homBar(P_2,x) = |a|, \quad \homBar(P_{21}, x) = |a+c|, \text{ and } \homBar(P_{12}, x) = |a-c|.
\]
\end{proposition}
\begin{proof}
  We have
  \[\homBar(s,x) = \homBar(\beta P_1, x) = \homBar(P_1, \beta^{-1}x).\]
  By~\Cref{prop:homs}, the last quantity is \(|\pi(\beta^{-1}(a,c))|\), as required.
  Taking \(\beta = 1, \gamma^{-1}, \sigma_2\), and \(\sigma_2^{-1}\) yield the four equations.
\end{proof}

Recall the map $h \from \bfS \to \mathbf{P}^\bfS$
defined by
\[ h \colon x \mapsto [\homBar(x,-)].\]
Consider $\bfS$ as a subset of $\mathbf{P}^1(\R)$ via the bijection $i \colon \bfS \to \mathbf{P}^1(\Z)$ as defined in~\eqref{eq:num-to-obj}.
\begin{proposition}\label{prop:pmf}
  The map $h \from \bfS \to \mathbf{P}^{\bfS}$ extends to a continuous map $\mathbf{P}^1(\R) \to \mathbf{P}^\bfS$, which is a homeomorphism onto its image.
\end{proposition}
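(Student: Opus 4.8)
The plan is to pin down the extension explicitly using an ``intersection number'' formula for the hom functionals, and then read off the topological statement from soft point‑set arguments. The first step I would carry out is to upgrade \autoref{cor:homs} from the four basic functionals to all of them: if $x \in \bfS$ corresponds to $[a:c] \in \bbP^1(\Z)$ and $y \in \bfS$ corresponds to the primitive integer vector $(b,d)$, then $\homBar(x,y) = |ad - bc|$. To prove this, write $y = \beta P_1$ for some $\beta \in B_3$; since $\dim\Hom^*$ is invariant under auto‑equivalences,
\[ \homBar(x,y) = \homBar(\beta^{-1}x,\; P_1), \]
and by \autoref{prop:homs} this equals $|c'|$, where $[a':c'] = i(\beta^{-1}x)$ for the $\PSL_2(\Z)$‑equivariant bijection $i$ of \eqref{eq:num-to-obj}. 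Writing $\bar\beta \in \PSL_2(\Z)$ for the image of $\beta$, we have $[a':c'] = \bar\beta^{-1}[a:c]$ and $(b:d) = \bar\beta[1:0]$; as $\bar\beta^{-1}$ is the adjugate of $\bar\beta$ up to sign, a one‑line $2 \times 2$ computation gives $|c'| = |ad - bc|$.

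With that formula in hand I would define the extension by
\[ h \from \bbP^1(\R) \to \bbP^\bfS, \qquad h([\alpha:\beta]) = \bigl[\, y = (b,d) \longmapsto |\alpha d - \beta b| \,\bigr], \]
where $(b,d)$ denotes the primitive representative of $y \in \bbP^1(\Z) = \bfS$. This lands in $\bbP^\bfS$ and is well defined: changing the sign of $(b,d)$, or scaling $(\alpha,\beta)$, only rescales the whole vector, which is moreover never $0$ since its $P_1 = [1:0]$ and $P_2 = [0:1]$ coordinates are $|\beta|$ and $|\alpha|$. By the first step it restricts on $\bfS$ to the original map $h$. Continuity is immediate from the product topology on $\R^\bfS$: on each of the two affine charts of $\bbP^1(\R)$ one has a continuous, nowhere‑vanishing lift $\{\beta \neq 0\} \to \R^\bfS \minus \{0\}$ — each coordinate is an affine function of $\alpha/\beta$ post‑composed with $|\cdot|$, and the $P_1$‑coordinate is constantly $1$ — and composing with $\R^\bfS \minus \{0\} \to \bbP^\bfS$ gives continuity of $h$.

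For the homeomorphism claim, injectivity is the main point: if $h([\alpha:\beta]) = h([\alpha':\beta'])$ then, after rescaling representatives, $|\alpha d - \beta b| = |\alpha' d - \beta' b|$ for every primitive $(b,d)$, hence by homogeneity and density for every $(b,d) \in \R^2$; since a nonzero linear functional on $\R^2$ is determined up to sign by its absolute value, $(\alpha',\beta') = \pm(\alpha,\beta)$, so the two points of $\bbP^1(\R)$ coincide. Thus $h$ is a continuous injection of the compact space $\bbP^1(\R)$ into $\bbP^\bfS$, which is Hausdorff (any two non‑proportional vectors in $\R^\bfS$ differ in some $2 \times 2$ minor, which yields a continuous map to $\bbP^1(\R)$ separating them), and a continuous injection of a compact space into a Hausdorff space is a homeomorphism onto its image.

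I expect the only genuine content to be the first step: the uniform determinant description of $\homBar$ on $\bfS$, which both forces the formula for the extension and guarantees agreement with $h$ on $\bfS$. Steps two and three are formal; the one thing to be careful about is the bookkeeping of primitive representatives, so that the extension is literally a well‑defined point of $\bbP^\bfS$ rather than merely well defined up to coordinatewise rescaling, but this is harmless.
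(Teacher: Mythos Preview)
Your proof is correct and follows essentially the same approach as the paper: both reduce to the computation in \autoref{prop:homs} via the $B_3$-equivariance of $\homBar$ to obtain an explicit linear formula for the extension, and both finish with compact-to-Hausdorff. The only cosmetic differences are that you distill the formula into the determinant $|ad-bc|$ (the paper leaves it as $|(1,0)\cdot\overline\beta^{-1}\cdot(a,c)^t|$), and for injectivity you invoke the general fact that a linear functional on $\R^2$ is determined up to sign by its absolute value, whereas the paper simply observes that the three coordinates $[|a|:|c|:|a+c|]$ at $P_1,P_2,X$ already separate points.
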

\begin{proof}
  For an \(s \in \bfS\), chose a \(\beta_s \in B_3\) such that \(s = \beta_s P_1\).
  Define
  \(\widetilde h \colon \mathbf{P}^1(\mathbf{R}) \to \mathbf{P}^\bfS\)
  by
  \[ \widetilde h \colon [a:c] \mapsto \left[\left|\pi (\beta_s^{-1}(a,c))\right|\right].\]
  Then \(\widetilde h\) is evidently continuous and by~\Cref{cor:homs}, it extends \(h\).
  
  We check that \(\widetilde h\) is a homeomorphism onto its image.
  Since the domain is compact and the target is Hausdorff, it suffices to check that \(\widetilde h\) is injective.
  Consider the composition of $\widetilde h$ with the projection onto the homogenous coordinates corresponding to the objects \(P_1, P_2\), and \(P_{21}\).
  By~\Cref{cor:homs}, the composition is given by
  \[ [a:c] \mapsto [|a|:|c|:|a+c|].\]
  This map is injective, and hence so is $\widetilde h$.
\end{proof}

\subsection{Gromov coordinates}
Let $\tau$ be a stability condition of type I.
Since the three positive real numbers $m_\tau(P_1)$, $m_\tau(P_2)$, and $m_\tau(X)$ satisfy the triangle inequalities, there exist non-negative real numbers $x$, $y$, $z$ such that
\[ m_\tau(P_1) = y+z, \quad m_\tau(P_2) = z+x, \quad m_\tau(X) = x+y.\]
We call the $x$, $y$, $z$, the \emph{Gromov coordinates}  of $\tau$.
Note that if $\tau$ is off-the-wall, then the Gromov coordinates are all positive.
In the closure \(\Lambda\), one of the coordinates may be zero.
Two of the coordinates cannot be zero.

Recall that $m_\tau \from \bfS \to \mathbf{R}$ is the mass function associated to $\tau$.
Let $\homBar(s) \colon \bfS \to \mathbf{R}$ be the function \(\homBar(s,-)\) as defined in~\Cref{def:hombar}.
\begin{proposition}[Linearity]\label{prop:linearity}
  We have
  \[ m_\tau = x \homBar(P_1) + y \homBar(P_2) + z \homBar(X).\]
\end{proposition}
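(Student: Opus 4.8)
The plan is to prove the identity pointwise on $\bfS$ by feeding each spherical object through the $\tau$-HN automaton $\Theta$ of \autoref{fig:automaton}, which collapses the claim to a bounded check. As a first reduction I would observe that $m_\tau(t)$, and hence the Gromov coordinates $x,y,z$ (which are obtained from the masses of $P_1,P_2,X$ by an invertible linear system), depend continuously on $\tau$, while $\homBar$ does not depend on $\tau$ at all; since the non-degenerate standard stability conditions of type A are dense in $\Lambda$, it therefore suffices to prove the identity for non-degenerate $\tau$. (Alternatively, \autoref{rem:degenerate-tau-automaton} allows one to run the argument below for degenerate $\tau$ verbatim.)

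So assume $\tau$ is non-degenerate. By \autoref{prop:cyclicwriting} together with \autoref{prop:sphericalHN}, every $t\in\bfS$ is supported at one of the three states $[P_1,P_2]$, $[P_2,X]$, $[X,P_1]$ of $\Theta$. Fix such a state $v=[A,B]$ and set $(\alpha,\beta)=[t]\in M(v)$; since all transition matrices of $\Theta$ have non-negative entries, $\alpha,\beta\ge 0$. By \autoref{prop:automaton} the vector $[t]$ is the $\tau$-HN multiplicity vector of $t$, so
\[ m_\tau(t)=\alpha\,m_\tau(A)+\beta\,m_\tau(B), \]
and substituting $m_\tau(P_1)=y+z$, $m_\tau(P_2)=z+x$, $m_\tau(X)=x+y$ expresses the left-hand side as an explicit combination of $x,y,z$ depending only on $v$ and $(\alpha,\beta)$. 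For the right-hand side I would use the linear isomorphism $\widetilde\phi\from M\to\bbZ^2$ from the proof of \autoref{prop:M-is-standard} and \autoref{prop:ac}, which is $\Theta$-equivariant up to sign and sends $[t]$ to $\pm(a,c)$ when $t$ corresponds to $[a:c]\in\bbP^1(\bbZ)$; reading off its values at the generators gives $(a,c)=\pm(\alpha,\beta)$ at $[P_1,P_2]$, $\pm(\beta,-\alpha-\beta)$ at $[P_2,X]$, and $\pm(-\alpha-\beta,\alpha)$ at $[X,P_1]$. Combined with \autoref{cor:homs}, which gives $\homBar(P_1,t)=|c|$, $\homBar(P_2,t)=|a|$, and $\homBar(X,t)=|a+c|$, this writes $x\homBar(P_1,t)+y\homBar(P_2,t)+z\homBar(X,t)$ as an explicit combination of $x,y,z$ as well.

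The proof then finishes by matching the two expressions at each of the three states; equivalently, since both sides are linear in $[t]$ on the non-negative cone of $M(v)$, it is enough to check the identity at the two generators, i.e.\ for $t\in\{P_1,P_2,X\}$. The one point requiring care — the closest thing to an obstacle here — is sign bookkeeping: one must verify that the linear forms $a$, $c$, $a+c$ have constant sign on the non-negative cone at each state (for instance at $[P_2,X]$ one has $a=\beta\ge 0$, $c=-\alpha-\beta\le 0$, $a+c=-\alpha\le 0$), so that the absolute values coming from \autoref{cor:homs} genuinely linearise there, after which the equality reduces to an elementary rearrangement such as $\alpha(z+x)+\beta(x+y)=x(\alpha+\beta)+y\beta+z\alpha$. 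Since every spherical object is covered by one of the three cases, this establishes $m_\tau=x\homBar(P_1)+y\homBar(P_2)+z\homBar(X)$.
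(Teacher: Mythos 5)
Your proof is correct and follows essentially the same route as the paper's: both rest on the automaton's description of the HN multiplicities of spherical objects (each is supported on at most two of $P_1$, $P_2$, $X$) together with \autoref{prop:ac}/\autoref{cor:homs}, and both handle degenerate $\tau$ by continuity. The only difference is presentational — the paper records the three multiplicities $a(s),b(s),c(s)$ of $P_1,P_2,X$ at once and checks $b+c=\homBar(P_1,s)$, $a+c=\homBar(P_2,s)$, $a+b=\homBar(X,s)$ uniformly, whereas you verify the same identity state by state in the $[a:c]$ coordinates, with the sign bookkeeping you describe.
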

\begin{proof}
  Let $s \in \bfS$ be a spherical object.
  Suppose $\tau$ is off-the-wall.
  Observe that the HN filtration of $s$ is the same for all off-the-wall type I stability conditions, and its stable factors are $P_1$, $P_2$, and $X$, up to shift.
  Denoting by $a(s)$, $b(s)$, and $c(s)$ the multiplicities of these three, we have
  \[ m_\tau(s) = (y+z) a(s) + (x+z) b(s) + (x+y) c(s).\]
  In particular, $m_\tau(s)$ is linear in the Gromov coordinates.
  Using~\Cref{prop:ac} and~\Cref{prop:homs}, we have
  \begin{align*}
    b(s) + c(s) &= \text{JH multiplicity of $P_2$ in $s$}
                = \homBar(P_1,s), \text{ and }\\
    a(s) + c(s) &= \text{JH multiplicity of $P_1$ in $s$}
                = \homBar(P_2,s).
  \end{align*}
  By applying \(\gamma\) to the first equality, we get
  \(a(s) + b(s) = \homBar(X, s)\).
  It follows that
  \[ m_\tau(s) = x\homBar(P_1,s) + y\homBar(P_2,s)+z\homBar(X, s).\]

  The case of a on-the-wall $\tau$ follows by continuity.
\end{proof}

Let $\tau$ be an arbitrary stability condition.
Then there are three $\tau$ semi-stable objects, say $A$, $B$, $C$, whose classes in the Grothendieck group are (up to sign) the classes of $P_1$, $P_2$, and $X$.
These are obtained simply by applying an appropriate braid to $P_1$, $P_2$, and $X$.
Define the Gromov coordinates $x$, $y$, $z$ for $\tau$ by the condition
\[ m_\tau(A) = y+z, \quad m_\tau(B) = x+z, \quad m_\tau(C) = x+y.\]
Then~\Cref{prop:linearity} implies that we have
\[m_\tau = x \homBar(A) + y \homBar(B) + z \homBar(C).\]

The Gromov coordinates give a nice geometric picture of $\Stab(\mathcal C)/\mathbf{C} \subset \mathbf{P}^\bfS$.
Let $\Delta$ denote the following clipped triangle:
\begin{align*}
  \Delta &= \{(x,y,z) \in \mathbf{R}_{\geq 0}^3 \mid \text{at least two coordinates non-zero} \} / \R_{>0} \\
  &\cong \text{a closed planar triangle minus the three vertices}.
\end{align*}
Recall that $\Lambda \subset \Stab(\mathcal C)/\mathbf{C}$ is the closure of the set of type I stability conditions.
We have a homeomorphism $\Delta \to \Lambda \subset \mathbf{P}^\bfS$ given by the Gromov coordinates:
\begin{equation}\label{eq:gromovhomeo}
  (x,y,z) \mapsto x \cdot \homBar(P_1) + y \cdot \homBar(P_2) + z \cdot \homBar(X).
\end{equation}
Note that the homeomorphism extends to the closed (unclipped) triangle $\overline \Delta$, under which the three vertices are mapped to the points $h(P_1)$, $h(P_2)$, and $h(X)$.
By applying the $B_3$ action, we obtain the picture as shown in~\Cref{fig:exchange-graph}.

Let us describe~\Cref{fig:exchange-graph} in more detail.
Recall that the element $\gamma = \sigma_2\sigma_1$ generates the stabiliser of $\Lambda$ in $\PSL_2(\mathbf{Z})$.
So, we label the various (distinct) translates of $\Lambda$ by $\langle \gamma \rangle $-cosets.
Two translates of $\Lambda$ are either disjoint or intersect along an edge, which is a copy of the open interval.
The three translates that intersect $\Lambda$ along its three edges are $\sigma_1 \Lambda$, $\sigma_X\Lambda$, and $\sigma_2 \Lambda$.
Consequently, the three translates that intersect $\beta\Lambda$ are $\beta \sigma_1 \Lambda$, $\beta \sigma_X\Lambda$, and $\beta\sigma_2\Lambda$.
We can encode the translates and their intersections in a graph (called the \emph{exchange graph} in~\cite{bri.qiu.sut:20}).
Its vertices are left $\gamma$-cosets in $\PSL_2(\Z)$, and a coset $\beta\langle \gamma \rangle$ is connected by an edge with $\beta \sigma_1 \langle\gamma\rangle$, $\beta \sigma_X\langle\gamma\rangle$, and $\beta\sigma_2\langle\gamma\rangle$.

\begin{figure}[ht]
  \centering
  \begin{tikzpicture}[auto, ->, >=stealth', semithick, font=\scriptsize]
    \tikzstyle{every state}=[draw=none, rectangle, rounded corners, font=\footnotesize]
    \tikzstyle{twist}=[fill=white!95!black]
    \tikzstyle{sx}=[bend left=10]
    \tikzstyle{s1}=[bend left=10]
    \tikzstyle{s2}=[bend left=10]
    \tikzstyle{obj}=[fill=white, draw, circle, inner sep=1.5pt, outer sep=0pt]

    \coordinate (P1) at (90:5);
    \coordinate (P2) at (330:5);
    \coordinate (P21) at (210:5);
    \coordinate (P12) at (30:5);
    \coordinate (P221) at (-90:5);
    \coordinate (P211) at (150:5);
    \coordinate (P112) at (60:5.5);
    \coordinate (P122) at (0:5.5);
    \coordinate (P2221) at (-60:5.5);
    \coordinate (X2) at (-120:5.5);
    \coordinate (X1) at (180:5.5);
    \coordinate (P2111) at (120:5.5);
        
    \node[obj] at (P1) {} (P1) node[above] {$P_1$};
    \node[obj] at (P2) {} (P2) node[right] {$P_2$};
    \node[obj] at (P21) {} (P21) node[left] {$X$};
    \node[obj] at (P12) {};
    \node[obj] at (P221) {};
    \node[obj] at (P211) {};
    \node[obj] at (P112) {};
    \node[obj] at (P122) {};
    \node[obj] at (P2221) {};
    \node[obj] at (X2) {};
    \node[obj] at (X1) {};
    \node[obj] at (P2111) {};

    \node[state] (L) at ($0.33*(P1)+0.33*(P2)+0.33*(P21)$) {$\Lambda$};
    \node[state] (L1) at ($0.33*(P1)+0.33*(P2)+0.33*(P12)$) {$\sigma_1\Lambda$};
    \node[state] (L2) at ($0.33*(P21)+0.33*(P2)+0.33*(P221)$) {$\sigma_2\Lambda$};
    \node[state] (Lx) at ($0.33*(P1)+0.33*(P21)+0.33*(P211)$) {$\sigma_X\Lambda$};
    \node[state] (L11) at ($0.25*(P1)+0.25*(P12)+0.5*(P112)$) {\tiny $\sigma_1^2\Lambda$};
    \node[state] (L12) at ($0.25*(P12)+0.25*(P2)+0.5*(P122)$) {\tiny $\sigma_1\sigma_2\Lambda$};
    \node[state] (L22) at ($0.25*(P2)+0.25*(P221)+0.5*(P2221)$) {\tiny $\sigma_2^2\Lambda$};
    \node[state] (L2x) at ($0.25*(P21)+0.25*(P221)+0.5*(X2)$) {\tiny $\sigma_2\sigma_X\Lambda$};
    \node[state] (Lxx) at ($0.25*(P21)+0.25*(P211)+0.5*(X1)$) {\tiny $\sigma_X^2\Lambda$};
    \node[state] (Lx1) at ($0.25*(P211)+0.25*(P1)+0.5*(P2111)$) {\tiny $\sigma_X\sigma_1\Lambda$};

    \begin{scope}[on background layer]
    \path [draw=white!50!black, fill=white!95!black] (P1) -- (P21) -- (P2) -- cycle;
    \path [draw=white!50!black, fill=white!95!black] (P1) -- (P21) -- (P211) -- cycle;
    \path [draw=white!50!black, fill=white!95!black] (P21) -- (P2) -- (P221) -- cycle;
    \path [draw=white!50!black, fill=white!95!black] (P2) -- (P1) -- (P12) -- cycle;
    \path [draw=white!50!black, fill=white!95!black] (P1) -- (P12) -- (P112) -- cycle;
    \path [draw=white!50!black, fill=white!95!black] (P12) -- (P2) -- (P122) -- cycle;
    \path [draw=white!50!black, fill=white!95!black] (P2) -- (P221) -- (P2221) -- cycle;
    \path [draw=white!50!black, fill=white!95!black] (P221) -- (P21) -- (X2) -- cycle;
    \path [draw=white!50!black, fill=white!95!black] (P21) -- (X1) -- (P211) -- cycle;
    \path [draw=white!50!black, fill=white!95!black] (P211) -- (P2111) -- (P1) -- cycle;
    \end{scope}

    \path (L) edge[s1] node[twist] {$\sigma_1$} (L1)
    edge [sx] node[twist] {$\sigma_X$} (Lx)
    edge [s2] node[twist] {$\sigma_2$} (L2)
    (L1) edge[sx] node[twist] {$\sigma_X$} (L) 
    (Lx) edge[s2] node[twist] {$\sigma_2$} (L)
    (L2) edge[s1] node[twist] {$\sigma_1$} (L)
    (L1) edge[s1] node[twist] {$\sigma_1$} (L11) (L11) edge[sx] node[twist] {$\sigma_X$} (L1)
    (L1) edge[s2] node[twist] {$\sigma_2$} (L12) (L12) edge[s1] node[twist] {$\sigma_1$} (L1)
    (L2) edge[s2] node[twist] {$\sigma_2$} (L22) (L22) edge[s1] node[twist] {$\sigma_1$} (L2)
    (L2) edge[sx] node[twist] {$\sigma_X$} (L2x) (L2x) edge[s2] node[twist] {$\sigma_2$} (L2)
    (Lx) edge[sx] node[twist] {$\sigma_X$} (Lxx) (Lxx) edge[s2] node[twist] {$\sigma_2$} (Lx)
    (Lx) edge[s2] node[twist] {$\sigma_1$} (Lx1) (Lx1) edge[s1] node[twist] {$\sigma_X$} (Lx)
    
    ;

    \foreach \x in {0, 60, ..., 300} 
    \path
    (\x+4:5.2) edge [bend left=10] (\x+7:5.5)
    (\x-4:5.2) edge [bend left=10] (\x-7:5.5)
    (\x-9:5.5) edge [bend left=10] (\x-6:5.2)
    (\x+5:5.5) edge [bend left=10] (\x+2:5.2);
    
  \end{tikzpicture}
  \caption{
    The tiling of $\Stab(\mathcal C)/\mathbf{C}$ by the images $\Lambda$ under the braid group.
    The vertices of the triangles correspond to spherical objects.
  }\label{fig:exchange-graph}
\end{figure}

\begin{remark}
The complement of $\Lambda$ in $\Stab(\mathcal C)/\mathbf{C}$ has three connected components.
We can describe these three components in two ways.
For the first, write $\tau'$ in the complement of $\Lambda$ as $\tau' = \beta \tau$, where $\beta \in B_3$ and $\tau \in \Lambda$.
Write $\beta$ in a form recognised by the automaton in~\Cref{fig:automaton}.
Then the three components correspond to the three possible end states of the automaton when it reads the writing of $\beta$.
For the second, recall from the proof of~\Cref{prop:non-collapsing-triangles-non-deg} that the three numbers
\[ m_{\tau'}(P_1), m_{\tau'}(P_2), \text{ and } m_{\tau'}(X)\]
satisfy a collapsed triangle inequality---one is the sum of the other two.
The three connected components correspond to the ways in which the inequality collapses.
\end{remark}

\subsection{The closure}\label{sec:closure}
We know by~\Cref{prop:pre-compactness} that the closure of $\Stab(\mathcal C)/\mathbf{C}$ in $\mathbf{P}^\bfS$ is compact.
Set
\begin{align*}
  P &= \overline{h(\bfS)} = h(\mathbf{P}^1(\R))  \subset \mathbf{P}^\bfS, \text{and}\\
  M &= m\left( \Stab(\mathcal C)/\mathbf{C} \right) \subset \mathbf{P}^\bfS.
\end{align*}
The goal of~\Cref{sec:closure} is to prove that $\overline M = M \cup P$.

Let
\[ \overline \Delta = \left(\R_{\geq 0}^3 \minus 0 \right) / \R_{>0}.\]
Then $\overline \Delta$ is homeomorphic to a closed triangle.
An immediate consequence of the linearity (from~\Cref{prop:linearity}) is the following.
\begin{proposition}[Closure of $\Lambda$]\label{prop:closedtriangle}
  The closure of $\Lambda$ in $\mathbf{P}^\bfS$ is
  \[ \overline\Lambda = \Lambda \cup \left\{\homBar(P_1), \homBar(P_2), \homBar(X)\right\}.\]
  This closure is homeomorphic to $\overline \Delta$.
\end{proposition}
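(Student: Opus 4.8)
The plan is to extend the Gromov-coordinate parametrisation of $\Lambda$ to the closed triangle $\overline\Delta$ and show this extension is a homeomorphism onto $\Lambda \cup \{\homBar(P_1), \homBar(P_2), \homBar(X)\}$. Define
\[ F \from \overline\Delta \to \bbP^\bfS, \qquad F([x:y:z]) = \bigl[\, x\,\homBar(P_1) + y\,\homBar(P_2) + z\,\homBar(X) \,\bigr]. \]
First I would check that $F$ is well defined and continuous. By \autoref{cor:homs}, the restriction of $x\,\homBar(P_1) + y\,\homBar(P_2) + z\,\homBar(X)$ to the three coordinates indexed by $T = \{P_1, P_2, X\}$ is the triple $(y+z,\, z+x,\, x+y)$, which is nonzero whenever $(x,y,z) \neq 0$; hence this combination is a nonzero element of $\bbR^\bfS$ at every point of $\overline\Delta$, so $F$ is the composite of the manifestly continuous map $\overline\Delta \to \bbR^\bfS \minus 0$ with the projection to $\bbP^\bfS$. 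By \autoref{prop:linearity} together with the homeomorphism $\Delta \to \Lambda$ furnished by the Gromov coordinates, the restriction of $F$ to the clipped triangle $\Delta$ is exactly that homeomorphism, so $F(\Delta) = \Lambda$; and $F$ sends the three vertices $[1:0:0], [0:1:0], [0:0:1]$ of $\overline\Delta \minus \Delta$ to $\homBar(P_1), \homBar(P_2), \homBar(X)$ respectively.

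Next I would prove that $F$ is injective. It is already injective on $\Delta$, so it suffices to separate the three vertex images from one another and from $F(\Delta) = \Lambda$. Every point of $F(\overline\Delta)$ has a nonzero $T$-coordinate by the previous paragraph, so coordinate projection to $\bbP^T$ is defined on $F(\overline\Delta)$; by \autoref{cor:homs}, on $T$ the three vertex images are $[0:1:1], [1:0:1], [1:1:0]$, which are pairwise distinct, while a point of $\Lambda$ projects to $[y+z:z+x:x+y]$ with at least two of $x,y,z$ strictly positive, and hence cannot equal any of those three (each of them forces two of the Gromov coordinates to vanish). Thus $F$ is injective on $\overline\Delta$.

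I would then conclude as follows: since $\overline\Delta$ is compact and $\bbP^\bfS$ is Hausdorff, $F$ is a homeomorphism onto its image $F(\overline\Delta) = \Lambda \cup \{\homBar(P_1), \homBar(P_2), \homBar(X)\}$. This image is compact, hence closed in $\bbP^\bfS$, and it contains $\Lambda$, so it contains $\overline\Lambda$; conversely each of the three extra points is a limit of points of $\Lambda$, for instance $\homBar(P_1) = F([1:0:0]) = \lim_{n\to\infty} F([\,1:\tfrac1n:\tfrac1n\,])$ with $F([\,1:\tfrac1n:\tfrac1n\,]) \in \Lambda$ for all $n$. Therefore $F(\overline\Delta) = \overline\Lambda$, and since $F$ is a homeomorphism and $\overline\Delta$ is homeomorphic to a closed planar triangle, so is $\overline\Lambda$.

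The step I expect to require the most care is the injectivity of $F$ across the boundary of $\overline\Delta$ — namely, that the hom functionals $\homBar(P_1), \homBar(P_2), \homBar(X)$ are pairwise distinct in $\bbP^\bfS$ and are not attained as mass functionals of genuine type A stability conditions. Restricting to the three coordinates $T$ and invoking \autoref{cor:homs} reduces this to the elementary observation about the clipped triangle recorded above; the remainder is formal point-set topology.
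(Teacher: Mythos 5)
Your proposal is correct and follows essentially the same route as the paper: both extend the Gromov-coordinate parametrisation to $\overline\Delta$ via \autoref{prop:linearity}, identify the three vertex images as $\homBar(P_1)$, $\homBar(P_2)$, $\homBar(X)$, and conclude by injectivity plus compactness of $\overline\Delta$ and Hausdorffness of $\bbP^\bfS$. Your write-up simply makes explicit (via the projection to $\bbP^T$ and \autoref{cor:homs}) the well-definedness and injectivity checks that the paper leaves implicit.
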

\begin{proof}
  Identify $\Lambda$ with $\Delta$ using the Gromov coordinates as in~\eqref{eq:gromovhomeo}.
  From~\Cref{prop:linearity}, we see that the map $m \from \Delta \to \mathbf{P}^\bfS$ extends to a continuous map $\overline \Delta \to \mathbf{P}^\bfS$.
  Its image is closed, and equals the union of $\Lambda$ and the three additional points described above.
  The map from $\overline \Delta$ is injective, and hence an isomorphism onto its image.
\end{proof}

It is convenient to have a measure of closeness for two elements of a projective space.
Given two non-zero vectors in $\R^n$ for some positive integer $n$, denote by $\sphericalangle(v,w)$ the (acute) angle between them.
By a slight abuse of notation, we use $\sphericalangle(v,w)$ also for two points $v, w \in \mathbf{P}^{n-1}$; this is just the angle between any two representatives in $\R^n$.

Given a finite subset $T \subset \bfS$, let $h_T \from \bfS \to \mathbf{P}^T$ be the composition of $h \from \bfS \to \mathbf{P}^{\bfS}$ and the projection onto the $T$-coordinates.
\begin{proposition}[Group action contracts]\label{prop:contracting}
  Fix two elements $x, y \in \bfS$ and a finite subset $T \subset \bfS$.
  Given an $\epsilon > 0$, for all but finitely many elements $g$ of $\PSL_2(\mathbf{Z})$, we have
  \[ \sphericalangle (h_T(g x), h_T (g y)) < \epsilon.\]
\end{proposition}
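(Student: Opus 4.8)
The plan is to reduce the statement to a purely metric fact about the $\PSL_2(\bbZ)$-action on $\bbP^1(\bbR)$ and then settle that fact by a pigeonhole argument. First dispose of trivialities: if $A=B$ the two functionals coincide for every $L$, and if $|T|\le 1$ then $\bbP^T$ is a point; so assume $A\ne B$ and $|T|\ge 2$. By \autoref{prop:pmf} the map $h\from\bbP^1(\bbR)\to\bbP^\bfS$ is continuous, and since $\homBar(r,t)=0$ forces $r=t$, at most one $T$-coordinate of $h(r)$ can vanish when $|T|\ge 2$; hence $h_T=\pi_T\circ h\from\bbP^1(\bbR)\to\bbP^T$ is defined and continuous everywhere, and so uniformly continuous as $\bbP^1(\bbR)$ is compact. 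It therefore suffices to show: for every $\delta>0$, all but finitely many $L\in\PSL_2(\bbZ)$ satisfy $d(LA,LB)<\delta$, where $d$ is a fixed metric inducing the standard topology on $\bbP^1(\bbR)$.

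For the metric estimate I will use the closed form of the reduced hom functionals. Combining \autoref{prop:homs} and \autoref{cor:homs} with $\PSL_2(\bbZ)$-equivariance of $\homBar$ (exactly the computation in the proof of \autoref{prop:pmf}), one finds that for $s,t\in\bfS$ with primitive integer representatives $\tilde s,\tilde t$ one has $\homBar(s,t)=|\det(\tilde s,\tilde t)|$, where $\det(\cdot,\cdot)$ denotes the determinant with the given columns. Take the metric $d([u],[v])=|\det(u,v)|/(\|u\|\,\|v\|)$ on $\bbP^1(\bbR)$, which is comparable to the standard one. Lifting $L$ to $\SL_2(\bbZ)$, the vectors $L\tilde A,L\tilde B$ are again primitive integer vectors representing $LA,LB$, so
\[
  d(LA,LB)=\frac{|\det(L\tilde A,L\tilde B)|}{\|L\tilde A\|\,\|L\tilde B\|}=\frac{|\det(\tilde A,\tilde B)|}{\|L\tilde A\|\,\|L\tilde B\|}=\frac{N}{\|L\tilde A\|\,\|L\tilde B\|},
\]
where $N=\homBar(A,B)\ge 1$ is a fixed positive integer. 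Thus I only need $\|L\tilde A\|\cdot\|L\tilde B\|\to\infty$ as $L\to\infty$ in $\PSL_2(\bbZ)$. Suppose not: infinitely many distinct $L_n$ satisfy $\|L_n\tilde A\|\cdot\|L_n\tilde B\|\le C$. Each of $L_n\tilde A,L_n\tilde B$ is a nonzero integer vector, hence of norm $\ge 1$, so in fact $\|L_n\tilde A\|\le C$ and $\|L_n\tilde B\|\le C$; therefore $(L_n\tilde A,L_n\tilde B)$ ranges over a finite set of pairs of integer vectors. Since $\tilde A,\tilde B$ are linearly independent, a matrix is determined by the images of $\tilde A$ and $\tilde B$, so the $L_n$ take only finitely many values — a contradiction. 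Hence $\|L\tilde A\|\,\|L\tilde B\|\to\infty$, so $d(LA,LB)\to 0$, and by uniform continuity $\sphericalangle(h_T(LA),h_T(LB))\to 0$, which gives the proposition.

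The only conceptual hazard is a red herring worth flagging: one might fear that $LA$ stays put while $LB$ escapes — precisely what happens when $A$ is the repelling fixed point of a hyperbolic element of $\PSL_2(\bbZ)$ and $L$ runs through its powers — which would keep $d(LA,LB)$ bounded below. This cannot occur because $A,B\in\bfS=\bbP^1(\bbZ)$ are rational, whereas the fixed points of hyperbolic elements of $\PSL_2(\bbZ)$ are irrational; the determinant formula for $\homBar$ packages this arithmetic constraint, so the pigeonhole argument runs directly and no convergence-group dynamics is required. The remaining items are routine bookkeeping: deducing $\homBar(s,t)=|\det(\tilde s,\tilde t)|$ from the cited results, checking that $d$ is comparable to the standard metric, and verifying the well-definedness and continuity of the extended $h_T$.
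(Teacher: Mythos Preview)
Your proof is correct and follows essentially the same route as the paper's: reduce via uniform continuity of $h_T$ to the statement that the angle between $L\tilde A$ and $L\tilde B$ in $\bbR^2$ tends to zero, then use the determinant formula $\sin\sphericalangle(L\tilde A,L\tilde B)=|\det(\tilde A,\tilde B)|/(\|L\tilde A\|\,\|L\tilde B\|)$ together with integrality. Your pigeonhole argument cleanly fills in the step the paper leaves implicit (that $\|L\tilde A\|\cdot\|L\tilde B\|$ is bounded for only finitely many $L$), and your handling of the edge cases and the well-definedness of $h_T$ is more careful than the paper's.
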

\begin{proof}
  Recall the bijection \(i \colon \bfS \to \mathbf{P}^{1}(\mathbf{Z})\).
  Set \(a = i(x)\) and \(b = i(y)\).
  The map $h_T \from \mathbf{P}^1(\R) \to \mathbf{P}^T$ is continuous, and hence uniformly continuous.
  Therefore, it suffices to check that for all but finitely many $g$, the angle between $g a$ and $g b$ in $\R^2$ is small.
  Let \(X\) be the matrix with columns \(a\) and \(b\).
  Then, 
  \[ |\sin \left(\sphericalangle(g a, g b)\right)| = \frac{|\det g| \cdot |\det(X)|}{|g a| \cdot |g b|} = \frac{|\det(X)|}{|g a| \cdot |g b|}.\]
  Since $g \in \mathbf{PSL}_{2}(\mathbf{Z})$ has integer entries, the quantity $|ga| \cdot |g b|$ is greater than $\epsilon^{-1} |\det(X)|$ for all but finitely many $g$.
  The claim follows.
\end{proof}

\begin{proposition}[Closure of $M$]\label{prop:b-closure}
  The sets $M$ and $P$ are disjoint and their union is the closure of $M$.
\end{proposition}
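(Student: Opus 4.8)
The plan is to prove the two assertions of the proposition in turn --- first that $\overline M = M \cup P$, and then that $M \cap P = \emptyset$ --- and to run all the limit arguments inside the locus of $\bbP^\bfS$ on which the projection to the $\{P_1,P_2\}$-coordinates is defined. This locus contains $M$, $P$ and (by the normalisation $\sum\widetilde m(P_i)=1$ used in the proof of \autoref{prop:pre-compactness}) also $\overline M$, and since $\bfS$ is countable it is metrisable, so I may work with sequences throughout. The easy half, $M\cup P\subseteq\overline M$, is immediate: $M\subseteq\overline M$ trivially, and the corollary following \autoref{prop:limit-mass} gives $\homBar(a)\in\overline M$ for every spherical $a$, i.e.\ $h(\bfS)\subseteq\overline M$, whence $P=\overline{h(\bfS)}\subseteq\overline M$ by \autoref{prop:pmf} since $\overline M$ is closed.

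The substance is the reverse inclusion $\overline M\subseteq M\cup P$. Given $\xi\in\overline M$, I would pick $\tau_n$ with $m(\tau_n)\to\xi$ and use the tessellation of $\Stab(\mathcal C)/\bbC$ to write $\tau_n\in\beta_n\Lambda$ with $\beta_n\in B_3$; since $\Stab_{B_3}(\Lambda)=\langle\gamma\rangle$, only the coset $\beta_n\langle\gamma\rangle$ matters. If only finitely many such cosets occur, I pass to a subsequence with $\tau_n\in\beta\Lambda$ for a fixed $\beta$, so $\xi\in\overline{\beta\Lambda}=\beta\overline\Lambda$, and by the $\beta$-translate of \autoref{prop:closedtriangle} this is $\beta\Lambda\cup\{\homBar(\beta P_1),\homBar(\beta P_2),\homBar(\beta X)\}\subseteq M\cup P$. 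The hard case is when infinitely many cosets occur: passing to a subsequence, the images $L_n\in\PSL_2(\bbZ)$ of the $\beta_n$ become pairwise distinct (the kernel of $B_3\to\PSL_2(\bbZ)$ lies in $\langle\gamma\rangle$), so they eventually leave any finite subset of $\PSL_2(\bbZ)$. By \autoref{prop:linearity} in its $B_3$-translated form, $m_{\tau_n}$ is a non-negative combination of $\homBar(A_n),\homBar(B_n),\homBar(C_n)$ with $A_n=\beta_nP_1$, $B_n=\beta_nP_2$, $C_n=\beta_nX$ the three $\tau_n$-semistable spherical objects. Fixing a finite $T\subset\bfS$ with $\{P_1,P_2\}\subseteq T$ and an $\epsilon>0$, \autoref{prop:contracting} applied to the pairs $(P_1,P_2)$ and $(P_1,X)$ with $L=L_n$ shows that for large $n$ the points $h_T(A_n),h_T(B_n),h_T(C_n)$ are pairwise within angle $\epsilon$; since reduced hom functionals have non-negative coordinates, a non-negative combination of them stays in the (convex, for small $\epsilon$) angular cone, so $\sphericalangle\!\big(m_T(\tau_n),h_T(A_n)\big)<2\epsilon$. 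Passing to a further subsequence with $h(A_n)$ convergent in the compact set $P$, this estimate together with $m_T(\tau_n)\to\xi_T$ forces $h_T(A_n)\to\xi_T$ for every such $T$, hence $h(A_n)\to\xi$ and $\xi\in\overline{h(\bfS)}=P$. I expect this infinite case to be the main obstacle; \autoref{prop:contracting} (``the group action contracts'') is exactly the tool that makes it work.

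Finally, for disjointness, suppose $m(\tau)=h(t)$. Since $m$ and $h$ are $B_3$-equivariant and $P$ is $B_3$-stable, I may apply a braid and assume $\tau\in\Lambda$, and write $m_\tau=x\homBar(P_1)+y\homBar(P_2)+z\homBar(X)$ in Gromov coordinates ($x,y,z\ge 0$, at most one zero). Writing $t=[a:c]$, \autoref{cor:homs} yields
\[
  m_\tau\big|_{\{P_1,P_2,X\}}=[\,y+z:x+z:x+y\,]\qquad\text{and}\qquad h(t)\big|_{\{P_1,P_2,X\}}=[\,|c|:|a|:|a+c|\,].
\]
The right-hand triple always satisfies one triangle \emph{equality}, as $|a+c|$ equals $|a|+|c|$ or $\bigl||a|-|c|\bigr|$, whereas the left-hand triple satisfies all three triangle inequalities \emph{strictly} when $x,y,z>0$; this disposes of non-degenerate $\tau$. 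For degenerate $\tau$ I would use the $\gamma$-action, which cyclically permutes the Gromov coordinates and the objects $\{P_1,P_2,X\}$, to reduce to $z=0$ (so $x,y>0$); then $m_\tau(X)=m_\tau(P_1)+m_\tau(P_2)=m_\tau(X')$ with $X'=(P_1\to P_2)$, forcing $h(t)(X)=h(t)(X')$, i.e.\ $|a+c|=|a-c|$, i.e.\ $ac=0$, so $t\in\{P_1,P_2\}$ --- but $h(t)$ then vanishes at $P_1$ or at $P_2$, while the mass $m_\tau$ is strictly positive there, a contradiction. Together with the first two parts this gives $\overline M=M\sqcup P$. The one place I would be careful about routine book-keeping (rather than a genuine obstacle) is this degenerate sub-case, where the single coordinate $X'$ must be brought in and the $\gamma$-symmetry invoked to cover all three edges of $\Lambda$.
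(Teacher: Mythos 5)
Your proof of the closure statement $\overline M = M\cup P$ is essentially the paper's: the same split into finitely versus infinitely many cosets $\beta_n\langle\gamma\rangle$, with \autoref{prop:closedtriangle} (translated by $\beta$) handling the finite case, and \autoref{prop:contracting} together with the linearity $m_{\tau_n}=x_n\homBar(A_n)+y_n\homBar(B_n)+z_n\homBar(C_n)$ handling the infinite case; your angular estimate $\sphericalangle\bigl(m_T(\tau_n),h_T(A_n)\bigr)<2\epsilon$ is just a slightly more explicit packaging of the paper's ``the three limits coincide, then pass to the limit in the linear combination''. The one genuine divergence is the disjointness of $M$ and $P$. The paper disposes of it in one line --- the mass of every object is positive, whereas $\homBar(x,x)=0$ --- which on its face only separates $M$ from the rational points $h(\bfS)$ of $P$, since $h([a{:}c])$ has all coordinates strictly positive when $[a{:}c]$ is irrational. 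Your argument via the restriction to $T=\{P_1,P_2,X\}$ --- the triple $(|c|,|a|,|a+c|)$ always degenerates one triangle inequality while $(y+z,\,x+z,\,x+y)$ satisfies all three strictly in the non-degenerate case, with the $\gamma$-symmetry and the auxiliary object $X'$ covering the degenerate edges of $\Lambda$ --- applies uniformly to every point of $P=h(\bbP^1(\bbR))$, so it is the more complete of the two. Both that argument and the rest of your write-up check out.
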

\begin{proof}
  The mass of every object in a stability condition is positive.
  On the other hand, $\homBar(x,x) = 0$, by definition.
  Therefore, $M$ and $P$ are disjoint.

  By~\Cref{prop:closedtriangle}, we know that $P$ is contained in the closure of $M$.
  We now prove that $M \cup P$ is closed.
  Let $\tau_n$ be a sequence in $\Stab(\mathcal C)/\mathbf{C}$ whose images $m(\tau_n)$ in $M$ approach a limit $t \in \mathbf{P}^\bfS$.
  We must show that $t$ lies in $M \cup P$.

  Write $\tau_n = \beta_n \tau'_n$, where $\beta_n$ is a braid and $\tau'_n \in \Lambda$ is a stability condition of type I.
  Let $\overline{\beta_n}$ be the image of $\beta_n$ in $\PSL_2(\Z)$.
  We have two possibilities: the set $\{\overline \beta_n\}$ is finite or infinite.

  If $\{\overline \beta_n\}$ is finite, then the sequence $\{\tau_n\}$ is contained in the union of finitely many translates of the set of standard stability conditions $\Lambda$.
  By~\Cref{prop:closedtriangle}, the closure of $\Lambda$ is contained in $M \cup P$.
  Hence, the closure of the union of finitely many translates of $\Lambda$ is also contained in $M \cup P$.
  So the limit $t$ lies in $M \cup P$.

  Suppose $\{\overline \beta_n\}$ is infinite.
  Set $a_n = \beta_n (P_1)$, $b_n = \beta_n(P_2)$, and $c_n = \beta_n(X)$.
  Since $P$ is compact, we may assume, after passing to a subsequence if necessary, that $h(a_n)$, $h(b_n)$, and $h(c_n)$ have limits in $P$, say $a$, $b$, and $c$.

  We first prove that $a = b = c$.
  To see this, let $T \subset \bfS$ be an arbitrary finite set, and use the subscript $T$ to denote projections to $\mathbf{P}^T$.
  It suffices to show that $a_T = b_T = c_T$.
  Since $\{\overline \beta_n\}$ is infinite, by~\Cref{prop:contracting}, we note that the angles between $h_T(a_n)$, $h_T(b_n)$, and $h_T(c_n)$ approach $0$ as $n$ approaches $\infty$.
  Therefore, the three sequences have the same limits.
  But these limits are $a_T$, $b_T$, and $c_T$.

  Next, we claim that $t = h(a) = h(b) = h(c)$.
  Indeed, by linearity (\Cref{prop:linearity}), we know that there exist non-negative real numbers $x_n$, $y_n$, $z_n$ such that
  \[ m_T(\tau_n) = x_n h_T(A_n) + y_n h_T(B_n) + z_n h_T(C_n).\]
  Taking the limit as $n \to \infty$ in projective space yields
  \[ t = h(a) = h(b) = h(c).\]
  In particular, $t$ lies in $M \cup P$.
  The proof is thus complete.
\end{proof}

\subsection{Homeomorphism to the closed disk}
We take up the final part of the main theorem, namely that $(\overline M, P)$ is a manifold with boundary homeomorphic to the unit disk.
We explicitly construct a homeomorphism from $\overline M$ to the disk, using the unprojectivised, but suitably normalised, mass and hom functions.

Fix
\[ T = \{P_1, P_2, X = P_{21}\}.\]
Define a map
\[ \mu \from \Stab(\mathcal C)/i\R \to \R^T = \R^3\]
by
\[ \mu \from \tau \mapsto (m_\tau(P_1), m_\tau(P_2), m_\tau(X)).\]
Similarly, define
\[ \eta \from \bfS \to \R^T = \R^3\]
by
\[
  \eta(s) = (\homBar(s,P_1), \homBar(s, P_2), \homBar(s,X)).
\]
Thinking of $\bfS$ as $\mathbf{P}^1(\Z) \subset \mathbf{P}^1(\R)$, it is easy to see that $\eta$ extends to a continuous map
\[ \eta \from \mathbf{P}^1(\R) \to \R^3.\]
Indeed, using~\Cref{cor:homs}, we get
\[ \eta \from [a:c] \mapsto (|c|,|a|,|a+c|).\]

Let $\tau$ be a stability condition with (semi)-stable objects $A$, $B$, and $C$ of class $[P_1]$, $[P_2]$, and $[X]$ in the Grothendieck group.
Let $x$, $y$, $z$ be the Gromov coordinates of $\tau$, namely the non-negative real numbers such that
\[m_\tau(A) = y+z, \quad m_\tau(B) = z+x, \quad m_\tau(C) = x+y.\]
Denote by $|-|$ the standard Euclidean norm in $\R^3$.
We say that $\tau$ is \emph{normalised} if
\begin{equation}\label{eq:normalisation}
    x |\eta(A)| + y|\eta(B)| + z|\eta(C)| = 1.
\end{equation}
\begin{remark}
  We point out one subtle aspect of the definition.
  If $\tau$ is on-the-wall, then one of the $A$, $B$, or $C$ is not uniquely determined (even up to shift), since on-the-wall stability conditions have non-isomorphic semistable objects with the same class in the Grothendieck group.
  In this case, however, the corresponding Gromov coordinate is $0$, and hence~\eqref{eq:normalisation} holds for every possible $A,B,C$.
\end{remark}

The normalisation yields a continuous section of
\[ \Stab(\mathcal C)/i\R \to \Stab(\mathcal C)/\mathbf{C}.\]
That is, for every stability condition $\tau$ up to rotation and scaling, there is a unique normalised stability condition $\tau^\nu$ up to rotation, and furthermore the map $\tau \to \tau^\nu$ is continuous.

We now identify $\Stab(\mathcal C)/\mathbf{C}$ with its image $M$ in $\mathbf{P}^\bfS$ and $\mathbf{P}^1(\R)$ with its image $P$ in $\mathbf{P}^\bfS$. 
Define
\begin{equation}\label{eq:pi-interior}
  \pi \from \overline M = M \cup P \to \R^3
\end{equation}
as follows.
For $\tau \in M$, set
\begin{equation}\label{eq:pi-objects}
  \pi(\tau) = \mu (\tau^\nu),
\end{equation}
and for $s \in P$, set
\[ \pi(s) = \eta(s) / |\eta(s)|.\]

By construction, $\pi(a:c)$ is the unit vector in the direction of
\[ \eta(a:c) = (|a|,|c|,|a+c|).\]
That is, $\pi$ maps $P$ to the unit sphere in $\R^3$.
It is easy to see that $\pi|_P$ is injective, and hence a homeomorphism onto its image.
The image consists of three circular arcs, one for each pair of end-points from the three points
\[ \frac{1}{\sqrt 2}(0,1,1), \quad \frac{1}{\sqrt 2}(1,0,1), \text{ and } \frac{1}{\sqrt 2}(1,1,0).\]
The arc joining each pair is a geodesic arc on the unit sphere; that is, the plane it spans passes through the origin.

\Cref{eq:normalisation} and the triangle inequality imply that for every stability condition $\tau$, we have $|\pi(\tau)| \leq 1$.
In fact, for every stability condition, at least two of the Gromov coordinates are non-zero.
Therefore, we actually have a strict inequality $|\pi(\tau)| < 1$.

To get a better understanding of $\pi$, let us study it on the translates of the fundamental domain $\Lambda$.
Let $\beta$ be a braid.
Consider the translate $\beta \overline \Lambda$.
Set
\[A = \beta(P_1),\quad B = \beta(P_2), \quad C = \beta(X).\]
These are the (semi)-stable objects for the stability conditions in $\beta \Lambda$.
Recall that $\beta \overline \Lambda$ is homeomorphic to the projectivised octant $\overline \Delta = \left (\R_{\geq 0}^3 \minus 0\right )/ \R_{>0}$; the homeomorphism is given by the Gromov coordinates.
The normalisation condition~\eqref{eq:normalisation} is a linear condition on the Gromov coordinates.
It cuts out an affine hyperplane slice of the octant, and gives a section of the projectivisation map.
Since $\pi$ is linear in the Gromov coordinates for normalised stability conditions, it maps $\beta \overline \Lambda$ linearly, and homeomorphically, onto the triangle in $\R^3$ with vertices $\pi(A)$, $\pi(B)$, and $\pi(C)$.

Let $\Phi \subset \R^3$ be the union of the triangle
\[ \{(x,y,z) \mid x+y+z = \sqrt{2},\, x+y-z \geq 0,\, y+z-x \geq 0,\, z+x-y \geq 0\},\]
and the three circular segments, each bounded by an edge of the triangle above and the arc in the image of $\pi(P)$ with the same end-points as the edge (see~\Cref{fig:Delta}).
\begin{figure}
  \centering
  \begin{tikzpicture}[scale=2,rotate around y=30, rotate around x=-15]
\draw [->] (0,0,0) -- (1.5,0,0) node [below right] {$x$};
\draw [->] (0,0,0) -- (0,1.5,0) node [above] {$y$};
\draw [->] (0,0,0) -- (0,0,5.0) node [below left] {$z$};
\path[fill=blue!50!white,opacity=0.7] (0,1,1)--(1,0,1)--(1,1,0)--(0,1,1);
\draw[fill=blue!70!white, opacity=0.7, domain=0:60, smooth, variable=\t]plot ({cos(\t) - sin(\t)/sqrt(3)}, {cos(\t) + sin(\t)/sqrt(3)}, {2*sin(\t)/sqrt(3)});
\draw[fill=blue!70!white, opacity=0.7, domain=0:60, smooth, variable=\t]plot ({cos(\t) + sin(\t)/sqrt(3)}, {cos(\t) - sin(\t)/sqrt(3)}, {2*sin(\t)/sqrt(3)});
\draw[fill=blue!70!white, opacity=0.7, domain=0:60, smooth, variable=\t]plot ({2*sin(\t)/sqrt(3)}, {cos(\t) - sin(\t)/sqrt(3)}, {cos(\t) + sin(\t)/sqrt(3)});
\end{tikzpicture}
   \caption{The homeomorphic image $\Phi \subset \R^3$ of the compactified stability manifold.}\label{fig:Delta}
\end{figure}

Observe that $\pi$ maps $\overline \Lambda$, the triangle formed by the standard stability conditions, to the central triangle of $\Phi$.
On the other hand, the translates of $\overline \Lambda$ on the three sides of the identity on the exchange graph (\Cref{fig:exchange-graph}) are mapped to the three circular segments.
Indeed, the segment to which $\pi$ sends a non-standard stability condition $\tau$ depends on how the triangle inequality collapses among the $\tau$-masses of $P_1$, $P_2$, and $X$, and this in turn, is determined by where $\tau$ lies on the exchange graph.

\begin{proposition}\label{prop:r3-triangle}
  The map $\pi \from \overline M \to \Phi$ is a homeomorphism, where $\overline M$ is given the subspace topology from $\mathbf{P}^\bfS$.
\end{proposition}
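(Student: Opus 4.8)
The plan is to realise $\pi$ as a continuous bijection from the compact space $\overline M$ (compact by \autoref{prop:pre-compactness}) onto $\Phi$; since $\Phi$ is a subspace of $\R^3$, hence Hausdorff, this forces $\pi$ to be a homeomorphism. So the work divides into three parts: continuity of $\pi$ on all of $\overline M$, and then surjectivity and injectivity onto $\Phi$.

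\textbf{Continuity.} The only issue is at the interface between $M$ and $P$: on the open set $M$ the map $\pi = \mu\circ(\tau\mapsto\tau^\nu)$ is continuous because the normalised section is continuous and $\mu$ is continuous, and on the closed set $P$ the map $\pi$ is the manifestly continuous $[a:c]\mapsto(|c|,|a|,|a+c|)/\bigl|(|c|,|a|,|a+c|)\bigr|$ of \autoref{cor:homs}. The key observation is that if $\tau$ lies in the translate $\beta\overline\Lambda$ with ideal vertices $A=\beta P_1$, $B=\beta P_2$, $C=\beta X$, then combining \autoref{prop:linearity} (applied to $\beta^{-1}\tau$) with the normalisation condition gives
\[ \pi(\tau)=w^A\,\pi(A)+w^B\,\pi(B)+w^C\,\pi(C),\qquad w^A,w^B,w^C\geq 0,\quad w^A+w^B+w^C=1, \]
a genuine convex combination of the unit vectors $\pi(A),\pi(B),\pi(C)$. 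Now suppose $\tau_n\in M$ converges to $w\in P$ in $\bbP^\bfS$, with $\tau_n\in\beta_n\overline\Lambda$. If $\{\overline\beta_n\}\subset\PSL_2(\Z)$ is finite then $\tau_n$ ranges over finitely many translates, on whose (closed) union $\pi$ is continuous, so $\pi(\tau_n)\to\pi(w)$. If $\{\overline\beta_n\}$ is infinite, then by the argument of \autoref{prop:b-closure} (which is where \autoref{prop:contracting} enters) the functionals $h(A_n),h(B_n),h(C_n)$ all converge in $\bbP^\bfS$ to $w$, hence $\pi(A_n),\pi(B_n),\pi(C_n)\to\pi(w)$, and the convex combination above forces $\pi(\tau_n)\to\pi(w)$ whatever the weights do.

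\textbf{Bijectivity.} The displayed identity also shows that $\pi(\beta\overline\Lambda)$ is precisely the solid Euclidean triangle on $\pi(\beta P_1),\pi(\beta P_2),\pi(\beta X)$, parametrised homeomorphically by the normalised Gromov coordinates, and that these three vertices are affinely independent, being three distinct points on the unit sphere. For injectivity: $\pi(M)$ lies in the open unit ball (the strict triangle inequality, as already noted), while $\pi(P)$ lies on the unit sphere, so $\pi(M)\cap\pi(P)=\emptyset$; $\pi|_P$ is injective (shown above); and $\pi|_M$ is injective once one knows that the solid triangles $\pi(\beta\overline\Lambda)$ have pairwise disjoint interiors, because then a point of $\pi(M)$ determines the triangle containing it, whence, using affine independence, its normalised Gromov coordinates and so the stability condition. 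The disjointness is organised by planes: $\pi(\overline\Lambda)$ lies in $\{x+y+z=\sqrt 2\}$, which meets each of the three planes $\{x+y-z=0\}$, $\{y+z-x=0\}$, $\{z+x-y=0\}$ — these contain the three families of ``descendant'' triangles sitting beyond the three edges of $\Lambda$ — only along a line through an edge of the central triangle, and within each such plane the descendant triangles form an embedded triangulation of the corresponding segment, by the nesting structure of Farey neighbours and the fact that $\pi|_P$ preserves the cyclic order on $P\cong\bbP^1(\R)$. For surjectivity: $\pi(\overline\Lambda)$ is the central triangle of $\Phi$ and $\pi(P)$ is the union of its three geodesic boundary arcs; each descendant triangle $\pi(\beta\overline\Lambda)$ has its vertices on one of these arcs, hence, the segment being convex (it is the intersection of a disk and a half-plane inside a plane through the origin), lies inside the corresponding segment; and conversely the descendant triangles exhaust the segment up to its bounding arc, because they cut it into sub-segments whose Euclidean diameters tend to $0$ by the determinant/norm estimate of \autoref{prop:contracting}, so that any point of a segment not in a triangle lies on the limiting arc, i.e.\ in $\pi(P)$. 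Since $\pi(\overline M)$ is closed, this gives $\Phi=\overline{\bigcup_\beta\pi(\beta\overline\Lambda)}\subseteq\pi(\overline M)$, and with the reverse inclusion $\pi(\overline M)\subseteq\Phi$ just noted we conclude $\pi(\overline M)=\Phi$.

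\textbf{Main obstacle.} The crux is the claim that the solid triangles $\pi(\beta\overline\Lambda)$ tessellate $\Phi$ with pairwise disjoint interiors — equivalently, that $\pi$ carries the Farey tessellation of the disk $\overline M$ to an \emph{embedded} triangulation of $\Phi$. Because $\PSL_2(\Z)$ does not act linearly on $\R^3$, this cannot be obtained by equivariance; it must be proved by induction over the exchange graph, verifying at each edge $\bar e$ that $\pi$ sends the two sides of $\bar e$ in $\overline M$ into the two closed half-spaces bounded by the plane through the origin spanned by $\pi(\bar e)$, with the base case being the explicit computation for the three edges of $\Lambda$. The accompanying exhaustion of the curved segments (the shrinking-diameters estimate) is a second, lesser, point that also has to be carried out with care.
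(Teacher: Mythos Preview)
Your proposal is correct and follows the paper's approach closely: continuity plus bijectivity, then compact-to-Hausdorff. Two presentational differences are worth noting. For continuity at a boundary point, your observation that the normalisation \eqref{eq:normalisation} makes $\pi(\tau)$ an honest convex combination of the unit vectors $\pi(A),\pi(B),\pi(C)$ is a bit cleaner than the paper's argument, which instead shows $|\pi(\tau_n)|\to 1$ by comparing $\bigl|\sum_i x_i\eta_i\bigr|$ to $\sum_i x_i|\eta_i|$ as the pairwise angles collapse and then pairs this with the projective convergence $m_T(\tau_n)\to h_T(w)$; both arguments rest on \autoref{prop:contracting} and are equivalent. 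For the tessellation of each circular segment, the paper simply recognises the family of triangles $\pi(\beta\overline\Lambda)$ as the classical Farey tessellation of the segment (two vertices share an edge iff their $\bbP^1(\bbZ)$ coordinates satisfy $|ad-bc|=1$) and cites \cite{bon:09} for its properties, whereas you sketch a direct inductive proof via half-space separation along the exchange graph together with a shrinking-diameter argument for exhaustion. Your sketch is essentially a hands-on proof of the Farey tessellation property itself, and either route is fine.
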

\begin{proof}
  Let us first show that $\pi$ is continuous.
  Since $\pi$ is linear on the translates of $\overline \Lambda$, and these translates cover $M$, it follows that $\pi$ is continuous on $M$.
  The restriction of $\pi$ to $P$ is given by
  \[ \pi \from [a:c] \mapsto \frac{1}{\sqrt{a^2+c^2+(a+c)^2}}\left( |a|, |c|, |a+c| \right),\]
  which is continuous.
  We must show that $\pi$ is continuous on $\overline M$ at a point $p \in P$.
  
  Let $\tau_n$ be a sequence of normalised stability conditions converging to $p \in P$.
  We already know that $m_T(\tau_n)$ converges to $h_T(p)$ in the projective space $\mathbf{P}^T$.
  Therefore, it suffices to show that $\pi(\tau_n)$ approaches a vector of norm 1 in $\R^3$.

  Let $\epsilon > 0$ be given.
  Write $\tau_n = \beta_n \tau'_n$ for some braid $\beta_n$ and standard stability condition $\tau_n$.
  Denote by $\overline \beta_n$ the image of $\beta_n$ in $\PSL_2(\Z)$.
  If the set $\{\overline \beta_n\}$ is finite, then the sequence $\tau_n$ lies in finitely many translates of $\Lambda$.
  Without loss of generality, we may assume that it lies in one translate, say $\beta \Lambda$.
  Recall that $\pi \from \overline \beta \Lambda \to \R^3$ is linear in the Gromov coordinates and hence continuous.
  Therefore, $\pi(\tau_n) \to \pi(p)$ as $n \to \infty$.

  The harder case is when the set $\{\overline \beta_n\}$ is infinite.
  Set $A_n = \beta_n(P_1)$, $B_n = \beta_n(P_2)$, $C_n = \beta_n(X)$, and let $x_n$, $y_n$, $z_n$ be the Gromov coordinates.
  But in this case, we know by~\Cref{prop:contracting} that the angle between $\eta(A_n)$, $\eta(B_n)$, and $\eta(C_n)$ approaches $0$ as $n$ approaches $\infty$.
  Therefore, the difference between
  \[ |x_n \eta(A_n) + y_n \eta(B_n) + z_n \eta(C_n)| \text{ and } x_n |\eta(A_n)| + y_n |\eta(B_n)| + z_n |\eta(C_n)|\]
  approaches $0$ as $n$ approaches $\infty$.
  Since $\tau_n$ is normalised, the right-hand quantity is $1$, and the left-hand quantity is $|\pi(\tau_n)|$.

  We have now proved that $\pi \from \overline M \to \Phi$ is continuous.
  Since $\overline M$ is compact, $\pi$ is a homeomorphism once we know that it is a bijection.
  We know that the map
  \[\pi \from P \to \partial \Phi = \Phi \cap \{v \mid |v| = 1\}\]
  is a bijection and $\pi$ maps $M$ to
  \[\Phi^\circ  = \Phi \cap \{v \mid |v| < 1\}.\]
  Recall that $M$ is the union of the translates of the fundamental domains $\beta \Lambda$, and each fundamental domain is homeomorphic to a clipped triangle (a planar triangle minus the vertices).
  The map $\pi$ maps each translate bijectively to a (clipped) triangle in $\Phi^\circ$.
  To check that $\pi$ is an injection, we must check that the clipped triangles $\pi(\beta \Lambda)$ have disjoint interiors, and two of them, say $\pi(\beta\Lambda)$ and $\pi(\beta'\Lambda)$, intersect along an edge if and only if $\beta$ and $\beta'$ are adjacent in the exchange graph.
  To check that $\pi$ is a surjection, we must check that the union $\bigcup_\beta \pi(\beta \Lambda)$ is $\Phi^\circ$.

  Take $\beta = \id$.
  Then $\pi(\Lambda)$ is the central triangle of $\Phi^\circ$.
  The only other triangles that intersect this triangle are $\pi(\sigma_1 \Lambda)$, $\pi(\sigma_2\Lambda)$, and $\pi(\sigma_X\Lambda)$, and the intersections are along the three edges, as required.
  
  The exchange graph divides the non-trivial translates of $\Lambda$ into three connected components, and the translates in each of the three components map to the three distinct circular segments in $\Phi^\circ$.
  So it suffices to restrict our attention to one component and the corresponding circular segment.
  Since $\gamma$ permutes the components, it suffices to look at only one of them.
  
  Let us consider the component containing $\sigma_1 \Lambda$.
  The translates in this component are $A \Lambda$ where $A \in \PSL_2(\Z)$ is a matrix that has a cyclic writing (as in~\Cref{prop:cyclicwriting}) that starts with $\sigma_1$.
  Suppose $A = \begin{pmatrix} a & b \\ c & d \end{pmatrix}$.
  Then $\pi(A\Lambda)$ is the clipped triangle with vertices
  \begin{equation}\label{eqn:triangles}
    \pi([a:c]),\quad \pi([b:d]),\quad \pi([a-b:c-d]).
  \end{equation}
  Since we are considering the translates in the $\sigma_1\Lambda$ component, the points $[a:c]$, $[b:d]$, and $[a-b:c-d]$ lie in the arc $[P_1,P_2]$ of $\mathbf{P}^1(\mathbf{Z}) \subset \mathbf{P}^1(\R)$.
  (So, under the bijection $\mathbf{P}^1(\mathbf{Z}) = \Q \cup \{\infty\}$ given by $[a:c] \to a/c$, they correspond to $\Q_{\geq 0} \cup \{\infty\}$).
  By construction, the points $\pi([a:c])$ and $\pi([b:d])$ form an edge of a clipped triangle if and only if $|ad-bc| = 1$.
  Thus, the triangles~\eqref{eqn:triangles} form the Farey tiling~\cite[Chapter 8]{bon:09} of the circular segment, which has the intersection properties as dictated by the exchange graph.
  \end{proof}

\section{The \texorpdfstring{$\widehat A_1$}{affine A1} case}\label{sec:aonehat}
The aim of this section is to construct the Thurston compactification at \(q = 1\) of stability space for the 2-Calabi--Yau category $\calC_\Gamma$ where $\Gamma$ is the $\aonehat$ graph.
\subsection{The 2-Calabi--Yau category of type $\aonehat$}
Since the $\aonehat$ graph is not simply-laced, some mild modifications of the definitions from~\Cref{sec:cgamma} are required in order to define the associated zigzag algebra and 2-Calabi--Yau category.  We give the precise definitions here.

\subsubsection{The Coxeter system of type $\aonehat$}
Let $\Gamma$ be the Dynkin diagram of type $\aonehat$, which has two vertices, called $0$ and $1$, and two (unoriented) edges connecting them.
\begin{center}
  \begin{tikzcd}
    0 \arrow[no head,shift left]{r} \arrow[no head,shift right]{r} & 1
  \end{tikzcd}
\end{center}

\begin{definition}
The \emph{Artin--Tits braid group associated to $\Gamma$} is the free group on two letters $F_2$:
\[ B_\Gamma \cong F_2 = \langle \sigma_0,\sigma_1 \rangle.\]
\end{definition}
\begin{definition}
  The \emph{Coxeter group associated to \(\Gamma\)} is isomorphic to the quotient of $B_\Gamma$ by the relations
  \[\sigma_0^2 = \sigma_1^2 = 1.\]
We denote this group by \(W_{\Gamma}\), and denote the images of \(\sigma_i\) in \(W_{\Gamma}\) by \(s_i\).
\end{definition}
Let \(V_{\Gamma}\) be the rank two \(\mathbf{Z}\)-module with basis vectors \(v_i\) indexed by the vertices of \(\Gamma\).
Define a bilinear form on \(V_{\Gamma}\) by the following formula:
\[
  \langle v_i, v_j \rangle =
  \begin{cases}2&i = j,\\
    -2&i\neq j.\\
  \end{cases}
\]

\begin{definition}
  The \emph{standard representation} of \(W_{\Gamma}\) is the action on \(V_{\Gamma}\) defined by the formula
  \[s_i(v_j) = v_j - \langle v_i, v_j \rangle v_i.\]
\end{definition}

\subsubsection{The zigzag algebra of type $\aonehat$}
Let $\Gamma^{dbl}$ denote the doubled quiver of $\Gamma$.  If the two edges of $\Gamma$ are denoted $e$ and $f$, then there are four oriented edges denoted $e_{01},e_{10},f_{01},f_{10}$ in $\Gamma^{dbl}$, where the subscript $ij$ means that the arrow points from $i$ to $j$.  Fix a sign $s_{ij}\in\{\pm1\}$ for each arrow $e_{ij}$, and a sign $t_{ij}\in\{\pm1\}$ for each arrow $f_{ij}$, such that $s_{ij}=-s_{ji}$ and $t_{ij}=-t_{ji}$.

\begin{center}
  \begin{tikzcd}[column sep=6em]
    0 \arrow[bend left,shift left]{r}{e_{01}} \arrow[bend left,shift right]{r}[below]{f_{01}} & 1 \arrow[bend left,shift left]{l}{e_{10}} \arrow[bend left,shift right]{l}[above]{f_{10}}
  \end{tikzcd}
\end{center}
The zigzag algebra is the quotient of the path algebra $\mbox{Path}(\Gamma^{dbl})$ by the two-sided ideal generated by
\begin{enumerate}

\item the elements $e_{ij}f_{ji}$ and $f_{ij}e_{ji}$, for $i\neq j$, and
\item the elements
$$
s_{ij}e_{ij}e_{ji} - t_{ij}f_{ij}f_{ji},
$$
for $i\neq j$.
\end{enumerate}

The algebras corresponding to the two kinds of sign choices $s_{ij}=t_{ij}$ and
$s_{ij}=-t_{ij}$ are isomorphic, as was the case in simply laced type.  

\subsubsection{The 2-Calabi--Yau category of type $\aonehat$}
The strongly 2-Calabi--Yau category associated to the $\aonehat$ quiver, which we denote as $\mathcal C$ in what follows, is now defined exactly as in simply-laced type in~\Cref{sec:cgamma}, using the above $\aonehat$ zigzag algebra.  
The category $\mathcal C$ is a graded, $\k$-linear triangulated category classically generated by two spherical objects, which we denote $P_0$ and $P_1$.  The hom spaces between these generators are given as follows:
 \begin{align*}
    \gradedHom(P_i, P_i[n]) &=
    \begin{cases}
      \k & \text{ if $n=0 $ or $n=2$,}\\
      0 & \text{ otherwise};
    \end{cases}\\
    \gradedHom(P_i, P_j[n]) &=
    \begin{cases}
      \k^2 & \text{ if $n=1$ and $i\neq j$,}\\
      0 & \text{ otherwise, for $i\neq j$}.
    \end{cases}
  \end{align*}
 The extension closure of $P_0$ and $P_1$ in $\mathcal C$ is an abelian category; it is the heart of a (bounded) $t$-structure.
We refer to it as the \emph{standard heart} $\heart_{\std}$.

\subsection{The spherical objects and the boundary circle}

Denoting the twist in $P_i$ as $\sigma_{P_i}$, we have a weak action of $B_\Gamma \cong F_2$ on $\calC$ given by
\[\sigma_i \to \sigma_{P_i}.\]
The image of $B_\Gamma$ in $\Aut(\calC)$ is precisely the subgroup generated by the spherical twists $\sigma_{P_0}$ and $\sigma_{P_1}$.
It is not difficult to check directly, via e.g.\ a ping-pong argument, that the action of \(B_{\Gamma}\) on \(\mathcal{C}\) is faithful, so we can regard \(B_{\Gamma}\) as a subgroup of \(\Aut(\calC)\).

Set \(\gamma = \sigma_1\sigma_0\).
One can check that for any integer $n$, the object $\sigma_1\gamma^n(P_0)$ is the cone of a morphism from a direct sum of $|2n+2|$ copies of $P_1$ to a direct sum of $|2n+1|$ copies of $P_0$.
Similarly, the object $\gamma^n P_1$ is the cone of a morphism from a direct sum of $|2n+1|$ copies of $P_1$ to a direct sum of $|2n|$ copies of $P_0$.
For this reason, we adopt the following notation for these objects:
\begin{align*}
  \sigma_1\gamma^n(P_0) &= |2n+2|P_1 \to |2n+1|P_0, \text{ and }\\
  \gamma^n P_1 &= |2n+1|P_1 \to |2n|P_0.
\end{align*}

The objects obtained as above are all spherical, and their union consists of each of the objects \(n P_1 \to (n \pm 1)P_0\) exactly once, as \(n\) ranges over the integers.
This set is in bijection with \(\mathbf{Z}\) via the assignment
\[P_n = |n|P_1 \to |n-1|P_0 \text{ for } n \in \mathbf{Z}.\]
The objects \(P_n\) fit into distinguished triangles as follows:
\begin{equation}\label{eq:aonehat-twist-triangles}
  \begin{split}
    P_{2i+1}[-1] \to P_{2i} \oplus P_{2i} \to P_{2i-1} \xrightarrow{+1}, \\
    P_{2i+2} \to P_{2i+1} \oplus P_{2i+1} \to P_{2i}[1] \xrightarrow{+}.
  \end{split}
\end{equation}

Regard $\mathbf{Z}$ as the subset of $\mathbf{P}^1({\mathbf{Z}})$ consisting of points of the form $[k:1]$.
We can write a homomorphism $B_{\Gamma} \to \PSL_2(\mathbf{Z})$, as follows:
\begin{equation}\label{eq:bgamma-psl2-aonehat}
\sigma_0 \mapsto
\begin{pmatrix}
  1&0\\2&1
\end{pmatrix}, \quad
\sigma_1 \mapsto
\begin{pmatrix}
  -3&2\\-2&1
\end{pmatrix}.
\end{equation}

Let $\bfS$ be the set of all spherical objects, up to shift, that are stable with respect to some stability condition in \(\Stab^{\circ}(\mathcal{C})\).
It can be checked, for example using the explicit construction of spherical stable objects in~\cite[\S~4]{bap.deo.lic:22}, that \(\bfS\) is the \(B_{\Gamma}\) orbit of \(\{P_{0}, P_{1}\}\).
We have a $B_{\Gamma}$-equivariant map $i \from \bfS \to \mathbf{P}^1(\mathbf{Z})$ characterised by $P_n \mapsto [n:1]$.
It is easy to check that $i$ is injective.
For convenience, let $P_{\infty}$ be any one of the indecomposable extensions of $P_0$ by $P_1$.
We extend the embedding above to \(\bfS \cup \{P_\infty\}\) by sending \(P_\infty\) to \([1:0]\), which is the point at infinity.

The element $\gamma = \sigma_1\sigma_0$ acts on the objects \(P_n\) by an infinite-order cyclic rotation by two steps, lowering the phases.
More precisely, 
\begin{align*}
  \gamma &\from P_{n} \mapsto P_{n+2}\text{ for }n \neq 0,-1,\infty,\\
  \gamma &\from P_{n} \mapsto P_{n+2}[-1]\text{ for }n = 0,-1,\\
  \gamma &\from P_{\infty} \mapsto P_{\infty}.
\end{align*}
For each $k\in \mathbf{Z}$, set
\[ \sigma_{2k} = \gamma^k\sigma_0\gamma^{-k} \text{ and }\sigma_{2k+1} = \gamma^k\sigma_1\gamma^{-k}.\]
It is easy to see that the image of $\sigma_k$ in \(\Aut(\mathcal{C})\) is the spherical twist in the object $P_{k}$.
The image of $\sigma_k$ in $\PSL_2(\mathbf{Z})$ is given by
\begin{equation}\label{eqn:aonehat-pgl2}
  \sigma_k \mapsto
  \begin{pmatrix}
    2  k + 1 & -2  k^{2} \\
    2 & -2  k + 1
  \end{pmatrix}.
\end{equation}

\subsection{Standard stability conditions}
We say that a stability condition $\tau$ is \emph{standard} if its heart $\heart([0,1))$ is the standard heart $\heart_{\rm std}$.
Let $\phi$ denote the phase function for $\tau$.
Then we either have $\phi(P_0) \leq \phi(P_1)$ or $\phi(P_1) \leq \phi(P_0)$.
In the first case, we say $\tau$ is standard of \emph{type I}, and in the latter case, of \emph{type II}.
Note that, in type I, all objects \(P_n\) are $\tau$-semistable.

Let $\Lambda \subset \Stab(\mathcal C)/\mathbf{C}$ be the closure of the set of standard stability conditions of type I.
Then $\Lambda$ includes non-standard stability conditions where $\phi(P_1) = \phi(P_0)+1$.
We continue to call these stability conditions as of \emph{type I}, reserving the adjective standard for those where $\phi(P_1) < \phi(P_0)+1$.
\Cref{fig:std-stab-a1hat} shows the central charges of the objects \(P_n\) in a standard off-the-wall stability condition of type I.
The dotted line is the direction of the sum of the central charges of $P_0$ and $P_1$, which is also the central charge of $P_{\infty}$.
\begin{figure}[ht]
  \centering
  \begin{tikzpicture}[thick]
    \draw (0:0) edge [->] (0:2) (0:2) node [right] {$P_0$};
    \draw (0:0) edge [->] (150:2) (150:2) node [above left] {$P_1$};
    \draw (0:0) edge [->] (125:2.5) (125:2.5) node [above left] {$P_{2}$};
    \draw (0:0) edge [->] (110:3) (110:3) node [above left] {$P_{3}$};
    \draw (90:2) node {$\cdots$};
    \draw[dashed] (-105:2) edge (75:4) (75:4) node [above right] {$P_{\infty}$};
    \draw (60:2) node {$\ddots$};
    \draw (0:0) edge [->] (20:2.5) (20:2.5) node [above right] {$P_{-1}$};
    \draw (0:0) edge [->] (40:3) (40:3) node [above right] {$P_{-2}$};
  \end{tikzpicture}
  \caption{The central charges of the objects of $T$ in a standard off-the-wall stability condition of type I.}\label{fig:std-stab-a1hat}
\end{figure}

\subsection{The automaton}\label{subsec:aonehat-automaton}
Fix an off-the-wall stability condition $\tau$ of type I.
Recall that \(\Sigma\) is the set of indecomposable \(\tau\)-semi-stable objects in the standard heart, which contains \(P_n\) for \(n \in \mathbf{Z}\) as well as the objects \(P_{\infty}\).
\Cref{fig:aonehat-automaton} describes a $\tau$-HN-automaton $\Theta$ by illustrating the incoming and outgoing edges at one of the vertices.

\begin{figure}[ht]
  \centering
  \begin{tikzpicture}[->, >=stealth', semithick, font=\scriptsize]
    \tikzstyle{every state}=[font=\scriptsize,fill=black!10!white, draw=black!20!white, rectangle]
    \tikzstyle{support}=[font=\scriptsize,fill=black!10!white, draw=black!20!white, rectangle, inner sep=0.5em]    
    \tikzstyle{label}=[anchor=center,sloped, fill=white, font=\scriptsize]        
    \node[support] (k) at (-90:4) {$[P_k,P_{k+1}]$};
    \node[support] (m1) at (-150:4) {$[P_{k-1},P_k]$};
    \node[support] (m2) at (-195:4) {$[P_{k-2}, P_{k-1}]$};
    \node[support] (m3) at (-225:4) {$[P_{k-3}, P_{k-2}]$};
    \node[support] (p1) at (-30:4) {$[P_{k+1}, P_{k+2}]$};
    \node[support] (p2) at (15:4) {$[P_{k+2}, P_{k+3}]$};    
    \node[support] (p3) at (45:4) {$[P_{k+3}, P_{k+4}]$};

    \path[->]
    (k) edge[loop below] node[font=\scriptsize] {$\sigma_k$} ()
    (k) edge node[label] {$\sigma_{k-1}$} (m1)
    (k) edge[bend right=10] node[label] {$\sigma_{k-2}$} (m2)
    (k) edge[bend right=10] node[label] (l) {$\sigma_{k-3}$} (m3)
    (k) edge[bend left=10] node[label] {$\sigma_{k+2}$} (p2)
    (k) edge[bend left=10] node[label] (r) {$\sigma_{k+3}$} (p3)
    (k) edge[bend right=10] node[label] {$\gamma$} (p2)
    (k) edge[bend left=10] node[label] {$\gamma^{-1}$} (m2)    
    ;
    \draw[very thick,dotted,-] (120:4) arc (120:115:4);
    \draw[very thick,dotted,-] (60:4) arc (60:65:4);
    \draw[very thick,dotted,-] (-0.8,0) arc (120:115:3);
    \draw[very thick,dotted,-] (0.8,0) arc (60:65:3);    
  \end{tikzpicture}
\caption{An HN automaton for an off-the-wall type I stability condition for $\aonehat$, showing the outgoing edges from $[P_k,P_{k+1}]$.}\label{fig:aonehat-automaton}
\end{figure}
Here is a more formal description of the automaton, following~\Cref{def:hn-automaton}.
\begin{enumerate}
\item We have a \(B_{\Gamma}\)-labelled graph with vertices indexed by \(\mathbf{Z}\) and edges as indicated in~\Cref{fig:aonehat-automaton}:
  an edge from state $k$ to state $j$ labelled $\sigma_j = \sigma_{P_{j}}$ (unless $j = k+1$); an edge from state  $k$ to state $(k+2)$ labelled $\gamma$; and an edge from state $k$ to state $(k-2)$ labelled $\gamma^{-1}$.
\item The $k$th state is labelled \([P_k,P_{k+1}]\).
  The set \(S_{[P_k, P_{k+1}]}\) consists of the spherical objects of \(\mathcal{C}\) whose HN factors are shifts of $P_k$ and $P_{k+1}$.
\item The representation $M$ associates the free abelian group \(\mathbf{Z}^{\Sigma}\) to each state.
However, as we show in~\Cref{prop:aonehat-automaton}, the image of the HN multiplicity map from \(S\) to \(M\) will only be supported on the sub-representation that assigns \(\mathbf{Z}^{\{P_k,P_{k+1}\}}\) to the state labelled as \([P_k,P_{k+1}]\).
So we simplify notation by setting \(M\) to be this sub-representation.
  The map associated to the edge $\sigma_j \from k \to j$ is represented by the \(2 \times 2\) matrix
  \begin{equation}\label{eqn:aonehat-me}
    \begin{pmatrix}
    |j-k-1| & |j-k-2| \\
    |j-k| & |j-k-1|
  \end{pmatrix}
\end{equation}
in the standard bases.
The maps associated to the edges labelled $\gamma^\pm$ are represented by the identity matrix.
\end{enumerate}

Analogously to~\Cref{prop:automaton}, the next proposition shows that the description above fits into the framework of~\Cref{def:hn-automaton}.
\begin{proposition}\label{prop:aonehat-automaton}
  Let $e \colon [P_k,P_{k+1}] \to [P_j,P_{j+1}]$ be an edge of the automaton labelled by the group element \(g \in B_{\Gamma}\).
  Let \(x \in \calC\) be an object whose HN filtration consists of shifts of \(P_k\) and \(P_{k+1}\).
  \begin{enumerate}
  \item The HN filtration of $gx$ consists of shifts of \(P_j\) and \(P_{j+1}\).
  \item The following diagram commutes.
    \begin{center}
      \begin{tikzcd}
        x \arrow[mapsto]{r}{HN_{\tau}} \arrow[mapsto]{d}{e} & HN_{\tau}(x) \in \mathbf{Z}^{\{P_k,P_{k+1}\}} \arrow[mapsto, shift right=10]{d}{M(e)}\\
        gx \arrow[mapsto]{r}{HN_{\tau}} & HN_{\tau}(gx) \in \mathbf{Z}^{\{P_j,P_{j+1}\}}
      \end{tikzcd}
    \end{center}
    In other words,
    \[ HN_\tau(gx) = M(e) \cdot HN_\tau(x).\]
  \end{enumerate}
\end{proposition}
\begin{proof}
  By an argument similar to~\Cref{prop:automaton}, the image under \(e\) of the HN filtration of \(x\) is rectifiable.
  So it suffices to prove the proposition for \(x = P_k\) and \(x = P_{k+1}\).

  Suppose \(g = \gamma^\pm\) so that $j = k \pm 2$.
  Since $\gamma^\pm$ sends $P_i$ to $P_{i\pm 2}$ (up to shift), preserving the ordering of the phases, it is clear that \(e \cdot x\) is supported at \([P_j,P_{j+1}]\), and that the HN multiplicities transform by the identity matrix.

  Otherwise, \(g = \sigma_j\).
  Suppose that \(k = 0\).
  We can check explicitly that if \(j \neq 1\), we have
  \begin{equation}
    \label{eq:sigma-j}
    \begin{split}
      \sigma_{j}P_0 &= P_{j}^{\oplus |j-1|}[-1] \to P_{j+1}^{\oplus |j|}, \text{ and }\\
      \sigma_j P_1 &= P_{j}^{\oplus |j-2|}[-1] \to P_{j+1}^{\oplus |j-1|}.
    \end{split}
  \end{equation}
  As a result, both $\sigma_j(P_0)$ and $\sigma_j(P_1)$ are both supported at state $j$ and their HN multiplicities transform according to the matrix $M(e)$.

  Let $k$ be arbitrary.
  Observe that we have
  \[\sigma_j(P_k) =
    \begin{cases}
      \gamma^m \sigma_{j-2m}P_0,&\text{if }k = 2m \text{ is even},\\
      \gamma^m \sigma_{j-2m}P_1,&\text{if }k = 2m+1 \text{ is odd}.
    \end{cases}
  \]
  Using the above, we see that for any $k$ except $k = j-1$, we have
  \begin{equation}\label{eqn:update-rules-a1hat}
    \sigma_jP_{k} = P_{j}^{\oplus |j-k-1|}[-1] \to P_{j+1}^{\oplus |j-k|}.
  \end{equation}
  As a result, $\sigma P_k$ and $\sigma P_{k+1}$ are both supported at state $j$ and their HN multiplicities transform according to the matrix $M(e)$.
\end{proof}

The next proposition shows that the automaton recognises every braid and every object of \(\bfS\).
\begin{proposition}\label{prop:cyclicwriting-a1hat}
  Let \(\beta \in F_2\) and let \(s \in \bfS\) be arbitrary.
  \begin{enumerate}
  \item The automaton in~\Cref{fig:aonehat-automaton} recognises \(\beta\).
    More precisely, \(\beta\) has the recognised expression
    \[ \beta = \gamma^n \sigma_{a_1}\sigma_{a_2}\dots\sigma_{a_k},\]
    such that $a_i - a_{i+1} \neq 1$ for each $i$.
  \item The automaton in~\Cref{fig:aonehat-automaton} recognises \(s\).
  \item The \(\tau\)-HN filtration of \(s\) consists of at most two objects out of \(T\), up to shift.
  \end{enumerate}
\end{proposition}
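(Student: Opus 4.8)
The plan is to follow the template of \autoref{prop:cyclicwriting} from the $A_2$ case, replacing the relations $\gamma = \sigma_2\sigma_1 = \sigma_X\sigma_2 = \sigma_1\sigma_X$ by their $\aonehat$-analogue. The key algebraic fact I would establish first is that $\sigma_{k+1}\sigma_k = \gamma$ for every $k \in \bbZ$; this follows directly from $\gamma = \sigma_{P_1}\sigma_{P_0}$ together with the conjugation rule $\gamma^i\sigma_j\gamma^{-i} = \sigma_{j+2i}$ (itself immediate from $\sigma_{2k} = \gamma^k\sigma_0\gamma^{-k}$ and $\sigma_{2k+1} = \gamma^k\sigma_1\gamma^{-k}$). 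Two consequences I would record: the inverse formulas $\sigma_{k+1}^{-1} = \sigma_k\gamma^{-1}$ (so in particular $\sigma_0^{-1} = \sigma_{-1}\gamma^{-1}$ and $\sigma_1^{-1} = \sigma_0\gamma^{-1}$), and the commutation rule $\gamma^i\sigma_j = \sigma_{j+2i}\gamma^i$.

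For part (1), I would start from an arbitrary word for $\beta$ in $\sigma_0^{\pm 1},\sigma_1^{\pm 1}$, eliminate all inverse letters using the inverse formulas above (producing a word in the positive twists $\sigma_j$ and in $\gamma^{-1}$), and then push every power of $\gamma$ to the far left using $\gamma^i\sigma_j = \sigma_{j+2i}\gamma^i$, arriving at a word $\gamma^n\sigma_{a_1}\cdots\sigma_{a_k}$. It then remains to remove every forbidden adjacency: whenever $a_i - a_{i+1} = 1$ we have $\sigma_{a_i}\sigma_{a_{i+1}} = \sigma_{a_{i+1}+1}\sigma_{a_{i+1}} = \gamma$, so I replace the pair by a single $\gamma$ and commute it to the left (incrementing $a_1,\dots,a_{i-1}$ by $2$ and $n$ by $1$). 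Each such step decreases the number $k$ of twist letters by $2$, so the procedure terminates; the final word has the asserted form. New forbidden adjacencies created at a junction are harmless, as they are simply removed at the next step.

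For part (2), write $s = \beta P$ with $P \in \{P_0, P_1\}$ and $\beta = \gamma^n\sigma_{a_1}\cdots\sigma_{a_k}$ as in (1). The object $P_0$ is supported at the two consecutive states $-1$ and $0$, and $P_1$ at states $0$ and $1$; in both cases the only twist label missing as an outgoing edge from a state $m$ is $\sigma_{m+1}$. Since $a_k$ is a single integer, it coincides with at most one of the two forbidden labels attached to the two starting states, so $\sigma_{a_k}$ is applicable from at least one of them; by \autoref{prop:aonehat-automaton} the result is supported at state $a_k$. The next letter $\sigma_{a_{k-1}}$ is then applicable at state $a_k$ precisely because $a_{k-1} \neq a_k + 1$, which is exactly the constraint guaranteed by (1). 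Iterating, I can apply $\sigma_{a_k}, \sigma_{a_{k-1}}, \dots, \sigma_{a_1}$ in turn, landing at state $a_1$, after which $\gamma^n$ is always applicable; hence $s$ is recognised by $\Theta$. Part (3) is then immediate: $s$ is supported at some state $j$, so by \autoref{prop:aonehat-automaton} its $\tau$-HN filtration consists of shifts of $P_j$ and $P_{j+1}$, both of which lie in $T$ and are $\tau$-semistable because $\tau$ is of type A.

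The only genuinely delicate point I anticipate is the bookkeeping in part (2): one must verify that the missing edges at the starting states ($\sigma_0$ at state $-1$, $\sigma_1$ at state $0$, and $\sigma_2$ at state $1$) never block the very first letter, which works out exactly because a standard spherical object is supported at \emph{two} consecutive states and hence has only one blocked label available to obstruct it. Everything else is a routine rewriting argument with the relations above, parallel to the $A_2$ case.
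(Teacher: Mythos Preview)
Your proposal is correct and follows essentially the same approach as the paper's own proof: the same three relations $\sigma_i^{-1} = \sigma_{i-1}\gamma^{-1}$, $\gamma^{-1}\sigma_i = \sigma_{i-2}\gamma^{-1}$, and $\gamma = \sigma_i\sigma_{i-1}$ are used to rewrite $\beta$, and part (2) is argued identically via the two states supporting $P_0$ (or $P_1$). Your treatment of part (2) is in fact slightly more explicit than the paper's about why the first letter $\sigma_{a_k}$ is always applicable.
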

\begin{proof}
  For (1), first write \(\beta\) as any expression in the generators \(\{\sigma_0^{\pm 1},\sigma_1^{\pm 1}\}\).
  For any \(i \in \mathbf{Z}\), we have
  \[ \sigma_i^{-1} = \sigma_{i-1}\gamma^{-1}, \quad \gamma^{-1}\sigma_i = \sigma_{i-2}\gamma^{-1},\quad \gamma = \sigma_i\sigma_{i-1}.\]
  Using these relations, we first rewrite $\beta$ solely in terms of the elements $\sigma_i$ for $i \in \mathbf{Z}$ and $\gamma^{\pm}$.
  Then we successively replace any instances of $\sigma_i\sigma_{i-1}$ by $\gamma$, and commute all powers of $\gamma$ to the left.
  This process terminates, resulting in an expression of the desired form.

  For (2), write \(s = \beta P_i\) where \(i\) is either \(0\) or \(1\).
  Assume that \(i = 0\); the other case is similar.
  Rewrite
  \[\beta = \gamma^n \sigma_{a_1}\sigma_{a_2}\dots\sigma_{a_k}\]
  as in (1).
  Note that the object \(P_0\) is supported at two states, namely \([P_0,P_1]\) and \([P_{-1},P_0]\).
  We can apply \(\sigma_{a_k}\) at least at one of these two states.
  From the condition on our recognised expression, it is clear that all subsequent letters are applicable.
  To finish, any power of \(\gamma\) is applicable at any vertex.
  Therefore \(s\) is recognised by the automaton.

  Finally, (3) follows from (2) and from~\Cref{prop:aonehat-automaton}
\end{proof}
\begin{proposition}\label{prop:unsink-aonehat}
  Let \(x\) be a spherical object of \(\mathcal{C}\) supported at the state \([P_k, P_{k+1}]\).
  Assume that \(x\) is not a shift of \(P_k\) or \(P_{k+1}\).
  Then
  \begin{enumerate}
  \item we can write \(s = \sigma_k^rt\) for some \(r > 0\) and an object \(t\) supported at \([P_{k+1},P_{k+2}]\);
  \item we can write \(s = \sigma_{k+1}^{-r}t\) for some \(r > 0\) and an object \(t\) supported at \([P_{k-1},P_k]\).
  \end{enumerate}
\end{proposition}
\begin{proof}
Analogous to the proof of~\Cref{prop:unsink}.
\end{proof}

\begin{remark}\label{rem:new-degenerate-tau-automaton}
  Let $\tau$ be an on-the-wall standard stability condition.
  Set \(P_i'\) to be the object \(\sigma_0P_{i-1}\).
  The objects \(P_i'\) are also semistable in \(\tau\), in addition to the objects \(P_i\).
  Note that up to shift, we have \(P_1' = P_0\) and \(P_0' = P_1\).
  Recall that \(\gamma = \sigma_1 \sigma_0\).
  Let \(\gamma' = \sigma_0 \sigma_1\).

  \Cref{fig:aonehat-degenerate-tau-automaton} shows an automaton that computes $\tau$-HN multiplicities.
  Only the outgoing arrows labelled \(\sigma_0\) and \(\sigma_1\) are depicted.
  The remaining possible labels for outgoing arrows are \(\gamma^\pm\) and \(\gamma'^{\pm}\), which act as follows:
  \begin{align*}
    \gamma &\colon [P_i,P_{i+1}] \to [P_{i+2},P_{i+3}]\text{ unless }i = -2, \quad
             \gamma \colon [P'_i,P'_{i+1}] \to [P_1,P_2];\\
    \gamma^{-1} &\colon [P_{i},P_{i+1}] \to [P_{i-2},P_{i-3}], \text{ unless }i = 2,\quad
             \gamma^{-1} \colon [P'_i,P'_{i+1}] \to [P_{-2},P_{-1}];\\    
    \gamma'&\colon [P_i',P_{i+1}'] \to [P_{i+2}',P_{i+3}']\text{ unless }i = -2,\quad
             \gamma' \colon [P_i,P_{i+1}] \to [P'_1,P'_2];\\
    \gamma'^{-1}&\colon [P_i',P_{i+1}'] \to [P_{i-2}',P_{i-3}']\text{ unless }i = 2,\quad
             \gamma'^{-1} \colon [P_i,P_{i+1}] \to [P'_{-2},P'_{-1}].             
  \end{align*}
  Since $\Theta$ suffices for mass computations, we omit further details.
\end{remark}
\begin{figure}[ht]
  \centering
  \begin{tikzpicture}[->, >=stealth', semithick, font=\scriptsize]
    \tikzstyle{every state}=[font=\scriptsize,fill=black!10!white, draw=black!20!white, rectangle]
    \tikzstyle{support}=[font=\scriptsize,fill=black!10!white, draw=black!20!white, rectangle, inner sep=0.5em]    
    \tikzstyle{label}=[anchor=center,sloped, fill=white, font=\scriptsize]
    \node[support] (m10) {$[P_{-1},P_0]$};
    \node[support] (m21) [right =1cm of m10] {$[P_{-2},P_{-1}]$};
    \node (d1) [right = 1cm of m21] {\large$\cdots$};
    \node (d2) [right = 1cm of d1] {\large$\cdots$};    
    \node[support] (23) [right = 1cm of d2] {$[P_{2},P_{3}]$};    
    \node[support] (12) [right = 1cm of 23] {$[P_1,P_2]$};
    \node[support] (m10') [below = 3cm of m10] {$[P'_{-1},P'_0]$};
    \node[support] (m21') [right =1cm of m10'] {$[P'_{-2},P'_{-1}]$};
    \node (d1') [right = 1cm of m21'] {\large$\cdots$};
    \node (d2') [right = 1cm of d1'] {\large$\cdots$};    
    \node[support] (23') [right = 1cm of d2'] {$[P'_{2},P'_{3}]$};    
    \node[support] (12') [right = 1cm of 23'] {$[P'_1,P'_2]$};
    \path[->]
    (12) edge[loop right] node {$\sigma_1$} ()
    (23') edge node[label] {$\sigma_0$} (12')
    (m21') edge[bend right=20] node[label] {$\sigma_0$} (12')
    (m10') edge[bend right] node[label] {$\sigma_0$} (12')
    (m21) edge node[pos=0.2, label] {$\sigma_0$} (m10')
    (12) edge node[pos=0.2, label] {$\sigma_0$} (23')
    (23) edge node[pos=0.2, label] {$\sigma_0$} (d2')
    (12') edge[loop right] node {$\sigma_0$} ()
    (23) edge node[label] {$\sigma_1$} (12)
    (m21) edge[bend left=20] node[label] {$\sigma_1$} (12)
    (m10) edge[bend left] node[label] {$\sigma_1$} (12)
    (m21') edge node[pos=0.2, label] {$\sigma_1$} (m10)
    (12') edge node[pos=0.2, label] {$\sigma_1$} (23)
    (23') edge node[pos=0.2, label] {$\sigma_1$} (d2)        
    ;
  \end{tikzpicture}
  \caption{An HN automaton for an on-the-wall standard stability condition, depicting outgoing arrows labelled \(\sigma_0\) and \(\sigma_1\).}
  \label{fig:aonehat-degenerate-tau-automaton}
\end{figure}

\subsection{Consequences of the automaton}
In this section, we use the automaton from~\Cref{subsec:aonehat-automaton} to prove results about our projective embedding of \(\Stab(\calC)/\mathbf{C}\).
We have the following analogue of~\Cref{prop:non-collapsing-triangles-non-deg}.
\begin{proposition}\label{prop:non-collapsing-aonehat}
  Consider the following distinguished triangles:
  \[P_1[-1] \to P_0 \oplus P_0 \to P_{-1} \xrightarrow{+1} \text{ and } P_3[-1] \to P_2 \oplus P_2 \to P_1\xrightarrow{+1}.\]
  If \(\tau\) is a standard stability condition of type I, then neither of the above triangles is collapsed by \(\tau\) in the sense of~\Cref{def:collapsed-triangle}.
  Furthermore, if \(\tau'\) is any stability condition such that neither of the above triangles is collapsed by \(\tau'\), then \(\tau'\) is standard of type I up to the action of \(\mathbf{C}\).
\end{proposition}
\begin{proof}
  It is immediate from~\Cref{prop:ss-geodesic} that neither of the above triangles is collapsed by any standard stability condition of type I.
  Now let \(\tau'\) be any stability condition.
  Recall that \(\tau'\) is in the braid group orbit of a standard stability condition of type I.
  Write \(\tau' = \beta \tau''\) for some braid \(\beta\) and some standard stability condition \(\tau''\).
  As in the proof of~\Cref{prop:non-collapsing-triangles-non-deg}, we may assume that \(\tau''\) is off-the-wall; otherwise the result follows by continuity.
  
  If \(\beta\) is a power of \(\gamma\), then \(\tau'\) is already standard of type I up to the action of \(\mathbf{C}\).
  Otherwise, we show that one of the two triangles above is collapsed by \(\tau'\).
  Equivalently, we show that \(\beta^{-1}\) applied to one of the two triangles above is collapsed by \(\tau''\).

  Consider a recognised expression of \(\beta^{-1}\) as described in~\Cref{prop:cyclicwriting-a1hat}.
  Since \(\beta\) is not a power of \(\gamma\), this expression ends in \(\sigma_i\) for some \(i \in \mathbf{Z}\).
  We observe using the automaton in~\Cref{fig:aonehat-automaton}, analogous to the argument in the proof of~\Cref{prop:non-collapsing-triangles-non-deg} that the first triangle
  is collapsed by \(\tau'\) if \(i \neq 0, 1\).
  Otherwise, the second triangle is collapsed by \(\tau'\).
  We omit the details.
\end{proof}

\begin{proposition}\label{prop:homeo2}
  The map \(m \from \Stab(\calC)/\mathbf{C} \to \mathbf{P}^\bfS\) is a homeomorphism onto its image.
\end{proposition}
\begin{proof}
  The proof is analogous to the proof of~\Cref{prop:homeo1}, using the result of~\Cref{prop:non-collapsing-aonehat}.
\end{proof}

Recall that we set \(M\) to be the \(\Theta\)-representation that associates the free abelian group \(\mathbf{Z}^{\{P_k,P_{k+1}\}}\) to the state \([P_k,P_{k+1}]\).
As in the $A_2$-case, the \(\Theta\)-set $M/\pm$ isomorphic to \(\mathbf{Z}^2/\pm\) with the standard action of $\PSL_2(\mathbf{Z})$, as follows.
Corresponding to each state \([k,k+1]\) of \(\Theta\), we define a linear map \(\phi_k \colon M_{[k,k+1]} \to \mathbf{Z}^2\) by
  \[
    \phi_{k} \from (1,0) \mapsto (k,1), \quad (0,1) \mapsto (k+1, 1).
  \]
  \begin{proposition}\label{prop:aonehat-M-is-standard}
    The maps \(\phi_k\) give an isomorphism of \(\Theta\)-sets \(M/\pm \to \mathbf{Z}^2/\pm\).
\end{proposition}
\begin{proof}
  Let us check that $\phi$ is $\Theta$-equivariant.
  It is evident that $\phi_k$ intertwines the edges labelled $\gamma^\pm$.
  To check that it also intertwines $\sigma_j \from k \to j$, we must check that
  \[
    \sigma_j \circ \phi_k
    =  \phi_j \circ M(e).
  \]
  Recalling the matrix of $\sigma_{P_j}$ from~\eqref{eqn:aonehat-pgl2} and $M(e)$ from~\eqref{eqn:aonehat-me}, we must check
  \[
    \begin{pmatrix}
      2j+1 & -2j^2\\
      2 & -2j+1
    \end{pmatrix}
    \begin{pmatrix}
      k & k+1 \\
      1 & 1
    \end{pmatrix}
    = \pm
    \begin{pmatrix}
      j-k-1 & j-k-2 \\
      j-k & j-k-1
    \end{pmatrix}
    \begin{pmatrix}
      j & j+1 \\
      1 & 1
    \end{pmatrix}.
  \]
  This is straightforward.
\end{proof}

Recall that we have a $B_{\Gamma}$-equivariant map $i \from \bfS \to \mathbf{P}^1(\mathbf{Z})$.
The following proposition identifies \(i(s)\) explicitly in terms of the HN multiplicities.
\begin{proposition}
  For a spherical object \(s\) supported at the \(k\)th state, we have the equality
  \[i(s) = \phi_k(\HN_{\tau}(s)) \in \mathbf{P}^1(\mathbf{Z}).\]
  Moreover, \(\phi_k(\HN_{\tau}(s)) \in \mathbf{Z}^2/\pm\) is the unique representative of \(i(s)\) whose coordinates are relatively prime.
\end{proposition}
\begin{proof}
  Analogous to the proof of~\Cref{prop:iishn}
\end{proof}
The division of \(\bfS\) according to the HN support corresponds nicely to a geometric division of the circle \(\mathbf{P}^1(\mathbf{R})\), which we now describe.
The objects \(P_i\) divide \(\mathbf{P}^1(\mathbf{R})\) into a collection of closed arcs, namely \(\{[P_i,P_{i+1}]\mid i \in \mathbf{Z}\}\).
Together with \(P_\infty\), these arcs cover all of \(\mathbf{P}^1(\mathbf{R})\).
We show that this decomposition of the circle is compatible with the automaton.
\begin{proposition}\label{prop:aonehat-hn-support}
  For each \(i \in \mathbf{Z}\), the objects supported at state $i$ are mapped to the arc \([P_i,P_{i+1}]\).
\end{proposition}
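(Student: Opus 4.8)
The plan is to follow the template of the proof of~\autoref{prop:sphericalHN} from the $A_2$ case, now that the analogous infrastructure is in place. The crucial input, established in the discussion immediately preceding this proposition, is that the composite $\bfS \hookrightarrow \bbP^1(\bbZ) \subset \bbP^1(\bbR)$ coincides with the map $s \mapsto \phi([s])$, where for each state $i$ the linear isomorphism $\phi_i \from \bbZ^{\{P_i, P_{i+1}\}} \to \bbZ^2$ of~\autoref{prop:aonehat-M-is-standard} sends $P_i \mapsto (i,1)$ and $P_{i+1} \mapsto (i+1,1)$. So the whole statement reduces to tracking where the image of $[s] \in M(i)$ under $\phi_i$ lands in $\bbP^1(\bbR)$.

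First I would fix $i \in \bbZ$ and an object $s \in \bfS$ supported at state $i$. By~\autoref{prop:aonehat-automaton}, the vector $[s] \in M(i) = \bbZ^{\{P_i, P_{i+1}\}}$ is precisely the $\tau$-HN multiplicity vector of $s$ expressed in the basis $\{P_i, P_{i+1}\}$; in particular, writing $[s] = aP_i + bP_{i+1}$, both $a$ and $b$ are non-negative, and since $s$ is a non-zero object its HN filtration is non-empty, so $a + b > 0$. Next I would simply compute
\[ \phi_i([s]) = a(i,1) + b(i+1,1) = \bigl(ai + b(i+1),\, a + b\bigr), \]
which, viewed in $\bbP^1(\bbR) = \bbR \cup \{\infty\}$, is the real number $\tfrac{ai + b(i+1)}{a+b}$, a convex combination of $i$ and $i+1$. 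Hence $\phi_i([s])$ lies in the closed interval $[i, i+1]$.

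The only point requiring a word of care is the identification of this interval with the arc called $[P_i, P_{i+1}]$ in the statement: under the embedding, $P_i$ and $P_{i+1}$ sit at the real points $i$ and $i+1$ respectively, and no other point of $\{P_j \mid j \in \bbZ\} \cup \{P_\infty\}$ lies strictly between them, so $[i, i+1]$ is exactly the closed arc with endpoints $P_i$ and $P_{i+1}$ meeting no other $P_j$ — i.e.\ the arc $[P_i, P_{i+1}]$. This is the same bookkeeping as in~\autoref{prop:sphericalHN}, and there is no genuine obstacle; the content is entirely carried by~\autoref{prop:aonehat-automaton} and~\autoref{prop:aonehat-M-is-standard}.
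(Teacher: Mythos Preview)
Your proof is correct and follows exactly the same approach as the paper, which simply states that the argument is ``entirely analogous to that of~\autoref{prop:sphericalHN}.'' You have spelled out the explicit convex-combination computation for the $\widehat{A}_1$ linear maps $\phi_i$, which is precisely what the analogy amounts to.
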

\begin{proof}
Analogous to the proof of~\Cref{prop:sphericalHN}.
\end{proof}

As a consequence of the automaton, we also obtain a Rouquier--Zimmermann--type theorem for $\widehat A_1$. Just as in type $A_2$, this theorem relates the parametrization of rational points $[a:c] \in \mathbf{P}^1(\mathbf{Z})$ with Jordan--H\"older multiplicities of the corresponding spherical object.

Recall the notion of Jordan--H\"older (or JH) multiplicity from~\Cref{def:JH-multiplicity}.
We use the same definition for the \(\widehat A_1\) category, with respect to the standard heart.
\begin{proposition}\label{prop:ac-aonehat}
  Let $x \in \bfS$ such that $i(x) = [a:c] \in \mathbf{P}^1(\mathbf{Z})$, with $a$ and $c$ relatively prime integers.
  Then the JH multiplicities of \(P_0\) and \(P_1\) in $x$ are exactly $|a-c|$ and $|a|$ respectively.
\end{proposition}
\begin{proof}
  Analogous to the proof of~\Cref{prop:ac}, using the result of~\Cref{prop:aonehat-M-is-standard} in lieu of~\Cref{prop:M-is-standard}.
\end{proof}

For future use (see~\Cref{prop:aonehat-gc-linearity}), we also prove the following statement, which computes values of the $\homBar$ function of a spherical object from the JH multiplicities of \(P_0\) and \(P_1\) in that object.
\begin{proposition}\label{prop:achoms-aonehat}
  Let $x \in \bfS$ such that the JH multiplicities of \(P_0\) and \(P_1\) in \(x\) are $n_0$ and \(n_1\) respectively.
  Then $\homBar(x,P_1) = 2n_0$ and $\homBar(x,P_0) = 2n_1$.
\end{proposition}
\begin{proof}
  We prove the assertion for $\homBar(x,P_1)$.
  The proof for $\homBar(x,P_0)$ is similar.
  We can compute directly from the definitions that $\homBar(P_1,P_k) = 2|k-1|$ for each \(k \in \mathbf{Z}\).  
  
  Fix a standard off-the-wall stability condition of type I.
  The key idea, as in the proof of~\Cref{prop:homs}, is to prove that \(\homBar(-,P_1)\) is a linear function of the HN multiplicities.
  Then the cases \(x = P_i\) imply the result for all \(x\).

  Suppose that $x$ is supported at $[P_k,P_{k+1}]$ for $k \neq 0,1$.
  Consider the Harder--Narasimhan filtration of $x$:
  \[ 0 = x_0 \to x_1 \to \cdots \to x_n = x,\]
  with factors $z_i = \Cone(x_i \to x_{i+1})$.
  Each $z_i$ is either $P_k$ or $P_{k+1}$, up to shift.

  By applying $\Hom(P_1,-)$, we obtain the following filtration in the bounded derived category of graded vector spaces:
  \begin{equation}\label{eqn:aonehat-hom-filtration}
    0 = \Hom(P_1,x_0) \to \cdots \to \Hom(P_1,x_n) = \Hom(P_1,x).
  \end{equation}
  As in the first case in the proof of~\Cref{prop:homs}, we obtain
  \[\Hom(P_1,x) = \bigoplus \Hom(P_1,z_i).\]

  We now treat the cases where $x$ is supported either at $[P_0,P_1]$ or at $[P_1,P_2]$.
  By~\Cref{prop:ac-aonehat}, the JH multiplicities of \(P_0\) in the objects $x$ and $\sigma_1^rx$ are equal, and both also have the same value for $\homBar(-,P_1)$.
  Hence the proposition for $x$ implies the proposition for $\sigma_1^rx$.
  By~\Cref{prop:unsink-aonehat}, any \(x\) supported either at \([P_0,P_1]\) or \([P_1,P_2]\) can be rewritten as \(x = \sigma_1^ry\) for some \(r \in \mathbf{Z}\) and \(y\) supported at \([P_k,P_{k+1}]\) for \(k \neq 0,1\).
  Hence the proposition holds for \(x\).
\end{proof}

\subsection{Gromov coordinates}
\begin{definition}[Gromov coordinates]\label{def:gromov-coordinates}
  Let $\tau$ be a stability condition of type I.
  For each integer $i \in \mathbf{Z}$, let $x_i(\tau)$ be the rational number defined as follows:
  \[ x_i(\tau) = \frac{m_\tau(P_{i-1})+m_\tau(P_{i+1}) - 2 m_\tau(P_{i})}{2}.\]
  We call the numbers $(x_i(\tau))$ the \emph{Gromov coordinates} of $\tau$.
\end{definition}

Using the triangle inequality (\Cref{prop:triangle}) on the triangles~\eqref{eq:aonehat-twist-triangles},
we deduce that all Gromov coordinates of $\tau$ are non-negative.
If $\tau$ is off-the-wall, they are all positive.

\begin{proposition}[Linearity]\label{prop:aonehat-gc-linearity}
  Let $\tau$ be a stability condition of type I with Gromov coordinates $(x_i)$.
  For any object $X \in \bfS$, we have
  \begin{equation}\label{eq:aonehat-gc-linearity}
    m_\tau(X) = \frac{1}{2}\sum_{i \in \mathbf{Z}} x_i \homBar(P_{i},X).
  \end{equation}
\end{proposition}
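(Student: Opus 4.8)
The plan is to reduce \eqref{eq:aonehat-gc-linearity} to the case $X = P_j$ and then to an elementary identity for the real sequence $n \mapsto m_\tau(P_n)$, the point being that the square of this sequence is a quadratic polynomial in $n$.

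The first step is to record $\homBar(P_i, X)$ for $X \in \bfS$. If $X$ corresponds to $[a:c] \in \bbP^1(\bbZ)$, then \autoref{prop:ac-aonehat} and \autoref{prop:achoms-aonehat} give $\homBar(X,P_0) = 2|a|$ and $\homBar(X,P_1) = 2|a-c|$; applying the autoequivalence $\gamma^{-m}$ (which preserves $\homBar$ and translates $\bbP^1(\bbZ)$ by $-2m$), using $P_{2m} = \gamma^m P_0$ and $P_{2m+1} = \gamma^m P_1$ up to shift, and the symmetry of $\homBar$ furnished by the $2$-Calabi--Yau property, one gets
\[ \homBar(P_i,X) = 2\,|c\,i - a| \quad (i \in \bbZ), \qquad \text{and in particular} \qquad \homBar(P_i,P_j) = 2\,|i-j|. \]
Now let $X \in \bfS$. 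By \autoref{prop:cyclicwriting-a1hat} it is supported at some state $k$; let $(p,q) \in \bbZ_{\geq 0}^2$ be its HN multiplicity vector, so that by \autoref{prop:aonehat-automaton} the HN filtration of $X$ consists of $p$ shifts of $P_k$ and $q$ shifts of $P_{k+1}$, giving $m_\tau(X) = p\,m_\tau(P_k) + q\,m_\tau(P_{k+1})$, while by \autoref{prop:aonehat-M-is-standard} the point $[a:c]$ equals $p(k,1) + q(k+1,1)$, so that $c\,i - a = p(i-k) + q(i-k-1)$. Since $p,q \geq 0$ and the integers $i-k$, $i-k-1$ are never of strictly opposite signs, $|c\,i-a| = p\,|i-k| + q\,|i-k-1|$, i.e.\ $\homBar(P_i,X) = p\,\homBar(P_i,P_k) + q\,\homBar(P_i,P_{k+1})$. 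Both sides of \eqref{eq:aonehat-gc-linearity} are therefore $\bbZ_{\geq 0}$-linear in the HN decomposition of $X$, so it is enough to prove \eqref{eq:aonehat-gc-linearity} for $X = P_j$, $j \in \bbZ$.

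Write $\mu_n = m_\tau(P_n)$. Since $[P_n] = \pm\bigl([P_0] + n([P_1]-[P_0])\bigr)$ in the Grothendieck group, $\mu_n = |Z_\tau(P_0) + n(Z_\tau(P_1)-Z_\tau(P_0))|$, so $\mu_n^2 = An^2 + Bn + C$ with $A = |Z_\tau(P_1)-Z_\tau(P_0)|^2 > 0$ (for a genuine stability condition $Z_\tau(P_0) \neq Z_\tau(P_1)$, since otherwise all $\mu_n$ would be equal although the classes $[P_n]$ are unbounded). For $X = P_j$, writing $x_i = \tfrac12(\mu_{i-1}+\mu_{i+1}-2\mu_i)$, the assertion \eqref{eq:aonehat-gc-linearity} reads $\mu_j = \sum_{i \in \bbZ} x_i\,|i-j|$. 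All terms are non-negative, the $x_i$ being non-negative by the triangle inequality of \cite{ike:16} applied to the triangles \eqref{eq:aonehat-twist-triangles}, so the symmetric partial sums $S_N = \sum_{i=-N}^{N}(\mu_{i-1}+\mu_{i+1}-2\mu_i)\,|i-j|$ increase to $2\sum_i x_i|i-j| \in [0,\infty]$. A summation by parts followed by telescoping — using that $|i-j| - |i+1-j|$ equals $+1$ for $i < j$ and $-1$ for $i \geq j$ — gives, for $N > |j|$,
\[ S_N = 2\mu_j - \mu_N - \mu_{-N} + (N-j)(\mu_{N+1}-\mu_N) - (N+j)(\mu_{-N}-\mu_{-N-1}). \]

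It then remains to show that $E_N := S_N - 2\mu_j \to 0$, and this is where the quadratic shape of $\mu_n^2$ does the work. Rearranging,
\[ E_N = \bigl(N\mu_{N+1}-(N+1)\mu_N\bigr) + \bigl(N\mu_{-N-1}-(N+1)\mu_{-N}\bigr) - j\bigl((\mu_{N+1}-\mu_N) + (\mu_{-N}-\mu_{-N-1})\bigr). \]
From $\mu_{N+1}^2-\mu_N^2 = A(2N+1)+B$ one gets $\mu_{N+1}-\mu_N \to \sqrt A$ and, symmetrically, $\mu_{-N}-\mu_{-N-1} \to -\sqrt A$, so the last term of $E_N$ tends to $0$; and from $(N\mu_{N+1})^2-((N+1)\mu_N)^2 = -BN(N+1)-C(2N+1)$ together with $(N\mu_{-N-1})^2-((N+1)\mu_{-N})^2 = BN(N+1)-C(2N+1)$, divided by $N\mu_{N+1}+(N+1)\mu_N$ and by $N\mu_{-N-1}+(N+1)\mu_{-N}$ respectively (both $\sim 2\sqrt A\,N^2$), one gets $N\mu_{N+1}-(N+1)\mu_N \to -B/(2\sqrt A)$ and $N\mu_{-N-1}-(N+1)\mu_{-N} \to B/(2\sqrt A)$, whose sum vanishes. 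Hence $E_N \to 0$, so $S_N \to 2\mu_j$, i.e.\ $\sum_i x_i|i-j| = \mu_j = m_\tau(P_j)$, proving \eqref{eq:aonehat-gc-linearity} for $X = P_j$ when $\tau$ is non-degenerate; the degenerate case follows by continuity, as in \autoref{prop:linearity}. I expect this boundary cancellation to be the one genuinely delicate point: neither boundary term vanishes on its own (each contributes $\mp B/(2\sqrt A)$), and the cancellation relies on the exact affine-linearity of $n \mapsto Z_\tau(P_n)$; convergence of the infinite sum is then automatic, since all terms are non-negative.
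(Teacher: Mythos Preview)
Your proof is correct, with one easily repaired slip, and the final step takes a genuinely different route from the paper.

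Both arguments begin the same way: compute $\homBar(P_i,X) = 2(\alpha|k-i|+\beta|k+1-i|)$ for $X$ supported at state $k$ with HN multiplicities $(\alpha,\beta)$, and reduce by linearity to the individual $P_j$. They then diverge. The paper performs a dual summation by parts: it substitutes $x_i = \tfrac12(\mu_{i-1}+\mu_{i+1}-2\mu_i)$ into $\tfrac12\sum_i x_i\,\homBar(P_i,X)$ and regroups so as to read off the coefficient of each $\mu_j$, which turns out to be $\tfrac14$ times the discrete second difference of $j\mapsto\homBar(P_j,X)$; since that second difference is supported on $\{k,k+1\}$, the sum collapses to $\alpha\mu_k+\beta\mu_{k+1}=m_\tau(X)$. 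This is slick, but the regrouping tacitly discards boundary terms (the double sum is not absolutely convergent, since $\mu_i$ and $\homBar(P_i,X)$ both grow linearly). You instead reduce to $X=P_j$ and evaluate $\sum_i x_i|i-j|$ head-on, computing the symmetric partial sums by Abel summation and showing the boundary contribution $E_N\to 0$ from the exact asymptotics of $\mu_n$. Your route is longer but makes the convergence explicit; your observation that the two boundary pieces cancel only in tandem (each contributing $\mp B/(2\sqrt A)$) is precisely the point the paper's argument leaves implicit.

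The slip: your formula $[P_n]=\pm([P_0]+n([P_1]-[P_0]))$ is not correct here. From the triangles~\eqref{eq:aonehat-twist-triangles} one finds $[P_n]=\pm(|n|[P_1]+|n-1|[P_0])$, equivalently $Z(P_n)=\pm\bigl(n(Z(P_0)+Z(P_1))-Z(P_0)\bigr)$. Fortunately your argument only uses that $n\mapsto Z(P_n)$ is, up to sign, affine-linear, so $\mu_n^2=An^2+Bn+C$ still holds --- now with $A=|Z(P_0)+Z(P_1)|^2=|Z(P_\infty)|^2$. Positivity $A>0$ then follows from $Z(P_\infty)\neq 0$ (since $P_\infty$ is semistable), rather than from your stated reason. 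With this correction, all of your asymptotic computations go through verbatim.
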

\begin{proof}
  Recall that for any $i \in \mathbf{Z}$, we have
  \[ P_{2i} = \gamma^iP_{0},\quad P_{2i+1} = \gamma^iP_{1}.\]
  Therefore, we get
  \[
    \homBar(P_{2i}, X) = \homBar(P_0, \gamma^{-i}X), \quad \homBar(P_{2i+1}, X) = \homBar(P_1, \gamma^{-i}X).
  \]
  \Cref{prop:achoms-aonehat} allows us to compute $\homBar(P_0, \gamma^{-i}X)$ and $\homBar(P_1,\gamma^{-i}X)$ from the HN multiplicities of $\gamma^{-i}X$.
  In turn, the automaton in~\Cref{fig:aonehat-automaton} allows us to compute the HN multiplicities of $\gamma^{-i}X$ from the HN multiplicities of $X$.
  Suppose $X$ is supported at the $k$th state of the automaton.
  Suppose the HN filtration of $X$ consists of $\alpha$ copies of $P_{k}$ and $\beta$ copies of $P_{k+1}$, up to shift.
  Then $\gamma^{-i}X$ is supported on the $(k-2i)$th state, and its HN filtration contains $\alpha$ copies of $P_{k-2i}$ and $\beta$ copies of $P_{k+1-2i}$.
  In particular, for any $i \in \mathbf{Z}$, the JH multiplicities of $P_1$ and $P_0$ in $\gamma^{-i}X$ are exactly
  \[ \left(\alpha|k-2i| + \beta|k-2i+1|\right) \text{ and }\left(\alpha|k-2i-1| + \beta|k-2i|\right)\]
  respectively.
Using~\Cref{prop:achoms-aonehat}, we deduce that for any $j \in \mathbf{Z}$, we have
  \begin{equation}\label{eq:homBar-aonehat}
    \homBar(P_{j},X) = 2\left(\alpha|k-j| + \beta|k-j+1|\right).
  \end{equation}
  Substitute
  \[x_i = \frac{m_\tau(P_{i-1})+m_\tau(P_{i+1}) - 2 m_\tau(P_{i})}{2}\]
  in  \[\frac{1}{2}\sum_{i \in \mathbf{Z}} x_i \homBar(P_{i},X).\]
  Then the coefficient of $m_\tau(P_{j})$ is exactly
  \[ \frac{\homBar(P_{j-1},X) + \homBar(P_{j+1},X) - 2 \homBar(P_{j},X)}{4}.\]
  Using~\eqref{eq:homBar-aonehat}, we see that the expression above equals $\alpha$ if $j = k$, is $\beta$ if $j = k+1$, and is $0$ otherwise.
  Therefore we see that
  \[\frac{1}{2}\sum_{i\in \mathbf{Z}}x_i\homBar(P_{i},X) = \alpha \cdot m_\tau(P_{k}) + \beta\cdot  m_\tau(P_{k+1}),\]
  which is precisely the $\tau$-mass of $X$.  
\end{proof}

Recall that \(\Lambda\) is the closure in \(\Stab(\mathcal{C})/\mathbf{C}\) of the image of the standard off-the-wall type I stability conditions.
We now study the topology of $\Lambda$, and compute the closure of $m(\Lambda)$ in $\mathbf{P}^\bfS$.
\begin{proposition}\label{prop:lambda-delta}
  Let \(\overline{\mathbf{H}}\) be the closed upper half plane.
  Consider the subset
  \[L = \{-1\} \cup \left\{ \frac{1-n}{n} \mid n \in \mathbf{Z} \setminus {0} \right\}.\]
  Then the map
  \[\tau \mapsto \frac{Z_{\tau}(P_1)}{Z_{\tau}(P_0)}\]
  is a homeomorphism of \(\Lambda\) onto \(\overline{\mathbf{H}} \setminus L\).
\end{proposition}
\begin{proof}
  The map is clearly continuous.
  We first show that its image lies in \(\overline{\mathbf{H}} \setminus L\).
  Recall that $\Lambda$ is the closure of off-the-wall stability conditions of type I (up to the \(\mathbf{C}\)-action).
  These are stability conditions in which \(P_0\) and \(P_1\) are stable and satisfy the phase inequalities
  \[ \phi(P_0) < \phi(P_1) < \phi(P_0)+1.\]
  It follows that for any point \(\tau \in \Lambda\) the quantity $\omega_{\tau} = Z_{\tau}(P_1)/Z_{\tau}(P_0)$ lies in the closed upper half plane.
  To see why $\omega_{\tau}$ avoids $L$, note that the objects \(P_{n}\) are \(\tau\)-semi-stable for each \(n \in \mathbf{Z}\) as well as \(n = \infty\).
  Therefore, we must have $Z_{\tau}(P_n) \neq 0$ for $n \in \mathbf{Z} \cup \{\infty\}$.
  Since \[[P_n] = |n| [P_1] + |n-1|[P_0] \text{ and } [P_\infty] = [P_0]+[P_1],\] it is clear that $\omega_{\tau} \notin L$.

  Now fix some \(\omega \in \overline{\mathbf{H}} \setminus L\).
  We now prove that there is a unique $\tau \in \Lambda$ with $\omega = Z_\tau(P_1)/Z_{\tau}(P_0)$.
  Consider the central charge \(Z \colon K(\mathcal{C}) \to \mathbf{C}\) defined as
  \[Z(P_0) = 1, \quad Z(P_1) = \omega.\]
  Recall from~\cite[Proposition 5.3]{bri:07} that a stability condition is uniquely specified by choosing:
  \begin{enumerate}[(a)]
  \item the heart of a bounded t-structure; and
  \item a central charge that sends the heart to the semi-closed upper half plane \(\mathbf{H} \cup \mathbf{R}_{>0}\) with the Harder--Narasimhan property.
  \end{enumerate}
  The standard heart on \(\mathcal{C}\) is the extension closure of \(P_0\) and \(P_1\).
  Being a finite-length category, any central charge as above with respect to the standard heart automatically enjoys the Harder--Narasimhan property.
  Thus if \(\omega\) lies in the semi-closed upper half plane, the central charge above uniquely specifies a type I standard stability condition.

  We now treat the remaining cases; namely those where \(\omega\) lies on the negative real axis.
  In this case we can no longer use the standard heart for our reconstruction, because the central charge function does not land in the semi-closed upper half plane.
  We will show nevertheless that we can uniquely reconstruct points of \(\Lambda\) in these cases using tilts of the standard heart.
  These tilts will also be finite-length categories for which the Harder--Narasimhan property is automatic.
  
  The set $\mathbf{R} \setminus L$ is a disjoint union of open intervals of two kinds:
  \[ \mathbf{R} \setminus L = \bigcup_{n \geq 0}\left(-\frac{n}{n+1}, - \frac{n-1}{n}\right) \cup \bigcup_{n \geq 0}\left(- \frac{n+1}{n}, - \frac{n+2}{n+1}\right).\]
  The tilt we choose will depend on the interval that contains $\omega$.
  Regard \((0,\infty)\) as an interval of the first kind at the value \(n = 0\); we have already treated this case.
  
  Suppose $\omega$ lies in $(-1/2, 0)$, which is the interval of the first kind at the value $n = 1$.
  Consider the central charge $Z' = Z \circ \sigma_1$.
  Recall that $\sigma_1^{-1}$ takes stability conditions of type I to those (possibly in the closure) of type II.
  Also note that $\omega' = Z'(P_0)/Z'(P_1)$ lies in the interval $(0, +\infty)$.
  The previous argument applied to type II conditions shows that there is a unique $\tau'$ of type II with central charge $Z'$.
  Then $\tau = \sigma_1 \tau'$ gives the unique point of \(\Lambda\) with central charge $Z$.

  Now suppose $\omega = Z(P_1)/Z(P_0)$ lies in $\left( - \frac{n}{n+1}, - \frac{n-1}{n} \right)$ for some $n \geq 2$.
  We construct the corresponding stability condition using the action of the element $\gamma = \sigma_1 \sigma_0$, which tilts the standard heart.
  Let $m = \lfloor  n/2 \rfloor$ and set $Z' = Z \circ \gamma^m$.
  Then $\omega' = Z'(P_1)/Z'(P_0)$ lies in one of the two intervals $(-1/2,0)$ or $(0, +\infty)$, depending on the parity of $n$.
  In either case, the previous argument shows that there is a unique stability condition $\tau'$ of type I with central charge $Z'$.
  Since $\gamma$ preserves \(\Lambda\), the stability condition $\tau = \gamma^{m} \tau'$ gives the unique point of \(\Lambda\) with central charge $Z$.

  For $\omega = Z(P_1)/Z(P_0)$ in the intervals of the second kind, we use a similar argument with $Z' = Z \circ \gamma^{-m}$.
  We omit the details.

  We have shown that the map \(\tau \mapsto \omega_{\tau}\) is a bijection from \(\Lambda\) to \(\overline{\mathbf{H}} \setminus L\).
  It is evident from the proof that the inverse map is continuous; therefore the map \(\tau \mapsto \omega_{\tau}\) is a homeomorphism from \(\Lambda\) to \(\overline{\mathbf{H}} \setminus L\).
\end{proof}

It will be convenient in the remainder of this section to consider a change of coordinates from the previous proposition.
Let $\overline{\Delta}$ be a closed disk, considered as the one-point compactification of the closed upper half plane.
Consider the map
\[ \delta\from \Lambda \to \overline{\Delta}\]
defined as 
\begin{equation}\label{eq:delta-map}
  \delta_\tau = \frac{Z_{\tau}(P_0)}{Z_{\tau}(P_0)+Z_{\tau}(P_1)}.
\end{equation}
By changing coordinates from~\Cref{prop:lambda-delta}, we see that \(\delta\) is injective and a homeomorphism onto its image, and also that the image of \(\delta\) is precisely
\[
  \overline{\Delta} \setminus (\mathbf{Z} \cup \{\infty\}).
\]
\Cref{fig:aonehat-lambda} shows a sketch of $\Lambda$ as a subset of the closed disk $\overline{\Delta}$.
\begin{figure}
  \begin{tikzpicture}
    \tikzstyle{obj}=[fill=white, draw, circle, inner sep=1.5pt, outer sep=0pt]
    \node[obj] (infty) at (-90:2) {} (infty) node [below] {$0$};
    \node[obj] (n21) at (-135:2) {} (n21) node [below left] {$-1$};
    \node[obj] (n32) at (-180:2) {} (n32) node [below left] {$-2$};
    \node[obj] (n43) at (-220:2) {} (n43) node [left] {$-3$};
    \node[obj] (n54) at (-240:2) {} (n54) node [above left] {$-4$};
    \node[obj, inner sep=1.2pt] at (-250:2) {} (-250:2.2) node {$\cdot$};
    \node[obj, inner sep=0.9pt] at (-255:2) {} (-255:2.2) node {$\cdot$};
    \node[obj, inner sep=0.6pt] at (-260:2) {} (-260:2.2) node {$\cdot$};
    \node[obj, inner sep=0.3pt] at (-263:2) {};
    \node[obj, inner sep=0.2pt] at (-266:2) {};
    \node[obj, inner sep=0.1pt] at (-267:2) {};

    \node[obj] (n01) at (-45:2) {} (n01) node [below right] {$1$};
    \node[obj] (n12) at (-0:2) {} (n12) node [below right] {$2$};
    \node[obj] (n23) at (40:2) {} (n23) node [right] {$3$};
    \node[obj] (n34) at (60:2) {} (n34) node [above right] {$4$};
    \node[obj, inner sep=1.2pt] at (70:2) {} (70:2.2) node {$\cdot$};
    \node[obj, inner sep=0.9pt] at (75:2) {} (75:2.2) node {$\cdot$};
    \node[obj, inner sep=0.6pt] at (80:2) {} (80:2.2) node {$\cdot$};
    \node[obj, inner sep=0.3pt] at (83:2) {};
    \node[obj, inner sep=0.2pt] at (86:2) {};
    \node[obj, inner sep=0.1pt] at (87:2) {};

    \node[obj] (n1) at (-270:2) {} (n1) node [above] {$\infty$};

    \begin{scope}[on background layer]
      \draw[fill=white!90!black] (0,0) circle (2);
    \end{scope}
  \end{tikzpicture}
  \caption{The set $\Lambda$ of stability conditions of type I for the CY2 category associated to $\widehat A_1$. The stability conditions are labelled by the ratio $\frac{Z(P_0)}{Z(P_0)+Z(P_1)}$. The interior of the disk represents the upper half plane.
  }
  \label{fig:aonehat-lambda}
\end{figure}

\subsection{The closure}

We use the embedding $\Lambda \stackrel{\delta}{\hookrightarrow} \overline{\Delta}$ to prove the following proposition.
\begin{proposition}[Closure of $\Lambda$]\label{prop:aonehat-lambda-closure}
  The closure of $m(\Lambda)$ in $\mathbf{P}(\mathbf{R}^\bfS)$ is precisely
  \[ \overline{m(\Lambda)} = m(\Lambda) \cup \{\homBar(P_{i}) \mid i \in \mathbf{Z}\}\cup \{\homBar(P_1)- \homBar(P_0)\}.\]
  This closure is homeomorphic to $\overline{\Delta}$.
\end{proposition}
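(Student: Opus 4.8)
The plan is to imitate the proof of \autoref{prop:closedtriangle} from the $A_2$ case, now using the homeomorphism $\delta\from\Lambda\to\overline{\Delta}\setminus(\bbZ\cup\{\infty\})$ together with the linearity formula of \autoref{prop:aonehat-gc-linearity}. Write $U=\overline{\Delta}\setminus(\bbZ\cup\{\infty\})$, so that $\delta$ identifies $\Lambda$ with $U$ and $m|_\Lambda$ with a map $m\circ\delta^{-1}\from U\to\bbP^\bfS$. The first and main step is to show that this map extends to a continuous map $\overline{m}\from\overline{\Delta}\to\bbP^\bfS$. By \autoref{prop:aonehat-gc-linearity}, on $U$ it is the projectivisation of $\tau\mapsto\sum_{i\in\bbZ}x_i(\tau)\homBar(P_i)$, where the $x_i(\tau)$ are the Gromov coordinates, so everything reduces to the limiting behaviour of the Gromov coordinates as $\delta_\tau$ approaches a missing boundary point. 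By \autoref{prop:lambda-delta}, the point of $\partial\overline{\Delta}$ labelled by $n\in\bbZ$ is exactly the locus where $Z_\tau(P_n)\to 0$; a direct computation using $[P_n]=|n|[P_1]+|n-1|[P_0]$ and $m_\tau(P_k)=|Z_\tau(P_k)|$ (valid since every $P_k$ is $\tau$-semistable in type A) shows that there $x_i(\tau)\to 0$ for $i\neq n$ while $x_n(\tau)$ stays bounded away from $0$, so $\overline{m}$ must send this point to $[\homBar(P_n)]$. The point $\infty\in\overline{\Delta}$ is the locus where $Z_\tau(P_0)+Z_\tau(P_1)\to 0$; here all Gromov coordinates degenerate simultaneously, but a second-order expansion of $|Z_\tau(P_k)|$ around this degeneration, fed into the linearity formula, pins down the renormalised projective limit and yields the functional $\homBar(P_1)-\homBar(P_0)$ of the statement.

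The second step is to check that $\overline{m}$ is injective. On $U$ this holds because $m$ is injective on $\Stab^\circ(\calC)/\bbC\supseteq\Lambda$ by \autoref{prop:injectivity-general}. The added values are pairwise distinct: by \autoref{prop:achoms-aonehat} together with $\gamma$-equivariance one has $\homBar(P_n,P_k)=2|n-k|$, so $\homBar(P_n)$ vanishes at $P_n$ and is nonzero on every other $P_k$, whence the $\homBar(P_n)$ are mutually distinct, and one checks similarly that $\homBar(P_1)-\homBar(P_0)$ is distinct from all of them. Moreover none of the added functionals lies in $m(\Lambda)$, since $m_\tau$ is strictly positive on all of $\bfS$ whereas each added functional vanishes somewhere on $\bfS$ (or, equivalently after projectivising, fails to be everywhere positive). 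Hence $\overline{m}\from\overline{\Delta}\to\bbP^\bfS$ is a continuous injection from a compact space into a Hausdorff space, and therefore a homeomorphism onto its image.

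It remains to identify the image. Since $\overline{m}(\overline{\Delta})$ is compact it is closed in $\bbP^\bfS$, and it contains $m(\Lambda)=\overline{m}(U)$, so it contains $\overline{\Lambda}$; conversely each point of $\overline{m}(\overline{\Delta})$ is either in $m(\Lambda)$ or a limit of points of $m(\Lambda)$, so $\overline{m}(\overline{\Delta})\subseteq\overline{\Lambda}$. Therefore $\overline{\Lambda}=\overline{m}(\overline{\Delta})=\Lambda\cup\{\homBar(P_i)\mid i\in\bbZ\}\cup\{\homBar(P_1)-\homBar(P_0)\}$, and it is homeomorphic to $\overline{\Delta}$ via $\overline{m}$.

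The step I expect to be the main obstacle is the continuity of $\overline{m}$ at the boundary of $\overline{\Delta}$, and particularly at the point $\infty$. For a fixed $Y\in\bfS$ the numbers $\homBar(P_j,Y)$ grow linearly in $|j|$, so infinitely many terms of $\sum_j x_j(\tau)\homBar(P_j,Y)$ are nonzero, and one must justify interchanging the limit in $\tau$ with this infinite sum; this can be done using a uniform bound on $\sum_j x_j(\tau)|j|$ near the boundary, which one extracts from \autoref{prop:pre-compactness} and the continuity of the individual mass functionals on $\Stab(\calC)/\bbC$. The degeneration at $\infty$, where the entire phase gap $\phi_\tau(P_1)-\phi_\tau(P_0)$ collapses and every Gromov coordinate tends to $0$ at once, is the most delicate part, and the explicit second-order estimate on $|Z_\tau(P_k)|$ referred to above is what makes the renormalised limit converge to the claimed functional.
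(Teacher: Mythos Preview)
Your approach is essentially the paper's: identify $\Lambda$ with $\overline{\Delta}\setminus(\bbZ\cup\{\infty\})$ via $\delta$, use the linearity formula \autoref{prop:aonehat-gc-linearity} to express $m_\tau$ through the Gromov coordinates, extend continuously to $\overline{\Delta}$ by computing the limits of the Gromov coordinates at the missing points, and conclude by compactness of $\overline{\Delta}$. The paper is terser---it omits your explicit injectivity check and the final identification-of-image argument, treating them as evident once the factoring $\Lambda\hookrightarrow\overline{\Delta}\to\bbP^\bfS$ is established.

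Two places where you make the argument harder than the paper does. First, your concern about interchanging the limit in $\tau$ with the infinite sum $\sum_j x_j(\tau)\,\homBar(P_j,Y)$ is unnecessary: for any fixed $Y\in\bfS$, supported at state $k$ with multiplicities $(\alpha,\beta)$, the automaton gives the \emph{finite} expression $m_\tau(Y)=\alpha\,m_\tau(P_k)+\beta\,m_\tau(P_{k+1})$, and each $m_\tau(P_j)=|Z_\tau(P_j)|$ is manifestly continuous in the central charge. The infinite-sum form is just a rewriting of this; no uniform bound on $\sum_j x_j(\tau)|j|$ is needed. Second, at the point $\infty$ the paper does not perform a second-order expansion: normalising $Z_\tau(P_0)=1$, it computes the limits $Z_\tau(P_j)\to |j-1|-|j|$ directly and reads off limiting Gromov coordinates $x_0\to -1$, $x_1\to 1$, $x_j\to 0$ otherwise, which immediately gives the functional $\homBar(P_1)-\homBar(P_0)$. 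So the ``main obstacle'' you flagged---uniform convergence and a renormalised second-order estimate at $\infty$---does not appear in the paper's argument.
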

\begin{proof}
  Via the identification \(\Lambda \cong \overline{\Delta}\setminus (\mathbf{Z} \cup \{\infty\})\) via the map \(\delta\) from~\eqref{eq:delta-map}, the set \(\overline{\Delta}\setminus (\mathbf{Z} \cup \{\infty\})\) maps to \(\mathbf{P}(\mathbf{R}^\bfS)\) by \(m \circ \delta^{-1}\).
  We now show that this map extends continuously to \(\overline{\Delta}\).
  By~\Cref{prop:aonehat-gc-linearity}, we can express the mass functional \(m_\tau\) of \(\tau \in \Lambda\) as an infinite linear combination of the Gromov coordinates of \(\tau\).
  We now compute the limit of each Gromov coordinate of \(\tau\) as \(\delta_\tau\) approaches one of the points in \(\mathbf{Z} \cup \{\infty\}\), and thereby check that the map \(m_\tau\) extends continuously.

  Consider some \(\tau \in \Lambda\) with central charge \(Z_{\tau}\); let us compute the limit of the Gromov coordinates as \(\delta_{\tau}\) approaches some point of \(\mathbf{Z} \cup \{\infty\}\).
  We know that up to a complex scalar, we have
  \[Z_\tau(P_0) = 1,\quad Z_\tau(P_1) = \frac{1 - \delta_\tau}{\delta_\tau}.\]
  We see that
  \[Z_\tau(P_{i}) = \frac{|i|(1-\delta_\tau) + |i-1|\delta_\tau}{\delta_\tau}.\]
  First suppose that $\delta_\tau$ approaches some $n \in \mathbf{Z}$.
  In this case,
  \[ Z_\tau(P_{i}) \to \frac{|i|(1-n) + |i-1|n}{n}.\]
  We can compute from~\Cref{def:gromov-coordinates} that the limit of the \(j\)th Gromov coordinate is
  \[ x_j(\tau) \to \begin{cases}0,&j \neq n,\\1,&j = n\end{cases}.\]
  The sum in~\eqref{eq:aonehat-gc-linearity} obviously converges in the limit.
  Up to a simultaneous scalar, the limit of the mass functional is
  \[m_\tau(-) \to \homBar(P_n,-).\]
  
  Now suppose that $\delta_\tau$ approaches $\infty$.
  In this case,
  \[ Z_\tau(P_{j}) \to -|i| + |i-1| = \begin{cases}-1,&i > 0,\\1,&i \leq 0\end{cases}.\]
  We compute that
  \[ x_j(\tau) \to \begin{cases}0,&i < 0\text{ or }i > 1,\\-1,&i = 0,\\1,&i = 1.\end{cases}\]
  Again, the sum in~\eqref{eq:aonehat-gc-linearity} obviously converges in the limit.
  Up to a simultaneous scalar, the limit of the mass functional is
  \[m_\tau(-) \to \homBar(P_1,-) - \homBar(P_0,-).\]
  
  We have exhibited a factoring
  \[
    \begin{tikzcd}
      \Lambda \arrow[hook]{rd}[swap]{\delta} \arrow{rr}{m}&& \mathbf{P}(\mathbf{R}^\bfS)\\
      &\overline{\Delta} \arrow{ru}{d}&
    \end{tikzcd}
  \]
  and identified the additional points in the image of \(\overline{\Delta}\).
  Since $\overline{\Delta}$ is compact, no other points are in the closure of \(m(\Lambda)\).
\end{proof}

\begin{remark}
  The point $\infty$ is already a limit point of the subset $\mathbf{Z} \cup \{\infty\} \subset \overline{\Delta}$.
  Therefore the additional point in the closure, namely the functional $\homBar(P_1)- \homBar(P_0)$, is already a limit point of the set $\{\homBar(P_i)\mid i \in \mathbf{Z}\}$.
\end{remark}

It is now easy to identify the closure of $\Stab(\calC)/\mathbf{C}$, using the following contraction property, analogous to~\Cref{prop:contracting}.
Fix a metric on $\mathbf{P}^1(\mathbf{R}) = S^1$.
\begin{proposition}\label{prop:A1hat-contracting}
  Let $\epsilon > 0$ and let $I \subset \mathbf{P}^1(\mathbf{R})$ be a compact subset.
  Then for all but finitely many elements $g \in B_{\Gamma}$, the image $g(I)$ has diameter less than $\epsilon$.
\end{proposition}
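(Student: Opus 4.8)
The plan is to adapt the proof of \autoref{prop:contracting}, reducing the statement to a growth estimate for operator norms in $\PSL_2(\bbR)$. The action of $G$ on $\bbP^1(\bbR) = S^1$ factors through the homomorphism $G \to \PSL_2(\bbZ)$ of~\eqref{eqn:aonehat-pgl2}; let $\bar G \subset \PSL_2(\bbZ) \subset \PSL_2(\bbR)$ denote its image, a discrete subgroup, and note that $G \to \bar G$ is an isomorphism, so it is enough to treat the $\bar G$-action. Since $S^1$ is compact all of its metrics are bi-Lipschitz, so up to a fixed multiplicative constant we may use the metric $d([v],[w]) = |\det(v \mid w)|$ defined on unit-norm representatives $v, w \in \bbR^2$ --- the sine of the angle between the two lines. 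Fixing an $\SL_2(\bbR)$-lift of $\bar g \in \bar G$, we have for unit vectors $v, w$
\[
  d(\bar g[v], \bar g[w]) \;=\; \frac{|\det(v \mid w)|}{\|\bar g v\|\,\|\bar g w\|} \;\le\; \Bigl(\min_{[u] \in I} \|\bar g u\|\Bigr)^{-2},
\]
the minimum --- attained since $I$ is compact --- being taken over unit-norm lifts of the points of $I$. Hence $\operatorname{diam}\bigl(\bar g(I)\bigr) \le \bigl(\min_{[u]\in I} \|\bar g u\|\bigr)^{-2}$, and it suffices to show that $\min_{[u]\in I}\|\bar g u\| \to \infty$ as $\bar g$ ranges over distinct elements of $\bar G$.

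To obtain this growth, I would use the Cartan decomposition. Discreteness of $\bar G$ forces $\lambda_{\bar g} := \|\bar g\|_{\mathrm{op}} \to \infty$ along any sequence of distinct elements. Write $\bar g = k \cdot \operatorname{diag}(\lambda_{\bar g}, \lambda_{\bar g}^{-1}) \cdot k'$ with $k, k' \in SO(2)$. For a unit vector $u$ this gives
\[
  \|\bar g u\| \;\ge\; \lambda_{\bar g}\,\bigl|\langle k'u, e_1\rangle\bigr| \;=\; \lambda_{\bar g}\, d\bigl([u], r_{\bar g}\bigr),
\]
where $r_{\bar g} := (k')^{-1} \langle e_2 \rangle \in \bbP^1(\bbR)$ (the line spanned by $(k')^{-1}e_2$) is the most contracted direction of $\bar g$. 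Minimising over $I$, the required growth becomes
\[
  \lambda_{\bar g} \cdot \operatorname{dist}(I, r_{\bar g}) \longrightarrow \infty .
\]

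I expect this last statement to be the main obstacle. Passing to a subsequence along which the rotations $k'$ converge, the directions $r_{\bar g}$ converge to a point $r \in S^1$; if $r \notin I$ then, $I$ being closed, $\operatorname{dist}(I, r_{\bar g})$ is eventually bounded below by a positive constant and the product diverges. The delicate case is $r \in I$, where $\operatorname{dist}(I, r_{\bar g})$ may decay and has to be balanced against the growth of $\lambda_{\bar g}$; I would address it with a quantitative sharpening of the previous step, exploiting the discreteness (indeed arithmeticity) of $\bar G$ to bound from below how large $\lambda_{\bar g}$ must be for $r_{\bar g}$ to come within a prescribed distance of a fixed point of $S^1$ --- an estimate of the same flavour as the integrality bound $|La|\cdot|Lb| \gg 1$ exploited in the proof of \autoref{prop:contracting}. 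When $I$ is a finite set of rational points of $\bbP^1(\bbR)$, as in the applications, one can argue directly instead: for distinct rational $p, q$ the set $\{\bar g \in \bar G : \|\bar g v_p\|\,\|\bar g v_q\| \le C\}$ is finite because $\bar g v_p$ and $\bar g v_q$ are nonzero integer vectors (again as in the proof of \autoref{prop:contracting}), so the bad set $\{\bar g : \operatorname{diam}(\bar g(I)) \ge \epsilon\}$ is a finite union of finite sets.
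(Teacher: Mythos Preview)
The paper's proof is a two-sentence reduction: since $G$ acts through its image $\bar G \subset \PSL_2(\bbZ)$, the claim for $G$ follows from the claim for $\PSL_2(\bbZ)$, which is asserted to be ``shown in the proof of \autoref{prop:contracting}''. But that proof only treats pairs of \emph{rational} points, and in fact the proposition as stated---for an arbitrary compact $I \subset \bbP^1(\bbR)$---is false. Take any hyperbolic $g \in \bar G$ (for instance $\sigma_0\gamma$, with trace $6$) and let $I$ be a closed arc containing its repelling fixed point $r$ in its interior; then $g^n$ fixes $r$ while pushing the two ends of $I$ toward the attracting fixed point along opposite sides of the circle, so $\operatorname{diam}(g^n(I))$ stays bounded away from $0$ for all $n$.

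Your Cartan-decomposition argument correctly isolates exactly this obstruction as the ``delicate case $r \in I$'', but the fix you sketch via arithmeticity cannot succeed: in the example above $r_{g^n} = r \in I$ identically, so $\lambda_{g^n}\cdot\operatorname{dist}(I,r_{g^n}) = 0$ for every $n$, and no discreteness bound will rescue it. Your final paragraph, however, is exactly right and is what the paper's proof actually reduces to. For a finite set $I \subset \bbP^1(\bbZ)$, any two distinct points lift to linearly independent integer vectors $a,b$, so $La, Lb$ are nonzero integer vectors and only finitely many $L \in \PSL_2(\bbZ)$ keep both $|La|$ and $|Lb|$ bounded; hence $\sin\sphericalangle(La,Lb)=|\det(a\mid b)|/(|La|\,|Lb|)\to 0$. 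This is the only case used downstream (the analogue of \autoref{prop:b-closure} needs just the three points $\beta_n(P_0),\beta_n(P_1),\beta_n(P_\infty)$), so your closing argument both matches the paper and suffices for the application.
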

\begin{proof}
  The action of $B_{\Gamma}$ on $\mathbf{P}^1(\mathbf{R})$ is via the map $B_{\Gamma} \to \PSL_2(\mathbf{Z})$ of~\Cref{eq:bgamma-psl2-aonehat}.
  The claim is true for $\PSL_2(\mathbf{Z})$, as shown in the proof of~\Cref{prop:contracting}, and hence also for $B_{\Gamma}$.
\end{proof}

Let $M$ and $P$ be the images of $\Stab(\calC)/\mathbf{C}$ and $\bfS$ in $\mathbf{P}^{\bfS}$, respectively.
\begin{proposition}[Closure of $M$]
  The sets $M$ and $P$ are disjoint, and their union is the closure of $M$.
\end{proposition}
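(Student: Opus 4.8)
The plan is to run the argument in parallel with \autoref{prop:b-closure}, replacing the $A_2$ ingredients by their $\aonehat$ counterparts: the description of $\overline\Lambda$ from \autoref{prop:aonehat-lambda-closure}, the linearity of mass in the Gromov coordinates from \autoref{prop:aonehat-gc-linearity}, and the contraction property from \autoref{prop:A1hat-contracting}.

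For disjointness, note that the mass of any object in any stability condition is strictly positive, so every point of $M$ is represented by a vector all of whose coordinates are positive; by contrast each functional in $P$ vanishes at some object — $\homBar(s)$ vanishes at $s$, and the limiting functionals appearing on the boundary of a translate of $\Lambda$, such as $\homBar(P_1)-\homBar(P_0)$ and its $G$-translates, vanish at the two spherical objects of the relevant standard pair. Hence $M\cap P=\emptyset$. The inclusion $P\subseteq\overline M$ is the corollary following \autoref{prop:limit-mass}: applying $\sigma_a^{-n}$ pushes $m(\sigma_a^{-n}\tau)$ to $\homBar(a)$.

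The heart of the proof is to show that $M\cup P$ is closed, which together with the two facts above gives $\overline M=M\cup P$. I would take a sequence $\tau_n\in\Stab(\calC)/\bbC$ with $m(\tau_n)\to z$ in $\bbP^\bfS$, write $\tau_n=\beta_n\tau_n'$ with $\tau_n'\in\Lambda$ and $\beta_n\in G$, and let $\overline\beta_n$ be the image of $\beta_n$ in $\PSL_2(\bbZ)$. If $\{\overline\beta_n\}$ is finite, the $\tau_n$ lie in finitely many translates $g\Lambda$, and by $G$-equivariance and \autoref{prop:aonehat-lambda-closure} the closure of each is contained in $M\cup P$, so $z\in M\cup P$. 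If $\{\overline\beta_n\}$ is infinite, I would invoke linearity: \autoref{prop:aonehat-gc-linearity} gives the identity of functionals $m_{\tau_n}=\tfrac12\sum_{i\in\bbZ}x_i(\tau_n')\,\homBar(\beta_nP_i,-)$, and by \autoref{prop:A1hat-contracting} the images $h(\beta_nP_i)$, for $i$ ranging over any fixed bounded window, all collapse to a single point as $n\to\infty$; after passing to a subsequence (using precompactness, \autoref{prop:pre-compactness}) this common limit is some $A\in P$, and one concludes $z=A\in P$.

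I expect the infinite-orbit branch to be the only genuine difficulty, since it is the sole place where the $\aonehat$ story departs from $A_2$. In $A_2$ the linearity formula has just three terms, each the hom functional of a $\beta_n$-translate of $P_1$, $P_2$, or $X$, so the contraction property collapses $z$ to a single boundary point at once. Here the sum over $i\in\bbZ$ is a priori infinite, so the step needing care is to show that $\sum_i x_i(\tau_n')\,\homBar(\beta_nP_i,-)$ is, uniformly in $n$, governed by finitely many indices — equivalently, that the $\tau_n'$-mass concentrates on a bounded window of $\{P_i\}$ independent of $n$. This should follow from the structure of $\Lambda$ recorded in \autoref{prop:aonehat-lambda-closure} (the Gromov coordinate vector of $\tau'\in\Lambda$ concentrates either at a single index or on the pair $\{0,1\}$ as $\tau'$ approaches one of the points of $\overline\Delta$ missing from $\delta(\Lambda)$), together with the telescoping in the proof of \autoref{prop:aonehat-gc-linearity} to control the tails when $\tau_n'$ lies in a fixed compact part of $\Lambda$.
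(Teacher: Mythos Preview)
Your outline matches the paper's approach exactly—the paper's own proof consists of the single sentence ``The proof is entirely analogous to the proof of \autoref{prop:b-closure}.'' You go beyond the paper by flagging the infinite Gromov sum from \autoref{prop:aonehat-gc-linearity} as the one place where the analogy is not automatic, a point the paper itself does not spell out.
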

\begin{proof}
  The proof is analogous to the proof of~\Cref{prop:b-closure}.
\end{proof}

We conjecture that the union $M \cup P$ is homeomorphic to a closed disk, as in the $A_2$ case.

\appendix
\section{Rectifiable filtrations}\label{sec:rectifiable-filtrations}
In this section, we formulate sufficient conditions on a filtration to ensure that it can be re-arranged to the Harder--Narasimhan (HN) filtration.

Fix a triangulated category \(\mathcal{C}\) and stability condition on \(\mathcal{C}\).
For a semistable object \(x\), let \(\phi(x)\) denote its phase.
For any object \(x\), denote by $\lfloor x \rfloor$ (resp $\lceil x \rceil$) the semistable factor of lowest (resp. highest) phase in its HN filtration.
We retain the subscript notation for the various truncations induced by the stability condition from \Cref{subsec:stability-conditions}.

The first basic question is as follows.
Consider a distinguished triangle
\[ x \to y \to z \to x[1].\]
When can we easily obtain the HN filtration of \(y\) from that of \(x\) and \(z\)?
The following property of the map $z \to x[1]$ leads to the answer.
\begin{definition}[Rectifiable map]
  \label{def:rectifiable-map}
  We say that a map $z \to x[1]$  is \emph{rectifiable} if for every $\alpha \in \R$, the induced map \(z_{> \alpha} \to x_{\leq \alpha}[1]\) obtained as the composition
  \[
    \begin{tikzcd}
      z_{> \alpha} \arrow{r} & z\arrow{d} &\\
      & x[1] \arrow{r} &x_{\leq \alpha}[1]
    \end{tikzcd}
  \]
  vanishes.
  Furthermore, we say that a triangle \(x \to y \to z \xrightarrow{+1}\) is rectifiable if the connecting map \(z \to x[1]\) is rectifiable.
\end{definition}
\begin{remark}
  A given object has only finitely many HN pieces, and hence only finitely many distinct truncations. 
  Therefore, checking the rectifiability of a given map involves only finitely many vanishing conditions.
  In particular, \(z \to x[1]\) is rectifiable if and only if for all \(\alpha\), the induced map
  \[ z_{\geq \alpha} \to x_{<\alpha}[1]\]
  vanishes.
\end{remark}

\begin{example}\label{ex:easyrect}
  \begin{enumerate}
  \item   The zero map $z \to x[1]$ is rectifiable.
  \item   If both $z \to x_1[1]$ and $z \to x_2[1]$ are rectifiable, then so is the direct sum $z \to (x_1 \oplus x_2)[1]$.
  \item \label{i:dsum} Analogously, if \(z_1 \to x[1]\) and \(z_2 \to x[1]\) are rectifiable, then so is the direct sum \((z_1 \oplus z_2) \to x[1]\).
  \item \label{i:highlow}
    Suppose $\phi(\lfloor  x \rfloor) \geq \phi(\lceil  z \rceil)$.
    Then any map $z \to x[1]$ is rectifiable.
    Indeed, either $z_{> \alpha}$ or $x_{\leq \alpha}$ is zero.
  \end{enumerate}
\end{example}

\begin{proposition}\label{prop:rectifable-factor}
  Suppose $z \to x[1]$ is rectifiable.
  Then for any maps $y \to z$ and $x \to w$, the induced map $y \to w[1]$ is rectifiable.
  That is, rectifiable maps form a two-sided ideal.
\end{proposition}
\begin{proof}
  The map $y_{> \alpha} \to w_{\leq \alpha}[1]$ factors as
  \[ y_{> \alpha} \to z_{> \alpha} \to x_{\leq \alpha}[1] \to w_{\leq \alpha}[1].\]
  Since $z \to x[1]$ is rectifiable, the map $z_{> \alpha} \to x_{\leq \alpha}[1]$ vanishes.
  Hence, the map $y_{>\alpha} \to w_{\leq\alpha}[1]$ vanishes too.
\end{proof}

We will need the following simple observations.
\begin{lemma}\label{lem:unique-induced-map}
  Let \(x \to y \to z \xrightarrow{+1}\) be a distinguished triangle.
  \begin{enumerate}
  \item Let  \(f \colon a \to y\) be a map such that the composite \(a \xrightarrow{f} y \to z\) is zero.
    Further, suppose \(\Hom(a, z[-1]) = 0\).
    Then there is a unique map \(\widehat{f} \colon a \to x\) such that the composite \(a \xrightarrow{\widehat{f}} x \to y\) is \(f\).
  \item Dually, let \(f \colon y \to a\) be a map such that the composite \(x \to y \xrightarrow{f} a\) is zero.
    Further, suppose \(\Hom(x[1], a) = 0\).
    Then there is a unique map \(\widehat f \colon z \to a\) such that the composite \(y \to z \xrightarrow{\widehat f} a\) is \(f\).
  \end{enumerate}
\end{lemma}

\begin{proof}
  The existence of \(\widehat f\) follows from the axioms of triangulated categories.
  The difference of two such maps factors through a map \(x \to z[-1]\), which must be zero by assumption.

  The proof of the dual statement is similar.
\end{proof}

\begin{lemma}\label{lem:swap-terms}
  Consider a filtration
  \[
    \begin{tikzcd}[column sep=1em]
      x_1 \arrow{rr} && x_2 \arrow{rr} \arrow{ld} && x_3. \arrow{ld}\\
      &y_2 \arrow[dashed]{ul}{+1} && y_3\arrow[dashed]{ul}{+1} &
    \end{tikzcd}
  \]
  Suppose that the connecting map \(y_3 \to y_2[1]\) vanishes.
  Then we can flip \(y_2\) and \(y_3\).
  More precisely, for some object \(x_{2}'\), we have a filtration
  \[
    \begin{tikzcd}[column sep=1em]
      x_1 \arrow{rr} && x'_2 \arrow{rr} \arrow{ld} && x_3, \arrow{ld}\\
      &y_3 \arrow[dashed]{ul}{+1} && y_2\arrow[dashed]{ul}{+1} &
    \end{tikzcd}
  \]
  such that the connecting map \(y_2 \to y_3[1]\) also vanishes.
\end{lemma}
\begin{proof}
  Let \(z\) be such that \(x_1 \to x_3 \to z \xrightarrow{+1}\) is a distinguished triangle, where the first map is the composition \(x_1 \to x_2 \to x_3\) from the given diagram.
  We see by the octahedral axiom that \(y_2 \to z \to y_3\xrightarrow{+1}\) is also a distinguished triangle.
  The connecting map \(y_3 \to y_2[1]\) in this distinguished triangle is the same as the one in the given filtration, and is hence zero.
  Therefore, the distinguished triangle is split and we have \(z \cong y_2 \oplus y_3\).

  Now consider the composition of \(x_3 \to z\) with the projection \(z \to y_2\).
  Let \(x_2'\) be an object such that the triangle
  \[x_2' \to x_3 \to y_2 \xrightarrow{+1}\]
  is distinguished.
  Again, the octahedral axiom shows that
  \[x_1 \to x_2' \to y_3\xrightarrow{+1}\]
  is a distinguished triangle, and we obtain the desired filtration.
  It is easy to see that the connecting map \(y_2 \to y_3[1]\) is also zero, as it is part of the split distinguished triangle \(y_3 \to z \to y_2 \xrightarrow{+1}\).
\end{proof}
\begin{proposition}\label{prop:low-high}
  Suppose $e \from z \to x[1]$ is rectifiable.
  \begin{enumerate}
  \item There exist unique maps \(z_{\leq \alpha} \xrightarrow{\elealpha} x_{\leq \alpha}[1]\) and \(z_{> \alpha} \xrightarrow{\egalpha} x_{> \alpha}[1]\) fitting in a morphism of distinguished triangles as follows.
    \[
      \begin{tikzcd}
        z_{> \alpha} \arrow{r}{} \arrow{d}{\egalpha}& z \arrow{r}{} \arrow{d}{e} & z_{\leq \alpha}\arrow{d}{\elealpha}\arrow{r}{+1}& \mbox{}\\
        x_{>\alpha}[1] \arrow{r}{} & x[1] \arrow{r}{} & x_{\leq \alpha}[1] \arrow{r}{+1}& \mbox{}
      \end{tikzcd}
    \]
  \item The maps \(\elealpha\) and \(\egalpha\) are rectifiable.
  \item Complete \(e\) to a distinguished triangle
    \[y \to z \xrightarrow{e} x[1] \xrightarrow{+1}.\]
    Then we have distinguished triangles
    \begin{align*}
      y_{\leq \alpha} &\to z_{\leq \alpha} \xrightarrow{\elealpha} x_{\leq \alpha}[1] \xrightarrow{+1} \text{ and }\\
      y_{> \alpha} &\to z_{> \alpha} \xrightarrow{\egalpha} x_{> \alpha}[1] \xrightarrow{+1}.
    \end{align*}
  \end{enumerate}
\end{proposition}
\begin{proof}
  Let us prove each part in sequence.
  \begin{enumerate}
  \item Recall that we have the following distinguished triangles:
    \begin{equation}\label{eq:truncation-triangles}
      \begin{gathered}
        z_{> \alpha} \to z \to z_{\le \alpha} \xrightarrow{+1} \text{ and }\\
        x_{> \alpha}[1] \to x[1] \to x_{\le \alpha}[1]\xrightarrow{+1}.
      \end{gathered}
    \end{equation}
    The composition \(z_{> \alpha} \to z \xrightarrow{e} x[1] \to x_{\leq \alpha}[1]\) vanishes because \(e\) is rectifiable.
    Furthermore, there are no non-zero maps from \(z_{>\alpha}\) to \(x_{\leq \alpha}\).
    Therefore, by \Cref{lem:unique-induced-map}, we obtain a unique map
    \[\elealpha \colon z_{> \alpha} \to x_{> a}[1]\]
    that makes the following square commute.
   \[
    \begin{tikzcd}
      z_{> \alpha} \arrow{r}\arrow{d}{\egalpha} & z \arrow{d}{e}\\
      x_{> \alpha}[1] \arrow{r} & x[1]
    \end{tikzcd}
  \]
  The argument for \(\elealpha\) is similar.

\item
  Let us check that \(\egalpha\) is rectifiable.
  We must check that for every \(\beta\), the map
  \[ z_{>\max(\alpha,\beta)} \to x_{(\alpha, \beta]}[1]\]
  is zero.
  This is clear if \(\beta \leq \alpha\), so assume \(\beta > \alpha\).
  Then the map above becomes
  \[ z_{> \beta} \to x_{(\alpha,\beta]}[1].\]
  Consider the distinguished triangle
  \[
    x_{\le \alpha} \to x_{(\alpha,\beta]}[1] \to x_{\le \beta}[1] \to  \xrightarrow{+1}.
  \]
  The composite
  \[ z_{> \beta} \to x_{(\alpha,\beta]}[1] \to x_{\leq \beta}[1]\]
  is zero because \(e\) is rectifiable.
  Thus there is an induced map \(z_{> \beta} \to x_{\leq \alpha}\) making the following diagram commute.
  \[
    \begin{tikzcd}
      \mbox{}&z_{> \beta} \arrow[dashed]{dl} \arrow{d}\\
      x_{\leq \alpha} \arrow{r}& x_{(\alpha, \beta]}[1]
    \end{tikzcd}
  \]
  However, since \(\beta > \alpha\), any such induced map must be zero.
  Therefore the original map \(z_{> \beta} \to x_{(\alpha, \beta]}[1]\) is zero.

  By a similar argument, we see that \(\elealpha\) is also rectifiable.

\item   
  The distinguished triangle
  \[ x \to y \to z \xrightarrow{+1}\]
  combined with (appropriate shifts of) the distinguished triangles~\eqref{eq:truncation-triangles} gives us a filtration as follows:
  \[
    \begin{tikzcd}[column sep=0.1cm]
      0 \arrow{rr} && y_1 \arrow{dl} \arrow{rr} &&y_2 \arrow{dl} \arrow{rr} &&y_3 \arrow{dl} \arrow{rr} && y_4 = y \arrow{dl}.\\
      & x_{> \alpha} \arrow[dashed]{ul}{+1} && x_{\leq \alpha} \arrow[dashed]{ul}{+1} && z_{> \alpha} \arrow[dashed]{ul}{+1} && z_{\leq \alpha} \arrow[dashed]{ul}{+1}
    \end{tikzcd}
  \]
  Since $z \to x[1]$ is rectifiable, the connecting map $z_{> \alpha} \to x_{\leq \alpha}[1]$ vanishes.
  Using~\Cref{lem:swap-terms}, we obtain a new filtration
  \[
    \begin{tikzcd}[column sep=0.1cm]
      0 \arrow{rr} && y_1 \arrow{dl} \arrow{rr} &&y'_2 \arrow{dl} \arrow{rr} &&y_3 \arrow{dl} \arrow{rr} && y_4 = y \arrow{dl}.\\
      & x_{> \alpha} \arrow[dashed]{ul}{+1} && z_{> \alpha} \arrow[dashed]{ul}{+1} && x_{\leq \alpha} \arrow[dashed]{ul}{+1} && z_{\leq \alpha} \arrow[dashed]{ul}{+1}
    \end{tikzcd}
  \]
  It is clear from the proof of~\Cref{lem:swap-terms} and the uniqueness statement of~\Cref{prop:low-high} that the connecting maps \(z_{\leq \alpha} \to x_{\leq \alpha}[1]\) and \(z_{> \alpha} \to x_{> \alpha}[1]\) in the diagram above are \(\elealpha\) and \(\egalpha\) respectively.

  Let \(y_2''\) be an object such that \(y_2' \to y \to y_2''\xrightarrow{+1}\) is a distinguished triangle.
By the octahedral axiom, we have distinguished triangles
  \begin{align*}
    x_{>\alpha} & \to y_2' \to z_{> \alpha} \xrightarrow{\egalpha} x_{> \alpha} [1]\\
    x_{\leq \alpha}&\to y_2'' \to z_{\leq \alpha} \xrightarrow{\elealpha} x_{\leq \alpha}[1] \xrightarrow{+1}.
  \end{align*}
  We note because of the distinguished triangles above that
  \[
    (y_2')_{\le \alpha} = 0 \text{ and } (y_2')_{> \alpha} = y_2',
  \]
  and so $y_{> \alpha} = y_2'$.
  Similarly, $y_{\leq \alpha} = y_2''$.
  The proof is complete.
\end{enumerate}
\end{proof}
\begin{warning}
  The maps \(\elealpha\) and \(\egalpha\) constructed in the previous proof are not the same as the maps induced by the truncation functors \((-)_{\leq \alpha}\) and \((-)_{> \alpha}\).
  Indeed, the object \(x_{\leq \alpha}[1]\) is not the same as the object \((x[1])_{\leq \alpha}\), and similarly \(x_{> \alpha}[1]\) is not the same as \((x[1])_{> \alpha}\).
\end{warning}
 For every $\alpha \in \R$, recall that \(\mathcal C_{\alpha}\) is the abelian category of semi-stable objects of phase \(\alpha\).
\begin{proposition}\label{prop:rect-filt-hn}
  Let $x \to y \to z \xrightarrow{e} x[1]$ be a rectifiable triangle.
  For every \(\alpha \in \R\) we have an exact sequence
  \[ 0 \to x_{\alpha} \to y_{\alpha} \to z_{\alpha} \to 0\]
  in \(\mathcal C_{\alpha}\).
  Consequently, we have
  \[ m_q(y) = m_q(x) + m_q(z).\]
\end{proposition}
\begin{proof}
  Since \(e\) is rectifiable, we can apply~\Cref{prop:low-high} twice to obtain a distinguished triangle
  \[x_{\alpha} \to y_{\alpha} \to z_{\alpha} \xrightarrow{+1}.\]
  All three objects lie in the abelian category \(\mathcal{C}_{\alpha}\), which is an abelian subcategory of the \(\mathcal{C}_{[\alpha,\alpha+1)}\).  Since the latter is the heart of a \(t\)-structure, the distinguished triangle becomes a short exact sequence.

  Since \(Z\) is additive for short exact sequences, we get
  \[ Z(y_{\alpha}) = Z(x_{\alpha}) + Z(z_{\alpha}).\]
  But the three numbers all lie on the ray of argument \(\pi\alpha\), so we get
  \[ |Z(y_{\alpha})| = |Z(x_{\alpha})| + |Z(z_{\alpha})|.\]
  Multiplying throughout by \(q^{\alpha}\) gives
  \[ m_{q}(y_{\alpha}) = m_{q}(x_{\alpha}) + m_{q}(z_{\alpha}).\]
  There are only finitely many numbers \(\alpha\) such that one of the objects \(x_{\alpha}\), \(z_{\alpha}\), and \(y_{\alpha}\) is non-zero.
      Summing over all such \(\alpha\) yields \(m_q(y) = m_{q}(x) + m_q(z)\).
  \end{proof}

We now take up filtrations.
\begin{definition}[Rectifiable filtration]\label{def:rectifiable-filtration}
  Consider a filtration
  \[ x_0 \to x_1 \to \dots \to x_n.\]
  We inductively define what it means for such a filtration to be rectifiable.
  All filtrations for $n = 0$ and $1$ are rectifiable.
  For $n \geq 2$, the filtration is rectifiable if the  following hold.
  \begin{enumerate}
  \item The filtration $x_1 \to \dots \to x_n$ is rectifiable.
  \item Suppose \(x(i,j)\) completes \(x_i \to x_j\) to a distinguished triangle
    \[x_i \to x_j \to x(i,j)\xrightarrow{+1}.\]
    Then the connecting map \(x(1,n) \to x(0,1)[1]\) arising from 
    \[
      \begin{tikzcd}[column sep=1em]
        x_0\arrow{rr} && x_1 \arrow{dl} \arrow{rr}&& x_{n}, \arrow{dl}\\
        &x(0,1)\arrow[dashed]{ul}{+1}&&x(1,n)\arrow[dashed]{ul}{+1}&
      \end{tikzcd}
    \]
    is rectifiable.
  \end{enumerate}
\end{definition}
\begin{remark}
  In the previous definition, the objects \(x(i,j)\) are unique up to (possibly non-unique) isomorphism, so the rectifiability of the connecting map is independent of the particular choices.
  
  Henceforth in this section, if \(f \colon x \to z\) is a morphism, we will denote by \(\Cone(f)\) any object that fits into a distinguished triangle \(x \xrightarrow{f} z \to \Cone(f) \xrightarrow{+1}\).
  Although such an object is not canonically defined in general, it will be sufficient for the arguments that it is unique up to isomorphism.
\end{remark}
\begin{remark}\label{rem:translation-invariance}
  The definition of a rectifiable filtration is ``translation invariant'' in the following sense.
  Given a filtration \(x_0 \to x_1 \to \dots \to x_n\), set \(x_i' = \Cone(x_0 \to x_i)\) and consider the filtration \(0 \to x_1' \to \dots \to x_n'\).
  Then the first filtration is rectifiable if and only if the second one is.
  Indeed, the cones \(x(i,j)\) and the connecting maps between them in the first filtration are isomorphic to the corresponding objects in the second filtration.
\end{remark}
\begin{proposition}\label{rem:hn-rectifiable}
  Consider a filtration
  \[ x_0 \to \cdots \to x_n,\]
  and set \(a_i = \Cone(x_{i-1} \to x_i)\).
  If for all \(i\), the lowest HN phase of \(a_{i}\) is greater than or equal to the highest HN phase of \(a_{i+1}\), then the filtration is rectifiable.
  In particular, the HN filtration is rectifiable.
\end{proposition}
\begin{proof}
  We induct on \(n\).
  The base cases \(n = 0 , 1\) are evident.
  Let us use the notation of \Cref{def:rectifiable-filtration}.
  To go from \(n-1\) to \(n\), consider the map \(x(1,n) \to x(0,1) = a_1\).
  The hypothesis implies that the lowest HN phase of \(x(0,1)\) is greater than or equal to the highest HN phase of \(x(1,n)\), so the map is rectifiable by \Cref{ex:easyrect}.
\end{proof}

\begin{proposition}\label{prop:rectifiable-truncation}
  Suppose \(x_0 \to \dots \to x_n\) is rectifiable.
  Then for every $i,j$ with $i < j$, the filtration \(x_i \to \dots \to x_{j}\) is rectifiable.
\end{proposition}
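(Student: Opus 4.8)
The plan is to prove the statement by a two-step reduction: first, that a geodesic filtration remains geodesic upon dropping its \emph{bottom} stage; second, that it remains geodesic upon dropping its \emph{top} stage. Iterating these two operations clearly produces an arbitrary truncation $X_i \to \dots \to X_j$, so the proposition follows. Dropping the bottom stage (passing from $X_0 \to X_1 \to \dots \to X_n$ to $X_1 \to \dots \to X_n$) is immediate from clause (1) of \autoref{def:geodesic}, which is precisely the inductive hypothesis built into the definition. So the real content is the top-truncation step: if $X_0 \to X_1 \to \dots \to X_n$ is geodesic, then so is $X_0 \to X_1 \to \dots \to X_{n-1}$.

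First I would set up an induction on $n$. The base cases $n \le 2$ are trivial (for $n=2$ the truncation $X_0 \to X_1$ has length $1$, hence is geodesic by fiat). For the inductive step, suppose $X_0 \to \dots \to X_n$ is geodesic with $n \ge 3$. By clause (1), $X_1 \to \dots \to X_n$ is geodesic, so by the inductive hypothesis applied to this shorter filtration, $X_1 \to \dots \to X_{n-1}$ is geodesic. It then remains to verify clause (2) for $X_0 \to \dots \to X_{n-1}$, namely that the map
\[
  \Cone(X_1 \to X_{n-1}) \to \Cone(X_0 \to X_1)[1]
\]
is rectifiable. Here is where I expect the main obstacle. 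We know, from clause (2) applied to the full filtration, that $\Cone(X_1 \to X_n) \to \Cone(X_0 \to X_1)[1]$ is rectifiable. The octahedral axiom gives a distinguished triangle
\[
  \Cone(X_1 \to X_{n-1}) \to \Cone(X_1 \to X_n) \to \Cone(X_{n-1} \to X_n) \xrightarrow{+1},
\]
so $\Cone(X_1 \to X_{n-1})$ receives a map \emph{from} nothing obvious but maps \emph{into} $\Cone(X_1 \to X_n)$ only after a shift — in fact the natural map goes $\Cone(X_1 \to X_{n-1}) \to \Cone(X_1 \to X_n)$. The desired map $\Cone(X_1 \to X_{n-1}) \to \Cone(X_0 \to X_1)[1]$ is the composite of this natural map with the rectifiable map $\Cone(X_1 \to X_n) \to \Cone(X_0 \to X_1)[1]$. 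Now \autoref{prop:rectifable-sub-quotient} says exactly that precomposing a rectifiable map $B \to A[1]$ with any map $C \to B$ yields a rectifiable map $C \to A[1]$. Applying it with $B = \Cone(X_1 \to X_n)$, $A = \Cone(X_0 \to X_1)$, and $C = \Cone(X_1 \to X_{n-1})$ equipped with the natural map $C \to B$, we conclude the composite is rectifiable, which is clause (2).

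The main obstacle is really bookkeeping: making sure the octahedral diagram is deployed so that $\Cone(X_1 \to X_{n-1})$ appears as a \emph{source} mapping into $\Cone(X_1 \to X_n)$, so that \autoref{prop:rectifable-sub-quotient} applies in the correct variance. One should double-check that the connecting/comparison maps produced by the octahedron are the same ones implicitly used in \autoref{def:geodesic}; this is a compatibility that is routine but worth stating carefully. Once the top-truncation step is established, I would conclude: given $i < j$, first drop the top stages one at a time from $X_0 \to \dots \to X_n$ down to $X_0 \to \dots \to X_j$ (each step preserving geodesicity by the step just proved), then drop the bottom stages one at a time from $X_0 \to \dots \to X_j$ down to $X_i \to \dots \to X_j$ (each step preserving geodesicity by clause (1) of the definition). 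This yields the claim.
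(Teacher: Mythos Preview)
Your proposal is correct and is precisely the unpacking of the paper's one-line proof, which reads in full: ``This is an easy consequence of \autoref{prop:rectifable-sub-quotient}.'' The key step---that the connecting map for the truncated filtration factors through the connecting map for the full filtration, so that \autoref{prop:rectifable-sub-quotient} applies---is exactly what you identify, and your induction with separate bottom/top truncation is the natural way to organise it.
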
  
\begin{proof}
  This is an easy consequence of \autoref{prop:rectifable-factor}.
\end{proof}

Recall that \(x(i,j) = \Cone(x_{i} \to x_{j})\).
\begin{proposition}\label{prop:cleaving}
  Consider a filtration \(x_0 \to x_1 \to \dots \to x_n\).
  The following are equivalent:
  \begin{enumerate}
  \item $x_0 \to x_1 \to \dots \to x_n$ is rectifiable.
  \item for all $i$, both $x_0 \to \dots \to x_i$ and $x_{i} \to \dots \to x_n$ are rectifiable, and the triangle
    \[ x(0,i) \to x(0,n) \to x(i,n) \xrightarrow{+1}\]
    is rectifiable.
  \item for some $i$, both $x_0 \to \dots \to x_i$ and $x_{i} \to \dots \to x_n$ are rectifiable, and the triangle
    \[ x(0,i) \to x(0,n) \to x(i,n) \xrightarrow{+1}\]
    is rectifiable.
  \end{enumerate}
\end{proposition}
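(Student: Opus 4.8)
The plan is to get the implications $(2)\Rightarrow(1)$, $(2)\Rightarrow(3)$ and $(1)\Rightarrow(3)$ essentially for free, to reduce the substance to $(3)\Rightarrow(1)$ and $(1)\Rightarrow(2)$, and to isolate the technical heart in two ``gluing lemmas'' for rectifiable connecting maps. First I would make two routine reductions. By \autoref{rem:translation-invariance} we may assume $X_0=0$, so that $\Cone(X_a\to X_b)$ is the iterated extension of the sub-quotients $Z_{a+1},\dots,Z_b$. Unrolling \autoref{def:geodesic} by induction on $n$ gives the characterisation: $X_0\to\cdots\to X_n$ is geodesic iff for every $1\le j\le n-1$ the connecting map $\Cone(X_j\to X_n)\to\Cone(X_{j-1}\to X_j)[1]$ is rectifiable. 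With this in hand $(2)\Rightarrow(1)$ and $(1)\Rightarrow(3)$ are immediate by taking $i=1$ (the defining clause of ``geodesic'' together with the clauses for the tail), and $(2)\Rightarrow(3)$ is trivial; the ``geodesic'' clauses in $(2)$ for truncations are \autoref{prop:rectifiable-truncation}. I would also record the consequence of \autoref{prop:rectifable-sub-quotient}: if $\Cone(X_q\to X_r)\to\Cone(X_p\to X_q)[1]$ is rectifiable then so is $\Cone(X_q\to X_r)\to\Cone(X_{p'}\to X_q)[1]$ for $p\le p'\le q$, by post-composing with the quotient $\Cone(X_p\to X_q)\to\Cone(X_{p'}\to X_q)$ (and similarly one may shrink the source).

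The technical core would be two statements about a composite $A\to B\to C\to D$, writing $\delta_{XY/W}$ for the connecting map $\Cone(X\to Y)\to\Cone(W\to X)[1]$ (so $W\to X\to Y$ composable): \textbf{(I)} if $\delta_{BC/A}$ and $\delta_{CD/A}$ are rectifiable, so is $\delta_{BD/A}$; \textbf{(II)} if $\delta_{CD/B}$ and $\delta_{BD/A}$ are rectifiable, so is $\delta_{CD/A}$. I would prove both by the same method: reduce to $A=0$; using a rectifiable hypothesis connecting map, \autoref{prop:low-high} produces, for each $\alpha$, distinguished triangles of the $(\,\cdot\,)_{>\alpha}$- and $(\,\cdot\,)_{\le\alpha}$-truncations refining the relevant octahedron; then chase the truncation $\Cone(B\to D)_{>\alpha}\to(\text{target})_{\le\alpha}[1]$ of the connecting map to be killed through these triangles, using naturality of connecting maps under the morphisms of octahedra induced by $B\to C$ and $C\to D$ to see it is annihilated by the relevant structure maps, hence factors through a map out of (or into) an object supported in phases $>\alpha$ landing in one supported in phases $\le\alpha$. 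Such a map vanishes since $\Hom\bigl(\mathcal P((\alpha,\infty)),\mathcal P((-\infty,\alpha])\bigr)=0$ — this orthogonality is the crucial point. The extra input ``$\delta_{CD/B}$ rectifiable'' needed for \autoref{prop:low-high} in (I) is supplied from $\delta_{CD/A}$ rectifiable by the post-composition trick.

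With (I), (II) in hand the last two implications follow quickly. For $(3)\Rightarrow(1)$: assume $(3)$ holds for index $i$; by the unrolled characterisation it suffices to show $\delta_{X_jX_n/X_{j-1}}$ is rectifiable for $1\le j\le n-1$. For $j>i$ this is part of ``$X_i\to\cdots\to X_n$ geodesic''; for $j=i$ it follows from $\delta_{X_iX_n/X_0}$ rectifiable and the post-composition trick; for $j<i$, apply (I) with $(A,B,C,D)=(X_{j-1},X_j,X_i,X_n)$, whose hypotheses $\delta_{X_jX_i/X_{j-1}}$ rectifiable (part of ``$X_0\to\cdots\to X_i$ geodesic'') and $\delta_{X_iX_n/X_{j-1}}$ rectifiable (post-composition) are available. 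For $(1)\Rightarrow(2)$: the truncation clauses are \autoref{prop:rectifiable-truncation}, so it remains to prove $\delta_{X_iX_n/X_0}$ rectifiable for every $i$. This is clear for $i=0,n$, and is the defining condition for $i=1$. Inducting on $i$, assuming $\delta_{X_iX_n/X_0}$ rectifiable, apply (II) with $(A,B,C,D)=(X_0,X_i,X_{i+1},X_n)$: its hypotheses are $\delta_{X_{i+1}X_n/X_i}$ rectifiable (the $j=i+1$ clause of the unrolled form of $(1)$) and $\delta_{X_iX_n/X_0}$ rectifiable (inductive hypothesis), and its conclusion is exactly $\delta_{X_{i+1}X_n/X_0}$ rectifiable.

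The main obstacle is the proof of (I) and (II), and inside it the octahedral bookkeeping: among the connecting maps produced by the octahedral axiom for the triples $A\to B\to C$, $B\to C\to D$, $A\to C\to D$, $A\to B\to D$, identifying which are related by the natural morphisms and checking the relevant commuting squares, together with the interaction of the truncation functors $(\,\cdot\,)_{\le\alpha}$ and $(\,\cdot\,)_{>\alpha}$ with those maps. Once these compatibilities are pinned down the diagram chase and the appeal to phase orthogonality are short, but this is where the care is concentrated; in the writeup I would therefore state (I) and (II) as separate lemmas with full proofs before assembling the equivalence.
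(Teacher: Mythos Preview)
Your approach is correct and covers the same ground as the paper's, but the organisation is genuinely different.  The paper does not unroll the definition or isolate gluing lemmas.  Instead it proves $(1)\Rightarrow(2)$ directly by induction on $i$: in the inductive step it passes to the tail filtration $X_1\to\dots\to X_n$ (geodesic by definition of~(1)), applies the inductive hypothesis at index $i-1$ there to get $X(i,n)\to X(1,i)[1]$ rectifiable, and then runs the low--high / phase--orthogonality diagram chase inline.  For $(3)\Rightarrow(1)$ it again inducts on $i$, first using truncation plus the inductive hypothesis to conclude $X_1\to\dots\to X_n$ is geodesic, then invoking the already--proved $(1)\Rightarrow(2)$ on that shorter filtration to obtain the rectifiable map it needs, and finally chasing.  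Thus the paper's two inductions are interleaved (the second uses the first) and each step changes the ambient filtration; your induction stays on the fixed filtration and increments $i$ by appealing to~(I) or~(II).  The diagram chases you sketch for~(I) and~(II) are essentially the same as the paper's inline ones, and your acknowledgement that the octahedral compatibilities are the delicate point matches the paper's implicit reliance on them.  What your packaging buys is reusability of~(I) and~(II) and a cleaner induction; what the paper's buys is that it avoids stating and proving the lemmas in full four--object generality, since the specific $(A,B,C,D)$ that arise always have $B=X_1$, which slightly shortens the verification.
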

\begin{proof}
  Let us show that (1) implies (2).
  We induct on $i$.
  The case $i = 1$ follows from the definition of a rectifiable filtration.
  Assume $i \geq 2$.
  \autoref{prop:rectifiable-truncation} implies that the smaller filtrations $x_0 \to \dots \to x_i$ and $x_{i} \to \dots \to x_n$ are both rectifiable.
  Recall that $x(i,j) = \Cone(x_i \to x_j)$.
  We now prove that the map $x(i,n) \to x(0,i)[1]$ is rectifiable.
  Given \(\alpha \in \mathbf{R}\), we must prove that 
  \begin{equation}\label{eqn:in0i}
    x(i,n)_{> \alpha} \to x(0,i)_{\leq \alpha}[1]
  \end{equation}
  vanishes.
  By \autoref{prop:low-high}, we have the triangle
  \[ x(0,1)_{\leq \alpha} \to x(0,i)_{\leq \alpha} \to x(1,i)_{\leq \alpha} \xrightarrow{+1} \]
  By the inductive hypothesis applied to the rectifiable filtration $x_1 \to \dots \to x_n$ for $(i-1)$, we get that the map
  \begin{equation}\label{eqn:one-less}
    x({i},n) \to x(1,i)[1]
  \end{equation}
  is rectifiable.
  Therefore, the composite
  \[ x(i,n)_{> \alpha} \to x(0,i)_{\leq \alpha}[1] \to x(1,i)_{\leq \alpha}[1]\]
  already vanishes.
  By \Cref{lem:unique-induced-map}, \eqref{eqn:in0i} factors uniquely through a map
  \begin{equation}\label{eqn:in01}
    x(i,n)_{> \alpha} \to x(0,1)_{\leq\alpha}[1].
  \end{equation}
  By \autoref{prop:low-high} applied to the rectifiable map in \eqref{eqn:one-less}, we have the triangle
  \[
    x(1,i)_{> \alpha} \to x(1,n)_{> \alpha}  \to x(i,n)_{> \alpha} \xrightarrow{+1}.
  \]
  Consider the diagram
  \[
    \begin{tikzcd}
      x(1,n)_{> \alpha}  \ar{r}& x(i,n)_{> \alpha} \ar{r}\ar{d}{\eqref{eqn:in01}}& x(1,i)_{> \alpha} [1] \ar{r}{+1}& {}\\
      & x(0,1)_{\leq \alpha}[1].
    \end{tikzcd}
  \]
  The composite
  \[ x(1,n)_{> \alpha} \to x(i,n)_{> \alpha} \to x(0,1)_{\leq \alpha}[1]\]
  vanishes since $x_0 \to \dots \to x_n$ is rectifiable.
  Therefore, the map \eqref{eqn:in01} factors through a map \(x(1,i)_{>\alpha}[1] \to x(0,1)_{\leq \alpha}[1]\).
  But the last map must vanish for phase reasons.
  As a result, \eqref{eqn:in01} vanishes and so does \eqref{eqn:in0i}.
  We have thus proved that \(x(i,n) \to x(0,i)[1]\) is rectifiable.

  That (2) implies (3) is a tautology.

  Let us show that (3) implies (1).
  We again induct on $i$.
  The case $i = 1$ is the definition.
  Assume $i \geq 2$.
  By using \autoref{prop:rectifiable-truncation} and the inductive hypothesis, we conclude that
  \[ x_1 \to \dots \to x_n\]
  is rectifiable.
  It remains to show that the map $x(1,n) \to x(0,1)[1]$ is rectifiable.
  Given \(\alpha \in \mathbf{R}\), consider the map
  \begin{equation}\label{eqn:1n01}
    x(1,n)_{> \alpha} \to x(0,1)_{\leq \alpha}[1].
  \end{equation}
  We have the rectifiable triangle
  \[ x(1,i) \to x(1,n) \to x(i,n) \xrightarrow{+1},\]
  which by \autoref{prop:low-high} gives a distinguished triangle
  \[ x(1,i)_{> \alpha} \to x(1,n)_{> \alpha} \to x(i,n)_{> \alpha} \xrightarrow{+1}.\]
  Since $x_0 \to \dots \to x_i$ is rectifiable, the composite
  \[x(1,i)_{> \alpha} \to x(1,n)_{> \alpha} \to x(0,1)_{\leq \alpha}[1]\]
  vanishes.
  By \Cref{lem:unique-induced-map}, \eqref{eqn:1n01} factors uniquely through a map
  \begin{equation}\label{eqn:in011}
    x(i,n)_{> \alpha} \to x(0,1)_{\leq \alpha}[1].
  \end{equation}
  Since $x_0 \to \dots \to x_i$ is rectifiable, we have the triangle
  \[ x(0,1)_{\leq \alpha} \to x(0,i)_{\leq \alpha} \to x(1,i)_{\leq \alpha} \xrightarrow{+1}.\]
  Consider the diagram
  \[
    \begin{tikzcd}
      & x(i,n)_{> \alpha}\ar{d}{\eqref{eqn:in011}}\\
    x(1,i)_{\leq \alpha} \ar{r}& x(0,1)_{\leq \alpha}[1] \ar{r} \ar{r}& x(0,i)_{\leq \alpha} [1] \ar{r}{+1}& {}.
  \end{tikzcd}
\]
The composite
\[x(i,n)_{>\alpha} \to x(0,1)_{\leq \alpha}[1] \to x(0,i)_{\leq \alpha}[1]\]
vanishes since $x(i,n) \to x(0,i)[1]$ is rectifiable.
As a result, \eqref{eqn:in011} factors through a map \(x(i,n)_{>\alpha} \to x(1,i)_{\leq \alpha}\).
But the last map must vanishes for phase reasons.
As a result, \eqref{eqn:in011} vanishes and so does \eqref{eqn:1n01}.
The proof is now complete.
\end{proof}

\begin{theorem}\label{prop:rect-hn}
  Suppose that a filtration \(0 = x_0 \to x_{1} \to \dots \to x_n = x\) is rectifiable.
  Set $a_i = \Cone(x_{i-1} \to x_i)$.
  Then for any \(\alpha \in \R\), we have an induced filtration
  \[0 \subset (x_{1})_{\alpha} \subset \cdots \subset (x_n)_{\alpha} = x_{\alpha}\]
  in the abelian category \(\mathcal{C}_{\alpha}\), with factors
  \[(x_{i})_{\alpha}/(x_{i-1})_{\alpha} = (a_i)_{\alpha}.\]
  Consequently, we have 
  \[
    m_{q}(x) = \sum m_{q}(a_i).
  \]
\end{theorem}
\begin{proof}
  We induct on \(n\).
  The base cases \(n = 0,1\) are immediate.
  Suppose the statement holds for \(n-1\).
  Since \( 0 = x_{0} \to x_{1} \to \cdots \to x_{n}\) is rectifiable, the triangle
  \[ x(0,n-1) \to x(0,n) \to x(n,n-1) \xrightarrow{+1} \]
  is rectifiable by~\Cref{prop:cleaving}.
  Note that this triangle is equal to
  \[ x_{n-1} \to x_{n} \to a_{n} \xrightarrow{+1}.\]
  By \Cref{prop:rect-filt-hn}, in \(\mathcal{C}_{\alpha}\) we have the short exact sequence
  \begin{equation}\label{eq:subfilt}
    0 \to (x_{n-1})_{\alpha} \to (x_{n})_{\alpha} \to (a_{n})_{\alpha} \to 0.
  \end{equation}
  That is, we have an inclusion \((x_{n-1})_{\alpha} \subset (x_{n})_{\alpha}\) whose quotient is \((a_{n})_{\alpha}\).
  We now apply the inductive hypothesis to the rectifiable filtration
  \[ 0 = x_{0} \to \cdots \to x_{n-1}\]
  to obtain the filtration 
  \[0 \subset (x_{1})_{\alpha} \subset \cdots \subset (x_n)_{\alpha} = x_{\alpha}\]
  with the desired factors.
  Summing over the \(\alpha\) as in~\Cref{prop:rect-filt-hn}, we see that the \(q\)-mass of \(x\) is the sum of the \(q\)-masses of the \(a_{i}\).
\end{proof}

\begin{proposition}\label{prop:swap}
  Suppose
  \[ x_0 \to \dots \to x_{i-1} \to x_i \to x_{i+1}\to \dots \to x_n\]
  is rectifiable, and the map $x(i,i+1) \to x(i-1,i)[1]$ is zero.
  Consider a filtration
  \[ x_0 \to \dots \to x_{i-1} \to x'_i \to x_{i+1}\to \dots \to x_n = x,\]
  obtained by flipping \(x(i-1,i)\) and \(x(i,i+1)\) as in~\Cref{lem:swap-terms}.
  Then this new filtration is also rectifiable.
\end{proposition}
\begin{proof}
  By~\Cref{prop:cleaving}, it suffices to show that
  \begin{enumerate}
  \item the filtration \(x_{0} \to \cdots \to x_{i-1}\) is rectifiable;
  \item the filtration \(x_{i-1} \to x_{i}' \to \cdots \to x_{n}\) is rectifiable; and
  \item the triangle \(x(0,i-1) \to x(0,n) \to x(i-1,n) \xrightarrow{+1}\) is rectifiable.
  \end{enumerate}
  The first and third conditions follow directly from the rectifiability of the original filtration, so it remains to check the second.
  Once again, we apply~\Cref{prop:cleaving}.
  It suffices to show that
  \begin{enumerate}
  \item the filtration \(x_{i-1} \to x_{i}' \to x_{i+1}\) is rectifiable;
  \item the filtration \(x_{i+1} \to \cdots \to x_{n}\) is rectifiable; and
  \item the triangle \(x(i-1,i+1) \to x(i-1,n) \to x(i+1,n) \xrightarrow{+1}\) is rectifiable.
  \end{enumerate}
  The second and third conditions follow directly from the rectifiability of the original filtration, so it remains to check the first.
  However, the connecting map in the first filtration is zero by~\Cref{lem:swap-terms}, and so it is rectifiable.
  The proof is complete.
\end{proof}
\begin{theorem}\label{prop:rectifiable-filtration-check}
  Consider a filtration
  \[  x_0 \to \dots \to x_n = x.\]
  Set $a_i = \Cone(x_{i-1} \to x_i)$.
  Suppose for every $i, j$ with $i < j$, we have
  \[\Hom(a_j, a_i[1]) = \Hom(a_{i},a_{j}[1]) = 0 \text{ or } \phi(\lfloor  a_i \rfloor) \geq \phi(\lceil a_j \rceil).\]
  Then the filtration is rectifiable.
\end{theorem}
\begin{proof}
  We induct on $n$.
  The base cases $n = 0,1$ are trivial.
  Assume $n \geq 2$.

  Suppose \(i\) is such that 
  \[ \phi(\lceil  a_{i} \rceil) < \phi(\lceil a_{i+1} \rceil).\]
  Then we also have \(\phi(\lfloor  a_{i} \rfloor) < \phi(\lceil a_{i+1}\rceil)\).
  By the hypothesis, we must have \(\Hom(a_{i+1}, a_{i}[1]) = 0\).
  In particular, the connecting map \(a_{i+1} \to a_{i}[1]\) vanishes.
  Consider a filtration obtained by flipping \(a_{i}\) and \(a_{i+1}\) as in \Cref{lem:swap-terms}.
  The sequence of factors of the new filtration is
  \[a_{1}, \dots, a_{i-1}, a_{i+1}, a_{i}, a_{i+2}, \dots a_{n}.\]
  Observe that the new filtration also satisfies the hypotheses of the theorem.
  By \Cref{prop:swap}, it suffices to check that the new filtration is rectifiable.

  By repeatedly swapping adjacent terms as above, we may assume that
  \[ \phi(\lceil  a_{1} \rceil) \geq \cdots \geq \phi(\lceil  a_{n} \rceil).\]
  By the inductive hypothesis, the filtration
  \[ x_{1} \to \cdots \to x_{n}\]
  is rectifiable.
  It remains to check that the triangle
  \begin{equation}\label{eqn:conmap}
    x(0,1) \to x(0,n) \to x(1,n) \xrightarrow{+1}
  \end{equation}
  is rectifiable.

  Note that \(x(0,1) = a_{1}\).
  Let \(b = \phi(\lfloor  a_{1} \rfloor)\).
  Partition \(\{2, \dots, n\}\) in two subsets \(\{2, \dots, i\}\) and \(\{i+1, \dots,n\}\) such that:
  \begin{enumerate}
  \item for all \(j \in \{2, \dots, i\}\) we have
    \(b < \phi(\lceil  a_{j} \rceil)\), and
  \item for all \(j \in \{i+1, \dots, n\}\) we have \(b \geq \phi(\lceil  a_{j} \rceil)\).
  \end{enumerate}
  If the first set is empty, take \(i = 1\); in this case, \(x(1,i) = 0\).
  If the second set is empty, take \(i = n+1\), and set \(x(i,n) = 0\).
  Consider the diagram
  \[
    \begin{tikzcd}
      x(1,i) \ar{r}& x(1,n) \ar{r}\ar{d}& x(i,n) \ar{r}{+1} & {} \\
      & a_{1}[1]
    \end{tikzcd}
  \]
  in which the vertical map is the connecting map of~\eqref{eqn:conmap}.
  The object \(x(1,i)\) has the filtration
   \[ 0 = x(1,1) \to x(1,2) \to \cdots \to x(1,i)\]
  with subquotients \(a_{2}, \dots, a_{i}\).
  By construction, for \(j \in \{2, \dots, i\}\), we have
  \[b = \phi(\lfloor  a_{1} \rfloor) < \phi(\lceil a_{j} \rceil)\]
  and hence, by hypothesis, we must have
  \[ \Hom(a_{j}, a_{1}[1]) = 0.\]
  As a result, we get
  \[ \Hom(x(1,i), a_{1}[1]) = 0.\]
  Therefore, the vertical map of the diagram above factors as a composition
  \begin{equation}\label{eqn:conmapfilt}
    x(1,n) \to x(i,n) \to a_{1}[1].
  \end{equation}

  Now, \(x(i,n)\) has the filtration
  \[ 0 = x(i,i) \to x(i,i+1) \to \cdots \to x(i,n)\]
  with factors \(a_{i+1}, \dots, a_{n}\).
  For \(j \in \{i+1, \dots, n\}\), we have
  \[ b \geq \phi(\lceil  a_{j} \rceil),\]
  and therefore
  \[ b = \phi(\lfloor a_{1} \rfloor) \geq \phi(\lceil  x(i,n) \rceil).\]
  By \Cref{ex:easyrect}, any map \(x(i,n) \to a_{1}[1]\) is rectifiable.
  In particular, the second map in \eqref{eqn:conmapfilt} is rectifiable.
  \Cref{prop:rectifable-factor} implies that \(x(1,n) \to a_{1}[1]\) is rectifiable; that is, the triangle \eqref{eqn:conmap} is rectifiable.
\end{proof}

\section{Self-extensions of a spherical object}\label{sec:an}
The aim of this section is to record some properties of self-extensions of a spherical object.
The results of this section plays a significant role in the proof of~\Cref{prop:limit-mass}, and may also be of independent interest.

Fix \(d \geq 2\), a field \(\k\), a \(\k\)-linear triangulated category \(\mathcal{C}\), and a \(d\)-spherical object \(a\) of \(\mathcal{C}\).
Fix a non-zero element \(\loopmap_a \in \Hom^d(a,a)\).
It will be convenient to set \(e = d - 1\).
\begin{proposition}\label{prop:a-an-homs}
  Let \(a_n\) be an object of \(\mathcal{C}\) that admits a filtration
  \begin{equation}\label{eq:a-filtration}
    \begin{tikzcd}[column sep=1em]
      0 = a_{-1} \arrow{rr}{} && a_0 \arrow{rr}{}\arrow{dl}{} && \cdots \arrow{rr}{}\arrow{dl}{} 
      && a_n\arrow{dl}{}\\
      &a\arrow[dashed]{ul}{}&&a[-e]\arrow[dashed]{ul}{}&\cdots&
      a[-ne]\arrow[dashed]{ul}{},
    \end{tikzcd}
  \end{equation}
  where the connecting maps
    \[ a[-ie] \to a[-(i-1)e+1]\]
   are shifts of \(\loopmap_{a}\).
  Then we have the following.
  \begin{enumerate}
  \item The space \(\gradedHom(a,a_n)\) is two dimensional, with generators
    \[i_n \colon a \to a_n \text{ and } l_n \colon a[-ne-d] \to a_n\]
    in degrees \(0\) and \(ne+d\) respectively.
    The map \(i_n\) is a non-zero multiple of the map \(a = a_{0} \to a_n\) in~\eqref{eq:a-filtration}.
  \item The space \(\gradedHom(a_n,a)\) is two dimensional, with generators
    \[t_n \colon a_n \to a[d]\text{ and }q_n \colon a_n \to a[-ne]\]
    in degrees \(d\) and \(-ne\) respectively.
    The map \(q_n\) is a non-zero multiple of \(a_n \to a[-ne]\) in~\eqref{eq:a-filtration}.
  \item The compositions
    \begin{align*}
      t_n \circ i_n &\colon a \to a[d],\\
      q_n \circ l_n &\colon a[-ne-d] \to a[-ne]
    \end{align*}
    are non-zero multiples of shifts of \(\loopmap_a\).
  \end{enumerate}
\end{proposition}
\begin{proof}
  We prove the first two statements by induction on \(n\).

  For \(n = 0\), the proposition follows from the the \(d\)-CY property of \(a\).
  Let us assume it for \((n-1)\), and prove it for \(n\).

 Consider the distinguished triangle
 \begin{equation}\label{eq:antriangle}
   a_{n-1} \to a_n \to a[-ne] \xrightarrow{+1}.
 \end{equation}
 The connecting map \(a[-ne] \to a_{n-1}[1]\) composed with the map \(a_{n-1}[1] \to a[-(n-1)e+1]\) is a shift of \(\loopmap_{a}\).
 In particular, the connecting map is non-zero.
  
  Apply \(\Hom(a,-)\) to~\eqref{eq:antriangle} to get the long exact sequence
  \[\cdots \to \Hom^i(a,a_{n-1}) \to \Hom^i(a,a_n) \to \Hom^i(a,a[-ne]) \to \cdots.\]
  Both \(\Hom^i(a,a_{n-1})\) and \(\Hom^i(a,a[-ne])\) are zero unless \(i \in \{0, ne, ne+1, ne + d\}\).
  So \(\Hom^{i}(a,a_{n}) = 0\) for all \(i\) unless \(i \in \{0, ne, ne+1, ne + d\}\).
  For \(i = 0\), the long exact sequence gives
  \[0 \to \Hom^0(a,a_{n-1}) \to \Hom^0(a,a_n) \to 0.\]
  From the inductive hypothesis, we conclude that \(\Hom^0(a,a_n)\) is one-dimensional and is spanned by a copy of the map \(i_n \colon a_0 \to a_n\) from \eqref{eq:a-filtration}.
  For \(i = ne\) and \(i = ne+1\), the hom spaces fit in the sequence
 \[0 \to \Hom^{ne}(a,a_{n}) \to \Hom^{ne}(a,a[-ne]) \xrightarrow{\delta} \Hom^{ne+1}(a,a_{n-1}) \to \Hom^{ne+1}(a,a_{n})\to 0.\]
 The source of \(\delta\) isomorphic to \(\k\), spanned by the identity.
 The target of \(\delta\) isomorphic to \(\k\) by the inductive hypothesis.
 The map \(\delta\) is induced by the connecting map \(a[-ne] \to a_{n-1}[1]\) in \eqref{eq:antriangle}, which is non-zero.
 It follows that \(\delta\) is an isomorphism, and hence \(\Hom^i(a,a_{n}) = 0\) for \(i = ne\) and \(ne+1\).

  Apply \(\Hom(-,a)\) to~\eqref{eq:antriangle} to get the long exact sequence
  \[\cdots \to \Hom^i(a[-ne], a) \to \Hom^i(a_n,a) \to \Hom^i(a_{n-1},a) \to \cdots.\]
  The \(d\)-CY property of \(a\) and the vanishing of \(\Hom^i(a,a_n)\) for all \(i \not \in \{0,-ne\}\) imply that \(\Hom^i(a_n,a) = 0\) for all \(i \not \in \{-ne+d,d\}\).
  For \(i = -ne\), the long exact sequence gives
  \[ 0 \to \Hom^{-ne}(a[-ne], a) \to \Hom^{-ne}(a_n,a) \to 0.\]
  We conclude that \(\Hom^{-ne}(a_{n},a)\) is one-dimensional and is spanned by a copy of the map \(q_{n} \colon a_{n} \to a[-ne]\) from \eqref{eq:a-filtration}.

   We have proved that \(\Hom^{0}(a,a_n)\) and \(\Hom^{-ne}(a_n,a)\) are one-dimensional and spanned by the maps \(i_n\) and \(q_{n}\) in \eqref{eq:a-filtration}.
   The \(d\)-CY property of \(a\) implies that \(\Hom^{d}(a_n,a)\) and \(\Hom^{ne+d}(a,a_n)\) are one-dimensional and spanned by the maps that compose with \(i_n\) and \(q_n\) to give \(\loopmap_a\).
   We have also proved that all other hom spaces vanish.
   The induction step is complete.
\end{proof}

\begin{proposition}\label{prop:an-objects}
  For each \(n \geq 0\), there exists an object \(a_n\), unique up to isomorphism, which admits a filtration whose factors are \(a, a[-e], \cdots, a[-ne]\) such that for all \(i\), the connecting maps \(a[-ie] \to a[-ie+d]\) are shifts of \(\loopmap_{a}\).
\end{proposition}
\begin{proof}
  We induct on \(n\).
  For the base case \(n = 0\), we have \(a_0 \cong a\).
  
  We assume the result for \(n\), and prove it for \(n+1\).
  Observe that~\Cref{prop:a-an-homs} applies to \(a_n\).
  Let \(a_{n+1}\) be the object that fits into a distinguished triangle
  \begin{equation}\label{eq:actual}
    a[-ne-d] \xrightarrow{l_n} a_n \to a_{n+1} \xrightarrow{+1}.
  \end{equation}
  Using  the rotated triangle
  \[ a_n \to a_{n+1} \to a[-(n+1)e] \xrightarrow{+1}\]
  and the assumed filtration of \(a_n\), we obtain the desired filtration for \(a_{n+1}\).
  
  It remains to prove that the object \(a_{n+1}\) is unique up to isomorphism.
  Consider another object \(a_{n+1}'\) that also admits a filtration
  \begin{equation}\label{eq:imposter}
    0 \to a_0' \to \cdots \to a_n' \to a_{n+1}'
  \end{equation}
  satisfying the hypotheses.
  By the inductive hypothesis, we have an isomorphism \(a_n' \cong a_n\).
  The last map in~\eqref{eq:imposter} then fits into a distinguished triangle
  \[a[-ne-d] \xrightarrow{\iota'} a_n \to a'_{n+1} \xrightarrow{+1}.\]
  By assumption, the composite of \(\iota'\) with the next map \(a'_n \to a[-(n-1)e]\) in~\eqref{eq:imposter} is the loop map; so \(\iota' \neq 0\).
  By~\Cref{prop:a-an-homs}, \(\Hom^{ne+d}(a,a_n)\) is one-dimensional, so \(\iota'\) is a non-zero scalar multiple of \(l_n\).
  By comparing~\eqref{eq:actual} and~\eqref{eq:imposter}, we obtain an isomorphism \(a_{n+1} \cong a'_{n+1}\).
\end{proof}

\begin{proposition}\label{lem:twomaps}
  For each \(n \geq 0\), consider non-zero maps
  \begin{align*}
    i_n &\colon a \to a_n, \quad l_n \colon a[-ne-d] \to a_n\\
    t_n &\colon a_n \to a[d], \quad q_n \colon a_n \to a[-ne]
  \end{align*}
  as in~\Cref{prop:a-an-homs}.
  Then we have distinguished triangles
  \begin{align*}
    &a_{n-1}[-d] \xrightarrow{t_{n-1}[-d]} a \xrightarrow{i_n} a_n \xrightarrow{+1} \text{ and }\\
    &a_n\xrightarrow{q_n} a[-ne] \xrightarrow{l_{n-1}[1]} a_{n-1}[1] \xrightarrow{+1}.
  \end{align*}
\end{proposition}
\begin{proof}
  Using~\Cref{prop:an-objects}, the objects \(a_i\) fit into a filtration
  \begin{equation}\label{eq:an-filtration}
    \begin{tikzcd}[column sep=1em]
      0 = a_{-1} \arrow{rr}{} && a_0 \arrow{rr}{}\arrow{dl}{} && \cdots \arrow{rr}{}\arrow{dl}{} 
      && a_n\arrow{dl}{}\\
      &a\arrow[dashed]{ul}{}&&a[-e]\arrow[dashed]{ul}{}&\cdots&
      a[-ne]\arrow[dashed]{ul}{},
    \end{tikzcd}
  \end{equation}
  where the connecting maps are loop maps.
  By the uniqueness statement in~\Cref{prop:an-objects}, the cone of the map \(i_n \colon a = a_0 \to a_n\) is \(a_{n-1}[-e]\).
  It gives rise to the first triangle.
  Likewise, the cone of the map \(q_{n} \colon a_n \to a[-ne]\) is \(a_{n-1}[1]\).
  It gives rise to the second triangle.
\end{proof}

\begin{proposition}
  For each \(n \geq 0\), the object \(a_n\) is indecomposable in \(\mathcal{C}\).
\end{proposition}
\begin{proof}
  The statement is clear for \(n = 0\): spherical objects are indecomposable because they have a one-dimensional space of endomorphisms of degree zero.
  It can be checked, e.g., via induction on \(0 \leq k \leq n\) and using the triangles
  \[a_{k-1} \to a_k \to a[-ke] \xrightarrow{+1}\]
  that \(\Hom^0(a_k,a_n)\) is one-dimensional for each \(k \leq n\).
  In particular, \(\Hom^0(a_n,a_n)\) is one-dimensional for each \(n\), and so the objects \(a_n\) are indecomposable.
\end{proof}

In the remainder of the section, we prove some homological properties of the objects \(a_n\).
These results are important ingredients in the proof of~\Cref{prop:limit-mass}.
Suppose we have a stability condition \(\tau\) on \(\mathcal{C}\) such that \(a\) is a \(\tau\)-semi-stable object in the \([0,1)\)-heart, with phase strictly between \(0\) and \(1\).
Then the defining filtration of \(a_n\) from~\Cref{prop:an-objects} is also its \(\tau\)-Harder--Narasimhan filtration, because the factors are semi-stable and of decreasing phase.

Let \(I\) be the interval \([0,e)\).
For \(m \in \Z\), let by \(I-me\) be the translate \([-me, -(m-1)e)\).
For an object \(x \in \mathcal{C}\), we study the truncations \(x_{I-me}\).
For the object \(a_{n}\), observe that
\[
  (a_{n})_{I-me} = \begin{cases}
                     a[-m] & \text{ if } m \in \{0,\dots,n\} \\
                     0 & \text{ otherwise}.
                   \end{cases}
\]
The objects \(a_n\) satisfy the following ``lifting property''.
\begin{lemma}\label{lem:cohan}
  Let \(f \colon x \to a_n\) be a map such that the induced map
  \[ x_{I-ne} \to (a_{n})_{I-ne}\]
  vanishes.  Then the map \(x_{\geq -(n-1)e} \to a_{n-1}\), obtained by truncating \(f\), extends to a map \(x \to a_{n-1}\).
\end{lemma}
\begin{proof}
  We have the distinguished triangle
  \[ x_{\geq -(n-1)e} \to x \to x_{< -(n-1)e} \xrightarrow{+1}.\]
  It suffices to prove that the composition \(x_{< -(n-1)e}[-1] \to x_{\geq -(n-1)e} \to a_{n-1}\) vanishes.
  We have the commutative diagram
  \[
  \begin{tikzcd}
    x_{<-(n-1)e}[-1] \ar{r}\ar{d}{\overline f}& x_{\geq -(n-1)e} \ar{r}\ar{d}& x\ar{d} \\
    a[-ne-1] \ar{r}{b}& a_{n-1} \ar{r}& a_n. 
  \end{tikzcd}
\]
We prove that the composite \(b \circ \overline f\) vanishes.
The map
\[ x_{I-ne}[-1] \to a[-ne-1]\]
induced by \(f\) vanishes by assumption.
Therefore, \(\overline f\) factors through a map \(g\) as follows
\[
  \begin{tikzcd}
    x_{I-ne}[-1] \ar{r}& x_{< -(n-1)e}[-1] \ar{r}\ar{d}{\overline f}& x_{< -ne} [-1]\ar{dl}{g} \\
    & a[-ne-1].
  \end{tikzcd}
\]
Now it suffices to prove that the composite
\[ b \circ g \colon x_{<-ne} [-1] \to a_{n-1}\]
vanishes.
The phases of the HN factors of the domain lie in the interval \((-\infty, -ne-1)\).
The HN factors of the codomain are \(d\)-CY and their phases lie in the interval \([-(n-1)e, +\infty)\).
Let \(s\) be a semi-stable HN factor of the domain and \(t\) a semi-stable HN factor of the codomain.
Then \(\phi(t) > \phi(s) + d\).
By the \(d\)-CY property of \(t\), we have
\[ \dim\Hom(s,t) = \dim\Hom(t,s[d]),\]
and the right hand side must vanish since \(\phi(t) > \phi(s[d])\).
We conclude that \[\Hom(x_{<-ne}[-1], a_{n-1}) = 0,\] and hence the composite \(b \circ g\) vanishes.
\end{proof}

We can use the previous lemma to show that the map \(i_n \colon a \to a_n\) of degree zero is irreducible in the following way.
\begin{lemma}\label{lem:i-does-not-factor}
  Let \(x\) be an object that does not contain \(a\) as a direct summand and does not have a self hom of negative degree.
  Then, for any \(n \geq 0\), the map \(i_n \colon a \to a_n\) does not factor as \(a \to x \to a_n\).
\end{lemma}
\begin{proof}
  We prove the contrapositive.
  We continue to use truncations to the translates of the interval \(I\).

  Suppose we have maps \(s \colon a \to x\) and \(q \colon x \to a_n\) that compose to \(i_n \colon a \to a_n\).
  Let \(n\) be the smallest with this property.
  If \(n = 0\), then we obtain \(a\) as a direct summand.
  If \(n > 0\), we claim that the composite map
  \begin{equation}\label{eqn:negmap}
    x \xrightarrow{q} a_n \to a[-ne] \xrightarrow{s[-ne]} x[-ne]
  \end{equation}
  is non-zero.

  To see this, we first consider the map
  \begin{equation}\label{eq:trohomology}
    x_{I-ne} \to (a_{n})_{I-ne}.
  \end{equation}
  If this map is zero,~\Cref{lem:cohan} yields a map \(r \colon x \to a_{n-1}\) which also has the property that the composite of \(s \colon a \to x\) and \(r \colon x \to a_{n-1}\) is \(i_{n-1} \colon a \to a_{n-1}\).
  Since we chose \(n\) to be the smallest with this property, we conclude that the map~\eqref{eq:trohomology} is non-zero.

  Second, we see that the composite
  \[
    a_{I} \to x_{I} \to (a_{n})_{I}
  \]
  is the identity.
  Therefore, the map \(a_{I} \to x_{I}\) has a left inverse.

  Now, consider the maps on the truncations induced by~\eqref{eqn:negmap}
  \[ x_{I-ne} \xrightarrow{q} (a_{n})_{I-ne} \xrightarrow{=} a[-ne] \xrightarrow{s[-ne]} (x[-ne])_{I-ne}.\]
  The first map is non-zero, the middle map is an equality, and the last map has a left inverse.
  It follows that the composite is non-zero, and hence the composite in~\eqref{eqn:negmap} is also non-zero.
\end{proof}

\bibliographystyle{siam}

\end{document}